\newcommand{\leaveout}[1]{}
\renewcommand{\p@enumii}{}
\newcommand{\cB}{{\mathcal B}}
\newcommand{\cD}{{\mathcal D}}
\newcommand{\cG}{{\mathcal G}}
\newcommand{\cL}{{\mathcal L}}
\newcommand{\cK}{{\mathcal K}}
\newcommand{\cS}{{\mathcal S}}
\newcommand{\bW}{{\mathbf W}}
\newcommand{\bX}{{\mathbf X}}
\newcommand{\bu}{{\mathbf u}}
\newcommand{\bx}{{\mathbf x}}
\newcommand{\by}{{\mathbf y}}
\newcommand{\bz}{{\mathbf z}}
\newcommand{\bv}{{\mathbf v}}
\newcommand{\bw}{{\mathbf w}}
\newcommand{\bS}{{\mathbf S}}
\newcommand{\precc}{\llcurly}
\newcommand{\succc}{\ggcurly}
\newcommand{\DD}[1]{\mathbin{\frac{\rm d }{{\rm d }#1}}}
\newcommand{\ddt}{\DD t}
\newcommand{\ud}{\,{\mathrm d}}
\newcommand\R{{\mathbb R}}
\newcommand\C{{\mathbb C}}
\newcommand\Z{{\mathbb Z}}
\newcommand\rplus{{\R^{+}}}
\newcommand\cplus{{\C^{+}}}
\newcommand\zplus{{\Z^{+}}}
\newcommand\rminus{{\R^{-}}}
\newcommand{\dA}{\mathbf A}
\newcommand{\dB}{\mathbf B}
\newcommand{\dC}{\mathbf C}
\newcommand{\dD}{\mathbf D}
\newcommand{\dW}{\mathbf W}
\newcommand{\AB}{{A\& B}}
\newcommand{\CD}{{C\& D}}
\newcommand{\SysNode}{\bbm{\AB \\ \CD}}
\newcommand{\SmallSysNode}{\sbm{\AB \cr \CD}}
\newcommand{\dom}[1]{\operatorname{dom}(#1)}
\newcommand{\range}[1]{\operatorname{ran}(#1)}
\newcommand{\supp}[1]{\operatorname{supp}(#1)}
\newcommand{\re}[1]{\operatorname{Re}\,#1}
\newcommand{\Ker}[1]{\operatorname{ker}(#1)}
\newcommand{\ipd}[2]{\langle #1 , #2 \rangle}
\newcommand{\Ipd}[2]{\left\langle #1 , #2 \right\rangle}
\newcommand{\Ipdp}[2]{\left\langle #1 , #2 \right\rangle}
\newcommand{\set}[1]{\left\lbrace #1 \right\rbrace}
\newcommand{\bigmid}{\bigm\vert}
\newcommand{\Bigmid}{\Bigm\vert}
\newcommand{\biggmid}{\biggm\vert}
\newcommand{\bi}{\begin{itemize}}
\newcommand{\ei}{\end{itemize}}
\newcommand{\be}{\begin{enumerate}}
\newcommand{\ee}{\end{enumerate}}
\newcommand{\Afrak}{\mathfrak A}
\newcommand{\Bfrak}{\mathfrak B}
\newcommand{\Cfrak}{\mathfrak C}
\newcommand{\Dfrak}{\mathfrak D}
\newcommand{\Hfrak}{\mathfrak H}
\newcommand{\Tfrak}{\mathfrak T}
\newcommand{\Vfrak}{\mathfrak V}
\newcommand{\ya}{\reflectbox{\rm R}}     
\newcommand{\sbm}[1]{\left[\begin{smallmatrix}#1
\end{smallmatrix}\right]}
\newcommand{\bbm}[1]{\begin{bmatrix}#1\end{bmatrix}}
\newtheorem{lemma}{Lemma}[section]
\newtheorem{theorem}[lemma]{Theorem}
\newtheorem{corollary}[lemma]{Corollary}
\newtheorem{proposition}[lemma]{Proposition}
\theoremstyle{definition}
\newtheorem{example}[lemma]{Example}
\newtheorem{definition}[lemma]{Definition}
\newtheorem{remark}[lemma]{Remark}
\numberwithin{equation}{section}
\begin{document}

\title[The Infinite-Dimensional Bounded Real Lemmas in Continuous Time]{The Infinite-Dimensional Standard and Strict Bounded Real Lemmas in Continuous Time:\ The storage function approach}

\author[J.A. Ball]{J.A. Ball}
\address{J.A. Ball, Department of Mathematics, Virginia Tech, Blacksburg, VA 24061-0123, USA}
\email{joball@math.vt.edu}

\author[S. ter Horst]{S. ter Horst}
\address{S. ter Horst, Department of Mathematics, Research Focus Area:\ Pure and Applied Analytics, North-West University, Potchefstroom, 2531 South Africa and DSI-NRF Centre of Excellence in Mathematical and Statistical Sciences (CoE-MaSS)}
\email{Sanne.TerHorst@nwu.ac.za}

\author[M. Kurula]{M. Kurula}
\address{M. Kurula, {\AA}bo Akademi Mathematics, Henriksgatan 2, 20500 {\AA}bo, Finland}
\email{Mikael.Kurula@abo.fi}

\thanks{This work is based on the research supported in part by the National Research Foundation of South Africa (Grant Numbers 118513 and 127364).}

\subjclass[2010]{Primary 47A63; Secondary 47A48, 47A56, 93B28, 93C05, 93D25}

%
%
%


\keywords{Kalman-Yakubovich-Popov inequality, bounded real lemma, storage functions, well-posed linear systems, continuous time, passive systems, Schur functions}

\begin{abstract}
The bounded real lemma (BRL) is a classical result in systems theory, which provides a linear matrix inequality criterium for dissipativity, via the Kalman-Yakubovich-Popov (KYP) inequality. The BRL has many applications, among others in $H^\infty$ control. Extensions to infinite dimensional systems, although already present in the work of Yakubovich, have only been studied systematically in the last few decades. In this context various notions of stability, observability and controllability exist, and depending on the hypothesis one may have to allow the KYP-inequality to have unbounded solutions which forces one to consider the KYP-inequality in a spatial form. In the present paper we consider the BRL for continuous time, infinite dimensional, linear well-posed systems. Via an adaptation of Willems' storage function approach we present a unified way to address both the standard and strict forms of the BRL. We avoid making use of the Cayley transform and work only in continuous time. While for the standard bounded real lemma, we obtain analogous results as there exist for the discrete time case, when treating the strict case additional conditions are required, at least at this stage. This might be caused by the fact that the Cayley transform does not preserve exponential stability, an important property in the strict case, when transferring a continuous-time system to a discrete-time system.
\end{abstract}

\maketitle


\tableofcontents

\section{Introduction}\label{sec:intro}

The study and elaboration of the Bounded Real Lemma (BRL) has a rich history, beginning with the work of Kalman \cite{Kal63}, Yakubovich \cite{Yak62} and of Popov \cite{Pop61}. From the beginning, the Kalman-Yakubovich-Popov (KYP) lemma was viewed more broadly as the quest to establish the equivalence between a frequency-domain inequality (FDI) and a Linear Matrix Inequality (LMI). In our case, this will actually be a Linear Operator Inequality.

A finite dimensional, linear input-output system in continuous time is frequently written in input/state/output form
\begin{equation}\label{eq:introiso}
	\Sigma:\quad
	\bbm{\dot \bx(t)\\\by(t)} = \bbm{A&B\\C&D} \bbm{\bx(t)\\\bu(t)},
	\quad t\geq 0,\quad \bx(0)=x_0,
\end{equation}
where the \emph{state} $\bx(t)$ at time $t$ takes values in the \emph{state space} $X=\C^n$ (with ${\mathbb C}$ denoting the set of complex numbers), the \emph{input}
$\bu(t)$ lives in the \emph{input space} $U=\C^m$, and the \emph{output} $\by(t)$ in the \emph{output space} $Y=\C^k$, and where $A$, $B$, $C$, $D$
are matrices of appropriate sizes. The \emph{initial time} is $t=0$ and $x_0\in X$ is the given \emph{initial state} of the system.
By the elementary
theory of differential equations, the unique solution of \eqref{eq:introiso} is
\begin{equation}  \label{eq:introiso'}
\left\{  \begin{aligned}
\bx(t) & = e^{At} x_0 + \int_0^t e^{A(t-s)} B \bu(s) \ud s, \\
\by(t) & = C e^{At} x_0 +\int_0^t C e^{A(t-s)} B \bu(s) \ud s + Du(t).
\end{aligned}  \right.
\end{equation}

Taking Laplace transforms in \eqref{eq:introiso'}, we get
$$
\left\{  \begin{aligned}
\widehat\bx(\lambda) & = (\lambda-A)^{-1} x_0 + (\lambda- A)^{-1}B\widehat \bu(\lambda), \\
\widehat\by(\lambda) & = C (\lambda-A)^{-1} x_0 + \widehat\Dfrak(\lambda)\bu(\lambda),
\end{aligned}  \right.
$$
where
\begin{equation}  \label{intro-transfunc}
\widehat \Dfrak(\lambda ) = C (\lambda - A)^{-1} B+D
\end{equation}
is called the {\em transfer function} of the linear system \eqref{eq:introiso}. In particular, when $x_0=0$, we get
\begin{equation}   \label{transfunc-freq}
  \widehat \by(\lambda) = \widehat \Dfrak(\lambda) \widehat \bu(\lambda),
\end{equation}
i.e., the transfer function maps the Laplace transform of the input signal into the Laplace transform of the output signal. Alternatrively, let us make the Ansatz that $\bu(t)=e^{\lambda t}u_0$, $\bx(t)=e^{\lambda t}x_0$ and $\by(t)=e^{\lambda t}y_0$ form a trajectory on $\R$, where $u_0$, $x_0$ and $y_0$ are constant vectors. Then $\dot \bx(t)=\lambda e^{\lambda t}x_0$ and the first equation in \eqref{eq:introiso} gives $x_0=(\lambda-A)^{-1}Bu_0$. Plug this into the second equation of \eqref{eq:introiso} to get  $y_0=\widehat\Dfrak(\lambda)u_0$. Hence, the transfer function maps the amplitude of the input wave to the amplitude of the output wave, and this gives a second interpretation of the transfer function as a frequency response function. This second interpretation can be extended to time-varying linear systems as well; see \cite{BGK95}. For finite dimensional systems, the Laplace transform version is more common, but for infinite-dimensional systems, the frequency response version is more accessible.

We will be particularly interested in the case where  $\widehat \Dfrak(\lambda)$ is analytic on the right half-plane  ${\mathbb C}^+$.
If it is the case that in addition $\| \widehat \Dfrak(\lambda) \| \le 1$ for all $\lambda$ in the open right half-plane ${\mathbb C}^+$, we say that
$\widehat \Dfrak$ is in the {\em Schur class} (with respect to ${\mathbb C}^+$), denoted as $\cS_{U,Y}$.  

What we shall call the
 {\em standard bounded real lemma (standard BRL)} is concerned with
characterizing in terms of  the system matrix $\sbm{ A & B \\ C & D}$ when it is the case that the associated transfer function $\widehat \Dfrak(\lambda)$ is in $\cS_{U,Y}$.
A variation of the problem is the {\em strict bounded real lemma} which is concerned with the problem of characterizing in terms of the system matrix
$\sbm{A & B  \\ C & D }$  when the associated transfer function $\widehat \Dfrak(\lambda)$ is in the {\em strict Schur class}  $\cS^0_{U, Y}$, i.e., when there exists a $\rho<1$ such that $\| \widehat \Dfrak(\lambda) \| \le \rho$ for all $\lambda \in {\mathbb C}^+$.
For the finite dimensional case, the problem is pretty well understood (see \cite{AnVo73, Will72a} for the standard case and \cite{PAJ91} for the strict case), while for the
infinite dimensional case the results are not as complete, but see \cite{ArSt07} for the standard case).  Our goal here is to provide a unified approach to the standard and the strict bounded real lemmas for infinite dimensional well-posed system with continuous time (as in \cite{StafBook}); in fact, at that level of generality, this appears to be the first attempt at a strict bounded real lemma.

We shall make use of the concept of \emph{storage function} as introduced by J.\ Willems in his study of dissipative systems \cite{Will72a,Will72b}, closely related
to independent work \cite{Arov79b} of  D.\ Arov  appearing around the same time.  Here we concentrate on the special case of ``scattering'' supply rate:
$s(u,y) = \| u \|^2 - \| y \|^2$.

\begin{definition}\label{def:storage}
The function $S:X\to[0,\infty]$ is a \emph{storage function} for $\Sigma$ if $S(0)=0$ and for all trajectories $(\bu,\bx,\by)$ of $\Sigma$ with initial time $0$
and for all $t>0$, it holds that
\begin{equation}\label{eq:storfndef}
	S\left(\bx(t)\right) +\int_0^t\|\by(s)\|_Y^2\ud s \leq S\left(\bx(0)\right)+
	\int_0^t\|\bu(s)\|_U^2\ud s.
\end{equation}
If $S(x)=\|x\|_X^2$ is a storage function for $\Sigma$, then $\Sigma$ is called \emph{passive}.
\end{definition}

An easy consequence of this notion of dissipativity (i.e., existence of a storage function) is what we shall call {\em input/output dissipativity}, namely: In case the system is initialized with the initial state $x_0$ set equal to $0$, then the energy drained out of the system over the interval $[0,t]$ via the output $\by$ cannot exceed the energy inserted into the system over the same interval via the input $\bu$: that is,
$$
\int_0^t\|\by(s)\|_Y^2\ud s \leq \int_0^t\|\bu(s)\|_U^2\ud s,\quad \text{subject to }x_0=0.
$$
This implies that the transfer function is in the Schur class; more details on this can be found in Proposition \ref{prop:storageimpliesschur} below.   A non-obvious point is that the
converse holds: if $\widehat \Dfrak \in \cS_{U,Y}$, then a storage function exists for $\Sigma$, and this will be one of the statements in our standard BRL.  Similarly, as we shall see that
$\widehat \Dfrak$ being in the strict Schur class is equivalent to $\Sigma$ having what we shall call a {\em strict storage function}
(see Definition \ref{def:storage-strict} below).

 For a suitable function $\bu$, let $\tau^t$ denote the backward-shift operator
$$
 (\tau^t \bu)(s) = \bu(t+s), \quad t \in {\mathbb R}, \, t+s \in \dom{\bu}.
$$
 By time-invariance of the system equations \eqref{eq:introiso} we see that for any $t_0 > 0$ the backward-shifted trajectory
$(\tau^{t_0}\bu, \tau^{t_0}\bx, \tau^{t_0}\by)$ is again a system trajectory whenever $(\bu, \bx, \by)$ is a system trajectory.
 Setting $t_1 = t_0 > 0$, $t_2 = t + t_0>t_1$
and rewriting the resulting version of \eqref{eq:storfndef} as
$$
S(\bx(t_2)) - S(\bx(t_1)) \le \int_{t_1}^{t_2} \|\bu(s)\|_U^2\ud s - \int_{t_1}^{t_2} \|\by(s)\|_Y^2\ud s,
$$
we see that the dissipation inequality \eqref{eq:storfndef} can be interpreted as saying that the net energy stored by the system state
over the interval $[t_1, t_2]$ 
is no more than the net energy supplied to the system by the outside environment over the same time interval. 

In order to state the {\em standard} and {\em strict bounded real lemmas} even for the finite dimensional case, we need to carefully distinguish different notions of positivity for Hermitian matrices.

\begin{definition}  \label{D:matrix-positive}
 For $H$ an $n \times n$ Hermitian matrix over ${\mathbb C}$, we write
\begin{itemize}
\item $H \succ 0$ if $\langle H x, x \rangle > 0$ for all nonzero $x$ in ${\mathbb C}^{n \times n}$ (equivalently for the finite dimensional case here,
for some $\delta > 0$ we have $\langle H x, x \rangle \ge \delta \| x \|^2$ for all $x \in {\mathbb C}^n$),

\item $H \prec 0$ if $-H \succ 0$,

\item $H \succeq 0$ if $\langle H x, x \rangle \ge 0$ for all $x \in {\mathbb C}^n$,

\item $H \preceq 0$ if $-H \succeq 0$.
\end{itemize}
\end{definition}

\begin{theorem}[Standard finite dimensional bounded real lemma; see e.g.\ \cite{AnVo73, Will72a}]\label{thm:stdlemmaintro}
For a finite-di\-mensional linear system $\Sigma$ with system matrix $\bS = \sbm{ A & B \\ C & D}$ as in \eqref{eq:introiso} which is
minimal (i.e., $\textup{rank}\, [B\ AB\ \cdots\ A^{n-1}B]=n$ (controllability) and $\textup{rank}\, [C^*\ A^*C^* \ \cdots \ A^{*n-1}C^*]=n$ (observability), the following conditions are equivalent:

\begin{enumerate}
\item[(1)] After unique analytic continuation (if necessary) to a domain $\cD(\widehat \Dfrak) \supset {\mathbb C}^+$,
$\widehat\Dfrak$ is in the Schur class $\cS_{U,Y}$.
\item[(2)] The following continuous-time Kalman-Yakubovich-Popov (KYP) inequality has a solution $H\succ0$:
\begin{equation}\label{eq:KYPintro}
	\bbm{HA+A^*H+C^*C & HB+C^*D \\ B^*H+D^*C & D^*D-I}
	\preceq 0.
\end{equation}
\item[(3)] The system $\Sigma$ is similar to a passive system $\Sigma^\circ$, i.e., there exist $X^\circ$ and an invertible $\Gamma \colon X \to X^\circ$ such that
\begin{equation}\label{eq:introinter}
	 \bbm{A^\circ&B^\circ\\C^\circ&D^\circ}: = \bbm{\Gamma &0\\0&I}\bbm{A&B\\C&D} \bbm{\Gamma^{-1}&0\\0&I}
\end{equation}
satisfies \eqref{eq:KYPintro} with $H = 1_{X^\circ}$.
\item[(4)] The system $\Sigma$ has a storage function.

\item[(5)] The system $\Sigma$ has a quadratic storage function (see below).
\end{enumerate}
\end{theorem}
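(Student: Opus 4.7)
The plan is to prove the cycle (1) $\Rightarrow$ (5) $\Rightarrow$ (4) $\Rightarrow$ (1) and, in parallel, the equivalence (5) $\Leftrightarrow$ (2) $\Leftrightarrow$ (3). The implication (5) $\Rightarrow$ (4) is vacuous. For (4) $\Rightarrow$ (1), one invokes Proposition \ref{prop:storageimpliesschur}: setting $\bx(0)=0$, combining $S(0)=0$ with \eqref{eq:storfndef} gives $\int_0^t\|\by\|_Y^2\le\int_0^t\|\bu\|_U^2$ for every $t$ and every input, so taking Laplace transforms identifies $\widehat\Dfrak$ with a contractive multiplier on $H^2$, i.e., $\widehat\Dfrak\in\cS_{U,Y}$. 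The algebraic equivalences (5) $\Leftrightarrow$ (2) $\Leftrightarrow$ (3) are handled as follows. For $H\succ 0$, let $S_H(x):=\ipd{Hx}{x}$; along any trajectory of \eqref{eq:introiso} the quantity
$$
\frac{d}{dt}S_H(\bx(t))+\|\by(t)\|_Y^2-\|\bu(t)\|_U^2
$$
equals the quadratic form associated to the matrix on the left side of \eqref{eq:KYPintro} evaluated at $\sbm{\bx(t)\\\bu(t)}$. Since $(\bx(0),\bu(0))$ can be prescribed arbitrarily in $X\times U$, integration between arbitrary $t_1<t_2$ yields (5) $\Leftrightarrow$ (2). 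For (2) $\Leftrightarrow$ (3), one uses the state-space change of variable $\bx^\circ=\Gamma\bx$: given $H\succ 0$, take $\Gamma=H^{1/2}$ in \eqref{eq:introinter}, so that the KYP inequality with weight $H$ for $\Sigma$ becomes the KYP inequality with weight $I$ for $\Sigma^\circ$, i.e., passivity; conversely, pull back via $H:=\Gamma^*\Gamma$.

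\textbf{The hard direction (1) $\Rightarrow$ (5).} Following Willems, I would introduce the \emph{available storage}
$$
S_a(x_0):=\sup\Biggset{\int_0^T\bigpar{\|\by(s)\|_Y^2-\|\bu(s)\|_U^2}\ud s \;:\; T\ge 0,\ (\bu,\bx,\by)\text{ trajectory},\ \bx(0)=x_0}.
$$
The first task is to prove $S_a(x_0)<\infty$: decompose $\by$ via \eqref{eq:introiso'} as $Ce^{\cdot A}x_0+\Fscr\bu$, bound the input contribution to $\|\by\|_{L^2}$ via $\widehat\Dfrak\in\cS_{U,Y}$ and Plancherel, and bound the initial-state contribution by using minimality (observability plus the Schur bound, which together force the relevant spectral decay of $A$ on its observable part). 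Standard manipulations then show $S_a(0)=0$, $S_a\ge 0$, and the dissipation inequality \eqref{eq:storfndef}, so $S_a$ is a storage function. Quadraticity comes from the affine parametrization of trajectories in $(x_0,\bu)$: the integrand in the supremum is a sum of terms quadratic in $x_0$, bilinear in $(x_0,\bu)$, and quadratic in $\bu$, and maximizing over $\bu$ and $T$ with $x_0$ fixed produces $S_a(x_0)=\ipd{H_a x_0}{x_0}$ for a uniquely determined $H_a\succeq 0$; observability promotes $H_a$ to $H_a\succ 0$, yielding (5).

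\textbf{Main obstacle.} The bulk of the technical work is the finiteness, quadraticity, and strict positivity of $S_a$. Finiteness is not automatic from (1) alone, since the initial-state contribution to $\|\by\|_{L^2}^2$ must be separately controlled and minimality is used precisely to rule out troublesome modes of $A$. The step from ``$S_a$ is a storage function'' to ``$S_a$ is a quadratic form on $X$'' is the most delicate: one must convert a supremum over time and input into a closed-form expression on $X$, and the cleanest way to carry this out is to recognize the optimization as a linear-quadratic output-maximization problem and appeal to the resulting algebraic Riccati equation, with observability ensuring strict positivity of $H_a$. If the direct handling of $S_a$ proves unwieldy, an alternative route to (1) $\Rightarrow$ (2) (hence (5)) is the spectral factorization of the Popov function $I-\widehat\Dfrak(-\overline\lambda)^*\widehat\Dfrak(\lambda)$ on $i\R$, which produces $H\succ 0$ solving \eqref{eq:KYPintro} directly and bypasses the available-storage construction.
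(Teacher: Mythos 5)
Your overall architecture --- (5) $\Rightarrow$ (4) trivially, (4) $\Rightarrow$ (1) by contractivity of the input/output map, (5) $\Leftrightarrow$ (2) by differentiating $S_H$ along trajectories, (2) $\Leftrightarrow$ (3) via $\Gamma = H^{1/2}$, and (1) $\Rightarrow$ (5) via Willems' available storage --- is exactly the storage-function route this paper follows for its infinite-dimensional generalization (Theorem \ref{thm:stdlemma}); the finite-dimensional statement itself is only cited to \cite{AnVo73,Will72a}. One small omission in the algebraic part: your (5) $\Leftrightarrow$ (2) correspondence matches quadratic storage functions with KYP solutions $H\succeq 0$, whereas item (2) demands $H \succ 0$; the upgrade uses observability, via $\langle Hx_0,x_0\rangle = S_H(x_0) \ge S_a(x_0) \ge \int_0^\infty \|Ce^{sA}x_0\|^2\ud s > 0$ for $x_0\neq 0$.

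The genuine gap is in your finiteness argument for $S_a$. Writing $\by = \by_0 + \Fscr\bu$ with $\by_0 = Ce^{\cdot A}x_0$ and $\Fscr$ the causal input/output map with $\|\Fscr\|\le 1$, the quantity to be maximized is
\[
\|\by_0 + \Fscr\bu\|^2 - \|\bu\|^2 \;=\; \|\by_0\|^2 + 2\re{\ipd{\Fscr^*\by_0}{\bu}} - \|D_{\Fscr}\bu\|^2,
\]
and when $\|\Fscr\|=1$ (the generic Schur-class situation, since $\sup_T\|\Fscr|_{L^2[0,T]}\| = \|\widehat\Dfrak\|_\infty$) the defect $D_{\Fscr}$ is not bounded below, so the linear cross term is not dominated by the quadratic one; no separate bound on the state and input contributions to $\|\by\|_{L^2}$ can yield a finite supremum. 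Moreover finiteness is not delivered by observability, as you suggest, but by \emph{controllability}. The standard repair (and the one used here in Theorem \ref{thm:schurimpliesstorage}) is to introduce the required supply $S_r$, prove $S_a \le S_r$ by concatenating a trajectory driving $0$ to $x_0$ on $[t,0]$ with the given one on $[0,T]$ and applying input/output contractivity to the concatenation, and then note $S_r(x_0)<\infty$ for every $x_0$ because controllability makes every state reachable. (Equivalently, in operator terms: exact controllability gives $\Hfrak_\Sigma\Hfrak_\Sigma^* = \bW_o\bW_c\bW_c^*\bW_o^* \succeq c\,\bW_o\bW_o^*$, and combined with $D_{\Tfrak_\Sigma^*}^2\succeq \Hfrak_\Sigma\Hfrak_\Sigma^*$ Douglas' lemma places $\bW_o x_0$ in $\range{D_{\Tfrak_\Sigma^*}}$, which is precisely the finiteness condition; this is the content of Lemma \ref{L:XaXb} and the Appendix here.) Your fallback via spectral factorization of the Popov function is a legitimate independent route to (2), but as written the direct estimate for $S_a$ does not close.
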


Here by a {\em quadratic storage function} we mean a storage function $S$ of the special form $S(x) = \langle H x, x \rangle$, where $H \succeq 0$
is a Hermitian matrix.  If $H$ is positive definite ($H \succ 0$) then $S = S_H$ has the additional property that $S$ is coercive
(there is a $\delta > 0$ so that $S_H(x) \ge \delta \| x \|^2$ for all $x \in X$).  The connection between a solution $H \succeq 0$ of the KYP-inequality
\eqref{eq:KYPintro} and a quadratic storage function
is that any $H \succeq 0$ satisfying \eqref{eq:KYPintro} generates a quadratic storage function $S$ for $\Sigma$ according to
$S(x) = S_H(x):= \langle H x, x \rangle$. The {\em strict bounded real lemma} is concerned with an analogous characterization of the strict Schur class $\cS^0_{U,Y}$.

To formulate the strict result let us introduce the following terminology.

\begin{definition}\label{def:storage-strict}
Suppose  $S \colon X \to [0, \infty]$  is such that $S(0) = 0$ and $\Sigma$ is a well-posed linear system with system trajectories $(\bu, \bx, \by)$ with initiation
at $t = 0$.  Then we say that:
\begin{enumerate}
\item[(1)] $S$ is a {\em strict storage function} for $\Sigma$ if there is a $\delta > 0$ so that, for all system trajectories $(\bu, \bx, \by)$ of $\Sigma$ and $0 \le t_1 < t_2$ we have
\begin{equation}   \label{eq:stordefstrict}
S(\bx(t_2))  + \delta \int_{t_1}^{t_2} \| \bx(s) \|^2 \ud s + \int_{t_1}^{t_2} \| \by(s) \|^2 \ud s
\le S(\bx(t_1)) + (1 - \delta) \int_{t_1}^{t_2} \| \bu(s) \|^2  \ud s.
\end{equation}

\item[(2)] $S$ is a {\em semi-strict storage function} for $\Sigma$ if condition \eqref{eq:stordefstrict} holds but with the integral term involving
the state vector $\bx(s)$ omitted, i.e., if there is a $\delta > 0$ so that, for all system trajectories $(\bu, \bx, \by)$ and $0 \le t_1 < t_2$ we have
\begin{equation}   \label{eq:stordef-semi-strict}
S(\bx(t_2))  + \int_{t_1}^{t_2} \| \by(s) \|^2 \ud s
\le S(\bx(t_1)) + (1 - \delta) \int_{t_1}^{t_2} \| \bu(s) \|^2  \ud s.
\end{equation}
\end{enumerate}
\end{definition}

In the following result the equivalence (1) $\Leftrightarrow$ (2) is due to Petersen-Anderson-Jonckheere \cite{PAJ91} (at least for the special case $D = 0$);
we add the connections with similarity and storage functions for the strict setting. 

\begin{theorem}[Finite dimensional strict bounded real lemma]    \label{thm:stdlemmastrict-fin}
Suppose that $\Sigma$ is a finite dimensional linear system with system matrix $\bS = \sbm{ A & B \\ C & D}$ as in \eqref{eq:introiso} such that
the matrix $A$ is stable (i.e.,  $A$ has spectrum only in the open left half plane:  $\sigma(A) \subset {\mathbb C}^-:=\{\lambda\in\C \mid \re (\lambda)<0\}$).  Then the following conditions are equivalent:

\begin{enumerate}
\item[(1)] Possibly after unique analytic continuation to a domain $\dom{\widehat\Dfrak} \supset {\mathbb C}^+$,
$\widehat \Dfrak$ is in the strict Schur class $\cS^0_{U,Y}$.
\item[(2)] The following continuous-time \emph{strict} Kalman-Yakubovich-Popov (KYP) inequality has a solution $H\succ0$:
\begin{equation}\label{eq:KYPintro-strict}
	\bbm{HA+A^*H+C^*C & HB+C^*D \\ B^*H+D^*C & D^*D-I}
	\prec 0.
\end{equation}
\item[(3)] The system $\Sigma$ is similar to a \emph{strictly} passive system $\Sigma^\circ$, i.e., there exist $X^\circ$ and an invertible $\Gamma \colon X \to X^\circ$ such that \eqref{eq:introinter}
satisfies \eqref{eq:KYPintro-strict} with $H = 1_{X^\circ}$.

\item[(4)] The system $\Sigma$ has a quadratic, coercive strict storage function.

\item[(5)] The system $\Sigma$ has a semi-strict storage function.

\end{enumerate}
\end{theorem}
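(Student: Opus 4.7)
The plan is to establish the cycle (1) $\Rightarrow$ (2) $\Rightarrow$ (3) $\Rightarrow$ (4) $\Rightarrow$ (5) $\Rightarrow$ (1). The algebraic portion (2) $\Rightarrow$ (3) $\Rightarrow$ (4) $\Rightarrow$ (5) closely parallels the corresponding steps in Theorem \ref{thm:stdlemmaintro}, augmented by bookkeeping of a strictness margin; the closing implication (5) $\Rightarrow$ (1) is a short Fourier / maximum-modulus computation; and (1) $\Rightarrow$ (2) is invoked from the classical finite-dimensional strict bounded real lemma, which will be the main obstacle.

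For (2) $\Rightarrow$ (3), factor the given $H \succ 0$ as $H = \Gamma^* \Gamma$ with $\Gamma$ invertible, take $X^\circ = X$, and apply the similarity \eqref{eq:introinter}. Conjugation of the KYP matrix by $\textup{diag}(\Gamma^{-*}, I_U)$ preserves strict negative definiteness and converts \eqref{eq:KYPintro-strict} for $(A,B,C,D;H)$ into \eqref{eq:KYPintro-strict} for $(A^\circ, B^\circ, C^\circ, D^\circ; I_{X^\circ})$. For (3) $\Rightarrow$ (4), set $S(x) := \|\Gamma x\|^2 = \ipd{Hx}{x}$, which is quadratic and coercive. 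A direct computation along a smooth trajectory gives
\begin{equation*}
\DD{t} S(\bx(t)) + \|\by(t)\|^2 - \|\bu(t)\|^2 = \Ipd{\bbm{HA+A^*H+C^*C & HB+C^*D \\ B^*H+D^*C & D^*D-I}\sbm{\bx(t) \\ \bu(t)}}{\sbm{\bx(t) \\ \bu(t)}}.
\end{equation*}
Since the matrix on the right is $\preceq -\epsilon I$ for some $\epsilon > 0$, integration over $[t_1, t_2]$ yields \eqref{eq:stordefstrict} with $\delta = \epsilon$. The implication (4) $\Rightarrow$ (5) is immediate, since dropping the nonnegative term $\delta\int\|\bx\|^2$ from the left of \eqref{eq:stordefstrict} only weakens the inequality to \eqref{eq:stordef-semi-strict}.

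For (5) $\Rightarrow$ (1), apply \eqref{eq:stordef-semi-strict} with $x_0 = 0$, so $S(\bx(0)) = 0$; take $\bu \in L^2(\R_+; U)$, $t_1 = 0$, and let $t_2 \to \infty$. Stability of $A$ guarantees $\by \in L^2(\R_+; Y)$, and $S \geq 0$ gives
\begin{equation*}
\int_0^\infty \|\by(s)\|_Y^2 \ud s \leq (1-\delta) \int_0^\infty \|\bu(s)\|_U^2 \ud s.
\end{equation*}
Extending $\bu$ and $\by$ by zero to $\R$ and applying Plancherel, together with testing against inputs whose Fourier transforms concentrate near arbitrary frequencies, yields $\|\widehat\Dfrak(i\omega)\|^2 \leq 1-\delta$ for a.e.\ $\omega \in \R$. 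Since $A$ is stable, $\widehat\Dfrak$ is analytic on a neighborhood of $\overline{\C^+}$ and tends to $D$ as $\lambda \to \infty$, so the maximum modulus principle promotes the boundary bound to $\|\widehat\Dfrak(\lambda)\| \leq \sqrt{1-\delta} < 1$ on all of $\C^+$; hence $\widehat\Dfrak \in \cS^0_{U,Y}$.

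The principal obstacle is (1) $\Rightarrow$ (2): exhibiting a strictly positive definite solution to the strict KYP inequality. For $D = 0$ this is the Petersen--Anderson--Jonckheere theorem \cite{PAJ91}. For general $D$, note that $\widehat\Dfrak \in \cS^0$ together with $\widehat\Dfrak(\lambda) \to D$ as $\lambda \to +\infty$ along $\R$ forces $\|D\| < 1$ and hence $I - D^*D \succ 0$; a Schur-complement (loop-shifting) reduction then transforms \eqref{eq:KYPintro-strict} into an equivalent Lyapunov-type strict inequality for an auxiliary system with zero feedthrough, to which \cite{PAJ91} applies. Equivalently, stability of $A$ combined with $\|\widehat\Dfrak\|_\infty < 1$ implies the Hamiltonian matrix associated with the bounded-real algebraic Riccati equation has no purely imaginary eigenvalues, and a small perturbation $H \mapsto H + \eta I$ of its stabilizing positive semidefinite solution furnishes the strictly positive definite $H$ sought.
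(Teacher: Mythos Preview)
Your proof is correct, and the chain (2) $\Rightarrow$ (3) $\Rightarrow$ (4) $\Rightarrow$ (5) $\Rightarrow$ (1) is handled essentially as the paper does it in \S\ref{sec:proofs} for the infinite-dimensional Theorem~\ref{thm:stdlemmastrict} (of which the present finite-dimensional statement is the special case where hypothesis (H1) holds automatically). In particular your (5) $\Rightarrow$ (1) argument via Plancherel is almost verbatim the paper's proof of (5b) $\Rightarrow$ (1). For (1) $\Rightarrow$ (2) you cite \cite{PAJ91} and sketch a Schur-complement/Hamiltonian reduction for general $D$; the paper likewise attributes (1) $\Leftrightarrow$ (2) to \cite{PAJ91} in the text preceding the theorem and does not give a direct finite-dimensional argument.

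Where the paper genuinely diverges from your approach is in its \emph{own} proof of (1) $\Rightarrow$ (2a) for Theorem~\ref{thm:stdlemmastrict}, which specializes to the present setting: rather than invoking Riccati/Hamiltonian theory, the paper uses an $\varepsilon$-regularization trick (Lemma~\ref{L:epsregsys}). One augments the control and observation operators to $B_\varepsilon = \bbm{B & \varepsilon 1_X}$ and $C_\varepsilon = \sbm{C \\ \varepsilon 1_X \\ 0}$, obtaining a dilated system $\Sigma_\varepsilon$ that is automatically minimal (even $L^2$-minimal). The \emph{non-strict} bounded real lemma applied to $\Sigma_\varepsilon$ then produces a bounded, strictly positive definite $H$; compressing the resulting non-strict KYP inequality for $\Sigma_\varepsilon$ back to the original input space yields the \emph{strict} KYP inequality~\eqref{eq:KYPintro-strict} for $\Sigma$ with margin $\delta = \varepsilon^2$. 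This route is self-contained (it bootstraps the strict case from the standard case) and extends to infinite dimensions, whereas your Riccati/Hamiltonian sketch is intrinsically finite-dimensional but has the advantage of connecting directly to the classical optimal-control literature.
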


In the infinite dimensional case, we wish to allow one or each of the coefficient spaces, i.e., the input space $U$, the state space $X$, or the output space $Y$,
to be a infinite dimensional Hilbert space.
The  situation becomes more involved in at least three respects:
\begin{itemize}
\item The system matrix $\sbm{A&B\\C&D}$ is replaced by an (in general) unbounded \emph{system node}
(see \cite[Definition 4.7.2]{StafBook}, \cite[\S2]{ArSt07} or \S\ref{sec:system-node} below for details) between Hilbert spaces $U$, $X$ and $Y$.
Here we restrict ourselves to the setting of {\em well-posed systems}, i.e., in place of the system matrix $\sbm{ A & B \\ C & D}$
as in \eqref{eq:introiso} there is a well-defined one-parameter family of block $2 \times 2$ operator matrices
$$
\begin{bmatrix} \Afrak^t & \Bfrak^t \\ \Cfrak^t & \Dfrak^t \end{bmatrix} \colon \begin{bmatrix}  X \\ L^2([0,t], U) \end{bmatrix} \to
 \begin{bmatrix} X  \\ L^2([0,t], Y) \end{bmatrix}, \quad t > 0,
 $$
 which corresponds to the mapping such that
 $$
  \begin{bmatrix} \Afrak^t & \Bfrak^t \\ \Cfrak^t & \Dfrak^t \end{bmatrix}  \colon  \begin{bmatrix} \bx(0) \\  \pi_{[0,t]} \bu \end{bmatrix}
  \to \begin{bmatrix} \bx(t) \\ \pi_{[0,t]} \by \end{bmatrix}, \quad t > 0,
$$
whenever  $(\bu, \bx, \by)$ is a system trajectory.
It is often advantageous to work with the "integrated operators"  $\Afrak^t$, $\Bfrak^t$, $\Cfrak^t$, $\Dfrak^t$ instead of with the system node directly.
In case the system is finite dimensional and given by system matrix $\sbm{ A & B  \\ C & D}$, one can read off from  \eqref{eq:introiso'}  that the
integrated operators $\Afrak^t$, $\Bfrak^t$, $\Cfrak^t$, $\Dfrak^t$ are given by
\begin{align*}
&  \Afrak^t \colon x_0 \mapsto e^{At} x_0,\\  &\Bfrak^t \colon \bu|_{[0,t]} \mapsto
   \int_0^t e^{A(t - s)}  B u(s) \ud s,\quad
\Cfrak^t \colon x_0 \mapsto C e^{As} x_0|_{0 \le s \le t},\\
&  \Dfrak^t \colon \bu|_{[0,t]} \mapsto  \bigg( C \int_0^s  e^{A(s-s')}  B \bu(s') \ud s' + D \bu(s) \bigg) \bigg|_{0 \le s \le t}.
\end{align*}
To get some additional flexibility with respect to choice of location $t_0$ for the specification of the initial condition ($\bx(t_0) = x_0$), Staffans
(see \cite[page 30]{StafBook}) defines three ``master operators"
\begin{equation}\label{eq:introinteg}
\begin{aligned}
& \Bfrak \bu: = \int_{-\infty}^0 \Afrak^{-s} B \bu(s) \ud s, \quad
 \Cfrak x: = \bigg( t \mapsto C \Afrak^t x\bigg)_{t \ge 0}, \\
& \Dfrak \bu: = \bigg( t \mapsto \int_{-\infty}^t C \Afrak^{t-s} B \bu(s) \ud s + D \bu(t) \bigg)_{t \in {\mathbb R}}
\end{aligned}
\end{equation}
and observes that the analogues of $\Bfrak^t$, $\Cfrak^t$, $\Dfrak^t$ for the case where the initial condition is taken at $t = t_0$ rather than $t=0$ (denoted as $\Bfrak^t_{t_0}$, $\Cfrak^t_{t_0}$, $\Dfrak^t_{t_0}$) are all easily expressed in terms of the master operators;  for the case where $t_0=0$ the formulas are as in equation \eqref{BCD^t} below.

We let the collection of operators written in block matrix from
(even though it does not fit as the representation of a single operator between a two-component input space and a
two-component output space) $\sbm{ \Afrak & \Bfrak \\ \Cfrak  & \Dfrak}$ denote the associated well-posed linear system.

\item
Secondly, since the state space $X$ may be infinite dimensional, the solution $H$ of \eqref{eq:KYPintro} can become unbounded, both from below and from above.
In this case the notion of positivity for a (possibly unbounded) selfadjoint Hilbert-space operator becomes still more refined than that for the finite dimensional case (cf., Definition \ref{D:matrix-positive}) as follows.

\end{itemize}

\begin{definition}   \label{D:operator-positive}
For an unbounded, densely defined, selfadjoint operator $H$ on $X$ with domain $\dom{H}$ we say:
\begin{enumerate}
\item[(1)] $H$ is {\em positive semidefinite} (written $H \succeq 0$) when $\Ipdp{Hx}{x}\geq 0$ for all $x\in\dom{H}$;
\item[(2)] $H$ is {\em positive definite} (written $H \succ 0$) whenever $\Ipdp{Hx}{x}> 0$ for all $0\neq x\in\dom{H}$;
\item[(3)] $H$ is {\em strictly positive definite} (written $H \succc 0$) whenever there exists a $\delta>0$ so that $\Ipdp{Hx}{x} \geq \delta \|x\|^2$
for all $0\neq x\in\dom{H}$.
\end{enumerate}
\end{definition}

\begin{itemize}

\item[]

By \cite[Theorem 3.35 on p.\ 281]{Kato}, each positive semidefinite operator $H$ on $X$ admits a positive semidefinite square root $H^\frac{1}{2}$, for which we have $H=H^\frac{1}{2}H^\frac{1}{2}$, and hence \[
\dom{H}=\left\{x\in \dom{H^\frac{1}{2}} \bigmid H^\frac{1}{2} x \in \dom{H^\frac{1}{2}} \right\}\subset \dom{H^\frac{1}{2}}.
\]
Throughout this paper we use the standard ordering for possibly unbounded positive semidefinite operators (see, e.g., \cite[\S5]{ArKaPi06} or \cite[(2.17) on p.\ 330]{Kato}):\ {\em given positive semidefinite operators $H_1$ and $H_2$ on a Hilbert space $X$, we write $H_1 \preceq H_2$ if}
\[
\dom{H_2^{\frac{1}{2}}} \subset
\dom{H_1^{\frac{1}{2}}} \quad\mbox{and}\quad
\| H_1^{\frac{1}{2}} x\| \le \| H_2^{\frac{1}{2}}x \|\ \  \mbox{for all } x \in \dom{H_2^{\frac{1}{2}}}.
\]
In case $H_2$ and  $H_1$ are bounded, this amounts to the standard Loewner ordering for bounded selfadjoint operators. Similarly we define
$H_1 \prec H_2$  and $H_1 \precc H_2$, and we write $H_1 \succeq H_2$ (resp.\ $H_1 \succ H_2$ and $H_1 \succc H_2$) whenever $H_2 \preceq H_1$ (resp.\ $H_2 \prec H_1$ and $H_2 \precc H_1$).

\item  Thirdly,  with all of $A$, $B$, $C$, $D$, being possibly unbounded, it is more difficult to make sense of the formula \eqref{intro-transfunc} for the transfer function of the system $\Sigma$. However, there is a formula for the well-posed-system setup based on the interpretation of the transfer function as a ``frequency response function" which appeared at the beginning of the introduction. 
There is also a formula for the transfer function analogous to formula \eqref{intro-transfunc}  expressed directly in terms of
 the associated system node  $\bS$ (see the formula \eqref{node-transfunc} to come).  All these ideas are worked out in detail in Staffans' book
 \cite{StafBook} and the fragments needed here are reviewed in \S\ref{sec:prel} and \S\ref{sec:system-node} below.
\end{itemize}

In the case of unbounded positive semidefinite solutions $H$, the associated quadratic function $S_H$
should be allowed to take on the value infinity according to the formula:
$$
  S_H(x) =  \begin{cases}  \| H^{\frac{1}{2}}x \|^2_X & \text{if } x \in \dom{H^{\frac{1}{2}}}, \\
       \infty & \text{if } x \notin \dom{H^{\frac{1}{2}}}.  \end{cases}
 $$

 \begin{remark}  \label{R:HvsSH}
Note that then $H$ being {\em bounded} is detected in the associated quadratic function $S_H$ by
$S_H$ being {\em finite-valued,}  while $H$ being {\em strictly positive definite}
(i.e., $H \succc 0$) is detected in $S_H$ by $S_H$ being {\em coercive}, i.e., there is a $\delta > 0$ so that $S_H(x)  \ge \delta \| x \|^2$ for all $x \in X$.
\end{remark}

Also, for the case where $H$ is unbounded, the similarity  $\Gamma$ should  be weakened to a {\em pseudo-similarity} defined as follows.

\begin{definition}   \label{def:pseudosim}
 Two well-posed systems $\Sigma=\sbm{\Afrak&\Bfrak\\\Cfrak&\Dfrak}$ and $\Sigma^\circ=\sbm{\Afrak^\circ&\Bfrak^\circ\\\Cfrak^\circ&\Dfrak^\circ}$,
 with state spaces $X$ and $X^\circ$, respectively, are \emph{pseudo-similar} if $\Dfrak^\circ=\Dfrak$ and there exists a closed, densely defined and injective
 linear operator $\Gamma:X\supset \dom{\Gamma}\to  X^\circ$ with dense range, called a \emph{pseudo-similarity}, with the following properties:
\begin{enumerate}
\item[(1)] $\range{\Bfrak}\subset\dom{\Gamma}$ and $\Bfrak^\circ=\Gamma\Bfrak$, or equivalently
$\range{\Bfrak^t} \subset \dom{\Gamma}$ and $\Bfrak^{\circ t} = \Gamma \Bfrak^t$ for each $t$.

\item[(2)] for all $t\geq0$, $\Afrak^t\dom{\Gamma}\subset \dom{\Gamma}$ and $\Afrak^{\circ t}\Gamma=\Gamma\Afrak^t\big|_{\dom{\Gamma}}$, and

\item[(3)] $\Cfrak^\circ\Gamma=\Cfrak\big|_{\dom{\Gamma}}$, or equivalently, $\Cfrak^{\circ t} \Gamma = \Cfrak^t \big|_{\dom{\Gamma}}$ for all $t > 0$.
\end{enumerate}
If $\Gamma$ is bounded with a bounded  inverse, then $\Sigma$ and $\Sigma^\circ$ are said to be \emph{similar}.
(In this case the condition that $\dom{\Gamma}=X$ is automatically satisfied.)
\end{definition}

This definition is reproduced from \cite[Definition 9.2.1]{StafBook}, but with the condition that the range of $\Gamma$ is dense added and
a couple of redundant assumptions dropped; observe that Staffans also states on page 512 of \cite{StafBook} that $\Gamma^{-1}$ is
a pseudo-similarity if $\Gamma$ is a pseudo-similarity, that property (1) in Definition \ref{def:pseudosim} implies that $\widetilde\Bfrak$
maps into $\range\Gamma$ and item (2) implies that $\range\Gamma$ is invariant under $\widetilde\Afrak^t$. Hence the tw o
pseudo-similarity definitions are equivalent.

We make the following additional definitions:
\begin{itemize}
\item
For each $\alpha\in\R$, we define $\C_\alpha:=\set{z\in\C\mid \re z>\alpha}$ (so in particular   $\cplus =\C_0$).

\item  We let
$H^\infty(\C_\alpha;\cB(U,Y))$ denote the $\cB(U,Y)$-valued functions which are analytic and bounded on $\C_\alpha$.
\end{itemize}
Thus the {\em Schur class} consists of those
functions $F\in H^\infty(\cplus;\cB(U,Y))$ such that  $F(\lambda)$ is a contraction from $U$ into $Y$ for all $\lambda\in\cplus$, and in this case we write $F\in\cS_{U,Y}$. In fact, for convenience, we identify two analytic functions which coincide on some set in the intersection of their domains which has an interior cluster point. In particular, we write $F\in\cS_{U,Y}$ if the restriction $F\big|_{\dom{F}\bigcap \cplus}$ has a unique extension to a function in $\cS_{U,Y}$.

In the infinite dimensional situation, following \cite{StafBook} we use the frequency response idea at the beginning of the introduction to define the transfer function
$\widehat\Dfrak$ by the formula
$$
	\widehat\Dfrak(\lambda)u_0:=
		(\overline\Dfrak e_\lambda u_0)(0),
	\quad \lambda\in\C_{\omega_\Afrak},\ u_0\in U,
$$
where $\overline\Dfrak$ is a suitable version of the input/output map $\Dfrak$; see Proposition \ref{prop:Transfer} for the details. We can now formulate our first main result.

\begin{theorem}[Standard infinite dimensional bounded real lemma]\label{thm:stdlemma}
For a \emph{minimal} well-posed system $\Sigma=\sbm{\Afrak&\Bfrak\\\Cfrak&\Dfrak}$ with transfer function $\widehat\Dfrak$ the following are equivalent:
\begin{enumerate}
\item[(1)] The transfer function satisfies $\widehat\Dfrak\in\cS_{U,Y}$ (in the generalized sense described above).

\item[(2)] The continuous-time KYP-inequality has a `spatial' solution $H$ in the following sense: $H$ is a closed, possibly unbounded, densely defined, and positive definite operator on $X$, such that for all $t>0$:
\begin{equation}\label{eq:HhalfDomCond}
\begin{aligned}
	\Afrak^t\,\dom{H^{\frac{1}{2}}} \subset \dom{H^{\frac{1}{2}}},\quad
	\Bfrak^t\,L^2([0,t];U) \subset \dom{H^{\frac{1}{2}}},
\end{aligned}
\end{equation}
and the following \emph{spatial form} of the KYP-inequality holds:
\begin{equation}\label{eq:KYP}
	\left\|\bbm{H^{\frac{1}{2}}&0\\0&I}
		\bbm{\Afrak^t&\Bfrak^t\\\Cfrak^t&\Dfrak^t}\bbm{x\\\bu}\right\|
	\leq\left\|\bbm{H^{\frac{1}{2}}&0\\0&I}\bbm{x\\\bu}\right\|,\quad \bbm{x\\\bu}\in\bbm{\dom{H^{\frac{1}{2}}}\\L^2([0,t];U)},
\end{equation}
where the norms are those of $\sbm{X\\L^2([0,t];Y)}$ and $\sbm{X\\L^2([0,t];U)}$, respectively.
\item[(3)] The system $\Sigma$ is \emph{pseudo-similar} to a passive system.
\item[(4)] The system $\Sigma$ has a storage function.
\item[(5)] The system $\Sigma$ has a \emph{quadratic} storage function.
\end{enumerate}
When these equivalent conditions hold, an operator $H$ defining a quadratic storage function in item \textup{(5)} will also be a spatial solution of the
KYP-inequality in item \textup{(2)} and vice versa. For every pseudo-similarity $\Gamma$ to a passive system, the operator $H:=\Gamma^*\Gamma$ is a spatial solution to the KYP-inequality in item \textup{(2)} and it can serve as the operator defining the quadratic storage function in item \textup{(5)}.
\end{theorem}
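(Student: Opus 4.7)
The plan is to prove the cycle $(5) \Rightarrow (4) \Rightarrow (1) \Rightarrow (5)$ together with the bijections $(2) \Leftrightarrow (5)$ and $(2) \Leftrightarrow (3)$; the closing assertion of the theorem will fall out of $(3) \Rightarrow (2)$ via the formula $H = \Gamma^*\Gamma$. The trivial link $(5) \Rightarrow (4)$ is immediate from Definition~\ref{def:storage}, and $(4) \Rightarrow (1)$ is Proposition~\ref{prop:storageimpliesschur} quoted in the introduction.

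For the bijection $(2) \Leftrightarrow (5)$ I would use the correspondence $H \leftrightarrow S_H$ of Remark~\ref{R:HvsSH}: the invariance conditions \eqref{eq:HhalfDomCond} ensure $\bx(t) \in \dom{H^{1/2}}$ whenever $\bx(0) \in \dom{H^{1/2}}$, so squaring \eqref{eq:KYP} yields exactly \eqref{eq:storfndef} for $S_H$, and the reverse direction recovers $H^{1/2}$ as the selfadjoint operator associated with the quadratic form $S$ on $\{x : S(x) < \infty\}$. For $(2) \Leftrightarrow (3)$: in the direction $(3) \Rightarrow (2)$, given a pseudo-similarity $\Gamma \colon X \supset \dom{\Gamma} \to X^\circ$ onto a passive $\Sigma^\circ$, set $H := \Gamma^*\Gamma$; since $\Gamma$ is closed, densely defined, injective and has dense range, $H$ is closed, densely defined, positive definite with $\dom{H^{1/2}} = \dom{\Gamma}$ and $\|H^{1/2}x\| = \|\Gamma x\|$ by polar decomposition, and the intertwining relations of Definition~\ref{def:pseudosim} transport passivity of $\Sigma^\circ$ (which is \eqref{eq:KYP} with weight $1_{X^\circ}$) back to \eqref{eq:HhalfDomCond} and \eqref{eq:KYP} for $\Sigma$ with weight $H$, simultaneously delivering the closing assertion. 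The converse $(2) \Rightarrow (3)$ takes $\Gamma := H^{1/2}$ as pseudo-similarity from $\Sigma$ onto the passive conjugate system on $X^\circ := \crange{H^{1/2}}$ defined by conjugation.

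The main obstacle is $(1) \Rightarrow (5)$: producing a quadratic storage function from $\widehat\Dfrak \in \cS_{U,Y}$ alone. Following Willems I would first work with the \emph{available storage}
\[
  S_a(x_0) := \sup \left\{ \int_0^t \bigl(\|\by(s)\|^2 - \|\bu(s)\|^2\bigr)\,ds \,\Bigmid\, (\bu,\bx,\by)\ \text{trajectory},\ \bx(0) = x_0,\ t \geq 0 \right\},
\]
which by the Schur property satisfies $S_a(0) = 0$ and, combined with minimality, yields a (not necessarily quadratic) storage function that is finite on the reachable subspace. The hard task is to upgrade $S_a$ to a \emph{quadratic} storage function; the natural device is to construct a functional-model passive realisation $\Sigma^\circ$ of $\widehat\Dfrak$ (continuous-time Sz.-Nagy--Foias or de~Branges--Rovnyak type, built from the frequency-response formula in Proposition~\ref{prop:Transfer}) and define $\Gamma$ as the pseudo-similarity on the reachable subspace of $\Sigma$ sending each state to the corresponding state of the model, after which $H := \Gamma^*\Gamma$ and $S_H$ furnish the quadratic storage function. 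The technical heart of either approach is the careful domain bookkeeping with the master operators \eqref{eq:introinteg} and their truncations $\Afrak^t$, $\Bfrak^t$, $\Cfrak^t$: unboundedness of $H$ corresponds precisely to $\Gamma$ failing to extend boundedly, which is why the spatial form \eqref{eq:KYP} of the KYP-inequality and pseudo-similarity (rather than similarity) are indispensable in infinite dimensions.
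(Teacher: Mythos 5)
Most of your skeleton matches the paper: $(5)\Rightarrow(4)$ is trivial, $(4)\Rightarrow(1)$ is Proposition \ref{prop:storageimpliesschur}, $(2)\Leftrightarrow(5)$ is exactly Proposition \ref{P:StorageKYP} (with the same $H$ serving in both items), and your $(3)\Rightarrow(2)$ via $H=\Gamma^*\Gamma$ is the paper's $(3)\Rightarrow(5)$. Your route into $(5)$ from $(1)$ --- build a passive realization of $\widehat\Dfrak$ and pull it back by a pseudo-similarity --- is in outline the paper's $(1)\Rightarrow(3)$, which is carried out by citing the existence of a \emph{minimal} passive realization of a Schur function (\cite[Theorem 11.8.14]{StafBook}) together with the pseudo-similarity theorem for minimal realizations sharing a transfer function (\cite[Theorem 9.2.4]{StafBook}); note that minimality of $\Sigma$ is what makes the second citation applicable, and that the paper also has an independent, constructive proof of $(1)\Rightarrow(2)$ via the available storage, namely $S_a=S_{H_a}$ with $H_a=\overline{\bX}_a^*\overline{\bX}_a$ from Theorem \ref{T:HaHr-KYPsols}.

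The genuine gap is your implication $(2)\Rightarrow(3)$. You propose to take $\Gamma:=H^{1/2}$ and declare the conjugated system on $X^\circ:=\crange{H^{1/2}}$ to be the passive system. For an unbounded (or bounded but not boundedly invertible) spatial solution $H$ there is no reason why $H^{1/2}\Afrak^t H^{-1/2}$ should extend to a \emph{strongly continuous} semigroup on $X^\circ$, nor why the conjugated input/output data should satisfy the continuity requirements of Definition \ref{def:WPsys}; the paper explicitly flags this (see the remark following Theorem \ref{thm:stdlemmaL2reg}: ``it does not appear to be the case that every spatial KYP-solution $H$ can be used to define a passive well-posed system $\Sigma'$ \dots to prove strong continuity of the semigroup of the candidate passive system, more conditions seem necessary''), and even in the bounded case it needs $H$ bounded \emph{and} boundedly invertible to transfer the $C_0$-property. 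This is not a cosmetic problem for your write-up, because $(2)\Rightarrow(3)$ is your only arrow into statement $(3)$: if it fails, $(3)$ is never derived from the other conditions and the equivalence is not established. The fix is the paper's: abandon $(2)\Rightarrow(3)$ and instead prove $(1)\Rightarrow(3)$ directly from the existence of a minimal passive realization and the pseudo-similarity of minimal realizations, closing the cycle as $(1)\Rightarrow(3)\Rightarrow(5)\Leftrightarrow(2)$, $(5)\Rightarrow(4)\Rightarrow(1)$.
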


Note that the spatial solution $H$ of the KYP-inequality in item (2) of the preceding theorem is required to be independent of $t$. 

In \S\ref{sec:L2maps} below (see in particular Definition \ref{def:L2min}), we will introduce the concept of $L^2$-exact controllability and $L^2$-exact observability for continuous-time systems, which are weaker than exact controllability and exact observability in infinite time, but still strong enough to guarantee a bounded solution of the KYP-inequality. Thus we get the following alternative infinite dimensional version of the standard bounded real lemma, a result which we believe is new in the continuous-time setting:

\begin{theorem}[$L^2$-minimal infinite dimensional bounded real lemma]     \label{thm:stdlemmaL2reg}
For an $L^2$-minimal well-posed system $\Sigma=\sbm{\Afrak&\Bfrak\\\Cfrak&\Dfrak}$ with transfer function $\widehat\Dfrak$, the following conditions are equivalent:
\begin{enumerate}
\item[(1)] The transfer function of $\Sigma$ satisfies $\widehat\Dfrak\in\cS_{U,Y}$.
\item[(2)] A \emph{bounded, strictly positive definite} solution $H$ to the following standard KYP-inequality exists:
\begin{equation}\label{eq:KYPbdd}
	\bbm{\Afrak^t&\Bfrak^t\\\Cfrak^t&\Dfrak^t}^*\bbm{H&0\\0&I}
	\bbm{\Afrak^t&\Bfrak^t\\\Cfrak^t&\Dfrak^t}
		\preceq
		\bbm{H&0\\0&I},\quad t\geq 0,
\end{equation}
with the adjoint computed w.r.t. the inner product in $L^2([0,t];K)$, where $K=U$ or $K=Y$.
\item[(3)] The system $\Sigma$ is \emph{similar} to a passive system.
\end{enumerate}
When these conditions hold, in fact $\cplus\subset\dom{\widehat\Dfrak}$, so that $\widehat\Dfrak|_{\cplus}$ is itself in $\cS_{U,Y}$, rather than just having a unique restriction-followed-by-extension in $\cS_{U,Y}$.

For each bounded, strictly positive definite solution $H$ to the KYP-inequality in item \textup{(2)}, the operator $\Gamma:=H^{\frac{1}{2}}$ establishes similarity to a passive system
as in item \textup{(3)}. Conversely, for every similarity $\Gamma$ in item \textup{(3)}, $H:=\Gamma^*\Gamma$ is a bounded, strictly positive definite solution to the KYP-inequality in item \textup{(2)}. 

All solutions $H$ to the spatial KYP-inequality in item \textup{(2)} of Theorem \ref{thm:stdlemma} are in fact bounded,
strictly positive definite solutions of \eqref{eq:KYPbdd}, and there exist bounded, strictly positive definite solutions $H_a$ and $H_r$ of \eqref{eq:KYPbdd} such that
\[
H_a \preceq H \preceq H_r.
\]
\end{theorem}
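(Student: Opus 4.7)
\textbf{Easy equivalences.} For (2) $\Leftrightarrow$ (3), given $H$ bounded and strictly positive definite, the operator $\Gamma := H^{1/2}$ is bounded with bounded inverse, and conjugating $\Sigma$ by $\sbm{\Gamma & 0 \\ 0 & I}$ produces a system $\Sigma^\circ$ with integrated operators $\Afrak^{\circ t} = \Gamma \Afrak^t \Gamma^{-1}$, $\Bfrak^{\circ t} = \Gamma \Bfrak^t$, $\Cfrak^{\circ t} = \Cfrak^t \Gamma^{-1}$, $\Dfrak^{\circ t} = \Dfrak^t$, for which the inequality \eqref{eq:KYPbdd} is precisely the statement that each block matrix $\sbm{\Afrak^{\circ t} & \Bfrak^{\circ t} \\ \Cfrak^{\circ t} & \Dfrak^{\circ t}}$ is contractive, i.e., that $\Sigma^\circ$ is passive; the converse starts from a similarity $\Gamma$ and takes $H := \Gamma^*\Gamma$ with the same block computation run in reverse. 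The implication (2) $\Rightarrow$ (1) then follows by applying Proposition \ref{prop:storageimpliesschur} to the passive $\Sigma^\circ$, which yields $\widehat\Dfrak = \widehat\Dfrak^\circ \in \cS_{U,Y}$ and simultaneously $\cplus \subset \dom{\widehat\Dfrak}$ without any need for analytic continuation, since a passive system is automatically well-posed with $\omega_{\Afrak^\circ} \le 0$.

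\textbf{The main direction (1) $\Rightarrow$ (3).} Here the plan is to invoke Theorem \ref{thm:stdlemma} to obtain a spatial solution $H$ and the associated pseudo-similarity $\Gamma = H^{1/2}\colon X \supset \dom{H^{1/2}} \to X^\circ$ to a passive system $\Sigma^\circ$, and then to upgrade $\Gamma$ to a genuine similarity using the two halves of $L^2$-minimality. The upper bound on $\Gamma$ should come from $L^2$-exact controllability: surjectivity of the reachability map $\Bfrak$ and the open mapping theorem furnish a bounded right inverse, so that, combined with the contractivity of $\Bfrak^\circ = \Gamma \Bfrak$ (from passivity of $\Sigma^\circ$), we obtain $\dom{\Gamma} = X$ together with a uniform bound $\norm{\Gamma x} \le C \norm{x}$. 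The lower bound on $\Gamma$ should come from $L^2$-exact observability: using $\Cfrak^\circ \Gamma = \Cfrak\big|_{\dom{\Gamma}}$ together with $\norm{\Cfrak^\circ y} \le \norm{y}$ one gets $\norm{\Gamma x} \ge \norm{\Cfrak^\circ \Gamma x} = \norm{\Cfrak x} \ge c \norm{x}$ on $\dom{\Gamma}$, and closedness of $\Gamma$ then produces a genuine similarity. Consequently $H = \Gamma^*\Gamma$ is bounded and strictly positive definite and solves \eqref{eq:KYPbdd}, which at the same time shows that \emph{every} spatial solution from Theorem \ref{thm:stdlemma}(2) is in fact a bounded strictly positive definite solution of \eqref{eq:KYPbdd}.

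\textbf{Extremal solutions $H_a$ and $H_r$.} For the existence of extremal bounded solutions the natural route is the classical Willems construction: the available storage $S_a(x_0)$ is the supremum of $\int_0^t (\norm{\by(s)}^2 - \norm{\bu(s)}^2) \ud s$ over $t \ge 0$ and system trajectories with $\bx(0) = x_0$, and the required supply $S_r(x_0)$ is the infimum of $\int_{-t}^0 (\norm{\bu(s)}^2 - \norm{\by(s)}^2) \ud s$ over trajectories steering $0$ at time $-t$ to $x_0$ at time $0$. Evaluating $S_a$ along the zero-input trajectory and invoking $L^2$-exact observability yields $S_a(x_0) \ge \norm{\Cfrak x_0}^2 \ge c^2 \norm{x_0}^2$, while $L^2$-exact controllability provides an $L^2$-input of norm at most $C \norm{x_0}$ steering $0$ to $x_0$, giving $S_r(x_0) \le C^2 \norm{x_0}^2 < \infty$. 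Linearity of the trajectory structure combined with the polarization identity should establish quadraticity of $S_a$ and $S_r$, and the trajectory-concatenation arguments standard for dissipative systems translate to $S_a \le S \le S_r$ for every storage function; the corresponding bounded, strictly positive definite operators $H_a, H_r$ then satisfy $H_a \preceq H \preceq H_r$ in the sense of Definition \ref{D:operator-positive}. The principal obstacle, to my mind, is the technical verification that in the continuous-time well-posed setting---where the semigroup generator is unbounded and state trajectories need not be classically differentiable---the suprema and infima defining $S_a$ and $S_r$ genuinely give quadratic forms and that trajectory concatenation respects the $L^2$-input/output structure; this should be handled through careful use of the integrated operators $\Afrak^t, \Bfrak^t, \Cfrak^t, \Dfrak^t$ rather than pointwise manipulations with the underlying system node.
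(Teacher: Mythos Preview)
Your easy equivalences (2)\,$\Leftrightarrow$\,(3) and (3)\,$\Rightarrow$\,(1) match the paper's approach, though the paper is careful to verify that the conjugated family $\Afrak^{\circ t}=\Gamma\Afrak^t\Gamma^{-1}$ inherits strong continuity and the remaining well-posedness axioms.

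For (1)\,$\Rightarrow$\,(2) you take a genuinely different route from the paper. The paper does \emph{not} start from an arbitrary spatial solution or pseudo-similarity and upgrade it; instead it builds the specific operators $H_a$ and $H_r$ via the operator-theoretic factorizations of Lemma~\ref{L:XaXb} and Theorem~\ref{T:SaUnSrQuad}, and then invokes Corollary~\ref{C:L2minImplics} to see that $L^2$-minimality forces both $H_a$ and $H_r$ to be bounded with bounded inverse. Your upgrade-the-pseudo-similarity idea is attractive, but it has a gap as written: the pseudo-similarity relation from Definition~\ref{def:pseudosim} only gives $\Gamma\Bfrak=\Bfrak^\circ$ on $L^{2-}_{\ell,U}$ (inputs with bounded support), whereas $L^2$-controllability is the statement that the $L^2$-input map $\dW_c$ (defined on all of $L^{2-}_U$) is surjective. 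The finite-time reachable set $\range{\Bfrak}$ is only dense, not all of $X$, so your open-mapping-theorem step does not apply to $\Bfrak$ directly. One can rescue the argument by a density/approximation step (using that $\dW_c$ is bounded and $\Bfrak=\dW_c|_{L^{2-}_{\ell,U}}$), but you did not supply it.

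The same conflation affects your extremal-solution paragraph. The required supply $S_r$ in \eqref{eq:Sr} is defined via inputs in $L^{2-}_{\ell,U}$, so $S_r(x_0)=\infty$ for $x_0\notin\range{\Bfrak}$; $L^2$-controllability does \emph{not} give an input of bounded support steering $0$ to an arbitrary $x_0$, only an $L^{2-}_U$-input. This is exactly why the paper replaces $S_r$ by the $L^2$-regularized version $\underline S_r$ of \eqref{SrMod}. The paper's sandwich argument then runs as follows: every spatial solution $H$ gives a storage function $S_H$ with $S_a\le S_H\le S_r$; on the dense set $\range{\Bfrak}$ one has $S_r=\underline S_r$ and the quadratic formulas \eqref{SaUnSrQuad} apply, so $\|H_a^{1/2}x\|\le\|H^{1/2}x\|\le\|H_r^{1/2}x\|$ there, and density plus the already-established boundedness of $H_a,H_r$ propagate this to all of $X$. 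Your outline omits this distinction and therefore does not actually bound $S_r$.
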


\begin{remark}
The $L^2$-minimality assumption in Theorem \ref{thm:stdlemmaL2reg} brings the results much closer to the finite dimensional setting, while only assuming
minimality makes the situation more subtle. For instance, while each pseudo-similarity provides a spatial solution to the
KYP-inequality \eqref{eq:KYP}, the converse may not hold, as it does not appear to be the case that every spatial KYP-solution $H$ can be used to define a
passive well-posed system $\Sigma'$ via \eqref{eq:introinter};   see the proof of Theorem \ref{thm:stdlemmaL2reg} for more details in the bounded case. 
Specifically, to prove strong continuity if the semigroup of the candidate passive system, more conditions
seem necessary. Also, assuming only minimality, there are results on a 'largest' and 'smallest' solution to the spatial KYP-solution, but these serve as extremal solutions only for subclasses of spatial KYP-solutions; see Remark \ref{R:ASconnect} below for more details.
\end{remark}

It is straightforward to formulate a naive infinite dimensional version of the strict BRL. While the implications (2) $\Leftrightarrow$ (3) and (2) $\Rightarrow$ (1) are then straightforward, the implication (1) $\Rightarrow$ (2) or (3) appears to require
some extra hypotheses.
We present three possible strengthenings of the hypothesis (1) so that the implication (1) $\Rightarrow$ (2) or (3) holds in the infinite dimensional setting.
The naive expectation is that one should strengthen the stability assumption on $A$ in the discrete-time case  to the assumption that the
operator $C_0$-semigroup be exponentially stable for the continuous-time case.  However this appears to be not sufficient in general.
We shall additionally assume that the operator $C_0$-semigroup $\{\Afrak^t \mid t \ge 0\}$ embeds into an operator $C_0$-group
$\{ {\widetilde \Afrak}^t \mid  t \in {\mathbb R}\}$ (meaning that $\{ {\widetilde \Afrak}^t \mid t \in {\mathbb R}\}$ is a $C_0$-group of operators
such that ${\widetilde \Afrak}^t = \Afrak^t$ for $t \ge 0$).   Equivalently, the $C_0$-semigroup \{$\Afrak^t \mid t \ge 0\}$
is such that $\Afrak^t$ is invertible for some $t>0$;  see Proposition \ref{P:semigroup-group} below for additional
information. We note that this invertibility condition always holds in finite dimensions, and hence the notions strict and semi strict collapse to one notion of strictness in the finite dimensional case.

 In addition we introduce auxiliary operators
$$
  \Cfrak^t_{1_X, A} \colon X \to L^2([0,t], X), \quad \Dfrak^t_{A,B} \colon L^2([0,t]; U) \to L^2([0,t]; X)
$$
given by
\begin{align*}
& \Cfrak^t_{1_X, A} \colon x \mapsto ( s \to 1_X \Afrak^s x =  \Afrak^s x)_{0 \le s \le t} \in L^2([0,t], X),   \notag \\
& \Dfrak_{A,B}^t \colon ( s \to \bu(s))_{0 \le s \le t} \mapsto 
\left(s \to \int_0^s \Afrak^{s-r} B \bu(r) \ud r\right)_{0 \le s \le t}.
\label{CDfrak-aux}
\end{align*}
Here $\sbm{ A \& B \\ C \& D }$ is the system node associated with the well-posed system (details in \S\ref{sec:system-node} below) and we shall be assuming that the
$C_0$-semigroup $\Afrak^t$ generated by $A$ is exponentially stable.  Under these conditions the state trajectories
$(\bu, \bx, \by)$ associated with $\Sigma$ are such that $\bx \in L^2({\mathbb R}^+, X)$ and $\by \in L^2({\mathbb R}^+, Y)$
as long as $\bu \in L^2({\mathbb R}^+, U)$.  In system-trajectory terms, the operator $\begin{bmatrix}  \Cfrak^t_{1_X, A} & \Bfrak^t_{A,B} \end{bmatrix}$
has the following property:  if $(\bu, \bx, \by)$ is any system trajectory, then
\begin{equation}   \label{x(0)u-x}
\begin{bmatrix}  \Cfrak^t_{1_X, A} & \Dfrak^t_{A,B} \end{bmatrix} \colon \begin{bmatrix} \bx(0) \\ \bu|_{[0,t]} \end{bmatrix} \to
\bx|_{[0,t]} \in L^2([0,t], X)
\end{equation}

Our version of the strict BRL for the infinite dimensional continuous-time setting is as follows:

\begin{theorem}[Infinite dimensional strict bounded real lemma]\label{thm:stdlemmastrict}
Consider the following statements for a well-posed system $\Sigma=\sbm{\Afrak&\Bfrak\\\Cfrak&\Dfrak}$:
\begin{enumerate}
\item[(1)] The transfer function $\widehat\Dfrak$ of $\Sigma$ is in $\cS_{U,Y}^0$ and $\cplus\subset\dom{\widehat\Dfrak}$.
\item[(2a)] There exists a bounded $H \succc 0$ on $X$ which  satisfies the strict KYP-inequality associated with $\Sigma$, i.e.,
there is a $\delta >0$ such that 
\begin{align}
&  \bbm{\Afrak^t&\Bfrak^t\\\Cfrak^t&\Dfrak^t}^*\bbm{H&0\\0&1_{L^2([0,t],Y)}} \bbm{\Afrak^t&\Bfrak^t\\\Cfrak^t&\Dfrak^t}  \notag \\
&  \quad + \delta \begin{bmatrix} (\Cfrak_{1_X,A}^t)^*) \\ (\Dfrak_{A,B}^t)^* \end{bmatrix} \begin{bmatrix}  \Cfrak_{1_X,A}^t &  \Dfrak_{A,B}^t \end{bmatrix}
  \preceq  \bbm{H&0\\0& (1 - \delta) 1_{L^2([0,t], U)}},\quad t>0.
\label{eq:strictKYP}
\end{align}

\item[(2b)] There exists a bounded $H \succc 0$ on $X$ which satisfies the semi-strict KYP-inequality for $\Sigma$, i.e.,
there is a $\delta > 0$ so that  for all $t> 0$ we have:
\begin{align}
&  \bbm{\Afrak^t&\Bfrak^t\\\Cfrak^t&\Dfrak^t}^*\bbm{H&0\\0&1_{L^2([0,t],Y)}} \bbm{\Afrak^t&\Bfrak^t\\\Cfrak^t&\Dfrak^t}
  \preceq  \bbm{H&0\\0& (1 - \delta) 1_{L^2([0,t], U)}}.
\label{eq:semi-strictKYP}
\end{align}
\item[(3a)] $\Sigma$ is \emph{similar} to a strictly passive system, i.e., one satisfying \eqref{eq:strictKYP}
with $H=1_X$ and some $\delta>0$.
\item[(3b)] $\Sigma$ is \emph{similar} to a semi-strictly passive system, i.e., one satisfying \eqref{eq:semi-strictKYP} with $H = 1_X$.

\item[(4a)] $\Sigma$ has a finite-valued, coercive, quadratic, strict storage function.

\item[(4b)]  $\Sigma$ has a finite-valued, coercive, quadratic, semi-strict storage function.

\item[(5a)] $\Sigma$ has a strict storage function.

\item[(5b)]  $\Sigma$ has a semi-strict storage function.
\end{enumerate}
Then  we have the following implications:
\[
\begin{array}{ccccccccc}
(2a)&\Longleftrightarrow&(3a)&\Longleftrightarrow&(4a)&\Longrightarrow&(5a)\\
\Downarrow&&\Downarrow&&\Downarrow&&\Downarrow\\
(2b)&\Longleftrightarrow&(3b)&\Longleftrightarrow&(4b)&\Longrightarrow&(5b)&\Longrightarrow&(1).
\end{array}
\]

Furthermore, all 9 statements in the list
\textup{(1)}--\,\textup{(5)} are equivalent if we assume in addition that
$\Afrak^t$ is exponentially stable and at least one of the following three conditions holds:
\begin{itemize}
\item[(H1)] $\Afrak^t$ can be embedded into a $C_0$-group;

\item[(H2)]  $\Sigma$ is $L^2$-controllable;

\item[(H3)] $\Sigma$ is $L^2$-observable.
\end{itemize}
\end{theorem}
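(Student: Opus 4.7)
My plan is to establish the implication diagram without any extra hypotheses first, and then close it into a full equivalence using exponential stability together with one of (H1)--(H3).

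\emph{Horizontal equivalences within the (a) and (b) blocks.} For (2a)$\Leftrightarrow$(3a) and (2b)$\Leftrightarrow$(3b), I would set $\Gamma:=H^{1/2}$, which is bounded with bounded inverse since $H$ is bounded and $H\succc 0$. Sandwiching \eqref{eq:strictKYP} or \eqref{eq:semi-strictKYP} between $\mathrm{diag}(H^{-1/2},I)$ on both sides translates it precisely into the corresponding inequality for the similar system with $H$ replaced by $1_X$, i.e., (semi-)strict passivity; conversely, a similarity $\Gamma$ yields $H=\Gamma^*\Gamma$. For (2a)$\Leftrightarrow$(4a) and (2b)$\Leftrightarrow$(4b), use the correspondence $S_H(x)=\langle Hx,x\rangle$: $S_H$ is finite-valued and coercive exactly when $H$ is bounded and strictly positive definite, and applying the operator KYP inequality to a pair $(x,\bu|_{[0,t]})$ arising from a trajectory with $\bx(0)=x$ recovers the (semi-)strict storage function inequality \eqref{eq:stordefstrict} or \eqref{eq:stordef-semi-strict}, and vice versa by polarization. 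The trivial implications (4a)$\Rightarrow$(5a), (4b)$\Rightarrow$(5b), and the vertical (a)$\Rightarrow$(b) implications all follow by specialization or by dropping the positive term $\delta\int\|\bx\|^2\,ds$.

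\emph{The implication (5b)$\Rightarrow$(1).} Apply \eqref{eq:stordef-semi-strict} with $\bx(0)=0$, $t_1=0$, $t_2=t$; using $S(0)=0$ and $S(\bx(t))\ge 0$ one obtains
\[
\int_0^t\|\by(s)\|^2\,ds\le (1-\delta)\int_0^t\|\bu(s)\|^2\,ds,\qquad \bu\in L^2([0,t];U).
\]
Letting $t\to\infty$, the input/output map of $\Sigma$ extends to a bounded operator $L^2(\R^+;U)\to L^2(\R^+;Y)$ of norm at most $\sqrt{1-\delta}<1$. By the standard Laplace/Plancherel correspondence for well-posed systems (see, e.g., \cite[Chapter 4]{StafBook}), $L^2$-boundedness of the causal input/output map is equivalent to $\widehat\Dfrak$ being analytic on $\cplus$ with $\|\widehat\Dfrak\|_{H^\infty(\cplus)}\le \sqrt{1-\delta}<1$; hence $\widehat\Dfrak\in\cS^0_{U,Y}$ and $\cplus\subset\dom{\widehat\Dfrak}$.

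\emph{The hard direction (1)$\Rightarrow$(2a) under exponential stability plus (H1), (H2), or (H3).} Let $\rho:=\sup_{\lambda\in\cplus}\|\widehat\Dfrak(\lambda)\|<1$. I would build a bounded $H\succc 0$ via the \emph{strict available storage}
\[
S_a(x)=\sup\biggl\{\int_0^\infty\bigl(\|\by\|^2+\delta\|\bx\|^2-(1-\delta)\|\bu\|^2\bigr)\,ds:\,\bx(0)=x,\;\bu\in L^2(\R^+;U)\biggr\},
\]
for $\delta>0$ chosen small. Exponential stability gives the \emph{a priori} bounds $\|\bx\|_{L^2}\le C(\|x\|+\|\bu\|_{L^2})$ and $\|\by\|_{L^2}^2\le \rho^2\|\bu\|_{L^2}^2+C'\|x\|^2$, so the integrand is bounded above by $C''\|x\|^2+(\rho^2+\delta M-(1-\delta))\|\bu\|_{L^2}^2$ where $M$ depends on $C$; choosing $\delta$ so small that $\rho^2+\delta(M+1)<1$ makes the coefficient of $\|\bu\|_{L^2}^2$ negative, forcing $S_a(x)\le K\|x\|^2$. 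A dynamic-programming argument on $S_a$ then yields the strict storage function inequality \eqref{eq:stordefstrict} and hence, by the (4a)$\Leftrightarrow$(2a) equivalence, the strict KYP inequality \eqref{eq:strictKYP}. Coercivity $S_a(x)\ge\delta'\|x\|^2$ is obtained as follows: under (H3), take $\bu\equiv 0$ to get $S_a(x)\ge \|\Cfrak x\|_{L^2}^2\ge c\|x\|^2$ from $L^2$-observability; under (H2), dualize and work with the \emph{strict required supply} $S_r$, finite-valued by $L^2$-controllability and coercive through exponential stability; under (H1), the $C_0$-group extension lets one invert the forward semigroup on any time interval, so past trajectories starting from $x$ can be constructed with controlled energy and a combined past/future argument supplies the missing coercivity.

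\emph{Main obstacle.} The core difficulty is producing a \emph{bounded} $H$ satisfying the \emph{strict} KYP: one cannot merely invoke Theorem \ref{thm:stdlemma} or \ref{thm:stdlemmaL2reg} applied to the scaled system with transfer function $\rho^{-1}\widehat\Dfrak$, since that only yields non-strict inequalities and may produce an unbounded spatial solution. Among (H1)--(H3), the $C_0$-group case (H1) is the subtlest, because it provides neither controllability nor observability directly and one must exploit group invertibility to harvest both boundedness above and coercivity below of the candidate storage function from strict contractivity of $\widehat\Dfrak$ and exponential stability alone.
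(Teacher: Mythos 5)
Your treatment of the implication diagram itself (the horizontal equivalences via $\Gamma=H^{1/2}$ and the quadratic-form/storage-function dictionary, the tautological vertical and rightward arrows, and (5b)$\Rightarrow$(1) via Plancherel) matches the paper's proof and is fine. The problems are all in the hard direction (1)$\Rightarrow$(2a), where you take a genuinely different route from the paper: you build a \emph{strict available storage} by a variational/value-function argument, whereas the paper dilates $\Sigma$ to an $\varepsilon$-regularized system $\Sigma_\varepsilon$ (extending $B$ to $[B\ \ \varepsilon 1_X]$ and $C$ to $\sbm{C\\ \varepsilon 1_X\\ 0}$), applies the already-proved standard BRL machinery (Theorem \ref{thm:stdlemmaL2reg} under (H1), Proposition \ref{P:BRLstrict+L2} under (H2)/(H3)) to $\Sigma_\varepsilon$ to get a bounded, strictly positive definite \emph{operator} solution, and then compresses back, the extra channels producing exactly the $\delta\int\|\bx\|^2$ and $\delta\int\|\bu\|^2$ terms with $\delta=\varepsilon^2$.

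The first genuine gap: statement (2a) demands an \emph{operator} $H$, equivalently (by your own (2a)$\Leftrightarrow$(4a)) a \emph{quadratic} coercive finite-valued strict storage function. Your $S_a$ is a supremum of quadratic functionals, and you invoke ``the (4a)$\Leftrightarrow$(2a) equivalence'' as if $S_a$ were already of the form $\langle Hx,x\rangle$; nothing in your argument establishes this. It is fixable — once you have arranged that the coefficient of $\|\bu\|_{L^2}^2$ in the integrand is strictly negative, the supremum is attained at a maximizer depending boundedly and linearly on $x$, so the value function is a bounded quadratic form — but as written your argument only delivers (5a), not (2a). (This is precisely the issue the paper's dilation approach avoids, since it produces $H$ directly as an operator, e.g.\ $H_{\varepsilon,a}=\dW_{o,\varepsilon}^*D_{\Tfrak_{\Sigma_\varepsilon}^*}^{-2}\dW_{o,\varepsilon}$.) The second gap is the case (H1): your description (``invert the forward semigroup\ldots past trajectories\ldots combined past/future argument'') does not identify a working mechanism — group invertibility of $\Afrak^t$ gives neither controllability nor observability, so there is no reason past trajectories reaching $x$ with controlled energy exist. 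The mechanism that actually works in your framework is different and simpler: by Lemma \ref{L:twosidedgrowth}, a group satisfies $\|\Afrak^s x\|\ge \delta_1 e^{-\omega^- s}\|x\|$ with $\omega^->0$ (since exponential stability of $\Afrak^t$ forces the backward semigroup to grow), so already with $\bu\equiv 0$ the term $\delta\int_0^\infty\|\Afrak^sx\|^2\,ds\ge c\|x\|^2$ supplies coercivity of your strict available storage. Finally, a minor slip: your a priori bound $\|\by\|_{L^2}^2\le\rho^2\|\bu\|_{L^2}^2+C'\|x\|^2$ ignores the cross term from $\by=\Cfrak x+\Dfrak\bu$; you need $(1+\eta)\rho^2$ in place of $\rho^2$ and must fold the resulting $\eta$ into the choice of $\delta$.
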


\begin{remark}   \label{R:strict-storages}  Let us sketch here the connection between the strict operator KYP-inequality \eqref{eq:strictKYP}
and the strict storage-function inequality \eqref{eq:stordefstrict}.

As already observed in Remark \ref{R:HvsSH},
 $H \succeq 0$ being  bounded corresponds to the associated quadratic storage function $S_H(x) =
\| H^{\frac{1}{2}}x \|^2$ being finite-valued on $X$, and  $H \succc 0$ corresponds to $S_H$ being coercive.

 Given a  well-posed system $\Sigma$,  by the definition of the $\sbm{ \Afrak & \Bfrak \\ \Cfrak & \Dfrak}$  system trajectories, $(\bu, \bx, \by)$
 are determined from the initial condition $\bx(0) = x_0$ and the input signal $\bu$ according to
 \begin{align*}
   \bx(t) & = \Afrak^t x_0 + \Bfrak^t \bu|_{[0,t]}  \\
   \by(t) & = \Cfrak^t x_0 + \Dfrak^t \bu|_{[0,t]},\quad t \ge 0.
\end{align*}

If we look at the quadratic form coming from the selfadjoint operator on the left-hand side of the operator inequality
\eqref{eq:strictKYP} evaluated  at $\sbm{\bx(0) \\ \bu|_{[0,t]}}$ coming from a system trajectory $(\bu, \bx, \by)$, we get
\begin{align*}
& \langle H (\Afrak^t x_0 + \Bfrak^t \bu|_{[0,t]}), \Afrak^t x_0 + \Bfrak^t \bu|_{[0,t]} \rangle_X
 + \| \Cfrak^t x_0 + \Dfrak^t \bu|_{[0,t]} \|^2_{L^2([0,t], Y)}    \\
 & \quad + \delta \| \bx|_{[0,t]} \|^2_{L^2([0,t], X)}
 = \langle H \bx(t), \bx(t) \rangle_X + \| \by|_{[0,t]} \|^2_{L^2([0,t],Y)} + \delta \| \bx|_{[0,t]} \|^2_{L^2([0,t],X)}
\end{align*}
while the right-hand side gives us
$$
 \langle H \bx(0), \bx(0) \rangle_X + (1 - \delta) \| \bu \|^2_{L^2([0,t],U)}\,.
 $$
 Thus the strict KYP-inequality \eqref{eq:strictKYP} for a bounded $H \succc 0$, when viewed in terms of the respective quadratic forms evaluated at
 $\sbm{ \bx(0) \\ \bu|_{[0,t]}}$,
becomes exactly
\begin{align*}
& \langle H \bx(t), \bx(t) \rangle_X + \| \by|_{[0,t]} \|^2_{L^2([0,t],Y)} + \delta \| \bx|_{[0,t]} \|^2_{L^2([0,t],X)}  \\
& \quad \le  \langle H \bx(0), \bx(0) \rangle_X + (1 - \delta) \| \bu|_{[0,t]} \|^2_{L^2([0,t],U)}.
\end{align*}
Setting $S_H(x) = \| H^{\frac{1}{2}} x \|^2 = \langle H x , x \rangle$,
 we see that the last inequality is exactly the defining inequality \eqref{eq:stordefstrict} for $S_H$ to be a strict storage  function.  Thus
 {\em the class of bounded $H \succc 0$ satisfying the strict KYP inequality \eqref{eq:strictKYP} is exactly the class of $H$ for which
 the associated quadratic function $S_H$ is a finite-valued, coercive strict storage functions for $\Sigma$.}

A similar analysis gives the corresponding statement for the semi-strict setting:  {\em the class of bounded $H \succc 0$ satisfying the semi-strict KYP inequality
\eqref{eq:semi-strictKYP} is exactly the class of $H$ for which the associated quadratic function $S_H$ is a finite-valued, coercive, semi-strict storage function.}
\end{remark}

Arov and Staffans \cite{ArSt07} also treat the standard BRL for infinite dimensional, continuous-time systems (Theorem \ref{thm:stdlemma}
above), but from a complementary point of view.  There the authors introduce system nodes $\sbm{ A \& B \\ C \& D}$ first,  and then define the associated system
(and the associated operators $\Sigma = \sbm {\Afrak & \Bfrak \\ \Cfrak & \Dfrak}$) through smooth system trajectories associated with the system-node trajectories.
They introduce the notion of  pseudo-similarity at the level of system nodes and obtain the equivalence of pseudo-similarity to a dissipative system node
with the existence of a solution to a spatial KYP-inequality expressed directly in terms of the system node operators (a spatial infinite dimensional analogue of the spatial KYP-inequality \eqref{eq:KYPintro}).  To complete the analysis they use Cayley transform computation to reduce the result to the discrete-time
situation studied in \cite{ArKaPi06} (see  Remark \ref{R:ASdifKYP} below for additional details). In the present paper, on the other hand, all details are worked out
directly in the continuous-time systems setting rather than using Cayley transforms to map into discrete time. This is necessary in our stydy of the strict BRL, because 
exponential stability in continuous time is in general \emph{not}
mapped into exponential stability in discrete time; see Example \ref{ex:counter} below. 

We extend the concept of $L^2$-storage function originally introduced by Willems \cite{Will72a, Will72b} and developed further for discrete-time infinite dimensional systems in  \cite{BGtH18b} to
continuous-time, infinite dimensional systems. We show that Willems' available storage function $S_a$ (see \cite{Will72a,Will72b}) is of a special type which we call \emph{$L^2$-regular}, whereas Willems' required supply $S_r$ is not. In response to the latter, we introduce an $L^2$-regularized version $\underline S_r\leq S_r$ of the required supply and prove that all $L^2$-regularized storage
functions $S$ satisfy $S_a\leq S\leq \underline S_r$ under some additional assumptions. Moreover, we prove that $S_a$ and $\underline S_r$ are quadratic. Our variational approach to the explicit solution of the density operators determining $S_a$ and $\underline{S}_r$ in \S\ref{sec:storage} is much in the same spirit as in the discussion in \cite[\S3]{Pan96}. 

Extensions to the infinite dimensional, Hilbert space setting were begun already by Yakubovich in \cite{Yak74,Yak75}, but the theory has been systematized and refined in many iterations after these seminal papers. The paper of Curtain \cite{Cur93} for instance treats the strict BRL for the case where ``$B$ and $C$ are bounded" (i.e., $B \in \cB(U,X)$  and $C \in \cB(X,Y)$) and the resulting feedthrough operator $D \in \cB(U,Y)$ is taken to be $0$.   Her KYP-inequality
can be seen (via a Schur-complement calculation) to be contained in our strict KYP-inequality criterion (see \eqref{eq:strictKYPnode} below) when specialized to her situation.

In addition to the BRL as presented here, the so-called KYP lemma appears in the context of many other topics in control theory. e.g., the design of  a certain type of Lyapunov function leading to stabilization of a linear system via a nonlinear state-feedback control as in the original problem of Lur'e,  linear-quadratic optimization problems, feedback design, etc.; we refer to \cite{GuLi06} for an informative survey. The paper \cite{IwHa05} for instance gives a far-reaching extension of the original form of the KYP-lemma, allowing the FDIs to be given only on finite frequency intervals and the class of systems allowed to be more general, by exploiting the $S$-procedure, which also goes back to work of Yakubovich (see \cite{GaYa66,Yak71}).

The Bounded Real Lemma (more generally the KYP lemma) has now been adapted to a number of additional applications. Let us mention that, specifically, in \cite{GuOp13}, the bounded real lemma is applied to model reduction, more precisely to balanced bounded real truncation, and the relation of the minimal and maximal storage functions to optimal control theory is described; see also \cite{StafALSOpt} for this connection and an alternative version of the strict bounded real and positive real lemmas.
Finally, we mention that there is also an extension \cite{BGtH18c} of the present approach to discrete-time dichotomous and bicausal systems, where it is essential
that solutions of the KYP-inequality be indefinite; such a situation is considered for both discrete-time and continuous-time systems in \cite{Pro15} to handle applications where a stabilizability assumption is missing.  It should be of interest to extend the results here to the dichotomous setting, thereby getting a continuous-time analogue of \cite{BGtH18c}.

The paper is organized as follows. In \S\ref{sec:prel}, the basics of well-posed systems are recalled.  In \S\ref{sec:system-node} the complementary differential approach via system nodes is reviewed, because some issues coming up in the sequel are
more easily resolved via the system-node approach.   In \S\ref{sec:L2maps} we develop
the concept of $L^2$-minimality for the continuous-time setting (analogous to developments in \cite{BGtH18b} for the
discrete-time setting). Some examples of $L^2$-minimal systems are discussed in \S\ref{sec:L2minexamples}. 
In \S\ref{sec:storage}, we extend the concept of $L^2$-regularized storage function from \cite{BGtH18b} to continuous time and
we use this to study $S_a$ and $\underline S_r$. Finally, in \S \ref{sec:proofs} we prove our main results stated in the present introduction. Part of the proofs are based on an operator optimization problem, which is the topic of Appendix \ref{sec:OpOpt}.

\smallskip

\paragraph{\bf Notation and terminology.}
For $t \in \R$, we define the backward shift operator $\tau^t$ acting on a function $\bu$ with $\dom{\bu}\subset \R$ by
\[
(\tau^t \bu)(s)=\bu(t+s),\qquad s\in\R,\, t+s\in\dom{\bu}.
\]
Given $J\subset \R$, we define the projection $\pi_J$ acting on a function $\bu$ with $J\subset \dom \bu \subset \R$ by
\[
(\pi_Ju)(s):=\begin{cases} \bu(s),\quad s\in J, \\ 0,\quad s\in \R\setminus J.\end{cases}
\]
Set $\R^+:=[0,\infty)$ and $\R^-:=(-\infty,0)$. We abbreviate $\pi_+:=\pi_{\R^+}$, $\pi_-:=\pi_{\R^-}$ and define $\tau_+^t:=\pi_+\tau^t$  and $\tau_-^t:=\tau^t\pi_-$ for $t\geq0$, both acting on functions with support anywhere in $\R$. The multiplicative interaction between these operations is given by
\[
\tau^t \pi_J =\pi_{J+t} \tau^t,\quad t\in\R,\, J\subset\R,\quad \mbox{with } J+t:=\{x+t \mid x\in J\}.
\]
Furthermore, we let $\ya$ denote the reflection operator:
\begin{equation}  \label{ya}
(\ya \bu)(-t)=\bu(t), \quad t\in\dom{\bu}.
\end{equation}

Let $K$ be a Hilbert space. For every, not necessarily bounded, interval $J\subset\R$ we write $L^2(J;K)$ for the usual Hilbert space of $K$-valued measurable, square integrable functions on $J$ with values in $K$, considering this space as a subspace of $L^2_K:=L^2(\R;K)$ by zero extension, without writing out the injection explicitly. We abbreviate $L^{2+}_K:=L^2(\R^+;K)$, and $L^{2-}_K:=L^2(\R^-;K)$. With $L^2_{loc,K}$ we denote the
space of $K$-valued measurable functions $\bu$ such that $\pi_J \bu\in L^2_K$ for every bounded interval $J$.  The symbols $L^2_{\ell,K}$, $L^2_{r,K}$ and $L^2_{\ell,r,K}$ stand for the spaces of functions $\bu\in L^2_K$ with support bounded to the left ($\supp \bu \subset (L,\infty)$ for some $L\in\R$), support bounded to the right ($\supp \bu \subset (-\infty,L)$ for some $L\in\R$), or with support bounded on both sides, respectively. Similarly we define $L^2_{\ell,loc,K}$, $L^2_{r,loc,K}$, $L^{2\pm}_{loc,K}$, $L^{2\pm}_{\ell,K}$, etc. However, note that some spaces may coincide,
e.g., $L^2_{\ell,r,loc,K}=L^2_{\ell,r,K}$, $L^{2+}_{\ell,loc,K}=L^{2+}_{loc,K}$, $L^{2+}_{r,loc,K}=L^{2+}_{r,K}$, etc. Convergence of $\bz_k$ to $\bz$ in $L^2_{\ell,loc,K}$ means that there is some $L\in\R$ such that $\supp{\bz}, \supp{\bz_k}\subset (L,\infty)$ for all $k$, and $\pi_{[L,T]}\bz_k\to \bz$ in $L^2_K$ for all $T>L$, and convergence in $L^2_{r,loc,K}$ is defined similarly. Moreover, $L^{2-}_{\ell,K}=L^{2-}_{\ell,loc,K}$ and $L^{2+}_{loc,K}=L^{2+}_{\ell,loc,K}$ are considered as subspaces of $L^{2}_{\ell,loc,K}$ with support contained in $\overline\rminus$ and $\rplus$, respectively, and we let these spaces inherit the topology of $L^{2}_{\ell,loc,K}$. \
For an interval $J\subset \R$, we write $C(J,K)$ for the space of continuous functions on $J$ with values in $K$.

Throughout, for Hilbert spaces $U$ and $V$ we write $\cB(U,V)$ for the Banach space of bounded linear operators mapping $U$ into $V$ with the operator norm simply denoted by $\|\ \|$. For a contraction operator $T$ in $\cB(U,V)$, that is, with $\|T\|\leq 1$, we write $D_T$ for the {\em defect operator} of $T$ which is
defined to be the unique positive semidefinite square root of the bounded, positive semidefinite operator $I-T^*T$, i.e., $D_T := ( I - T^* T)^{\frac{1}{2}}$.

\section{Well-posed linear systems}\label{sec:prel}

In this section we provide some background on well-posed systems, more specifically, causal, time-invariant $L^2$-well-posed linear systems. We recall this class of systems in Definition \ref{def:WPsys}; for a more detailed study and motivation of this class of systems we refer the reader to \cite{StafBook}.
It may be a helpful experience for the reader to verify that the system determined by \eqref{eq:introiso} and \eqref{eq:introinteg} fits Definitions \ref{def:WPsys} and \ref{def:WPtraj} below.

\begin{definition}\label{def:WPsys}
Let $U$, $X$ and $Y$ be separable Hilbert spaces. A quadruple $\Sigma=\sbm{\Afrak&\Bfrak\\\Cfrak&\Dfrak}$ is called a \emph{well-posed system} if it has the following properties:
\begin{enumerate}
\item[(1)] The symbol $\Afrak$ indicates a family $t\mapsto \Afrak^t$, which is a $C_0$-semigroup on $X$.

\item[(2)] The \emph{input map} $\Bfrak:L^{2-}_{\ell,U}\to X$ is a linear map satisfying $\Afrak^t\Bfrak =\Bfrak\tau_-^t$ on $L^{2-}_{\ell,U}$, for all $t\geq0$.

\item[(3)] The \emph{output map} $\Cfrak:X\to L^{2+}_{loc,Y}$ is a linear map satisfying $\Cfrak\Afrak^t=\tau_+^t\Cfrak $ on $X$, for all $t\geq0$.

\item[(4)] The \emph{transfer map (input/output map)} $\Dfrak:L^2_{\ell,loc,U}\to L^2_{\ell,loc,Y}$ is a linear map satisfying the following identities on $L^2_{\ell,loc,U}$:
\begin{enumerate}
\item[(a)] $\tau^t\Dfrak =\Dfrak\tau^t$ for all $t\in\R$ (time invariance),
\item[(b)] $\pi_-\Dfrak\pi_+=0$ (causality) and
\item[(c)] $\pi_+\Dfrak\pi_-=\Cfrak\Bfrak\pi_-$ (Hankel operator factorization).
\end{enumerate}

\item[(5)] The operators $\Bfrak$, $\Cfrak$, and $\Dfrak$ are continuous with respect to the topology of $L^2_{\ell,loc}$.
\end{enumerate}

We remark that the intertwinement in condition (2) in the preceding definition, $\Afrak^t\Bfrak \bu =\Bfrak\tau_-^t \bu$ for $\bu\in L^{2-}_{\ell,U}$, is written in this form in \cite[Definition 2.2.1]{StafBook}, but in fact the projection in $\tau_-^t=\tau^t\pi_-$ is redundant for such $\bu$, since $\pi_-\bu=\bu$. It is also possible to consider $\Bfrak$ as an operator with domain $L^2_{\ell,loc,U}$, without breaking this intertwinement property, by setting $\Bfrak:=\Bfrak\pi_-$; however, we do \emph{not} make this convention here.
On the other hand, $\Cfrak$ can be interpreted as an operator from $X$ into $L^2_{\ell,loc,Y}$, since $L^{2+}_{loc,Y}$ can be identified with a subspace of $L^2_{\ell,loc,Y}$ by zero extension on $\rminus$.

Given the well-posed system $\Sigma$, we define
\begin{equation}\label{BCD^t}
\begin{aligned}
\Bfrak^t&:=\Bfrak\pi_-\tau^t|_{L^2([0,t],U)}:L^2([0,t],U)\to X,\quad  t\in\R^+,\\
\Cfrak^t&:=\pi_{[0,t]}\Cfrak:X\to L^2([0,t],Y),\quad t\in\R^+,\qquad\text{and}\\
\Dfrak^t&:=\pi_{[0,t]}\Dfrak|_{L^2([0,t],U)}:L^2([0,t],U)\to L^2([0,t],Y),\quad t\in\R^+.
\end{aligned}
\end{equation}
\end{definition}

In order to stay compatible with the notation in \cite{StafBook}, we abbreviate $\Bfrak^t\pi_{[0,t]}\bu$ to $\Bfrak^t \bu$, so that we can apply $\Bfrak^t$ to arbitrary $\bu\in L^2_{loc,U}$ rather than only $\bu\in L^2([0,t];U)$. Note that we divert in \eqref{BCD^t} from the notation in \cite[Definition 2.2.6]{StafBook}: what we define as $\Bfrak^t$, $\Cfrak^t$ and $\Dfrak^t$ corresponds to $\Bfrak^t_0$, $\Cfrak^t_0$ and $\Dfrak^t_0$ in \cite{StafBook}, with the additional feature that we restrict $\Bfrak^t$ and $\Dfrak^t$ to functions in $L^2([0,t],U)$.

With a slight modification of the formulas in \cite[Theorem 2.2.14]{StafBook} it is possible to recover $\Bfrak$, $\Cfrak$ and $\Dfrak$ from $\Bfrak^t$, $\Cfrak^t$ and $\Dfrak^t$ via:
\begin{equation}\label{t-back}
\begin{aligned}
\Bfrak \bu= \lim_{t\to \infty} \Bfrak^t \tau^{-t} \pi_{[-t,0]}\bu, \ \ \bu\in L^{2-}_{\ell,U},&\quad \Cfrak x=\lim_{t\to\infty} \Cfrak^t x,\ \ x\in X,\\
\Dfrak \bu = \lim_{t\to \infty}  \tau^{t} \Dfrak^{2t} \tau^{-t} \pi_{[-t,t]}& \bu, \quad  \bu\in L^{2}_{\ell,loc,U}.
\end{aligned}
\end{equation}
The limits for $\Bfrak$ and $\Cfrak$ follow from Theorem 2.2.14 in \cite{StafBook}. For $\Dfrak$, a slightly different argument is needed, which we will now give. Fix $\bu\in L^2_{\ell,loc,U}$ and let $L$ be such that $\supp \bu\subset [L,\infty)$. For all $t>|L|$, we then get from the time invariance and causality of $\Dfrak$ that
\begin{align*}
\tau^{t} \Dfrak^{ 2t} \tau^{-t} \pi_{[-t,t]} \bu
& = \tau^{t} \pi_{[0,2t]} \Dfrak \tau^{-t} \pi_{[L,t]}   \bu
=  \pi_{[-t,t]} \tau^{-t} \tau^{t} \Dfrak  \pi_{[L,t]}   \bu\\
&=  \pi_{[L,t]} \Dfrak  \pi_{[L,t]} \bu.
\end{align*}
Now fix $T>L$ arbitrarily. When $t\to\infty$, we get $\pi_{[L,T]}\pi_{[L,t]}\bu=\pi_{[L,T]}\bu$ for all $t>T$, so that $\pi_{[L,t]}\bu\to \bu$ in $L^2_{\ell,loc,U}$. By the continuity of $\Dfrak$, we then get for $t>\max\{|L|,|T|\}$ that
$$
	\pi_{[L,T]}\tau^{t} \Dfrak^{2t} \tau^{-t} \pi_{[-t,t]} \bu = \pi_{[L,T]}\Dfrak \pi_{[L,t]} \bu \to
	 \pi_{[L,T]}\Dfrak \bu.
$$
Hence, in $L^2_{\ell,loc,Y}$, we have
\[
\lim_{t\to\infty} \tau^{t} \Dfrak^{2t} \tau^{-t} \pi_{[-t,t]} \bu = \Dfrak \bu.
\]

Next we define what we mean by a solution, or a trajectory, of a well-posed system. 

\begin{definition}\label{def:WPtraj}
By an \emph{(input/state/output) trajectory on $\rplus$} of a well-posed linear system $\Sigma$ with \emph{initial state} $x_0\in X$, we mean a triple $(\bu,\bx,\by)$ with \emph{input signal} $\bu\in L^{2+}_{loc,U}$, \emph{state signal} $\bx\in C(\rplus;X)$ and \emph{output signal} $\by\in L^{2+}_{loc,Y}$ that satisfies
\begin{equation}\label{eq:trajonRplus}
\begin{aligned}
	\bx(t) &= \Afrak^tx_0+\Bfrak^t \pi_{[0,t]} \bu,\quad t\geq 0, \\
	\by &=\Cfrak x_0+\Dfrak  \pi_+ \bu =\Cfrak x_0+\Dfrak \bu.
\end{aligned}
\end{equation}
By an \emph{(input/state/output) trajectory of $\Sigma$ on $\R$} (with initial state $x_{-\infty}=0$) we mean a triple $(\bu,\bx,\by)$ with input signal $\bu\in L^2_{\ell,loc,U}$, state trajectory $\bx\in C(\R;X)$ and output signal $\by\in L^2_{\ell,loc,Y}$ that satisfies
\begin{equation}\label{eq:trajonR}
	\bx(t):=\Bfrak\pi_- \tau^t \bu,\quad t\in\R,\qquad \by:=\Dfrak \bu.
\end{equation}
\end{definition}

Note that a trajectory $(\bu,\bx,\by)$ on $\rplus$ is uniquely determined by the initial state $x_0$ and the input signal $\bu$, while a trajectory on $\R$ is uniquely determined by $\bu$, and then one can intuitively think of $\lim_{t\to-\infty} \bx(t)=0$ as a kind of initial state. We mention a few rules on how trajectories on $\R$ and $\R^+$ can be manipulated, which will be useful in the sequel. The proofs are straightforward and left to the reader.

\begin{enumerate}
  \item[(1)] If $(\bu,\bx,\by)$ is a trajectory on $\R$ and $t\in\R$ with $\bx(t)=0$, then $\pi_{[t,\infty)}(\bu,\bx,\by)$ is also a trajectory on $\R$.

 \item[(2)] A triple $(\bu,\bx,\by)$ is a trajectory on $\R$ if and only if the support of $\bu$ is bounded to the left by some $t\in\R$ and $\tau^t( \bu, \bx, \by)$ is a trajectory on
 $\rplus$ with initial state zero.

\item[(3)] The triple $(\bu,\bx,\by)$ is a trajectory on $\R$ if and only if $\tau^s(\bu,\bx,\by)$ is a trajectory on $\R$ for some/all $s\in\R$.

\item[(4)] If $(\bu,\bx,\by)$ and $(\bv,\bz,\bw)$ are trajectories on $\rplus$ and $\bx(t)=\bz(0)$ for some $t>0$ then $\pi_{[0,t)}(\bu,\bx,\by)+ \tau^{-t}(\bv,\bz,\bw)$ is a trajectory on $\rplus$.

\item[(5)] If $(\bu,\bx,\by)$ is a trajectory on $\R$ and $(\bv,\bz,\bw)$ is a trajectory on $\rplus$ with $\bz(0)=\bx(0)$ then $\pi_-(\bu,\bx,\by)+(\bv,\bz,\bw)$ is a trajectory on $\R$.
\end{enumerate}

In order to discuss additional features of the well-posed system $\Sigma$, we need an alternative representation of $\Bfrak$, $\Cfrak$ and $\Dfrak$, as bounded linear Hilbert space operators, and we now proceed to construct this representation. First set $e_\lambda(t):=e^{\lambda t}$ for $\lambda\in\C,$ $t\in\R$, and define the Hilbert space  $L^2_{\omega,K}$ by
$$
L^2_{\omega,K}=\set{ e_{\omega} \bu \mid \bu\in L^2_K} \text{ with }
\Ipdp{e_{\omega} \bu}{e_{\omega} \bv}_{L^2_{\omega,K}}:=\Ipdp{\bu}{\bv}_{L^2_K} \text{ for } \bu,\bv\in L^2_K.
$$
Similarly we define $L^{2\pm}_{\omega,K}$ by
replacing $L^2_K$ by $L^{2\pm}_K$. Note that, as sets, we have the inclusions $L^2_{\ell,r,K}\subset L^2_{\omega,K}\subset L^2_{loc,K}$ for all $\omega\in\R$, with each
inclusion being dense in their respective topologies, with similar dense inclusions for the corresponding $L^{2\pm}$--spaces.

It is well-known, see e.g., Theorem 2.5.4 in \cite{StafBook}, that every $C_0$-semigroup $\Afrak$ has a  \emph{growth bound}
\begin{equation}\label{eq:omegaAdef}
	\omega_\Afrak:=\lim_{t\to\infty} \frac{\ln\|\Afrak^t\|}{t}<\infty,
\end{equation}
meaning that for every $\omega>\omega_\Afrak$ there is some $M>0$ such that $\|\Afrak^t\|\leq Me^{\omega t}$ for all $t\geq0$. We call $\Sigma$, or $\Afrak$, \emph{exponentially stable} if $\omega_\Afrak<0$. In this connection, we also point out that a passive system has a \emph{contractive semigroup}, i.e., $\|\Afrak^t\|\leq 1$ for all $t\geq0$, and this implies that $\omega_\Afrak\leq0$. In particular, all $\alpha\in\C_{\omega_\Afrak}$ lie in the \emph{resolvent set} $\rho(A)$ of the generator $A$ of $\Afrak$, meaning that $\alpha-A$ has a bounded inverse on the state space $X$; see \cite[Theorem 3.2.9(i)]{StafBook}.

Fix a real number $\omega$. In case $\omega>0$, then $L^{2-}_{\omega,K}\subset L^{2-}_{K}$ with dense and continuous embedding, and $L^{2-}_{-\omega,K}$ is the dual of $L^{2-}_{\omega,K}$ with pivot space $L^{2-}_{K}$, so that the duality pairing satisfies
\begin{equation}\label{eq:omegadual}
\Ipdp \bv\bu_{L^{2-}_{-\omega,K},L^{2-}_{\omega,K}}= \Ipdp \bv\bu_{L^{2-}_{K}},\quad \bv\in L^{2-}_{K},\, \bu\in L^{2-}_{\omega,K}.
\end{equation}
See for instance \cite[\S3.6]{StafBook} or \cite[\S2.9]{TuWeBook} for detailed constructions of the dual with respect to a pivot space. If we have an exponentially stable system, then it is possible to take $\omega=0$ and in that case the three spaces in \eqref{eq:omegadual} coincide. In fact, for an exponentially stable system it is possible to take $\omega<0$, in which case instead $L^{2-}_{-\omega,K}$ is the densely and continuously embedded subspace and $L^{2-}_{\omega,K}$ is the dual subspace of $L^{2-}_{K}$. Then, for $L^{2+}_K$ the embeddings are reversed, so that $L^{2+}_{-\omega,K}\subset L^{2+}_{K}\subset L^{2+}_{\omega,K}$ and $L^{2+}_{\omega,K}\subset L^{2+}_{K}\subset L^{2+}_{-\omega,K}$, and duality pairings with respect to the pivot space $L^{2+}_{K}$ exist in analogy to \eqref{eq:omegadual}.

Let now $\Sigma = \sbm{ \Afrak & \Bfrak \\ \Cfrak & \Dfrak}$ be a well-posed system  and fix a real number $\omega$ with $\omega>\omega_\Afrak$.
By Theorem 2.5.4 in \cite{StafBook}, $\range\Cfrak$ is contained in $L^{2+}_{\omega,Y}$, while $\Bfrak$ extends to a unique continuous linear operator from $L^{2-}_{\omega,U}$ into $X$, and the restriction of $\Dfrak$ to $L^2_{\ell,loc,U}\bigcap L^{2}_{\omega,U}$ has a unique linear extension that maps $L^{2}_{\omega,U}$ continuously into $L^{2}_{\omega,Y}$. We can thus reinterpret the operators $\Bfrak$, $\Cfrak$ and $\Dfrak$ as
\begin{equation}\label{eq:FrakTildeDef}
	\widetilde\Bfrak\in\cB(L^{2-}_{\omega,U},X), \quad
	\widetilde\Cfrak\in\cB(X,L^{2+}_{\omega,Y}),\quad
	\widetilde\Dfrak\in\cB(L^2_{\omega,U},L^2_{\omega,Y}),
\end{equation}
and this reinterpretation can also be reversed, so that the original three operators can be recovered from their tilde versions. In case the operators $\Bfrak$, $\Cfrak$ and $\Dfrak$ can be reinterpreted in the above fashion as bounded operators as in \eqref{eq:FrakTildeDef}, then we say that $\Bfrak$, $\Cfrak$ and $\Dfrak$ are {\em $\omega$-bounded}, respectively. Moreover, the $C_0$-semigroup $\Afrak^t$ is called {\em $\omega$-bounded} in case $\sup_{t\geq 0}\|e^{-\omega t}\Afrak^t\| <\infty$.

The following proposition shows how the {\em frequency-response-function} approach at the beginning of the introduction can be used to define a transfer function for an infinite dimensional well-posed system $\Sigma$ directly via the integrated system operators $\Afrak$, $\Bfrak$, $\Cfrak$, $\Dfrak$, thereby avoiding completely the system node $\bS = \sbm{ A \& B \\ C \& D}$ to be discussed in \S\ref{sec:system-node}.

\begin{proposition}\label{prop:Transfer}
For a well-posed system $\Sigma=\sbm{\Afrak&\Bfrak\\\Cfrak&\Dfrak}$ and for all $\omega>\omega_\Afrak$, $\widetilde \Dfrak$ uniquely induces an operator $\overline\Dfrak: H^1_{\omega,loc}(\R;U) \to  H^1_{\omega,loc}(\R;Y)$, where
\begin{equation}\label{eq:H1def}
	 H^1_{\omega,loc}(\R;K):=\set{f\in L^2_{loc,K}\mid
	 	\dot f\in L^2_{loc,K},~\pi_-f\in L^{2-}_{\omega,K}},
\end{equation}
and the action of $\overline\Dfrak$ is independent of $\omega>\omega_\Afrak$.
The \emph{transfer function} $\widehat\Dfrak$ of $\Sigma$, given by
$$
	\widehat\Dfrak(\lambda)u_0:=
		(\overline\Dfrak e_\lambda u_0)(0),
	\quad \lambda\in\C_{\omega_\Afrak},\ u_0\in U,
$$
is well-defined and when restricted to the half-plane $\C_{\omega}$, for $\omega>\omega_\Afrak$, gives a function in $H^\infty(\C_\omega;\cB(U,Y))$. Furthermore we recover the Laplace-transform interpretation \eqref{transfunc-freq}  of $\widehat \Dfrak (\lambda)$ as follows:
for $\bu \in L^{2+}_{\omega,U}$ we have
\begin{equation}   \label{CTtransfunc}
 \widehat{ \Dfrak \bu}(\lambda) =  \widehat \Dfrak(\lambda) \widehat \bu(\lambda),\quad \lambda \in {\mathbb C}_\omega.
\end{equation}
\end{proposition}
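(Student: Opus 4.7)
The plan is to first extend $\widetilde\Dfrak$ to an operator $\overline\Dfrak$ on the larger space $H^1_{\omega,loc}(\R;U)$, which (crucially, unlike $L^2_{\omega,U}$) contains the pure exponentials $e_\lambda u_0$ for $\re\lambda > \omega$, since on $\R^-$ the weight $e^{-\omega t}$ dominates $e^{\re\lambda \cdot t}$; then to recover the transfer function by evaluating $\overline\Dfrak(e_\lambda u_0)$ at $t = 0$ using the time-invariance of $\overline\Dfrak$.

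For the construction, given $f\in H^1_{\omega,loc}(\R;U)$ I split $f = \pi_- f + \pi_+ f$ and set
\[
\overline\Dfrak f := \widetilde\Dfrak(\pi_- f) + \Dfrak(\pi_+ f),
\]
noting that by definition $\pi_- f\in L^{2-}_{\omega,U}$, so $\widetilde\Dfrak(\pi_- f)\in L^2_{\omega,Y}$, while $\pi_+ f \in L^{2+}_{loc,U}\subset L^2_{\ell,loc,U}$, so $\Dfrak(\pi_+f)\in L^2_{\ell,loc,Y}$; both summands live inside $L^2_{loc,Y}$. The membership $\pi_-(\overline\Dfrak f)\in L^{2-}_{\omega,Y}$ is immediate from $\pi_-\Dfrak\pi_+ = 0$. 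The heart of the matter is verifying $\dot{(\overline\Dfrak f)}\in L^2_{loc,Y}$, which I would establish by proving the commutation
\[
\dot{(\overline\Dfrak f)} = \overline\Dfrak(\dot f).
\]
This comes from the time-invariance $\tau^h\Dfrak = \Dfrak\tau^h$ (and its extended version for $\widetilde\Dfrak$): the difference quotient $h^{-1}(\tau^h\overline\Dfrak f - \overline\Dfrak f)$, after rewriting the $\pi_-/\pi_+$ split under $\tau^h$, agrees with $\overline\Dfrak$ applied to $h^{-1}(\tau^h f - f)$ up to boundary corrections supported on $[-h,0]$ whose $L^2_{loc}$-norms vanish as $h\downarrow 0$. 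Since $h^{-1}(\tau^h f - f)\to \dot f$ in $L^2_{loc}$, the continuity of $\widetilde\Dfrak$ and $\Dfrak$ in their respective topologies yields the identity. Independence of $\omega > \omega_\Afrak$ reduces to the observation that for $\omega_1 > \omega_2 > \omega_\Afrak$ one has $L^{2-}_{\omega_1,U}\subset L^{2-}_{\omega_2,U}$, and both $\widetilde\Dfrak_{\omega_1}$ and $\widetilde\Dfrak_{\omega_2}$ are continuous extensions of the common restriction $\Dfrak|_{L^2_{\ell,r,U}}$, which is dense in $L^{2-}_{\omega_1,U}$.

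For the transfer function, fix $\lambda\in\C_{\omega_\Afrak}$ and choose $\omega\in(\omega_\Afrak,\re\lambda)$. Then $e_\lambda u_0\in H^1_{\omega,loc}(\R;U)$, with $\|\pi_-(e_\lambda u_0)\|^2_{L^{2-}_{\omega,U}} = \|u_0\|^2/(2(\re\lambda - \omega))$. Time-invariance of $\overline\Dfrak$ yields $\tau^h\overline\Dfrak(e_\lambda u_0) = \overline\Dfrak(\tau^h e_\lambda u_0) = e^{\lambda h}\overline\Dfrak(e_\lambda u_0)$, forcing $\overline\Dfrak(e_\lambda u_0) = e_\lambda\cdot v_\lambda$ for a unique $v_\lambda\in Y$. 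Since $\overline\Dfrak(e_\lambda u_0)\in H^1_{\omega,loc}$ admits a continuous representative (one-dimensional Sobolev embedding), the evaluation $\widehat\Dfrak(\lambda)u_0 := (\overline\Dfrak e_\lambda u_0)(0) = v_\lambda$ is well-defined. Applying $\pi_-$ and using $\pi_-\Dfrak\pi_+=0$ gives
\[
\pi_-(e_\lambda v_\lambda) = \pi_-\overline\Dfrak(e_\lambda u_0) = \widetilde\Dfrak(\pi_- e_\lambda u_0);
\]
taking $L^{2-}_{\omega,Y}$-norms and cancelling the common factor $1/\sqrt{2(\re\lambda - \omega)}$ from both sides yields $\|\widehat\Dfrak(\lambda)\|\leq \|\widetilde\Dfrak\|_{L^2_{\omega,U}\to L^2_{\omega,Y}}$ uniformly in $\lambda\in\C_\omega$. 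Analyticity of $\widehat\Dfrak$ follows because $\lambda\mapsto\pi_-(e_\lambda u_0)$ is a holomorphic $L^{2-}_{\omega,U}$-valued map on $\C_\omega$ and $\widetilde\Dfrak$ is bounded.

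Finally, for the Laplace identity, I would argue by density. On inputs of the form $\bu = \pi_+(e_\mu u_0)$ with $\re\mu < \omega$ (whose span is dense in $L^{2+}_{\omega,U}$ via the Laplace isometry $L^{2+}_{\omega,U}\to H^2(\C_\omega;U)$), the identity $\widehat{\Dfrak\bu}(\lambda) = \widehat\Dfrak(\lambda)\widehat\bu(\lambda)$ can be verified explicitly by decomposing $e_\mu u_0 = \pi_-(e_\mu u_0) + \pi_+(e_\mu u_0)$, applying $\overline\Dfrak$, and using the formula $\overline\Dfrak(e_\mu u_0) = e_\mu\widehat\Dfrak(\mu)u_0$. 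Continuity of $\widetilde\Dfrak\colon L^{2+}_{\omega,U}\to L^{2+}_{\omega,Y}$ and of the Laplace transform then extend the identity to all $\bu\in L^{2+}_{\omega,U}$. The principal technical obstacle throughout is the commutation $\dot{(\overline\Dfrak f)} = \overline\Dfrak(\dot f)$: one must carefully track how the projections $\pi_\pm$ interact with $\tau^h$ near the splitting point $t=0$ to show that the resulting boundary corrections do not spoil the convergence of the difference quotients.
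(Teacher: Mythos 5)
Your construction $\overline\Dfrak f=\widetilde\Dfrak(\pi_-f)+\Dfrak(\pi_+f)$, the verification that $\overline\Dfrak$ commutes with $\tau^h$, the difference-quotient argument for differentiability, the reduction $\overline\Dfrak(e_\lambda u_0)=e_\lambda v_\lambda$, and the uniform bound $\|\widehat\Dfrak(\lambda)\|\le\|\widetilde\Dfrak\|$ obtained by comparing $L^{2-}_{\omega,Y}$-norms of $\pi_-(e_\lambda v_\lambda)$ and of $\widetilde\Dfrak(\pi_-e_\lambda u_0)$ are all sound, and they reproduce exactly the route the paper takes by citation to \cite[Lemmas 4.5.1, 4.5.3, 4.6.2 and Corollary 4.6.10]{StafBook}; the paper itself singles out the difference-quotient identity as the key starting point. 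One caveat: for general $f\in H^1_{\omega,loc}$ the right-hand side of your identity $\dot{(\overline\Dfrak f)}=\overline\Dfrak(\dot f)$ need not be defined, since $\pi_-\dot f$ need not lie in $L^{2-}_{\omega,U}$; you should phrase the conclusion as $L^2_{loc}$-convergence of the difference quotients of $\overline\Dfrak f$ rather than as a literal commutation. For the exponentials $e_\lambda u_0$ with $\re\lambda>\omega$ this is harmless, so everything up to and including membership of $\widehat\Dfrak|_{\C_\omega}$ in $H^\infty(\C_\omega;\cB(U,Y))$ goes through.

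The genuine gap is in the final step, the Laplace identity \eqref{CTtransfunc}. For $\bu=\pi_+(e_\mu u_0)$ your decomposition yields $\Dfrak\bu=\pi_+e_\mu\widehat\Dfrak(\mu)u_0-\pi_+\widetilde\Dfrak(\pi_-e_\mu u_0)$, so the claimed identity is equivalent to the statement that the Laplace transform of the Hankel-type term $\pi_+\widetilde\Dfrak(\pi_-e_\mu u_0)$ equals $(\lambda-\mu)^{-1}\bigl(\widehat\Dfrak(\mu)-\widehat\Dfrak(\lambda)\bigr)u_0$. Nothing in the ingredients you list (the splitting of $e_\mu u_0$ and the formula $\overline\Dfrak(e_\mu u_0)=e_\mu\widehat\Dfrak(\mu)u_0$) produces this cross term; establishing it requires either the Hankel factorization $\pi_+\Dfrak\pi_-=\Cfrak\Bfrak\pi_-$ through the state space, together with $\widetilde\Bfrak\pi_-e_\mu u_0=(\mu-A_{-1})^{-1}Bu_0$, the Laplace transform $\lambda\mapsto C(\lambda-A)^{-1}x$ of $\Cfrak x$, and the resolvent identity, or else a duality argument pairing $\Dfrak\bu$ against $\pi_+e_{-\overline\lambda}y$ and invoking the exponential response of the causal dual system $\Sigma^d$. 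Neither is in your proposal, so the "explicit verification" on the dense set is not actually carried out. A secondary, easily repaired point: your dense family is indexed by $\re\mu<\omega$, but $\widehat\Dfrak(\mu)$ is only defined for $\mu\in\C_{\omega_\Afrak}$, so you must restrict to $\omega_\Afrak<\re\mu<\omega$; the corresponding reproducing kernels still span a dense subspace of $H^2(\C_\omega;U)$ because they are indexed by a set with an accumulation point.
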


Proposition \ref{prop:Transfer} follows from Lemmas 4.5.1, 4.5.3 and 4.6.2 and Corollary 4.6.10 together with Definition 4.6.1 in \cite{StafBook}.
We emphasize that the domain of the transfer function defined in Proposition \ref{prop:Transfer} is $\C_{\omega_\Afrak}$, and at
the same time remind the reader that we identify two analytic functions agreeing on a set of points in the intersection of their
respective domains having a common interior cluster point. The key starting point to the preceding proposition is that
$$
	\frac{\tau^h\overline\Dfrak \bu-\overline\Dfrak \bu}{h}=
	\overline\Dfrak \frac{\tau^h\bu-\bu}{h},
$$
due to time invariance; see the proof of \cite[Lemma 4.5.1]{StafBook}.

Let us  identify the spaces $X$, $U$ and $Y$ with their duals.  Then the adjoints of the operators in \eqref{eq:FrakTildeDef} with respect to the appropriate duality pairings  belong to the following spaces:
$$
	\widetilde\Bfrak^*\in\cB(X,L^{2-}_{-\omega,U}), \quad
	\widetilde\Cfrak^*\in\cB(L^{2+}_{-\omega,Y},X),\quad
	\widetilde\Dfrak^*\in\cB(L^2_{-\omega,Y},L^2_{-\omega,U}).
$$
Since $\widetilde\Bfrak$, $\widetilde\Cfrak$ and $\widetilde\Dfrak$ are bounded linear Hilbert space operators, their adjoints are well defined. Noting that $L^{2-}_{-\omega,U}\subset L^{2-}_{loc,U}$ and $L^{2+}_{r,Y}\subset L^{2+}_{-\omega,Y}$, we can view the adjoints as operators of the following forms:
\begin{equation}\label{eq:SunOpDef}
\begin{aligned}
\Bfrak^\circledast:=\widetilde\Bfrak^*:X\to L^{2-}_{loc,U},\quad &
\Cfrak^\circledast:=\widetilde\Cfrak^*\big|_{L^{2+}_{r,Y}}: L^{2+}_{r,Y}\to X,\\ \Dfrak^\circledast:=\widetilde \Dfrak^*|_{L^2_{r,loc,Y}}&:L^2_{r,loc,Y}\to L^2_{r,loc,U};
\end{aligned}
\end{equation}
using \cite[Theorem 6.2.1]{StafBook}, we indeed see that $\widetilde\Dfrak^*$ has a restriction followed by an extension to an operator that maps $L^2_{r,loc,Y}$ continuously into $L^2_{r,loc,U}$. Using the reflection operator $\ya$ as in \eqref{ya}, we define the \emph{causal dual system} $\Sigma^d$ of $\Sigma$ via
\begin{equation}\label{eq:CausalDualDef}
	\Sigma^d=\bbm{\Afrak^d&\Bfrak^d\\
		\Cfrak^d& \Dfrak^d}:=\bbm{\Afrak^*&\Cfrak^\circledast\ya\\
		\ya\,\Bfrak^\circledast&\ya\,\Dfrak^\circledast\ya},
\end{equation}
where $\Afrak^*$ is the \emph{dual semigroup} of $\Afrak$, i.e., $(\Afrak^*)^t=(\Afrak^t)^*$, $t\geq0$. Here we depart from \cite{BGtH18b} by using the causal dual system instead of the anti-causal dual system, which would not have the reflections $\ya$ in \eqref{eq:CausalDualDef}. The reason is that we prefer to have all of the theory in \cite{StafBook} at our disposal.

\begin{theorem}\label{T:causdual}
Let $\Sigma=\sbm{\Afrak&\Bfrak\\\Cfrak&\Dfrak}$ be a well-posed system. Then the causal dual system $\Sigma^d$ of $\Sigma$ is a well-posed system with input space $Y$, state space $X$ and output space $U$. Moreover, the causal dual of $\Sigma^d$ is equal to $\Sigma$ and the transfer function of $\Sigma^d$ is $\widehat\Dfrak^d(\lambda)=\widehat\Dfrak(\overline\lambda)^*$, $\overline\lambda\in\rho(A)$, and in particular, $\|\widehat\Dfrak^d\|_{H^\infty(\C_\omega;\cB(Y,U))}=\|\widehat\Dfrak\|_{H^\infty(\C_\omega;\cB(U,Y))}$ for all $\omega>\omega_\Afrak$. If $\Sigma$ is passive, then $\Sigma^d$ is passive too.
\end{theorem}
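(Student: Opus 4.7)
My plan is to reduce everything to the ``causal dual'' construction developed in Chapter 6 of \cite{StafBook}. Staffans there shows that the adjoint system $\Sigma^\circledast=\sbm{\Afrak^*&\Cfrak^\circledast\\\Bfrak^\circledast&\Dfrak^\circledast}$, built from the bounded-operator adjoints in \eqref{eq:SunOpDef}, is an \emph{anti-causal} well-posed system. Because $\ya$ is a unitary involution that interchanges $L^{2-}_{loc}$ and $L^{2+}_{loc}$ and satisfies $\ya\,\tau^t=\tau^{-t}\,\ya$ and $\ya\,\pi_-=\pi_+\,\ya$, conjugating the input/output coefficients by $\ya$ in \eqref{eq:CausalDualDef} turns Staffans' anti-causal dual into a causal system. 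This packaging lets me import all the hard analytic work from \cite{StafBook} and only check bookkeeping.

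For well-posedness of $\Sigma^d$: condition (1) is standard, as the Hilbert-space adjoint of a $C_0$-semigroup is a $C_0$-semigroup of the same growth bound. The intertwining conditions (2) and (3) come by taking adjoints of $\Afrak^t\Bfrak=\Bfrak\tau_-^t$ and $\Cfrak\Afrak^t=\tau_+^t\Cfrak$ (here I need to relate $(\tau_+^t)^*$ on $L^{2+}_{-\omega}$ to $\tau_-^t$ on $L^{2-}_{-\omega}$), and then conjugating with $\ya$. The time-invariance, causality, and Hankel-factorization in (4) are dualized using $\ya\,\pi_-=\pi_+\ya$ and the fact that the adjoint of the Hankel operator $\pi_+\Dfrak\pi_-=\Cfrak\Bfrak\pi_-$ is $\pi_-\Dfrak^*\pi_+=\Bfrak^*\Cfrak^*\pi_+$, which after reflecting becomes precisely $\pi_+\Dfrak^d\pi_-=\Cfrak^d\Bfrak^d\pi_-$. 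The continuity condition (5) in $L^2_{\ell,loc}$ follows from the boundedness of $\widetilde\Bfrak^*,\widetilde\Cfrak^*,\widetilde\Dfrak^*$ on $\omega$-weighted spaces together with the continuity of $\ya$ on the relevant subspaces.

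For the bidual $\Sigma^{dd}=\Sigma$: Hilbert spaces are reflexive so $\widetilde A^{**}=\widetilde A$ for all three of $\widetilde\Bfrak,\widetilde\Cfrak,\widetilde\Dfrak$; combined with $\ya^2=I$ and the fact that the growth bounds of $\Afrak$ and $\Afrak^*$ coincide, a direct substitution into \eqref{eq:CausalDualDef} gives $\Sigma^{dd}=\Sigma$. For the transfer-function identity, I use the frequency-response definition in Proposition \ref{prop:Transfer}: for $u_0\in U$, $y_0\in Y$ and $\lambda\in\C_{\omega_\Afrak}$, I compute
\[
\Ipdp{\widehat\Dfrak^d(\lambda) y_0}{u_0}_U
=\Ipdp{(\overline{\Dfrak^d}e_\lambda y_0)(0)}{u_0}_U
\]
and, by expanding $\Dfrak^d=\ya\,\Dfrak^\circledast\,\ya$ and using that $\ya\,e_\lambda=e_{-\lambda}$ together with the duality pairing \eqref{eq:omegadual}, I pair this against $e_{\bar\lambda}u_0$ to obtain $\Ipdp{y_0}{\widehat\Dfrak(\bar\lambda)u_0}_Y$; this is the desired identity $\widehat\Dfrak^d(\lambda)=\widehat\Dfrak(\bar\lambda)^*$ on $\C_{\omega_\Afrak}$. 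The $H^\infty$-norm equality is then immediate from $\|T^*\|=\|T\|$.

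Finally, for passivity: $\Sigma$ passive is equivalent (by Definition \ref{def:storage} with $S(x)=\|x\|^2$) to the block operator $\sbm{\Afrak^t&\Bfrak^t\\\Cfrak^t&\Dfrak^t}:\sbm{X\\L^2([0,t];U)}\to\sbm{X\\L^2([0,t];Y)}$ being a contraction for every $t\ge 0$. A contraction on Hilbert spaces has a contractive adjoint, and by unraveling \eqref{BCD^t} for $\Sigma^d$ and using the reflection rule $\ya\,\pi_{[0,t]}=\pi_{[-t,0]}\,\ya$, the integrated operators of $\Sigma^d$ are precisely the Hilbert-space adjoint of $\sbm{\Afrak^t&\Bfrak^t\\\Cfrak^t&\Dfrak^t}$ pre- and post-composed with the unitary $\ya$ on the finite-interval $L^2$ spaces. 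Hence $\sbm{\Afrak^{dt}&\Bfrak^{dt}\\\Cfrak^{dt}&\Dfrak^{dt}}$ is a contraction, so $\Sigma^d$ is passive.

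The main obstacle I anticipate is the last step of Paragraph 2 and the passivity step: reconciling the two parametrizations of the dual operators (the ``master operators'' $\Bfrak^d,\Cfrak^d,\Dfrak^d$ built via $\omega$-weighted adjoints and reflections on one hand, versus the finite-interval integrated operators $\Bfrak^{dt},\Cfrak^{dt},\Dfrak^{dt}$ on the other), so that taking Hilbert-space adjoints on $[0,t]$ matches the construction in \eqref{eq:CausalDualDef}. Careful bookkeeping of $\ya$, $\pi_{[0,t]}$ and $\tau^t$ is needed, but no new analytic input beyond \cite[Ch.~6]{StafBook}.
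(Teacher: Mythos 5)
Your plan is correct and is essentially the paper's approach: the paper proves this theorem simply by citing Theorems 6.2.3, 6.2.13 and Lemma 11.1.4 of \cite{StafBook}, i.e.\ exactly the anti-causal-adjoint-plus-reflection machinery you propose to import, and your passivity argument via contractivity of the integrated operators is precisely the content of Lemma \ref{L:dualadj} proved immediately afterwards. The only spot needing a little more care is the transfer-function identity, where ``pairing $(\overline{\Dfrak^d}e_\lambda y_0)(0)$ against $e_{\bar\lambda}u_0$'' should be replaced by the Laplace-transform characterization \eqref{CTtransfunc} (or \cite[Lemma 4.5.3]{StafBook}), since pointwise evaluation at $0$ is not itself an $L^2$ pairing; this is a routine fix.
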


For the proof, see Theorems 6.2.3, 6.2.13 and Lemma 11.1.4 in \cite{StafBook}.

\begin{lemma}\label{L:dualadj}
Let $\Sigma=\sbm{\Afrak&\Bfrak\\\Cfrak&\Dfrak}$ be a well-posed system with causal dual system $\Sigma^d=\sbm{\Afrak^d&\Bfrak^d\\\Cfrak^d&\Dfrak^d}$. Define $\Bfrak^t$, $\Cfrak^t$ and $\Dfrak^t$ as in \eqref{BCD^t} and define $(\Bfrak^d)^t$, $(\Cfrak^d)^t$ and $(\Dfrak^d)^t$ analogously for the dual system $\Sigma^d$. Then
\[
\begin{bmatrix} (\Afrak^d)^t & (\Bfrak^d)^t\\ (\Cfrak^d)^t & (\Dfrak^d)^t  \end{bmatrix}^* =
\begin{bmatrix}
1_X & 0 \\
0 &  \Lambda^t_Y
\end{bmatrix}^*
\begin{bmatrix} \Afrak^t & \Bfrak^t\\ \Cfrak^t & \Dfrak^t  \end{bmatrix}
\begin{bmatrix}
1_X & 0 \\
0 &  \Lambda^t_U
\end{bmatrix},\quad t>0,
\]
where for a separable Banach space $K$ we define $\Lambda^t_K\in\cB(L^2([0,t],K))$ to be the unitary operator given
by $\Lambda^t_K=\tau^{-t}\ya|_{L^2([0,t],K)}$. 
\end{lemma}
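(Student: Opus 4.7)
The plan is to verify the identity block by block, exploiting the facts that $\Lambda_K^t$ is self-adjoint unitary (so $(\Lambda_K^t)^*=\Lambda_K^t=(\Lambda_K^t)^{-1}$) and that the key kinematic identity
\[
\ya\,\tau^t \bu = \tau^{-t}\,\ya\, \bu = \Lambda_K^t \bu,\qquad \bu\in L^2([0,t];K),
\]
holds because $\tau^t \bu$ is supported in $[-t,0]$. Taking adjoints on the left of the claimed identity, the proof reduces to showing
\[
\bigl((\Afrak^d)^t\bigr)^* =\Afrak^t,\quad \bigl((\Bfrak^d)^t\bigr)^* =\Lambda^t_Y\,\Cfrak^t,\quad \bigl((\Cfrak^d)^t\bigr)^* =\Bfrak^t\,\Lambda^t_U,\quad \bigl((\Dfrak^d)^t\bigr)^* =\Lambda^t_Y\,\Dfrak^t\,\Lambda^t_U.
\]
The $(1,1)$ block is immediate since $(\Afrak^d)^t=(\Afrak^*)^t=(\Afrak^t)^*$.

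For the off-diagonal blocks, the strategy is to express the adjoints of $\Bfrak^t$ and $\Cfrak^t$ in terms of the $\circledast$-operators from \eqref{eq:SunOpDef}, and then read off the dual formulas from \eqref{eq:CausalDualDef}. Concretely, for $\bu\in L^2([0,t];U)$ and $x\in X$ one uses the pivot duality \eqref{eq:omegadual} to compute
\[
\ipdp{\Bfrak^t \bu}{x}_X=\ipdp{\Bfrak\tau^t \bu}{x}_X=\ipdp{\tau^t \bu}{\Bfrak^\circledast x}_{L^{2-}_U},
\]
which, after a change of variable $r=s+t$, gives $(\Bfrak^t)^* x\,(s)=(\Bfrak^\circledast x)(s-t)$ on $[0,t]$. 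Comparing with $(\Cfrak^d)^t x = \pi_{[0,t]}\ya\Bfrak^\circledast x$ and evaluating at $s\in[0,t]$ yields the identity $(\Cfrak^d)^t=\Lambda_U^t (\Bfrak^t)^*$, equivalently $((\Cfrak^d)^t)^*=\Bfrak^t\,\Lambda_U^t$. A completely analogous argument, dualising $\Cfrak^t$ on compactly-supported test functions in $L^{2+}_{r,Y}$, gives $(\Cfrak^t)^*=\Cfrak^\circledast|_{L^2([0,t];Y)}$; combined with $(\Bfrak^d)^t=\Cfrak^\circledast\,\ya\,\tau^t|_{L^2([0,t];Y)}=\Cfrak^\circledast\,\Lambda_Y^t$, this produces $(\Bfrak^d)^t=(\Cfrak^t)^*\Lambda_Y^t$, i.e. $((\Bfrak^d)^t)^*=\Lambda_Y^t\,\Cfrak^t$.

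The $(2,2)$ block is the bookkeeping-heaviest step. Using that $\Dfrak^d$ is causal (as the input/output map of a well-posed system), one obtains that $\Dfrak^\circledast$ is anti-causal, so that for $\by\in L^2([0,t];Y)$ the function $\Dfrak^\circledast\ya\by$ is supported in $(-\infty,0]$. Using time-invariance $\tau^{-t}\Dfrak^\circledast=\Dfrak^\circledast\tau^{-t}$, one writes
\[
\Lambda_U^t\,(\Dfrak^t)^*\,\Lambda_Y^t \by = \Lambda_U^t\,\pi_{[0,t]}\,\Dfrak^\circledast(\tau^{-t}\ya\by) = \Lambda_U^t\,\pi_{[0,t]}\,\tau^{-t}\,\Dfrak^\circledast \ya\by,
\]
after first deriving the formula $(\Dfrak^t)^*\bz=\pi_{[0,t]}\Dfrak^\circledast \bz|_{L^2([0,t];U)}$ by the same $L^2$--pivot duality argument used for $\Bfrak^t$. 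Evaluating at $r\in[0,t]$, both sides collapse to $(\Dfrak^\circledast\ya\by)(-r)$, which is exactly $(\Dfrak^d)^t \by\,(r)=\pi_{[0,t]}\ya\Dfrak^\circledast\ya\by\,(r)$.

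The main obstacle is not conceptual but notational: one must keep careful track of supports and of the pivot-duality identifications \eqref{eq:omegadual} between $L^{2\pm}_{\omega,K}$ and $L^{2\pm}_{-\omega,K}$ in order to justify replacing $\widetilde\Bfrak^*$, $\widetilde\Cfrak^*$, $\widetilde\Dfrak^*$ by $\Bfrak^\circledast$, $\Cfrak^\circledast$, $\Dfrak^\circledast$ when the test functions are compactly supported. Once this is done, all four block identities reduce to the single kinematic fact that $\ya\,\tau^t$ acts as $\Lambda^t_K$ on $L^2([0,t];K)$.
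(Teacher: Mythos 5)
Your proposal is correct and follows essentially the same route as the paper's proof: reduce to the four block identities, identify $\widetilde\Bfrak^*$, $\widetilde\Cfrak^*$, $\widetilde\Dfrak^*$ with $\Bfrak^\circledast$, $\Cfrak^\circledast$, $\Dfrak^\circledast$ via the pivot duality on compactly supported test functions, and absorb the shift--reflection bookkeeping into $\ya\,\tau^t=\tau^{-t}\ya=\Lambda^t_K$ on $L^2([0,t];K)$. The only (cosmetic) difference is the direction of the duality computation --- you dualize $\Bfrak^t$, $\Cfrak^t$, $\Dfrak^t$ and match against the definitions of the dual blocks, while the paper starts from $(\Cfrak^d)^t$, $(\Bfrak^d)^t$, $(\Dfrak^d)^t$ and moves operators across the inner product; your observation that $\Lambda^t_K$ is a self-adjoint unitary harmlessly replaces the paper's $(\Lambda^t_Y)^*$ by $\Lambda^t_Y$.
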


\begin{proof}
We need to prove that for each $t>0$:
\[
((\Afrak^d)^t)^*=\Afrak^t,~ 
((\Cfrak^d)^t)^*=\Bfrak^t \Lambda^t_U,~
((\Bfrak^d)^t)^*=(\Lambda^t_Y)^* \Cfrak^t,~ ((\Dfrak^d)^t)^*= (\Lambda^t_Y)^* \Dfrak^t \Lambda^t_U.
\]
The first identity follows directly from the definition of $(\Afrak^d)^t$. Next note that
\[
(\Cfrak^d)^t=\pi_{[0,t]} \ya \Bfrak^\circledast:X\to L^{2}([0,t],U).
\]
Thus, for all $t>0$, $\bu\in L^{2}([0,t],U)$ and $x\in X$ we have
\begin{align*}
\Ipd{(\Cfrak^d)^t x}{\bu}_{ L^{2}([0,t],U)}
& =\Ipd{\pi_{[0,t]} \ya \Bfrak^\circledast x}{\bu}_{L^{2+}_U}
=\Ipd{\pi_{[0,t]} \ya \widetilde\Bfrak^* x}{\bu}_{L^{2+}_{-\omega,U},L^{2+}_{\omega,U}} \\
&=\Ipd{   x}{\widetilde\Bfrak\ya\pi_{[0,t]} \bu}_X =\Ipd{   x}{\widetilde\Bfrak\ya \bu}_X
=\Ipd{   x}{\Bfrak\ya \bu}_X
\end{align*}
using that $\Bfrak$ and $\widetilde\Bfrak$ coincide on $L^{2-}_{\ell,U}$ in the last step. It thus follows for $t>0$ and $\bu\in L^{2}([0,t],U)$ that
\begin{align*}
((\Cfrak^d)^t)^* \bu & =\Bfrak \ya \bu =\Bfrak \tau^t \tau^{-t} \ya \bu = (\Bfrak \pi_- \tau^t) \tau^{-t} \ya \bu = \Bfrak^t \Lambda^t_U \bu,
\end{align*}
and this proves the second identity. The third identity follows by an almost identical argument.

It remains to prove the last identity. For this purpose, let $\by\in L^{2}([0,t],Y)$ and $\bu\in L^{2}([0,t],U)$. Then
\begin{align*}
\Ipd{(\Dfrak^d)^t \by}{\bu}_{ L^{2}([0,t],U)} & =\Ipd{\pi_{[0,t]}\Dfrak^d \by}{\bu}_{ L^{2}([0,t],U)}
=\Ipd{\pi_{[0,t]}\ya\,\Dfrak^\circledast\ya \by}{\bu}_{ L^{2}([0,t],U)}\\
& =\Ipd{\ya\pi_{[-t,0]}\Dfrak^\circledast\ya \by}{\bu}_{ L^{2}([0,t],U)}\\
&=\!\Ipd{\pi_{[-t,0]}\Dfrak^\circledast\ya \by}{\ya \bu}_{ L^{2}([-t,0],U)}\!\!
=\!\Ipd{\widetilde{\Dfrak}^*\ya \by}{\ya \bu}_{L^{2-}_{-\omega,U},L^{2-}_{\omega,U}}\\
&=\Ipd{ \by}{\ya\widetilde{\Dfrak} \ya \bu}_{L^{2-}_{-\omega,Y},L^{2-}_{\omega,Y}}
=\Ipd{ \by}{\pi_{[0,t]}\ya\,\Dfrak \ya \bu}_{ L^{2}([0,t],Y)}.
\end{align*}
It follows that
\begin{align*}
((\Dfrak^d)^t)^*\bu & = \pi_{[0,t]}\ya\,\Dfrak \ya \bu = \ya\pi_{[-t,0]}\tau^{t}\Dfrak \tau^{-t} \ya \bu
= \ya\tau^{t}\pi_{[0,t]}\Dfrak \tau^{-t} \ya \bu\\
&= (\Lambda^t_Y)^*\Dfrak^t \Lambda^t_U \bu,
\end{align*}
which proves the last identity.
\end{proof}

The following notions will be important in the sequel:

\begin{definition}\label{def:WPmin-approx}
A well-posed system $\Sigma=\sbm{\Afrak&\Bfrak\\\Cfrak&\Dfrak}$ is \emph{(approximately) controllable} if the finite-time reachable subspace
\[
\operatorname{Rea}(\Sigma):=\range\Bfrak=\textup{span}\left\{\range{((\Cfrak^d)^t)^*} \mid t>0 \right\}
\]
is dense in $X$. Following \cite{ArNu96}, we say that the system $\Sigma$ is \emph{(approximately) observable} if the finite-time observable subspace
\[
\operatorname{Obs}(\Sigma):=\textup{span}\left\{\range{(\Cfrak^t)^*} \mid t>0 \right\} = \range{\Bfrak^d}
\]
is dense in $X$, and it is \emph{(approximately) minimal} if it is both controllable and observable.
\end{definition}

Note that the equalities in the definitions of $\textup{Rea}\,(\Sigma)$ and $\textup{Obs}\,(\Sigma)$ are dual, and that they follow directly from Lemma \ref{L:dualadj} and formulas 
\eqref{t-back}, and that these equalities imply the following corollary:

\begin{corollary}\label{cor:ApproxContrObsDual}
The well-posed system $\Sigma$ is controllable (resp.\ observable) if and only if $\Sigma^d$ is observable (resp.\ controllable). In particular, $\Sigma$ is minimal if and only if $\Sigma^d$ is minimal.
\end{corollary}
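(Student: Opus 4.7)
The corollary is essentially a bookkeeping consequence of two facts: the two equalities appearing inside Definition \ref{def:WPmin-approx}, and the involutive nature of the causal dual construction (Theorem \ref{T:causdual}, which gives $\Sigma^{dd}=\Sigma$). So the strategy is to unwind each statement via the definitions and then apply $\Sigma^{dd}=\Sigma$. I expect no real obstacle; the only care needed is to respect the reflections $\ya$ and the unitaries $\Lambda^t_K$ introduced in Lemma \ref{L:dualadj} when translating between the operators of $\Sigma$ and those of $\Sigma^d$, and this has already been packaged into the identities stated inside Definition \ref{def:WPmin-approx}.

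Concretely, I would first restate what the definition says: $\Sigma$ is controllable precisely when $\range\Bfrak$ is dense in $X$, which by the first equality in the definition is equivalent to $\textup{span}\{\range{((\Cfrak^d)^t)^*} : t>0\}$ being dense in $X$. Applying the second equality in the definition with $\Sigma$ replaced by $\Sigma^d$, the latter span equals $\operatorname{Obs}(\Sigma^d)$, so its density is equivalent to $\Sigma^d$ being observable. Hence $\Sigma$ is controllable if and only if $\Sigma^d$ is observable.

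For the reverse direction, $\Sigma$ is observable iff $\textup{span}\{\range{(\Cfrak^t)^*} : t>0\}$ is dense, which by the second equality of the definition is the same as $\range{\Bfrak^d}$ being dense in $X$. By the first equality in the definition applied to $\Sigma^d$ (using $\Sigma^{dd}=\Sigma$ from Theorem \ref{T:causdual}), this is precisely $\operatorname{Rea}(\Sigma^d)$ being dense, i.e., $\Sigma^d$ being controllable. Combining these two equivalences, $\Sigma$ is minimal (both controllable and observable) if and only if $\Sigma^d$ is minimal, as claimed. Along the way one should remark that the preparatory equalities in Definition \ref{def:WPmin-approx} are themselves consequences of Lemma \ref{L:dualadj} together with the recovery formulas \eqref{t-back}: taking adjoints of the identities in Lemma \ref{L:dualadj} expresses $\Bfrak^t$ and $(\Bfrak^d)^t$ in terms of $((\Cfrak^d)^t)^*$ and $(\Cfrak^t)^*$ modulo the unitaries $\Lambda^t_K$ (which do not affect ranges), and \eqref{t-back} lets one pass from the union over $t>0$ to the range of the full operator $\Bfrak$ or $\Bfrak^d$.
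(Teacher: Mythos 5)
Your proposal is correct and follows essentially the same route as the paper, which simply remarks that the two equalities built into Definition \ref{def:WPmin-approx} (themselves consequences of Lemma \ref{L:dualadj} and the formulas \eqref{t-back}) identify $\operatorname{Rea}(\Sigma)$ with $\operatorname{Obs}(\Sigma^d)$ and $\operatorname{Obs}(\Sigma)$ with $\operatorname{Rea}(\Sigma^d)$, whence the corollary. Your more explicit unwinding, including the appeal to $\Sigma^{dd}=\Sigma$ from Theorem \ref{T:causdual}, is a faithful elaboration of that one-line argument.
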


The following lemma shows that our definitions agree with the other common definitions of controllability and observability:

\begin{lemma}\label{lem:ObsControl1-1}
The well-posed system $\Sigma$ is controllable if and only if $\Bfrak^d$ is one-to-one and observable if and only if $\Cfrak$ is one-to-one.
\end{lemma}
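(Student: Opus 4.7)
My plan is to prove both equivalences via a single orthogonal-complement argument, deducing the controllability part from the observability part by the duality of Corollary~\ref{cor:ApproxContrObsDual}. The key identity is $(\range T)^\perp=\ker T^*$ applied at each finite time $t$, combined with the fact that local vanishing of $\Cfrak x$ on every $[0,t]$ is the same as $\Cfrak x=0$.

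For the observability part, I would show directly that $\operatorname{Obs}(\Sigma)^\perp=\ker\Cfrak$, which gives the equivalence at once. By Definition~\ref{def:WPmin-approx}, $\Sigma$ is observable iff $\operatorname{Obs}(\Sigma)=\textup{span}\{\range{(\Cfrak^t)^*}\mid t>0\}$ is dense in $X$, equivalently iff its orthogonal complement in $X$ is $\{0\}$. Transposing via the adjoint, $\ipd{x}{(\Cfrak^t)^*\by}_X=\ipd{\Cfrak^t x}{\by}_{L^2([0,t];Y)}$, so $x\in\operatorname{Obs}(\Sigma)^\perp$ iff $\Cfrak^t x=0$ for every $t>0$. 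Since $\Cfrak^t x=\pi_{[0,t]}\Cfrak x$ and $\Cfrak x\in L^{2+}_{\loc,Y}$, this family of local vanishing conditions is jointly equivalent to $\Cfrak x=0$, and hence $\operatorname{Obs}(\Sigma)^\perp=\ker\Cfrak$.

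For the controllability part, Corollary~\ref{cor:ApproxContrObsDual} gives that $\Sigma$ is controllable iff $\Sigma^d$ is observable. Applying the observability characterization just proved to $\Sigma^d$ (with $\Cfrak$ replaced by the output map of $\Sigma^d$), this is equivalent to the output map of $\Sigma^d$ being one-to-one; via the identifications in~\eqref{eq:CausalDualDef} together with the adjoint relations in Lemma~\ref{L:dualadj} (which in particular give $\range{((\Cfrak^d)^t)^*}=\range{\Bfrak^t}$, consistent with $\operatorname{Rea}(\Sigma)=\range\Bfrak=\textup{span}\{\range{((\Cfrak^d)^t)^*}\mid t>0\}$) this is exactly the stated injectivity of $\Bfrak^d$. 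The only genuinely non-formal step in the whole argument is the remark that local vanishing $\Cfrak^t x=0$ for every $t>0$ is equivalent to $\Cfrak x=0$ in $L^{2+}_{\loc,Y}$, which is immediate from the definition of $L^2_{\loc}$; everything else is routine bookkeeping with adjoints and reflections.
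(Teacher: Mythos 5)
Your observability argument is correct and is essentially the paper's own proof: both reduce to the chain $\Cfrak x=0\Leftrightarrow\Cfrak^t x=0$ for all $t>0\Leftrightarrow x\perp\range{(\Cfrak^t)^*}$ for all $t>0\Leftrightarrow x\perp\operatorname{Obs}(\Sigma)$.

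The controllability half has a genuine gap at the last step. Duality (Corollary \ref{cor:ApproxContrObsDual}) together with the observability characterization applied to $\Sigma^d$ gives: $\Sigma$ is controllable if and only if the \emph{output map} of $\Sigma^d$, namely $\Cfrak^d=\ya\,\Bfrak^\circledast$, is one-to-one. Your final sentence asserts that this ``is exactly the stated injectivity of $\Bfrak^d$'', but $\Cfrak^d$ and $\Bfrak^d=\Cfrak^\circledast\ya$ are different operators: $\Cfrak^d$ maps $X$ into $L^{2+}_{loc,U}$ and is built from $\Bfrak$, whereas $\Bfrak^d$ maps $L^{2-}_{\ell,Y}$ into $X$ and is built from $\Cfrak$. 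The relations you invoke from Lemma \ref{L:dualadj} connect $((\Cfrak^d)^t)^*$ with $\Bfrak^t$, not $\Cfrak^d$ with $\Bfrak^d$, so they do not bridge this gap. Indeed, injectivity of $\Bfrak^d$ is a condition on the pair $(C,A)$ and cannot characterize controllability, which is a condition on $(A,B)$: a system with $C=0$ but a controllable input pair is controllable while $\Bfrak^d=0$ fails to be injective. What your argument actually establishes is ``$\Sigma$ is controllable if and only if $\Cfrak^d$ is one-to-one'', which is also what the paper's one-line appeal to duality yields and is consistent with the remark following \eqref{eq:WcStarDef} that minimality amounts to injectivity of $\dW_o$ and of $\dW_c^\bigstar$ (a restriction of $\ya\,\Cfrak^d$). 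The occurrence of $\Bfrak^d$ in the statement of the lemma appears to be a misprint for $\Cfrak^d$; you should say so explicitly and prove the corrected statement, rather than close the argument with an appeal to ``identifications'' that do not in fact identify the two operators.
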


\begin{proof}
We prove the statement regarding observability; for controllability  the claim follows by duality. For $x\in X$ we have
$$
\begin{aligned}
\Cfrak x=0 \quad & \Longleftrightarrow\quad  \Cfrak^t x=\pi_{[0,t]} \Cfrak x =0\quad \mbox{for all $t>0$}\\
& \Longleftrightarrow \quad \Ipdp{\Cfrak^t x}{y} =0 \quad \mbox{for all $t>0$ and $y\in L^{2}([0,t],Y)$}\\
&\Longleftrightarrow\quad x\perp \range{(\Cfrak^t)^*} \quad \mbox{for all $t>0$}\\
&  \Longleftrightarrow\quad x\perp \textup{Obs}\,(\Sigma),
\end{aligned}
$$
which proves our claim.
\end{proof}

\section{The $L^2$-input and $L^2$-output maps of a well-posed linear system}\label{sec:L2maps}

The concepts of $\ell^2$-exact controllability, $\ell^2$-exact observability, and $\ell^2$-exact minimality were recently introduced for discrete-time systems in \cite{BGtH18a}. We will now extend these concepts to well-posed continuous-time systems.

Define the (in general unbounded) \emph{$L^2$-output map} as
\begin{equation}\label{Wo}
\begin{aligned}
\dW_o  := \Cfrak\big|_{\dom{\dW_o}}:X\supset \dom{\dW_o }\to L^{2+}_Y,\\
\text{with }\dom{\dW_o } := \set{x\in X\mid \Cfrak x\in L^{2+}_Y};
\end{aligned}
\end{equation}
i.e., we restrict $\Cfrak$ to the $x\in X$ that are mapped into $L^{2+}_Y$, rather than into $L^{2+}_{loc,Y}$, and view the resulting operator as mapping with codomain $L^{2+}_Y$. Note that $\Ker{\dW_o}=\Ker{\widetilde\Cfrak}=\Ker{\Cfrak}$ and hence $\Sigma$ is observable if and only if $\dW_o $ is one to one, or equivalently, if and only if $\widetilde\Cfrak$ is one-to-one.

\begin{proposition}\label{P:Wo}
Let $\dW_o$ be the $L^2$-output map of a well-posed system $\Sigma=\sbm{\Afrak&\Bfrak\\\Cfrak&\Dfrak}$. Then $\dW_o$ is closed.  Additionally assume that $\dW_o$ is densely defined. In this case:
\begin{enumerate}
\item[(1)] The operator $\dW_o$ has a closed and densely defined adjoint $\dW_o^*$.

\item[(2)] A function $\by\in L^{2+}_Y$ lies in $\dom{\dW_o^*}$ if and only if there exists an $x_o\in X$ such that 
\begin{equation}\label{eq:Wo*}
\lim_{t\to\infty}\Ipdp{x}{
\Bfrak^d\pi_{[-t,0]}\ya \by}_X
=\Ipdp{x}{x_o}_X,\quad x\in\dom{\dW_o}.
\end{equation}
When $\by\in\dom{\dW_o^*}$, we have $\dW_o^* \by=x_o$, where $x_o$ is given by \eqref{eq:Wo*}.

\item[(3)]  It holds that $L^{2+}_{r,Y}\subset \dom{\dW_o^*}$, that $\dW_o^*\big|_{L^{2+}_{r,Y}}=\Bfrak^{d}\ya$, and that $\dW_o^* L^{2+}_{r,Y}= \range{\Bfrak^d}=\textup{Obs}\,(\Sigma)$.

\item[(4)] For all $s>0$ and $\by\in \dom{\dW_o^*}$ we have
\[
\tau^{-s} \dom{\dW_o^*} \subset \dom{\dW_o^*},\quad \dW_o^* \tau^{-s}\by=(\Afrak^{s})^* \dW_o^* \by.
\]
\end{enumerate}
\end{proposition}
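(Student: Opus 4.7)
The plan is to handle closedness directly from part (5) of Definition~\ref{def:WPsys} and deduce (1) from standard Hilbert-space adjoint theory, derive (2) by truncation followed by application of Lemma~\ref{L:dualadj}, extract (3) as an easy special case of (2), and obtain (4) from the semigroup intertwining $\Cfrak \Afrak^s = \pi_+ \tau^s \Cfrak$.

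For closedness, suppose $x_n \in \dom{\dW_o}$ with $x_n \to x$ in $X$ and $\dW_o x_n \to \by$ in $L^{2+}_Y$. Convergence in $L^{2+}_Y$ implies convergence in $L^{2+}_{loc,Y}$, and $\Cfrak: X \to L^{2+}_{loc,Y}$ is continuous by Definition~\ref{def:WPsys}(5); hence $\Cfrak x_n$ tends both to $\by$ and to $\Cfrak x$ in $L^{2+}_{loc,Y}$, so $\Cfrak x = \by \in L^{2+}_Y$ and thus $x \in \dom{\dW_o}$ with $\dW_o x = \by$. Item (1) then follows from the standard fact that the adjoint of a closed, densely defined Hilbert-space operator is itself closed and densely defined.

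For (2) and (3), for any $\by \in L^{2+}_Y$ and $x \in \dom{\dW_o}$ I would expand
\[
\langle \dW_o x, \by\rangle_{L^{2+}_Y} = \lim_{t\to\infty} \langle \pi_{[0,t]} \Cfrak x, \by\rangle_{L^{2+}_Y} = \lim_{t\to\infty} \langle \Cfrak^t x, \pi_{[0,t]} \by\rangle = \lim_{t\to\infty} \langle x, (\Cfrak^t)^* \pi_{[0,t]} \by\rangle_X.
\]
Lemma~\ref{L:dualadj} yields $((\Bfrak^d)^t)^* = (\Lambda^t_Y)^* \Cfrak^t$, which rearranges (using unitarity of $\Lambda^t_Y$) to $(\Cfrak^t)^* = (\Bfrak^d)^t (\Lambda^t_Y)^*$. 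Tracking the definitions $\Lambda^t_Y = \tau^{-t}\ya|_{L^2([0,t],Y)}$ and $(\Bfrak^d)^t = \Bfrak^d \pi_- \tau^t|_{L^2([0,t],Y)}$, a direct computation (applying $(\Lambda^t_Y)^*$ to $\pi_{[0,t]}\by$ gives the function $s \mapsto \by(t-s)$ on $[0,t]$, and the subsequent shift by $\tau^t$ converts this to $\pi_{[-t,0]}\ya\by$, on which $\pi_-$ acts trivially) collapses the expression to $(\Cfrak^t)^* \pi_{[0,t]} \by = \Bfrak^d \pi_{[-t,0]} \ya \by$. This gives the identity in (2) and shows that $\by \in \dom{\dW_o^*}$ precisely when the resulting limit defines a bounded linear functional on $\dom{\dW_o}$, equivalently when it is represented by some $x_o \in X$; in that case $\dW_o^* \by = x_o$. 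For (3), any $\by \in L^{2+}_{r,Y}$ has $\supp \by \subset (0,L)$ for some $L$, so $\pi_{[-t,0]}\ya\by = \ya\by$ for every $t \ge L$, making the limit trivial and equal to $\Bfrak^d \ya \by$; the range assertion then follows from the bijection $\ya \colon L^{2+}_{r,Y} \to L^{2-}_{\ell,Y}$ together with the equality $\range{\Bfrak^d} = \textup{Obs}(\Sigma)$ recorded in Definition~\ref{def:WPmin-approx}.

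For (4), I would use the intertwining relation $\Cfrak \Afrak^s = \pi_+ \tau^s \Cfrak$ from Definition~\ref{def:WPsys}(3). If $x \in \dom{\dW_o}$, then $\Cfrak \Afrak^s x = \pi_+ \tau^s \Cfrak x \in L^{2+}_Y$ with norm at most $\|\Cfrak x\|_{L^{2+}_Y}$, so $\Afrak^s x \in \dom{\dW_o}$ and $\dW_o \Afrak^s x = \pi_+ \tau^s \dW_o x$. For $\by \in \dom{\dW_o^*}$, pairing this identity against $\by$ yields
\[
\langle \dW_o x, \tau^{-s}\by\rangle_{L^{2+}_Y} = \langle \pi_+ \tau^s \dW_o x, \by\rangle = \langle \dW_o \Afrak^s x, \by\rangle = \langle \Afrak^s x, \dW_o^* \by\rangle = \langle x, (\Afrak^s)^* \dW_o^* \by\rangle,
\]
whence $\tau^{-s}\by \in \dom{\dW_o^*}$ and $\dW_o^* \tau^{-s}\by = (\Afrak^s)^* \dW_o^* \by$. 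The main obstacle in this plan is the bookkeeping in the preceding paragraph: simplifying $(\Cfrak^t)^* \pi_{[0,t]}$ to $\Bfrak^d \pi_{[-t,0]} \ya$ requires careful tracking of how the time-reversal $\Lambda^t_Y$ and its adjoint interact with the shifts $\tau^t$ and the projections $\pi_{[0,t]}$ and $\pi_-$; once that identification is in hand, all remaining parts are routine.
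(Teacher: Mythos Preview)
Your proposal is correct and follows essentially the same route as the paper: closedness via continuity of $\Cfrak$ into $L^{2+}_{loc,Y}$, item (2) via the truncation identity $\langle \dW_o x,\by\rangle=\lim_{t\to\infty}\langle x,(\Cfrak^t)^*\pi_{[0,t]}\by\rangle$ combined with Lemma~\ref{L:dualadj}, and item (3) as the stabilizing special case. Your treatment of item (4) is in fact slightly cleaner than the paper's: you pass directly through the intertwining $\dW_o\Afrak^s=\pi_+\tau^s\dW_o$ and the adjoint pairing, whereas the paper re-derives the same conclusion by manipulating $(\Cfrak^{t+s})^*\pi_{[0,t+s]}\tau^{-s}\by$ inside the limit characterization from (2).
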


Before giving the proof, we remark that by Lemma \ref{L:dualadj}, the limit in \eqref{eq:Wo*} can be rewritten as
\begin{equation}\label{eq:Wo*2}
\lim_{t\to\infty}\Ipdp{x}{\Bfrak^d\pi_{[-t,0]}\ya \by}_X
=\lim_{t\to\infty}\Ipdp{x}{(\Cfrak^t)^*\pi_{[0,t]} \by}_X,
\end{equation}
because the expressions inside of the limit operators are the same.

\begin{proof}
To see that $\dW_o$ is closed, let $\dom{\dW_o} \ni x_k\to x$ in $X$ and $\dW_o x_k \to \by$ in $L^{2+}_Y$. Fix $b>0$ arbitrarily and observe that $\pi_{[0,b]}\Cfrak$ is a bounded operator from $X$ to $L^{2_+}_{Y}$, by the well-posedness of $\Sigma$. Hence
\[
\pi_{[0,b]} \Cfrak x
= \lim_{k\to\infty} \pi_{[0,b]} \Cfrak x_k
= \lim_{k\to\infty} \pi_{[0,b]} \dW_o x_k
=  \pi_{[0,b]} \by.
\]
Now let $b\to\infty$ to get that $\Cfrak x=\by\in L^{2+}_{Y}$. This shows that $x\in\dom{\dW_o}$ and $\dW_o x =\by$. Hence $\dW_o$ is closed, as claimed. 

In the remainder of the proof we assume that $\dom{\dW_o}$ is dense in $X$ and we prove items (1)--(4). Note that item (1) follows directly from \cite[Theorems 13.9 and 13.12]{RudinFA73}, since $\dW_o$ is closed and densely defined.

We now proceed with the explicit characterization of $\dW_o^*$ given in item (2). Let $x\in \dom{\dW_o}$ and $\by\in L^{2+}_Y$. We have $\Cfrak x=\dW_o x\in L^{2+}_Y$. Hence
\begin{align*}
\Ipdp{\dW_o x}{\by}_{L^{2+}_Y}
&\!\! = \! \Ipdp{\Cfrak x}{\by}_{L^{2+}_Y}
\!\!=\! \lim_{t\to\infty} \Ipdp{\pi_{[0,t]}\Cfrak x}{ \by}_{L^{2+}_{Y}}
\!\!=\! \lim_{t\to\infty} \Ipdp{\Cfrak^t x}{ \pi_{[0,t]}\by}_{L^{2}([0,t],Y)}\\
&=\lim_{t\to\infty} \Ipdp{ x}{(\Cfrak^t)^* \pi_{[0,t]} \by}_X.
\end{align*}
Then $\by\in \dom{\dW_o^*}$ if and only if there exists an $x_0\in X$, such that for all $x\in \dom{\dW_o}$, we have
\[
\Ipdp{x}{x_0}_X=\Ipdp{\dW_o x}{\by}_{L^{2+}_Y}=\lim_{t\to\infty} \Ipdp{ x}{(\Cfrak^t)^* \pi_{[0,t]} \by}_X.
\]
This proves item (2), and we next prove item (3).

In case $\by\in L^{2+}_{r,Y}$, say $\supp{\by}\subset[0,T]$, then $(\Cfrak^t)^*\pi_{[0,t]}\by$ is independent of $t$ for $t>T$ and thus $x_0:=\lim_{t\to\infty} (\Cfrak^t)^*\pi_{[0,t]}\by=(\Cfrak^T)^*\by$ exists and satisfies \eqref{eq:Wo*} by \eqref{eq:Wo*2}. Hence $L^{2+}_{r,Y}\subset \dom{\dW_o^*}$ and for $\by\in L^{2+}_{r,Y}$, it by \eqref{eq:Wo*2} holds that 
$$
	\dW_o^*\by=\lim_{t\to\infty}\Bfrak^d \ya \pi_{[0,t]} \by=
	\Bfrak^d \ya \by,
$$
and then clearly 
$$
	\bW_o^*L^{2+}_{r,Y}=\Bfrak^d\ya L^{2+}_{r,Y}=
	\Bfrak^d L^{2-}_{\ell,Y}=\range{\Bfrak^d}\,;
$$
this proves all of item (3).

Finally, we prove item (4). Fix $s,t>0$, $x\in X$ and $\by\in \dom{\dW_o^*}\subset L^{2+}_Y$. Then we have
\begin{align*}
&\Ipdp{x}{(\Cfrak^{t+s})^* \pi_{[0,t+s]} \tau^{-s} \by}_X
=\Ipdp{x}{(\Cfrak^{t+s})^*  \tau^{-s} \pi_{[0,t]} \by}_X\\
&\qquad\qquad= \Ipdp{\tau_+^s\Cfrak^{t+s}x}{ \pi_{[0,t]} \by}_{L^{2}([0,t],Y)}
= \Ipdp{\tau^s_+\pi_{[0,s+t]}\Cfrak x}{ \pi_{[0,t]} \by}_{L^{2}([0,t],Y)}\\
&\qquad\qquad= \Ipdp{\pi_{[0,t]}\tau_+^s\Cfrak x}{\pi_{[0,t]} \by}_{L^{2}([0,t],Y)}
= \Ipdp{\pi_{[0,t]}\Cfrak \Afrak^s x}{\pi_{[0,t]} \by}_{L^{2}([0,t],Y)}\\
&\qquad\qquad= \Ipdp{\Cfrak^t \Afrak^s x}{\pi_{[0,t]} \by} _{L^{2}([0,t],Y)}
= \Ipdp{\Afrak^s x}{ (\Cfrak^t)^* \pi_{[0,t]} \by} _X.
\end{align*}
Moreover, for $x\in\dom{\dW_o}$, we have $\Afrak^sx\in\dom{\dW_o}$, since $\Cfrak \Afrak^sx=\tau_+^s\Cfrak x\in L^{2+}_Y$. Using all of this, we find for $x\in\dom{\dW_o}$ and $x_o\in X$ satisfying \eqref{eq:Wo*} that
\begin{align*}
\Ipdp{x}{(\Afrak^s)^* x_o} _X
&= \Ipdp{\Afrak^s x}{ x_o} _X
= \lim_{t\to\infty} \Ipdp{\Afrak^s x}{ (\Cfrak^t)^* \pi_{[0,t]} \by} _X\\
&= \lim_{t\to\infty} \Ipdp{x}{(\Cfrak^{t+s})^*  \pi_{[0,t+s]} \tau^{-s} \by}_X
= \lim_{r\to\infty} \Ipdp{x}{(\Cfrak^{r})^*  \pi_{[0,r]} \tau^{-s} \by}_X.
\end{align*}
Since the limit exists for every $x\in\dom{\dW_o}$, it follows that $\tau^{-s} \by\in\dom{\bW_o^*}$
and $\bW_o^*\tau^{-s} \by=(\Afrak^s)^* x_o= (\Afrak^s)^* \bW_o^* \by$, which proves item (4).
\end{proof}

The $L^2$-input map is defined similarly, via the causal dual system. We first define the \emph{adjoint $L^2$-input map} $\dW_c^\bigstar$, using $\bigstar$ to indicate that $\dW_c^\bigstar$ is defined directly and not as the adjoint of an operator $\dW_c$:
\begin{equation}\label{eq:WcStarDef}
\begin{aligned}
\dW_c^\bigstar := \ya\Cfrak^d\big|_{\dom{\dW_c^\bigstar}}:X\supset \dom{\dW_c^\bigstar}\to L^{2-}_U,\\
\text{with }	\dom{\dW_c^\bigstar} := \set{x\in X\mid \Cfrak^d x\in L^{2+}_U}.
\end{aligned}
\end{equation}
Defining $\dW_o^d$ and $\dW_c^{d\bigstar}$ similarly as in \eqref{Wo} and \eqref{eq:WcStarDef}, respectively, for the causal dual system $\Sigma^d$, one obtains
\begin{equation}\label{DualObsCon}
\bW_o^d=\ya \dW_c^\bigstar\quad\mbox{and}\quad \bW_c^{d\bigstar}=\ya \dW_o,
\end{equation}
and in particular, $\Sigma$ is minimal if and only if $\dW_o$ and $\dW_c^\bigstar$ are both injective.

By duality, from Proposition \ref{P:Wo}, we obtain the following result:

\begin{proposition}\label{P:Wc}
Let $\dW_c^\bigstar$ be the adjoint $L^2$-input map of a well-posed system $\Sigma=\sbm{\Afrak&\Bfrak\\\Cfrak&\Dfrak}$. Then $\dW_c^\bigstar$ is closed.  Additionally assume that $\dW_c^\bigstar$ is densely defined. In this case:
\begin{enumerate}
\item[(1)] The operator $\dW_c^\bigstar$ has a closed and densely defined adjoint, denoted by $\dW_c$, such that $\dW_c^\bigstar=\dW_c^*$.

\item[(2)] A function $\bu\in L^{2-}_U$ lies in $\dom{\dW_c}$ if and only if there exists an $x_c\in X$ such that 
\begin{equation}\label{eq:Wc}
\lim_{t\to\infty}\Ipdp{x}{\Bfrak\pi_{[-t,0]} \bu}_X
=\Ipdp{x}{x_c}_X,\quad x\in\dom{\dW_c^\bigstar}.
\end{equation}
When $\bu\in\dom{\dW_c}$, we have $\dW_c \bu=x_c$, where $x_c$ is given by \eqref{eq:Wc}.

\item[(3)]  It holds that $L^{2-}_{\ell,U}\subset \dom{\dW_c}$, that $\dW_c\big|_{L^{2-}_{\ell,U}}=\Bfrak$, and that $\dW_c L^{2-}_{\ell,U}= \range{\Bfrak}=\textup{Rea}\,(\Sigma)$.

\item[(4)] For all $s>0$ we have $\tau^{s} \dom{\dW_c} \subset \dom{\dW_c}$ and $\dW_c \tau^{s}\bu=\Afrak^{s} \dW_c \bu$ for all $\bu\in \dom{\dW_c}$.
\end{enumerate}
\end{proposition}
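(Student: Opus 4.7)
The plan is to deduce Proposition \ref{P:Wc} from Proposition \ref{P:Wo} applied to the causal dual system $\Sigma^d$, using the duality relations \eqref{DualObsCon}. The key observation is that, since $\ya$ restricts to a unitary operator from $L^{2+}_U$ onto $L^{2-}_U$ with $\ya^2 = 1_{L^{2+}_U}$, the second identity in \eqref{DualObsCon} can be rewritten as $\dW_c^\bigstar = \ya\, \dW_o^d$, where $\dW_o^d$ is the $L^2$-output map of $\Sigma^d$; in particular $\dom{\dW_c^\bigstar} = \dom{\dW_o^d}$. Closedness of $\dW_c^\bigstar$ (no density hypothesis needed) then follows immediately, because $\dW_o^d$ is closed by Proposition \ref{P:Wo} applied to $\Sigma^d$ and composition on the left with the bounded unitary $\ya$ preserves closedness.

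Assume now that $\dW_c^\bigstar$ is densely defined; then so is $\dW_o^d$, and Proposition \ref{P:Wo} is fully available for $\Sigma^d$. For item (1), take adjoints in $\dW_c^\bigstar = \ya\, \dW_o^d$ to obtain $(\dW_c^\bigstar)^* = (\dW_o^d)^* \ya^*$; by Proposition \ref{P:Wo}(1) for $\Sigma^d$ together with the boundedness of $\ya^*$, the operator $\dW_c := (\dW_c^\bigstar)^*$ is closed and densely defined. For item (2), note that $\bu \in \dom{\dW_c}$ iff $\ya \bu \in \dom{(\dW_o^d)^*}$. Substituting $\by \rightsquigarrow \ya \bu$ in \eqref{eq:Wo*} for $\Sigma^d$, using $\Sigma^{dd}=\Sigma$ (hence $\Bfrak^{dd} = \Bfrak$) and $\ya^2 = I$, collapses $\Bfrak^{dd} \pi_{[-t,0]}\ya(\ya\bu)$ to $\Bfrak \pi_{[-t,0]} \bu$, which is exactly the condition \eqref{eq:Wc}, and identifies $\dW_c \bu = x_c$.

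For item (3), Proposition \ref{P:Wo}(3) applied to $\Sigma^d$ gives $L^{2+}_{r,U} \subset \dom{(\dW_o^d)^*}$ with $(\dW_o^d)^*|_{L^{2+}_{r,U}} = \Bfrak^{dd}\ya = \Bfrak \ya$; pulling this back through $\ya$ yields $L^{2-}_{\ell,U} \subset \dom{\dW_c}$ with $\dW_c \bu = (\dW_o^d)^* \ya \bu = \Bfrak \ya (\ya \bu) = \Bfrak \bu$ for every $\bu \in L^{2-}_{\ell,U}$, so $\dW_c L^{2-}_{\ell,U} = \range{\Bfrak} = \textup{Rea}(\Sigma)$. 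For item (4), use the intertwining $\ya \tau^s = \tau^{-s} \ya$ (direct verification from the definitions) together with Proposition \ref{P:Wo}(4) for $\Sigma^d$, which reads $\tau^{-s} \dom{(\dW_o^d)^*} \subset \dom{(\dW_o^d)^*}$ and $(\dW_o^d)^* \tau^{-s} = ((\Afrak^d)^s)^* (\dW_o^d)^* = \Afrak^s (\dW_o^d)^*$; for $\bu \in \dom{\dW_c}$ this gives $\ya \tau^s \bu = \tau^{-s} \ya \bu \in \dom{(\dW_o^d)^*}$ and $\dW_c \tau^s \bu = (\dW_o^d)^* \tau^{-s} \ya \bu = \Afrak^s \dW_c \bu$, as desired.

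There is no substantive obstacle; the argument is essentially bookkeeping. The only spot requiring care is keeping straight which copy of $\ya$ is acting on which $L^2$-space and verifying that $\ya^* = \ya^{-1}$ under the pivot-space identifications used in the construction of $\Sigma^d$, but once this is done the implications (1)--(4) are direct translations of the corresponding items in Proposition \ref{P:Wo}.
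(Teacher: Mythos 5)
Your proposal is correct and is precisely the argument the paper intends: the paper derives Proposition \ref{P:Wc} from Proposition \ref{P:Wo} "by duality" via the relations \eqref{DualObsCon}, offering no further detail, and your write-up simply fills in that bookkeeping (the identities $\dW_c^\bigstar=\ya\,\dW_o^d$, $\ya^2=1$, $\ya\tau^s=\tau^{-s}\ya$, and $((\Afrak^d)^s)^*=\Afrak^s$ are all exactly what is needed and all check out).
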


Again, it holds that
$$
	\Ipdp{x}{\Bfrak\pi_{[-t,0]} \bu}_X
	=\Ipdp{x}{((\Cfrak^d)^t)^*\pi_{[0,t]}\ya \bu}_X,
	\quad x\in X,\, \bu\in L^{2-}_U,\, t\geq0.
$$

We have the following easy corollary:

\begin{corollary}\label{cor:L2inputmaptraj}
Assume that the adjoint $L^2$-input map $\dW_c^\bigstar$ of a well-posed system $\Sigma=\sbm{\Afrak&\Bfrak\\\Cfrak&\Dfrak}$ is densely defined. For every system trajectory $(\bu,\bx,\by)$ of $\Sigma$ on $\R$, we have $\pi_-\bu\in\dom{\dW_c}$ and $\bx(0)=\dW_c\pi_-\bu$.
\end{corollary}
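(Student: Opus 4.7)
The plan is to unpack the definitions and apply Proposition \ref{P:Wc}(3) directly; there is essentially no obstacle here, only a small bookkeeping check that the projection $\pi_-\bu$ lands in the right subspace.

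First, I would recall from Definition \ref{def:WPtraj} that a trajectory $(\bu,\bx,\by)$ of $\Sigma$ on $\R$ satisfies
\[
\bx(t) = \Bfrak \pi_- \tau^t \bu,\qquad t\in\R,
\]
with input signal $\bu \in L^2_{\ell,loc,U}$. Setting $t=0$ gives $\bx(0) = \Bfrak \pi_- \bu$, so the task reduces to showing that $\pi_- \bu$ lies in $\dom{\dW_c}$ and that $\dW_c$ agrees with $\Bfrak$ on it.

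Next I would verify the membership $\pi_-\bu \in L^{2-}_{\ell,U}$. Since $\bu \in L^2_{\ell,loc,U}$, there exists $L \in \R$ with $\supp \bu \subset [L,\infty)$. Hence $\supp \pi_-\bu \subset [L,0)$ is a bounded interval, and combined with $\bu \in L^2_{loc,U}$ this yields $\pi_-\bu \in L^{2-}_{\ell,r,U} \subset L^{2-}_{\ell,U}$.

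Finally, Proposition \ref{P:Wc}(3) provides the inclusion $L^{2-}_{\ell,U} \subset \dom{\dW_c}$ together with the identity $\dW_c|_{L^{2-}_{\ell,U}} = \Bfrak$. Therefore $\pi_-\bu \in \dom{\dW_c}$ and
\[
\dW_c \pi_-\bu = \Bfrak \pi_-\bu = \bx(0),
\]
which is the claim. No density argument or passage to the limit is needed because $\pi_-\bu$ already has bounded support, placing it in the domain where $\dW_c$ coincides with the original input map $\Bfrak$.
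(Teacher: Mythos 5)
Your proof is correct and follows essentially the same route as the paper: both reduce to the identity $\bx(0)=\Bfrak\pi_-\bu$ from Definition \ref{def:WPtraj} and then invoke item (3) of Proposition \ref{P:Wc} to conclude $\pi_-\bu\in L^{2-}_{\ell,U}\subset\dom{\dW_c}$ with $\dW_c\pi_-\bu=\Bfrak\pi_-\bu$. Your explicit check that $\pi_-\bu$ has support in a bounded interval $[L,0)$ is exactly the bookkeeping the paper leaves implicit.
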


\begin{proof}
Let $(\bu,\bx,\by)$ be a trajectory of $\Sigma$ on $\R$. By Definition \ref{def:WPtraj}, we then have $\pi_-\bu\in L^{2-}_{\ell,U}\subset \dom{\Bfrak}\subset\dom{\dW_c}$. By item (3) of Proposition \ref{P:Wc} and \eqref{eq:trajonR}, $\bx(0)=\Bfrak\pi_-\bu=\dW_c\pi_-\bu$.
\end{proof}

In the remainder of this section we shall assume that $\widehat\Dfrak\big|_{\dom{\widehat\Dfrak}\bigcap\cplus}$ has a unique analytic extension to a function in
$H^\infty(\cplus;\cB(U,Y))$, also denoted by $\widehat\Dfrak$. With our convention to identify analytic functions that coincide on some set with an interior cluster
point, we simply write $\widehat\Dfrak\in H^\infty(\cplus;\cB(U,Y))$. In that case, $\widehat\Dfrak$ defines a bounded pointwise multiplication operator
\begin{equation}\label{MultOp}
M_{\widehat\Dfrak}:L^2(i\R;U) \to L^2(i\R;Y),\quad (M_{\widehat\Dfrak} f)(\lambda)= \widehat\Dfrak (\lambda) f(\lambda), \quad \lambda\in i\R,
\end{equation}
with operator norm $\| M_{\widehat\Dfrak} \|$ 
equal to the supremum norm $\|\widehat\Dfrak\|_\infty$ of
$\widehat\Dfrak$ over $\cplus$. 
Further, let $\cL:L^2(\R;K)\to L^2(i\R;K)$ denote the unitary \emph{bilateral Laplace transform}
$$
	(\cL \bu)(\lambda)=\int_{-\infty}^\infty
		e^{-\lambda t}\,\bu(t)\ud t,\quad
		\lambda\in i\R,
$$
which in particular maps $L^{2+}_K$ unitarily onto $H^{2+}_K:=H^2(\cplus;K)$. We then define the \emph{$L^2$-transfer map} $L_\Sigma$ by
\begin{equation}\label{L2inout}
L_\Sigma:=\cL^*M_{\widehat\Dfrak}\cL\in\cB(L^2_U,L^2_Y).
\end{equation}
We now derive various properties of this operator.

\begin{theorem}\label{thm:hankel}
Let $\Sigma$ be a well-posed linear system with transfer function $\widehat\Dfrak\in H^\infty(\cplus;\cB(U,Y))$. The following statements are true:
\begin{enumerate}
\item[(1)] The operator $L_\Sigma$ in \eqref{L2inout} is the unique continuous linear extension to an operator in $\cB(L^2_U,L^2_Y)$ of the restriction of $\Dfrak$ to $L^2_{\ell,U}$. Moreover, we have $\|L_\Sigma\|=\|\widehat\Dfrak\|_\infty$ and $L_\Sigma$ is causal, i.e., $\pi_- L_\Sigma \pi_+=0$, and time-invariant, i.e., $\tau^t L_\Sigma =L_\Sigma \tau^t$ for all $t\in\R$.

\item[(2)] It holds that $\range{\Bfrak}\subset \dom{\dW_o}$. The restriction to $L^{2-}_{\ell,U}$ of the Hankel operator $\pi_+\Dfrak\pi_-$ has a unique extension to an operator in $\cB(L^{2-}_U,L^{2+}_Y)$, which equals
\begin{equation}\label{HankelBounded}
\Hfrak_\Sigma:=\pi_+L_\Sigma\big|_{L^{2-}_U}\ \ \mbox{and satisfies}\ \
\|\Hfrak_\Sigma\|\leq\|\widehat\Dfrak\|_\infty,\quad \Hfrak_\Sigma|_{L^{2-}_{\ell,U}}=\dW_o \Bfrak.
\end{equation}

\item[(3)] For the causal dual system $\Sigma^d$ we have $\widehat\Dfrak^d\in H^\infty(\cplus;\cB(Y,U))$, the unique extension $L_{\Sigma^d}$ in $\cB(L^{2}_Y,L^{2}_U)$ of $\Dfrak^d$ restricted to $L^2_{\ell,Y}$ satisfies
\begin{equation}\label{eq:Lsig-d}
	L_{\Sigma^d}=\ya L_\Sigma^*\ya,
\end{equation}
and the $L^2$-analogue of the Hankel operator of the causal dual is $\Hfrak_{\Sigma^d}:=\pi_+L_{\Sigma^d}\big|_{L^{2-}_Y}=\ya\Hfrak_\Sigma^*\ya$. Moreover, we have $\range{\Bfrak^d}\subset \dom{\dW_c^\bigstar}$ and
\begin{equation}\label{Hfact1}
\ya\Hfrak_{\Sigma^d} |_{L^{2-}_{\ell,Y}}=
\Hfrak_\Sigma^*\ya |_{L^{2-}_{\ell,Y}}=\dW_c^\bigstar \Bfrak^d.
\end{equation}

\item[(4)] Furthermore, if $\dom{\dW_c ^\bigstar}$ is dense in $X$, then $\range{\dW_c } \subset\dom{\dW_o }$ and
\begin{equation}\label{Hfact2}
	\Hfrak_\Sigma\big|_{\dom{\dW_c }}=\dW_o \dW_c.
\end{equation}
If $\dom{\dW_o}$ is dense in $X$, then $\range{\dW_o^* } \subset\dom{\dW_c^\bigstar}$ and
\begin{equation}\label{Hfact3}
\Hfrak_\Sigma^*\big|_{\dom{\dW_o^* }}=\dW_c^\bigstar \dW_o^*.
\end{equation}
\end{enumerate}
\end{theorem}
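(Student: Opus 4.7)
The plan is to prove the four parts in order, with (1) providing the analytic foundation via $H^\infty$ multipliers, (2)--(3) assembling the Hankel factorization and its causal dual, and (4) being the most delicate.

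For (1), the strategy is to verify $\Dfrak \bu = L_\Sigma \bu$ on the dense subspace $L^2_{\ell,r,U}$, where $\bu$ belongs to $L^2_{\omega,U}$ for every $\omega \in \R$. For such $\bu$, Proposition \ref{prop:Transfer} gives $\widehat{\Dfrak \bu}(\lambda) = \widehat\Dfrak(\lambda)\widehat\bu(\lambda)$ on each half-plane $\C_\omega$ with $\omega > \omega_\Afrak$, and passage to the imaginary axis is justified by $\widehat\Dfrak \in H^\infty(\cplus;\cB(U,Y))$. The standard fact that multiplication by a bounded operator-valued $H^\infty$ function induces a bounded operator on $L^2(i\R;\cdot)$ with operator norm equal to the supremum then yields $\|L_\Sigma\| = \|\widehat\Dfrak\|_\infty$, while the extension to $L^2_U$ is unique by density. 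Causality is inherited from the invariance $\widehat\Dfrak\cdot H^{2+} \subset H^{2+}$ and time-invariance from the intertwining of $\cL$ with $\tau^t$.

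For (2), apply (1) to $\bu \in L^{2-}_{\ell,U}$: this gives $\Dfrak \bu = L_\Sigma \bu \in L^2_Y$, and the Hankel factorization $\pi_+ \Dfrak \pi_- = \Cfrak \Bfrak \pi_-$ from Definition \ref{def:WPsys} then yields $\Cfrak \Bfrak \bu = \pi_+ L_\Sigma \bu \in L^{2+}_Y$. This places $\Bfrak \bu$ in $\dom{\dW_o}$ and identifies $\dW_o \Bfrak \bu$ with $\Hfrak_\Sigma \bu$; density of $L^{2-}_{\ell,U}$ in $L^{2-}_U$ gives the bounded extension with $\|\Hfrak_\Sigma\| \le \|\widehat\Dfrak\|_\infty$. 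Part (3) is then obtained by applying (1)--(2) to $\Sigma^d$, combined with the identifications $\dW_o^d = \ya \dW_c^\bigstar$ and $\dW_c^{d\bigstar} = \ya \dW_o$ in \eqref{DualObsCon}. The algebraic identity $L_{\Sigma^d} = \ya L_\Sigma^* \ya$ follows from $\widehat\Dfrak^d(\lambda) = \widehat\Dfrak(-\lambda)^*$ on $i\R$ together with the intertwining of reflection and the Laplace transform; the Hankel version $\Hfrak_{\Sigma^d} = \ya \Hfrak_\Sigma^* \ya$ is a direct consequence after noting $\ya\pi_\pm = \pi_\mp\ya$.

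Part (4) is the main obstacle. The most natural attack, namely approximating $\bu \in \dom{\dW_c}$ by truncations $\bu_s := \pi_{[-s,0]}\bu$ and combining $\Bfrak \bu_s \in \dom{\dW_o}$ with $\dW_o \Bfrak \bu_s = \Hfrak_\Sigma \bu_s \to \Hfrak_\Sigma \bu$ in $L^{2+}_Y$, fails because $\Bfrak \bu_s \to \dW_c \bu$ only weakly against the dense test space $\dom{\dW_c^\bigstar}$, and closedness of $\dW_o$ does not respect weak limits. The way forward is to exploit the finer structure of $\dom{\dW_c^\bigstar}$: by Lemma \ref{L:dualadj} one has $(\Cfrak^t)^* = (\Bfrak^d)^t (\Lambda^t_Y)^*$, so $\range{(\Cfrak^t)^*} \subset \range{\Bfrak^d} \subset \dom{\dW_c^\bigstar}$, the final inclusion coming from part (3). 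Using $x = (\Cfrak^t)^*\by$ as a test vector in the defining weak limit \eqref{eq:Wc} then gives, for $\bu \in \dom{\dW_c}$ and $\by \in L^2([0,t],Y)$,
\begin{align*}
\Ipdp{\Cfrak^t \dW_c \bu}{\by}
 &= \Ipdp{\dW_c \bu}{(\Cfrak^t)^* \by}
 = \lim_{s\to\infty}\Ipdp{\Bfrak \pi_{[-s,0]}\bu}{(\Cfrak^t)^* \by} \\
 &= \lim_{s \to \infty} \Ipdp{\pi_{[0,t]}\Hfrak_\Sigma \pi_{[-s,0]}\bu}{\by}
 = \Ipdp{\pi_{[0,t]}\Hfrak_\Sigma \bu}{\by},
\end{align*}
where the middle step rewrites $\Cfrak^t \Bfrak \pi_{[-s,0]}\bu = \pi_{[0,t]}\Hfrak_\Sigma \pi_{[-s,0]}\bu$ by part (2) and the last uses continuity of $\Hfrak_\Sigma$ on $L^{2-}_U$. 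Hence $\Cfrak^t \dW_c \bu = \pi_{[0,t]}\Hfrak_\Sigma \bu$ for every $t > 0$; since $\Hfrak_\Sigma \bu$ already lies in $L^{2+}_Y$, letting $t \to \infty$ forces $\Cfrak \dW_c \bu = \Hfrak_\Sigma \bu \in L^{2+}_Y$, so $\dW_c \bu \in \dom{\dW_o}$ and $\dW_o \dW_c \bu = \Hfrak_\Sigma \bu$, establishing \eqref{Hfact2}. Identity \eqref{Hfact3} is obtained by running the same argument on $\Sigma^d$ and translating back via \eqref{DualObsCon}.
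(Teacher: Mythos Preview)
Your proposal is correct and follows essentially the same route as the paper's proof. In particular, your argument for part (4) --- testing against $(\Cfrak^t)^*\by$, noting that this lies in $\dom{\dW_c^\bigstar}$ via Lemma \ref{L:dualadj} and part (3), then passing the weak limit through $\Cfrak^t$ and the bounded $\Hfrak_\Sigma$ --- is exactly the paper's computation, and your preliminary remark about why the naive closure argument fails is a useful diagnostic the paper omits. The only cosmetic differences are in part (1), where you work on the smaller dense set $L^2_{\ell,r,U}$ while the paper uses all of $L^2_{\ell,U}$ via a shift to $\rplus$, and in part (3), where you derive $L_{\Sigma^d}=\ya L_\Sigma^*\ya$ on the frequency side (via $\widehat\Dfrak^d(i\omega)=\widehat\Dfrak(-i\omega)^*$ and the intertwining $\cL\ya=J\cL$) whereas the paper verifies it by an inner-product pairing showing that $L_\Sigma^*$ agrees with $\Dfrak^\circledast$ on $L^2_{r,Y}$; both arguments are short and equivalent.
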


\begin{proof}
Since $\widehat\Dfrak\in H^\infty(\cplus;\cB(U,Y))$, the operator $L_\Sigma$ maps $L^{2+}_U$ into $L^{2+}_Y$; hence $L_\Sigma$ is causal. Moreover, for every $\omega\in\R$, since $M_{\widehat\Dfrak}$ intertwines $M_{e_\omega 1_U}$ and $M_{e_\omega 1_Y}$, where $(e_\omega 1_K)(z)=e^{\omega z} 1_K $, we get that $L_\Sigma$ commutes with $\tau^t$ (suppressing the spaces $U$ and $Y$ in the notation); hence $L_\Sigma$ is time invariant. Now let $\bu\in L^2_U$ have $\supp \bu\subset [N,\infty)$ for some $N\in\R$. Then $\bu\in\dom{L_\Sigma}\bigcap\dom{\Dfrak}$ and $\tau^N\bu\in L^{2+}_U\subset L^{2+}_{\omega,U}$ for $\omega>\min\,\{0,\omega_\Afrak\}$. By \cite[Corollary 4.6.10(iii)]{StafBook} we have $M_{\widehat\Dfrak} \cL (\tau^N \bu)=\cL (\Dfrak\pi_+ \tau^N \bu)=\cL (\Dfrak \tau^N \bu)$. Hence
$$
	\tau^N L_\Sigma \bu=
	L_\Sigma \tau^N \bu=
	\cL^*M_{\widehat\Dfrak}\cL(\tau^N\bu)=
	\cL^* \cL (\Dfrak \tau^N \bu)=
	\Dfrak \tau^N \bu =
	\tau^N \Dfrak \bu.
$$
It follows that $L_\Sigma \bu= \Dfrak \bu$ for every $\bu\in L^2_{\ell,U}$. Since the latter subspace is dense in $L^2_U$, the only extension to a bounded linear operator on $L^2_U$ of the restriction of $\Dfrak$ to $L^2_{\ell,U}$ is $L_\Sigma$. Since $\mathcal{L}$ is unitary, we have $\|L_\Sigma\|=\|M_{\widehat\Dfrak}\|=\|\widehat\Dfrak\|_\infty$.
This proves item (1).

By \eqref{HankelBounded} and item (1), the operator $\Hfrak_\Sigma$ coincides with $\pi_+\Dfrak\pi_-$ on $L^{2-}_{\ell,U}$, and hence $\Hfrak_\Sigma$ is the unique extension to an operator in $\cB(L^{2-}_U,L^{2+}_Y)$ of $\pi_+\Dfrak\pi_-$ restricted to $L^{2-}_{\ell,U}$. Observing that $\pi_+$ is a contraction on $L^2_K$, we obtain that $\|\Hfrak_\Sigma\|\leq\|L_\Sigma\|=\|\widehat\Dfrak\|_\infty$. To see that the factorization of $\Hfrak_\Sigma|_{L^{2-}_{\ell,U}}$ in \eqref{HankelBounded} holds, let $\bu\in L^{2-}_{\ell,U}$ and note that Definition \ref{def:WPsys}.4(c) gives that $\Cfrak\Bfrak \bu=\pi_+\Dfrak\pi_- \bu= \Hfrak_\Sigma \bu$, which is in $L^{2+}_Y$ by the boundedness of $\Hfrak_\Sigma$. Hence $\Bfrak \bu \in\dom{\dW_o}$ and $\Hfrak_\Sigma \bu= \dW_o \Bfrak \bu$. This establishes item (2).

That $\widehat\Dfrak^d\in H^\infty(\cplus;\cB(Y,U))$ follows directly from $\widehat\Dfrak ^d(\lambda)=\widehat\Dfrak(\overline\lambda)^*$ in Theorem \ref{T:causdual}. By item (2) of the present theorem, which has already been proved, the restriction of $\Dfrak^d=\ya\Dfrak^\circledast\ya$ to $L_{\ell,Y}^2$ has a unique extension to $L_{\Sigma^d}\in\cB(L^{2}_Y,L^{2}_U)$. Moreover, $L_{\Sigma^d}=\ya L_\Sigma^*\ya$, because for all $\bu\in L^{2}_{\ell,U}$, $\by\in L^{2}_{r,Y}$ and some $\omega>\max\set{0,\omega_\Afrak}$,
\begin{align*}
	\Ipdp{L_\Sigma^* \by}{\bu}_{L^{2}_U} &=
	\Ipdp{ \by}{\Dfrak \bu}_{L^{2}_Y} =
	\Ipdp{ \by}{\widetilde\Dfrak \bu}_{L^{2}_{-\omega,Y},L^{2}_{\omega,Y}}\\
	&=
	\Ipdp{\widetilde\Dfrak^*\by}{ \bu}_{L^{2}_{-\omega,U},L^{2}_{\omega,U}}
	=	\Ipdp{\Dfrak^\circledast \by}{\bu}_{L^{2}_U},
\end{align*}
so that $L_\Sigma^*$ and $\Dfrak^\circledast$ coincide on $L^{2}_{r,Y}$ by the density of $L^2_{\ell,U}$ in $L^2_U$; then also $\Dfrak^d=\ya\Dfrak^\circledast\ya$ and $\ya L_\Sigma^*\ya$ coincide on $L^2_{\ell,Y}$, so that $L_{\Sigma^d}=\ya L_\Sigma^*\ya$. Letting $\iota_\pm:L^{2\pm}_K\to L^2_K$ denote the injection, we can write $\Hfrak_{\Sigma}=\pi_+L_{\Sigma}\iota_-$, and then $\Hfrak_{\Sigma}^*=\pi_-L_{\Sigma}^*\iota_+$, so that
\begin{equation}\label{eq:L2HandDualFact}
	\Hfrak_{\Sigma^d}=\pi_+L_{\Sigma^d}\big|_{L^{2-}_Y}
	=\ya\pi_-L_\Sigma^*\iota_+\ya = \ya\Hfrak_\Sigma^*\ya.
\end{equation}
Now \eqref{Hfact1} follows from \eqref{eq:L2HandDualFact} and \eqref{HankelBounded}, using the first identity in \eqref{DualObsCon}, and hence item (3) is true.

Now assume that $\dom{\dW_c ^\bigstar}$ is dense in $X$, hence $\dW_c$, the adjoint of $\dW_c ^\bigstar$, is closed and densely defined. From item (3) in Proposition \ref{P:Wc} and \eqref{HankelBounded}, it follows that $\Hfrak_\Sigma$ and $\dW_o \dW_c$ coincide on $L^{2-}_{\ell,U}$. We now show that $\range{ \dW_c}\subset \dom{\dW_o}$ and that $\Hfrak_\Sigma$ and $\dW_o \dW_c$ also coincide on $\dom{\dW_c}$. Let $\bu\in \dom{\dW_c}\subset L^{2-}_U$ and $x_c= \dW_c \bu \in \range{\dW_c}$. Choose $T>0$ and $\by\in L^{2}([0,T];Y)$ arbitrarily. Then Lemma \ref{L:dualadj} and item (3) yield
$$
	(\Cfrak^T)^*\by
	=(\Bfrak^d)^T(\Lambda^t_K)^*\by
	\in \dom{\dW_c^\bigstar},
$$
while item (2) of Proposition \ref{P:Wc} and the boundedness of $\Cfrak^T$ give 
\begin{align*}
\Ipd{\by}{\Cfrak^T x_c}_{L^{2+}_Y}
&
=\lim_{t\to \infty} \Ipd{(\Cfrak^T)^* \by}{ \Bfrak \pi_{[-t,0]}\bu}_X
=\lim_{t\to \infty} \Ipd{\by}{\pi_{[0,T]}\Cfrak \Bfrak \pi_{[-t,0]}\bu}_{L^{2}([0,T];Y)}\\
&=\lim_{t\to \infty} \Ipd{\by}{\pi_{[0,T]} \Hfrak_\Sigma \pi_{[-t,0]}\bu}_{L^{2}([0,T];Y)}
=\Ipd{\by}{\pi_{[0,T]} \Hfrak_\Sigma \bu}_{L^{2}([0,T];Y)},
\end{align*}
using the boundedness of $\Hfrak_\Sigma$ in the last identity. Since the above computation holds for all $\by$ and all $T$, we have
$\pi_{[0,T]}\Cfrak x_c=\Cfrak^T x_c=\pi_{[0,T]} \Hfrak_\Sigma \bu$ for all $T>0$. This shows that $\Cfrak x_c = \Hfrak_\Sigma \bu \in L^{2+}_Y$. In particular, we have $x_c\in\dom{\dW_o}$ and $\dW_o \dW_c \bu= \dW_o x_c= \Cfrak x_c=\Hfrak_\Sigma \bu$. Equality \eqref{Hfact3} is obtained by applying \eqref{Hfact2} to $\Sigma^d$, using that $\Hfrak_\Sigma^*=\ya \Hfrak_\Sigma^d \ya$, as proved above, and the identities in \eqref{DualObsCon}.
\end{proof}

\begin{corollary}\label{C:MinExp}
Let $\Sigma$ be a well-posed system  with $\widehat\Dfrak\in H^\infty(\cplus;\cB(U,Y))$. If $\Sigma$ is controllable, then $\dW_o$ is densely defined; if $\Sigma$ is observable, then $\dW_c^\bigstar$ is densely defined.
\end{corollary}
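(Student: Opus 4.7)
The proof will be essentially a one-line deduction from Theorem \ref{thm:hankel}, so the plan is to identify the relevant inclusions and combine them with the density definitions of controllability and observability.

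First I would recall Theorem \ref{thm:hankel}(2), which asserts that $\range{\Bfrak} \subset \dom{\dW_o}$ (this is precisely the content of the claim that $\Hfrak_\Sigma|_{L^{2-}_{\ell,U}} = \dW_o \Bfrak$, together with the fact that $\Hfrak_\Sigma$ maps into $L^{2+}_Y$). Likewise, Theorem \ref{thm:hankel}(3) gives $\range{\Bfrak^d} \subset \dom{\dW_c^\bigstar}$ via the factorization $\Hfrak_\Sigma^* \ya|_{L^{2-}_{\ell,Y}} = \dW_c^\bigstar \Bfrak^d$, noting also that Theorem \ref{T:causdual} ensures $\widehat\Dfrak^d \in H^\infty(\cplus; \cB(Y,U))$ so that the same theorem is applicable to $\Sigma^d$.

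Now for the first assertion, if $\Sigma$ is controllable then $\operatorname{Rea}(\Sigma) = \range{\Bfrak}$ is dense in $X$ by Definition \ref{def:WPmin-approx}. Combining with the inclusion $\range{\Bfrak} \subset \dom{\dW_o}$ yields that $\dom{\dW_o}$ is dense in $X$. For the second assertion, if $\Sigma$ is observable then $\operatorname{Obs}(\Sigma) = \range{\Bfrak^d}$ is dense in $X$, again by Definition \ref{def:WPmin-approx}, and the inclusion $\range{\Bfrak^d} \subset \dom{\dW_c^\bigstar}$ immediately gives the density of $\dom{\dW_c^\bigstar}$.

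There is no real obstacle here; the only subtlety is making sure that the hypothesis $\widehat\Dfrak \in H^\infty(\cplus; \cB(U,Y))$ is needed so that Theorem \ref{thm:hankel} applies and the bounded Hankel operator $\Hfrak_\Sigma$ exists, providing the range inclusions. Without that hypothesis, one would not know a priori that $\Cfrak \Bfrak \bu \in L^{2+}_Y$ (as opposed to merely $L^{2+}_{loc,Y}$), and the argument would fail.
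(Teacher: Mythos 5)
Your proof is correct and follows the same route as the paper: Theorem \ref{thm:hankel} gives the inclusions $\range{\Bfrak}\subset\dom{\dW_o}$ and $\range{\Bfrak^d}\subset\dom{\dW_c^\bigstar}$, and density of these ranges is exactly the definition of controllability and observability in Definition \ref{def:WPmin-approx}. Your remark on why the $H^\infty$ hypothesis is needed (to get the bounded Hankel operator and hence the range inclusions) is an accurate reading of where that assumption enters.
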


\begin{proof}
By Theorem \ref{thm:hankel}, the finite-time reachable subspace $\textup{Rea}\,(\Sigma)=\range{\Bfrak}$ is contained in $\dom{\dW_o}$ and the finite time observable subspace $\textup{Obs}\,(\Sigma)=\range{\Bfrak^d}$ is contained in $\dom{\dW_c^\bigstar}$. Thus the claim follows directly from Definition \ref{def:WPmin-approx}.
\end{proof}

We now present two cases where the $L^2$-input and $L^2$-output map are both bounded.

\begin{lemma}\label{lem:PasvContr}
For a well-posed system $\Sigma$, the following hold:
\begin{enumerate}

\item[(1)] If $\Sigma$ is exponentially stable, then $\dW_c\in\cB(L^{2-}_U,X)$ and $\dW_o\in\cB(X,L^{2+}_Y)$.

\item[(2)] If $\Sigma$ is passive, then $\dW_c$ and $\dW_o$ are everywhere-defined contractions.

\end{enumerate}
\end{lemma}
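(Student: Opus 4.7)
My plan is to handle part (2) first and then deduce part (1) by essentially the same argument with passivity replaced by the growth-bound machinery from Theorem 2.5.4 of \cite{StafBook}. For (2), since $\Sigma$ is passive the quadratic $S(x)=\|x\|_X^2$ is a storage function, so for any $x\in X$ the zero-input trajectory on $\rplus$ starting at $x$ is $(\bu,\bx,\by)=(0,\,\Afrak^\cdot x,\,\Cfrak x)$ and \eqref{eq:storfndef} reads
\[
\|\Afrak^t x\|_X^2 + \int_0^t \|(\Cfrak x)(s)\|_Y^2 \ud s \le \|x\|_X^2, \quad t\ge 0.
\]
Dropping the first (nonnegative) summand and letting $t\to\infty$ gives $\Cfrak x\in L^{2+}_Y$ with $\|\Cfrak x\|_{L^{2+}_Y}\le\|x\|_X$, i.e.\ $\dom{\dW_o}=X$ and $\dW_o$ is a contraction. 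Theorem \ref{T:causdual} says $\Sigma^d$ is passive too, so the same argument applied to $\Sigma^d$ produces an everywhere-defined contraction $\dW_o^d\in\cB(X,L^{2+}_U)$. The first identity in \eqref{DualObsCon} gives $\dW_o^d = \ya\,\dW_c^\bigstar$, and since $\ya$ is unitary between $L^{2+}_U$ and $L^{2-}_U$, it follows that $\dW_c^\bigstar$ is an everywhere-defined contraction from $X$ into $L^{2-}_U$. Proposition \ref{P:Wc}(1) then yields $\dW_c=(\dW_c^\bigstar)^*$ everywhere defined with $\|\dW_c\|\le 1$.

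For (1), the strategy is identical but the storage-inequality input is replaced by the growth-bound theorem for well-posed systems. Exponential stability means $\omega_\Afrak<0$, so I pick $\omega$ with $\omega_\Afrak<\omega<0$; then Theorem 2.5.4 of \cite{StafBook} furnishes $\widetilde\Cfrak\in\cB(X,L^{2+}_{\omega,Y})$, and the paragraph surrounding \eqref{eq:omegadual} in the paper shows that for $\omega<0$ the inclusion $L^{2+}_{\omega,Y}\hookrightarrow L^{2+}_Y$ is continuous. Hence $\range\Cfrak\subset L^{2+}_Y$ and $\dW_o\in\cB(X,L^{2+}_Y)$. Because the dual semigroup $\Afrak^d=\Afrak^*$ has the same operator norms as $\Afrak$, it has the same growth bound, so $\Sigma^d$ is exponentially stable too; applying the above to $\Sigma^d$ produces $\dW_o^d\in\cB(X,L^{2+}_U)$, and \eqref{DualObsCon} together with Proposition \ref{P:Wc}(1) then delivers $\dW_c\in\cB(L^{2-}_U,X)$ exactly as before.

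None of this is genuinely hard; the two details to keep straight are the direction of the weighted-$L^2$ embeddings when the weight parameter is negative (so that exponential stability really does force $\range\Cfrak\subset L^{2+}_Y$, not merely $L^{2+}_{\omega,Y}$), and the elementary identity that, for the zero-input trajectory, the output's restriction to $[0,t]$ has squared $L^2$-norm $\int_0^t\|(\Cfrak x)(s)\|_Y^2\ud s$. The reduction from $\dW_c$ to $\dW_o^d$ via Theorem \ref{T:causdual} and \eqref{DualObsCon} is the only conceptual maneuver, and it comes for free from results already established in the excerpt.
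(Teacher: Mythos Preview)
Your proof is correct and follows essentially the same approach as the paper's own proof: the zero-input storage inequality for $\dW_o$ in the passive case, followed by duality via Theorem~\ref{T:causdual} and \eqref{DualObsCon} for $\dW_c$, and the growth-bound/Theorem~2.5.4 argument in the exponentially stable case. The only cosmetic difference is that the paper takes $\omega=0$ directly (which is allowed since $\omega_\Afrak<0$), so that $L^{2+}_{0,Y}=L^{2+}_Y$ without needing the embedding step, and it reads off boundedness of $\dW_c^\bigstar=\widetilde\Bfrak^*$ directly from \eqref{eq:FrakTildeDef} rather than passing through $\Sigma^d$.
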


\begin{proof}
Concerning item (1), if $\Sigma$ is exponentially stable, then $\omega_\Afrak<0$ so that we can choose $\omega=0$ in order to obtain from \eqref{eq:FrakTildeDef} that $\widetilde\Cfrak\in\cB(X,L^{2+}_Y)$ and $\widetilde\Bfrak\in\cB(L^{2-}_U,X)$. Then $\dW_o =\widetilde\Cfrak$ and $\dW_c ^\bigstar=\widetilde\Bfrak^*$ are bounded, too, and we have $\dW_c=(\dW_c^\bigstar)^*\in\cB(L^{2-}_U,X)$.

For item (2), note that a passive system satisfies \eqref{eq:storfndef} with $S(x)=\|x\|^2_X$ by definition. For trajectories $(\bu,\bx,\by)$ on $\rplus$ with $\bu=0$, we in particular obtain $\int_0^t\|\by(s)\|^2\ud s\leq \|\bx(0)\|^2$, and letting $t\to\infty$, we get $\by\in L^{2+}_Y$. Moreover, by \eqref{eq:trajonRplus} and the definition \eqref{Wo} of $\dW_o$ we have $\|\by\|_{L^{2+}_Y}^2=\|\dW_o\bx(0)\|_{L^{2+}_Y}^2\leq\|\bx(0)\|_X^2$. This proves that $\dW_o$ is an everywhere-defined contraction, and applying the same argument to the passive dual $\Sigma^d$, using \eqref{DualObsCon}, gives that $\dW_c^\bigstar$ is a contraction, hence $\dW_c$ is a well-defined contraction, too.
\end{proof}

The following definition presents the analogues of exact $\ell^2$-controllability and exact $\ell^2$-observability from \cite{BGtH18a} in the context of well-posed systems.

\begin{definition}\label{def:L2min}
The well-posed system $\Sigma$ is \emph{(exactly) $L^2$-controllable} if $\dW_c ^\bigstar$ is densely defined and $\range{\dW_c }=X$. The system $\Sigma$ is \emph{(exactly) $L^2$-observable} if $\dW_o $ is densely defined and $\range{\dW_o ^*}=X$. The system $\Sigma$ is \emph{(exactly) $L^2$-minimal} if it is both $L^2$-controllable and $L^2$-observable.
\end{definition}

By \eqref{DualObsCon}, $\Sigma$ is $L^2$-controllable ($L^2$-observable) if and only if $\Sigma^d$ is $L^2$-observable ($L^2$-controllable). Some differences between $\ell^2$-controllability/observability and approximate controllability/observability for discrete-time systems are described in \cite[Proposition 2.7]{BGtH18a}; here we prove analogous results in the present context, and we also provide new information on these relationships.

\begin{corollary}\label{C:L2min}
For each well-posed system $\Sigma$ as in Definition \ref{def:WPsys}, $L^2$-controllability ($L^2$-observability) implies (approximate) controllability (observability). In particular, $L^2$-minimality of $\Sigma$ implies minimality of $\Sigma$. When we additionally assume that $\widehat\Dfrak\in H^\infty(\cplus;\cB(U,Y))$, the following statements are true:

\begin{enumerate}
  \item[(1)] If $\Sigma$ is $L^2$-controllable then $\dW_o$ is bounded.

  \item[(2)] If $\Sigma$ is $L^2$-observable then $\dW_c$ is bounded.

  \item[(3)] If $\Sigma$ is $L^2$-minimal then $\dW_c^* $ and $\dW_o$ are both bounded and bounded below.
\end{enumerate}
\end{corollary}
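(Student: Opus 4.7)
The plan is to leverage the Hankel factorisation of Theorem \ref{thm:hankel}(4), the closed-graph and closed-range theorems, and the $\Sigma\leftrightarrow\Sigma^d$ duality recorded in Corollary \ref{cor:ApproxContrObsDual} and \eqref{DualObsCon}. The first task is the unconditional claim that $L^2$-controllability implies approximate controllability; the observability version will then follow by dualising. I would fix $\bu\in\dom{\dW_c}$ and argue by contradiction that $\dW_c\bu\in\overline{\operatorname{Rea}(\Sigma)}=\overline{\range{\Bfrak}}$. If this failed, Hahn--Banach would supply a $y\in X$ with $\langle y,\Bfrak\bv\rangle=0$ for all $\bv\in L^{2-}_{\ell,U}$ and $\langle y,\dW_c\bu\rangle\neq 0$. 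By density of $L^{2-}_{\ell,U}$ in $L^{2-}_{\omega,U}$ and continuity of $\widetilde\Bfrak$, this yields $\widetilde\Bfrak^*y=0$ in $L^{2-}_{-\omega,U}$; since the zero vector lies in $L^{2-}_U$, the definition \eqref{eq:WcStarDef} together with the identification $\Bfrak^\circledast=\widetilde\Bfrak^*$ from \eqref{eq:SunOpDef} places $y\in\dom{\dW_c^\bigstar}$. Proposition \ref{P:Wc}(2) then forces $\langle y,\dW_c\bu\rangle=\lim_t\langle y,\Bfrak\pi_{[-t,0]}\bu\rangle=0$, a contradiction. Hence $\range{\dW_c}\subset\overline{\operatorname{Rea}(\Sigma)}$, and $L^2$-controllability yields approximate controllability; the observability case follows by transferring to $\Sigma^d$ via \eqref{DualObsCon} and Corollary \ref{cor:ApproxContrObsDual}.

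For part (1), $L^2$-controllability says $\dom{\dW_c^\bigstar}$ is dense in $X$, so Theorem \ref{thm:hankel}(4) supplies $X=\range{\dW_c}\subset\dom{\dW_o}$. Combined with the closedness of $\dW_o$ from Proposition \ref{P:Wo}, the closed-graph theorem delivers $\dW_o\in\cB(X,L^{2+}_Y)$. Part (2) is then part (1) transferred to $\Sigma^d$: by Corollary \ref{cor:ApproxContrObsDual} together with the first identity of \eqref{DualObsCon}, $\Sigma$ being $L^2$-observable forces $\Sigma^d$ to be $L^2$-controllable, so $\dW_o^d=\ya\,\dW_c^\bigstar$ is bounded, hence so are $\dW_c^\bigstar$ and $\dW_c=(\dW_c^\bigstar)^*$.

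Part (3) then collects these facts. Under $L^2$-minimality both $\dW_o$ and $\dW_c^*=\dW_c^\bigstar$ are bounded operators between Hilbert spaces. The closed-range theorem says that for a bounded Hilbert-space operator $T$, surjectivity of $T$ is equivalent to $T^*$ being bounded below; applying this with $T=\dW_c$ (surjective by $L^2$-controllability) yields $\dW_c^*$ bounded below, and applying it with $T=\dW_o^*$ (surjective by $L^2$-observability) yields $\dW_o=(\dW_o^*)^*$ bounded below. The one delicate step is the Hahn--Banach argument in the first paragraph: the convergence guaranteed by Proposition \ref{P:Wc}(2) is only tested against $\dom{\dW_c^\bigstar}$, so the whole deduction rests on verifying that every annihilator of $\range{\Bfrak}$ automatically lands in $\dom{\dW_c^\bigstar}$, which is precisely what the chain $\widetilde\Bfrak^*=\Bfrak^\circledast$ together with \eqref{eq:WcStarDef} provides.
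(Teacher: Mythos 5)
Your proof is correct, and items (1)--(3) follow the paper's own argument essentially verbatim: the inclusion $X=\range{\dW_c}\subset\dom{\dW_o}$ from Theorem \ref{thm:hankel}(4) plus the closed graph theorem for (1), duality for (2), and surjectivity of the bounded operators $\dW_c$ and $\dW_o^*$ giving bounded left inverses for their adjoints in (3). The only genuine divergence is in the unconditional first claim, where you and the paper travel in opposite directions around the duality. The paper handles observability first and in one line: $L^2$-observability gives $\range{\dW_o^*}=X$, so $\Ker{\dW_o}=\range{\dW_o^*}^\perp=\{0\}$, and by Lemma \ref{lem:ObsControl1-1} (equivalently, the remark after \eqref{Wo}) injectivity of $\dW_o$ \emph{is} observability; controllability then follows by passing to $\Sigma^d$. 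You instead attack controllability directly, showing $\range{\Bfrak}^\perp\subset\Ker{\dW_c^\bigstar}=\range{\dW_c}^\perp$, i.e.\ $\range{\dW_c}\subset\overline{\range{\Bfrak}}$, and then dualize for observability. Your argument is sound --- the one step you rightly flag as delicate, namely that any annihilator $y$ of $\range{\Bfrak}$ satisfies $\Bfrak^\circledast y=\widetilde\Bfrak^*y=0\in L^{2-}_U$ and hence lies in $\dom{\dW_c^\bigstar}$, does go through --- but it re-derives by hand what the paper gets for free from the orthogonality relation $\Ker{T}=\range{T^*}^\perp$ for the closed densely defined operator $\dW_o$ together with the already-established characterization of observability as injectivity of $\Cfrak$. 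The trade-off is that your route is self-contained modulo Proposition \ref{P:Wc}, while the paper's leans on Lemma \ref{lem:ObsControl1-1} and is shorter.
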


Hence, the assumptions on denseness of the domains of $\dW_c ^\bigstar$ and $\dW_o$ impose no restriction in the study of the bounded real lemma, since in the standard version (Theorem \ref{thm:stdlemma}) we assume minimality (or even $L^2$-minimality in Theorem \ref{thm:stdlemmaL2reg}) and in the strict version (Theorem \ref{thm:stdlemmastrict}) we assume exponential stability; see Lemma \ref{lem:PasvContr}.

\begin{proof}[Proof of Corollary \ref{C:L2min}]
Assume that $\Sigma$ is $L^2$-observable; then by Definition \ref{def:L2min}, $\dom{\dW_o}$ is dense in $X$ and $\range{\dW_o^*}=X$. Since $\dW_o$ is closed, the comment after \eqref{Wo} gives
that $\Sigma$ is (approximately) observable. If instead $\Sigma$ is $L^2$-controllable, then $\Sigma^d$ is $L^2$-observable, and further $\Sigma^d$ is observable by what we just proved; hence $\Sigma$ is controllable by Corollary  \ref{cor:ApproxContrObsDual}.

Now assume that $\Sigma$ is $L^2$-controllable and that $\widehat\Dfrak\in H^\infty(\cplus;\cB(U,Y))$. Then $\dom{\dW_c^\bigstar}$ is dense by definition, and according to Theorem \ref{thm:hankel}, we have $X=\range{\dW_c}\subset \dom{\dW_o}$, so that $\dW_o$ is bounded by the closed graph theorem. This completes the proof of item (1), and the proof of item (2) is easy using duality.

In conclusion we prove item (3). By assumption the ranges of $\dW_c$ and $\dW_o^*$ are equal to $X$. From items (1) and (2) we obtain that $\dW_c$
and $\dW_o^*$ are bounded. The boundedness of $\dW_c$ and $\dW_o^*$ together with $\range{\dW_c}=X=\range{\dW_o^*}$ yields that $\dW_c$
and $\dW_o^*$ have bounded right inverses, or, equivalently, $\dW_c^*$ and $\dW_o$ have bounded left inverses, and hence the latter are bounded below.
\end{proof}

\section{System nodes and well-posed linear systems}  \label{sec:system-node}

The well-posed systems considered in the present paper can alternatively be formulated in a differential representation via a so-called system node $\sbm{  A \& B \\ C \& D }$. In this section we review some of the details of system nodes and describe some related topics relevant for the paper, including a reformulation of the KYP-inequality in terms of system nodes. See Chapters 3 and 4 of \cite{StafBook} for full details and many more results on system nodes.

\subsection{Construction of the system node}\label{SubS:SysNodeConstruct}
Let $\Sigma=\sbm{\Afrak & \Bfrak\\ \Cfrak & \Dfrak}$ be a well-posed linear system as in Definitions \ref{def:WPsys} and \ref{def:WPtraj}. Let $A$ on $X$ be the {\em infinitesimal generator} of the $C_0$-semigroup $\Afrak^t$, that is,
$$
\dom{A}=\left\{x\in X \biggmid \lim_{h\to 0} \frac{1}{h}(\Afrak^tx -x) \text{ exists} \right\},\quad
Ax=\lim_{h\to 0} \frac{1}{h}(\Afrak^t x-x).
$$
Now fix the
\emph{rigging parameter} $\beta\in\rho(A)$ arbitrarily and define the \emph{interpolation space} $X_1:=\dom{A}$ with the Hilbert space norm
$\|x\|_1:=\|(\beta-A)x\|_X$; then $\alpha -A$ is an isomorphism from $X_1$ to $X$ for all $\alpha\in\rho(A)$. Next complete $X$ in the norm
$\|x\|_{-1}:=\|(\beta-A)^{-1}x\|_X$ to get the \emph{extrapolation space} $X_{-1}$. Then we have the chain of inclusions
\begin{equation}  \label{Xsub-1}
X_1\subset X\subset X_{-1}
\end{equation}
 with dense and continuous embeddings, and the spaces $X_1$, $X$ and $X_{-1}$ form a {\em Gelfand triple}. Moreover, the generator $A$ extends uniquely to a bounded operator $A_{-1}$ in
$\cB(X, X_{-1})$ which in turn is the infinitesimal generator of a $C_0$-semigroup $\Afrak_{-1}^t$ on $X_{-1}$ which extends $\Afrak^t$. The resolvent set $\rho(A_{-1})$ equals $\rho(A)$; see \cite[\S3.6]{StafBook} for further details.

By Theorems 4.2.1 and 4.4.2 in \cite{StafBook} there exist bounded operators $B\in\cB(U,X_{-1})$, the {\em control operator}, and $C\in \cB(X_1,Y)$, the {\em observation operator}, that are uniquely determined by the formulas
\begin{equation}\label{BfrakCfrak}
\Bfrak \bu=\int_{-\infty}^0 \Afrak_{-1}^{-s} B \bu(s)  \ud s, \quad \bu\in L^{2-}_{\ell,U},\qquad (\Cfrak x)(t)=C\Afrak^t x,\quad x\in X_1.
\end{equation}
Note that while $B$ maps into $X_{-1}$ and $\Afrak_{-1}^t$ acts on $X_{-1}$, the result after integration in the first formula still ends up in $X$.

With $A$ and $B$ defined as above we can form a closed operator $A \& B \colon \sbm{X \\ U} \supset \dom{\AB} \to X$ by
$$
  \dom{\AB}= \left\{ \begin{bmatrix} x \\ u \end{bmatrix} \biggmid A_{-1} x + B u \in X \right \}
  \quad\text{and}\quad
 \AB\begin{bmatrix} x \\ u \end{bmatrix} = A_{-1} x + B u.
$$
Choose a fixed $\alpha \in \C_{\omega_\Afrak}$.  For $\sbm{ x \\ u } \in \dom{\AB}$, we then have
\begin{align*}
 x - (\alpha - A_{-1})^{-1} B u =& (\alpha - A_{-1})^{-1}
 \big( \alpha x - (A_{-1} x +  Bu)\big) \\
 & \in
 (\alpha - A)^{-1} X = X_1 = \dom{C}.
\end{align*}
From $\Dfrak$, we can compute the 
transfer function $\widehat\Dfrak\in H^\infty(\C_\omega;\cB(U,Y))$, $\omega>\omega_\Afrak$, of $\Sigma$ 
via Proposition \ref{prop:Transfer}. Since $\alpha\in\C_{\omega_\Afrak}$, we can evaluate $\widehat\Dfrak(\alpha)$, and then define 
\begin{equation}   \label{C&D}
  C \& D \colon \begin{bmatrix} x \\ u \end{bmatrix} \mapsto C \big(x - (\alpha - A_{-1})^{-1} B u\big)+ \widehat \Dfrak(\alpha)u.
\end{equation}
Note that if $x \in X_1$, then $\sbm{ x \\ 0} \in
\dom{\AB}$ and $\CD \sbm{ x \\ 0 } = C x$.  In general there is no sensible way to separate out an independent feedthrough operator $D \in \cB(U,Y)$
except in some special cases, e.g., if at least one of $B \colon X \to U$ and  $C \colon X \to Y$ is bounded
(see Theorems 4.5.2 and 4.5.10  in \cite{StafBook}), or if $\Sigma$ is {\em regular} 
(see Chapter 5 in \cite{StafBook}). Rather we think of $C \& D$ as an extension of the operator $C$ defined on $X_1 \cong \sbm{ X_1 \\ 0} \subset \sbm{ X \\ U}$ to the operator $C \& D$ defined
on $\dom{\AB}\supset \sbm{ X_1 \\ 0 }$
and mapping into $X$.
After the above steps, we can introduce the {\em system node} $\sbm{  A \& B \\ C \& D } \colon \sbm{ X \\ U}\supset
\dom{\sbm{  A \& B \\ C \& D }} \to \sbm{ X \\ Y}$ with
$$
\dom{\sbm{  A \& B \\ C \& D }} = \dom{\AB} =  \dom{\CD}
$$
and action
$$
\begin{bmatrix} \AB \\ \CD \end{bmatrix}  \colon \begin{bmatrix} x \\ u \end{bmatrix} \mapsto \begin{bmatrix} \AB \sbm{x \\ u}
\\ \CD \sbm{ x \\ u } \end{bmatrix}.
$$

We next recall Definition 4.7.2 in \cite{StafBook}. 

\begin{definition}\label{def:sysnode}
Suppose that $\bS:=\SmallSysNode$ is an operator mapping  a dense subspace $\dom{\bS}$ of $\sbm{ X \\ U}$ into $\sbm{ X \\ Y}$.
 We shall say that $\bS$ is a {\em system node} if it has the following properties:
\begin{itemize}
\item[(1)] $\bS$ is closed as an operator from $\sbm{ X \\ U}$ into $\sbm{ X \\ Y}$.

\item[(2)]  The operator $A \colon X \supset \dom{A} \to X$ defined by $A x = A \& B \sbm{ x \\ 0}$ on
$\dom{A} = \{ x \in X \bigmid \sbm{ x \\ 0 } \in \dom{\bS} \}$ has domain dense in $X$, and $A$ as an unbounded operator on $X$
generates a $C_0$-semigroup on $X$.

\item[(3)]  The operator $A \& B$ (with $\dom{\AB} = \dom{\bS}$) can be extended to an operator
$$
\begin{bmatrix} A_{-1} & B \end{bmatrix} \in \cB(\sbm{ X \\ U }, X_{-1})
$$
 (where $X_{-1}$ is the extrapolation space introduced in \eqref{Xsub-1}).

\item[(4)] $\dom{\bS} = \big\{ \sbm{ x \\ u } \in \sbm { X \\ U} \bigmid A_{-1} x + B u \in X\}$.
\end{itemize}
\end{definition}

Given a system node $\bS = \sbm{ A \& B \\ C \& D}$ we may define its transfer function $\widehat \Dfrak_\bS(\lambda)$ by
\begin{equation} \label{node-transfunc}
   \widehat \Dfrak_\bS(\lambda)u =  C \& D \bbm{ (\lambda - A_{-1})^{-1} B \\ 1_U} u,\quad \lambda\in\rho(A).
\end{equation}
If $\widehat\Dfrak$ is constructed as in Proposition \ref{prop:Transfer}, then $\widehat\Dfrak_\bS$ is an extension of $\widehat\Dfrak$ from $\C_{\omega_\Afrak}$ to all of $\rho(A)$; see \cite[Lemma 4.7.5(iii)]{StafBook}. 

We end this subsection with a result which says that a system node works as the connecting operator of a well-posed system.

\begin{lemma} \label{L:traj-node}
{\rm  (See \cite[Theorem 4.6.11(i)]{StafBook}.)}  Suppose that $\Sigma = \sbm{ \Afrak & \Bfrak \\ \Cfrak & \Dfrak}$ is a well-posed system with associated
system node $\bS = \sbm{ A \& B \\ C \& D}$.  Let $(\bu, \bx, \by)$ be a system trajectory over ${\mathbb R}^+$ with state initial condition $\bx(0) = x_0$
and with $\bu$ continuous with distributional derivative $\dot \bu$ in $L^{2+}_{loc, U}$ and such that $\sbm{ x_0 \\ \bu(0) } \in \dom{\bS}$.
Then $\bx$ is continuously differentiable with values in $X$, $\sbm{ \bx(t) \\ \bu(t) } \in \dom{\bS}$
for all $t>0$,  $\by$ is continuous with distributional derivative $\dot \by$ in $L^{2+}_{loc,Y}$, and
\begin{equation} \label{node-sys-eq}
   \begin{bmatrix} \dot \bx(t)  \\ \by(t) \end{bmatrix} = \bS \begin{bmatrix} \bx(t) \\ \bu(t) \end{bmatrix},\quad t \ge 0.
\end{equation}
\end{lemma}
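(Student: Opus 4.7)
The plan is to exploit the extra smoothness of $\bu$ together with the domain condition $\sbm{x_0\\\bu(0)}\in\dom{\bS}$ to rewrite the state formula $\bx(t)=\Afrak^t x_0+\int_0^t \Afrak_{-1}^{t-s}B\bu(s)\ud s$ as a sum of terms manifestly taking values in $X$, so that $\dot\bx(t)$ can be identified with $\AB\sbm{\bx(t)\\\bu(t)}$. First I would fix $\alpha\in\rho(A)=\rho(A_{-1})$ and introduce the smoothing operator $R:=(\alpha-A_{-1})^{-1}B\in\cB(U,X_1)$. By Definition \ref{def:sysnode}(4) the hypothesis $\sbm{x_0\\\bu(0)}\in\dom{\bS}$ is equivalent to $A_{-1}x_0+B\bu(0)\in X$, and combining this with $(\alpha-A_{-1})(x_0-R\bu(0))=\alpha x_0-(A_{-1}x_0+B\bu(0))\in X$ gives $x_0-R\bu(0)\in X_1=\dom{A}$.

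Using the identity $\Afrak_{-1}^r B=\alpha\Afrak^r R-\frac{d}{dr}\Afrak^r R$ in $X_{-1}$ (a consequence of $(\alpha-A_{-1})R=B$ and $\frac{d}{dr}\Afrak^r R=A_{-1}\Afrak^r R$), I would integrate by parts on the convolution, legitimized by the $H^1_{loc}$ regularity of $\bu$, to arrive at
\[
\bx(t)=\Afrak^t(x_0-R\bu(0))+R\bu(t)+\int_0^t \Afrak^{t-s}R\bigl(\alpha\bu(s)-\dot\bu(s)\bigr)\ud s.
\]
Differentiating term by term, the $R\dot\bu(t)$ arising from $\frac{d}{dt}R\bu(t)$ exactly cancels the $-R\dot\bu(t)$ produced by differentiating the convolution, leaving
\[
\dot\bx(t)=A\Afrak^t(x_0-R\bu(0))+\alpha R\bu(t)+A\int_0^t \Afrak^{t-s}R\bigl(\alpha\bu(s)-\dot\bu(s)\bigr)\ud s,
\]
where each summand is continuous in $X$ (the first because $x_0-R\bu(0)\in X_1$, the third because the convolution is continuous in $X_1$ and $A\in\cB(X_1,X)$); thus $\bx\in C^1(\rplus;X)$.

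Having established $\dot\bx\in C(\rplus;X)$, I would match it with the system-node action. The original representation differentiated directly in $X_{-1}$ gives $\dot\bx(t)=A_{-1}\bx(t)+B\bu(t)$; combined with $\dot\bx(t)\in X$, this forces $\sbm{\bx(t)\\\bu(t)}\in\dom{\bS}$ for every $t\ge 0$ and $\dot\bx(t)=\AB\sbm{\bx(t)\\\bu(t)}$. For the output, the explicit form \eqref{C&D} gives $\by(t)=\CD\sbm{\bx(t)\\\bu(t)}=C(\bx(t)-R\bu(t))+\widehat\Dfrak(\alpha)\bu(t)$; since $\bx(t)-R\bu(t)=\Afrak^t(x_0-R\bu(0))+\int_0^t \Afrak^{t-s}R(\alpha\bu(s)-\dot\bu(s))\ud s$ is continuous in $X_1$ and $C\colon X_1\to Y$ is bounded, $\by\in C(\rplus;Y)$; the distributional derivative $\dot\by$ lies in $L^{2+}_{loc,Y}$ because $\bu\in H^1_{loc}(U)$ and $\Cfrak A(x_0-R\bu(0))\in L^{2+}_{loc,Y}$ by the $L^2$-output-map bound applied to $A(x_0-R\bu(0))\in X$.

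The hard part will be justifying the integration by parts, because the integrand and its natural antiderivative live in the extrapolation space $X_{-1}$ while the final sum must be interpreted in $X$. The crucial mechanism is that $R$ smooths $X_{-1}$-valued data into $X_1$, and the resolvent identity $(\alpha-A_{-1})R=B$ allows $A_{-1}$ to be transferred onto $\bu$ through the boundary terms. The payoff is that neither $R\bu(t)$ nor the convolution is $C^1$ in $X$ on its own, but their $R\dot\bu$ singularities annihilate to give a genuinely $C^1$-in-$X$ state trajectory; this is the mechanism by which the domain condition $\sbm{x_0\\\bu(0)}\in\dom{\bS}$ produces pointwise smoothness of $\bx$ without requiring separately that $x_0\in X_1$ or $B\bu(0)\in X$.
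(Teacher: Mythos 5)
Your decomposition of the state,
\[
\bx(t)=\Afrak^t\bigl(x_0-R\bu(0)\bigr)+R\bu(t)+\int_0^t \Afrak^{t-s}R\bigl(\alpha\bu(s)-\dot\bu(s)\bigr)\ud s,
\qquad R:=(\alpha-A_{-1})^{-1}B,
\]
is correct, and your observation that $x_0-R\bu(0)\in X_1$ is exactly how the hypothesis $\sbm{x_0\\\bu(0)}\in\dom{\bS}$ enters. But there is a genuine gap at the central step: $R$ belongs to $\cB(U,X)$, \emph{not} to $\cB(U,X_1)$. Indeed $A_{-1}Ru=-Bu+\alpha Ru$, which lies in $X$ only when $Bu\in X$; for an unbounded control operator $R$ does not map into $\dom{A}$. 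Hence the integrand of the convolution $w(t)=\int_0^t\Afrak^{t-s}R(\alpha\bu-\dot\bu)(s)\ud s$ is merely an $L^2_{loc}$ function with values in $X$, and your justification that $w$ is continuous in $X_1$ --- and therefore that $Aw(t)$ is a continuous $X$-valued function, which is what makes $\dot\bx$ continuous --- collapses. This is not a cosmetic issue: if $R\in\cB(U,X_1)$ were all that was needed, the lemma would hold for an arbitrary system node with $H^1_{loc}$ input, whereas for general (non-well-posed) nodes one needs roughly $\bu\in W^{2,1}_{loc}$ to get classical solutions. The well-posedness of $\Sigma$ must enter, and it enters precisely here: one should instead compute $(\alpha-A_{-1})w(t)=\int_0^t\Afrak_{-1}^{t-s}B(\alpha\bu-\dot\bu)(s)\ud s=\Bfrak^t(\alpha\bu-\dot\bu)$, which lies in $X$ and depends continuously on $t$ by the $L^2$-admissibility of $B$; hence $w(t)=(\alpha-A)^{-1}\Bfrak^t(\alpha\bu-\dot\bu)\in X_1$ with $Aw$ continuous. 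With that substitution your cancellation argument does go through, though it should be run a.e./distributionally (neither $R\bu$ nor $w$ is $C^1$; one gets $\dot\bx$ a.e.\ equal to a continuous function and then upgrades to $C^1$).

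Two further points. First, $\by(t)=\CD\sbm{\bx(t)\\\bu(t)}$ is part of the conclusion, not a definition: $\by$ is defined as $\Cfrak x_0+\Dfrak\bu$, and matching this with \eqref{C&D} requires the reconstruction formula \eqref{Dfrak0} on a dense class of inputs plus a continuity argument. Second, your justification of $\dot\by\in L^{2+}_{loc,Y}$ accounts for $\frac{\ud}{\ud t}C\Afrak^t(x_0-R\bu(0))$ and for $\widehat\Dfrak(\alpha)\dot\bu$, but not for $\frac{\ud}{\ud t}Cw(t)$, which is the delicate term (one only knows $\dot w=Aw+R(\alpha\bu-\dot\bu)$ a.e.\ with values in $X$, so $C\dot w$ does not make pointwise sense). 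Both issues, and the $C^1$ claim itself, are handled more economically by the standard route behind \cite[Theorem 4.6.11(i)]{StafBook}: set $z_0:=A_{-1}x_0+B\bu(0)\in X$ and let $(\dot\bu,\bz,\bw)$ be the well-posed trajectory with initial state $z_0$ and input $\dot\bu$; an integration by parts shows $\bx(t)=x_0+\int_0^t\bz(s)\ud s$ and $\by(t)=\by(0)+\int_0^t\bw(s)\ud s$, whence $\bx\in C^1$ with $\dot\bx=\bz$, $\dot\by=\bw\in L^{2+}_{loc,Y}$, and the node equations follow upon identifying $\bz(t)$ with $A_{-1}\bx(t)+B\bu(t)$.
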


\subsection{Reconstruction of the well-posed system}\label{SubS:ReconstructWP}

With the system node $\bS=\sbm{  \AB \\ \CD } $ constructed from $\Sigma=\sbm{\Afrak & \Bfrak\\ \Cfrak & \Dfrak}$ as above, it is possible to recover
$\Afrak$, $\Bfrak$, $\Cfrak$,  $\Dfrak$ and the transfer function $\widehat{\Dfrak}$ from $\sbm{  \AB \\ \CD }$. We first sketch this construction, and only afterwards, we discuss the rigour of the construction.

Clearly $\Afrak^t$ is the $C_0$-semigroup generated by $A$ and $\Bfrak$ and $\Cfrak$ can be recovered via \eqref{BfrakCfrak}, taking for $\Cfrak$ the unique continuous extension from $X_1$ to $X$ mapping into $L^{2+}_{loc,Y}$. 
Finally, by \cite[Theorem 4.7.14]{StafBook} and its proof, $\Dfrak$ can be recovered as the
unique extension to a continuous  operator from $L^2_{\ell,loc,U}$ to $L^2_{\ell,loc,Y}$ of the operator
\begin{equation}  \label{Dfrak0}
	\Dfrak \bu = t\mapsto \CD\bbm{\Bfrak^t\bu\\\bu(t)},
	\quad t\in\R,
\end{equation}
defined for $\bu\in H^1_{0,loc}(\R;U)$ with support bounded to the left; see \eqref{eq:H1def} for the definition of this space.

We have seen that the operator $\sbm{\AB\\ \CD}$ arising from a well-posed system $\Sigma$ as described in \S\ref{SubS:SysNodeConstruct} is a system node. However, in general, for a system node to give rise to a well-posed system via the above construction more is needed. We shall follow Definition 10.1.1 in \cite{StafBook} and use the following terminology: given $A$ equal to the generator of $C_0$-semigroup on $X$ and
operators $B \in \cB(U, X_{-1})$ and $C \in \cB(X_1, Y)$, we say that:
\begin{itemize}
\item $B$ is an {\em $L^2$-admissible}  (here abbreviated to {\em admissible}) {\em control operator} for $A$ if the operator $\Bfrak$  defined as in \eqref{BfrakCfrak} maps $L^{2-}_{\ell, U}$ into $X$.

\item $C$ is an {\em $L^2$-admissible} (here abbreviated to {\em admissible}) {\em observation operator}  for $A$ if the operator $\Cfrak$ defined as in \eqref{BfrakCfrak} is continuous as an operator from
$X$ to $L^{2+}_{loc, Y}$.
\end{itemize}

The following result describes what additional conditions must be imposed on a system node, in order to conclude that it induces a well-posed system.

\begin{theorem}  \label{T:absnode}  Suppose that $\bS = \sbm{ A \& B \\ C \& D}$ is a system node as defined above.  Suppose that the semigroup
$t \mapsto \Afrak^t$ has growth bound $\omega_\Afrak$ and let $\omega$ be any real number satisfying $\omega > \omega_\Afrak$.  Then there is a well-posed system
$\sbm{ \Afrak & \Bfrak \\ \Cfrak & \Dfrak}$ such that $\bS$ is the system node arising from $\Sigma$ if and only if
\begin{enumerate}
\item[(1)] the operator $B \colon U \to X_{-1}$ is  admissible for $A$,

\item[(2)] the operator $C \colon X_1 \to Y$ is admissible for $A$, and

\item[(3)]  the system-node transfer function $\widehat \Dfrak_\bS$ \eqref{node-transfunc} is in $H^\infty(\C_\omega;\cB(U,Y))$.
\end{enumerate}
Explicitly, when conditions \textup{(1)}, \textup{(2)}, \textup{(3)} are satisfied, the associated well-posed system $\Sigma = \sbm{ \Afrak & \Bfrak \\ \Cfrak & \Dfrak}$ is given by
\begin{itemize}
\item $t \mapsto \Afrak^t$ is the $C_0$-semigroup generated by $A$,
\item $\Bfrak$ and $\Cfrak$ are given by formulas \eqref{BfrakCfrak}, and
\item $\Dfrak \in \cB(L^2_{\ell, loc, U}, L^2_{\ell, loc, Y})$ is a continuous extension of the operator  acting on smooth input functions $\bu$ given by
the formula \eqref{Dfrak0}.
\end{itemize}
In this case the associated system $\Sigma$ 
is $\omega$-bounded, i.e., \eqref{eq:FrakTildeDef} holds.
\end{theorem}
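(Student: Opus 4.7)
For the necessity direction, the plan is to appeal to the construction outlined in \S\ref{SubS:SysNodeConstruct}. Given a well-posed system $\Sigma$ with system node $\bS$, its semigroup $\Afrak^t$ is by definition generated by $A$, so in particular $A$ generates a $C_0$-semigroup. The representation formulas \eqref{BfrakCfrak} exhibit $\Bfrak$ and $\Cfrak$ as coming from $B \in \cB(U, X_{-1})$ and $C \in \cB(X_1, Y)$. Since by the definition of a well-posed system $\Bfrak$ actually maps into $X$ and $\Cfrak$ is continuous into $L^{2+}_{loc, Y}$, conditions (1) and (2) follow immediately. For condition (3), Proposition \ref{prop:Transfer} gives $\widehat{\Dfrak} \in H^\infty(\C_\omega; \cB(U,Y))$ for every $\omega > \omega_\Afrak$, and Lemma 4.7.5(iii) of \cite{StafBook} identifies $\widehat{\Dfrak}$ with the system-node transfer function $\widehat{\Dfrak}_\bS$ on $\C_{\omega_\Afrak}$.

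For sufficiency, I would build $\Sigma = \sbm{\Afrak & \Bfrak \\ \Cfrak & \Dfrak}$ piece by piece and verify each clause of Definition \ref{def:WPsys}. First, take $\Afrak^t$ to be the $C_0$-semigroup generated by $A$, which exists by Definition \ref{def:sysnode}(2). Next, define $\Bfrak \colon L^{2-}_{\ell, U} \to X$ by the integral formula in \eqref{BfrakCfrak}; condition (1) guarantees the integral takes values in $X$ (not merely $X_{-1}$). The intertwining relation $\Afrak^t \Bfrak = \Bfrak \tau^t_-$ follows by a translation in the integral variable. Similarly, define $\Cfrak$ on $X_1$ by $(\Cfrak x)(t) = C \Afrak^t x$ and extend by continuity to all of $X$ using admissibility (2) and density of $X_1$ in $X$; the intertwining $\Cfrak \Afrak^t = \tau_+^t \Cfrak$ is immediate from the semigroup property.

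The main obstacle is constructing $\Dfrak$ on all of $L^2_{\ell, loc, U}$ and verifying clauses (4a)--(4c) and (5) of Definition \ref{def:WPsys}. For smooth inputs $\bu$ with support bounded to the left, the formula \eqref{Dfrak0}, combined with Lemma \ref{L:traj-node}, yields a well-defined output $\Dfrak \bu \in L^2_{\ell, loc, Y}$, and makes $(\bu, \bx, \by)$ with $\bx(t) = \Bfrak^t \bu$ a system trajectory. The key step, and the source of all the real work, is to extend $\Dfrak$ to a continuous operator on $L^2_{\ell, loc, U}$. Here hypothesis (3) is essential: the transfer function $\widehat{\Dfrak}_\bS$ lies in $H^\infty(\C_\omega; \cB(U,Y))$, so the associated multiplication operator yields, via the shifted Laplace transform as in \eqref{MultOp}--\eqref{L2inout}, a bounded operator between the exponentially weighted spaces $L^{2+}_{\omega, U}$ and $L^{2+}_{\omega, Y}$. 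Computing on Laplace transforms of smooth inputs verifies that this bounded operator agrees with the pointwise formula \eqref{Dfrak0}, so $\Dfrak$ extends continuously. Time invariance is inherited from the translation invariance of the multiplication operator on the frequency side. Causality $\pi_- \Dfrak \pi_+ = 0$ follows from analyticity of $\widehat{\Dfrak}_\bS$ on the right half-plane via the Paley--Wiener theorem. The Hankel factorization $\pi_+ \Dfrak \pi_- = \Cfrak \Bfrak \pi_-$ can be checked on a dense set of smooth inputs supported on $(-\infty, 0)$ by combining \eqref{Dfrak0} with \eqref{BfrakCfrak} and the intertwining relations.

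Finally, I would close the loop by showing that the well-posed system $\Sigma$ just constructed has $\bS$ as its system node. Running the construction in \S\ref{SubS:SysNodeConstruct} on this $\Sigma$ recovers $A$ as the generator of $\Afrak^t$, recovers $B$ and $C$ from \eqref{BfrakCfrak}, and recovers the feedthrough piece via \eqref{C&D}, which agrees with $\CD$ by comparison with \eqref{node-transfunc}. The $\omega$-boundedness assertion \eqref{eq:FrakTildeDef} is then immediate: for $\Afrak^t$ and $\Cfrak$ it is Theorem 2.5.4 of \cite{StafBook}, for $\Bfrak$ it follows from condition (1) together with the growth estimate on $\Afrak^t_{-1}$, and for $\Dfrak$ it is built into the construction via $M_{\widehat{\Dfrak}_\bS}$ on $L^2_{\omega}$.
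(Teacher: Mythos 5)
Your proposal is correct and follows essentially the same route as the paper: both directions come down to the Paley--Wiener correspondence between $H^\infty(\C_\omega)$ symbols and bounded causal time-invariant operators on $L^2_\omega$, plus the admissibility formulas \eqref{BfrakCfrak} and \eqref{Dfrak0}; the paper simply outsources the verification of the well-posedness axioms to Theorem 4.7.14 (together with Theorems 10.3.4--10.3.6 and Lemma 2.6.4) of \cite{StafBook}, whereas you sketch those verifications directly. One small logical slip: in the sufficiency direction you invoke Lemma \ref{L:traj-node} to make sense of \eqref{Dfrak0} on smooth inputs, but that lemma presupposes that a well-posed system already exists, which is exactly what you are constructing; the correct tool at that point is the classical-solvability statement for bare system nodes (Staffans, Lemma 4.7.8), after which your argument goes through unchanged.
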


\begin{proof}
Assume that  $\bS$ satisfies conditions (1), (2) and  (3) in the statement of the theorem. Conditions (1) and (2) just say that conditions (i) and (ii) in Theorem 4.7.14 of \cite{StafBook} are met; once we have proved condition (iii) of this theorem, we may conclude $\bS$ is an $L^2$-well-posed system node, which, by Definition 4.7.2 in \cite{StafBook}, implies that the constructed system $\Sigma$ is well-posed. As a consequence of the Paley-Wiener Theorem \cite[Theorem 10.3.4]{StafBook}, it follows from Theorem 10.3.5 in \cite{StafBook} that condition (3) is equivalent to $\widehat \Dfrak_\bS$ being the transfer function of an operator $\Dfrak$ in $\textup{TIC}^2_\omega(U,Y)$, that is,  a causal, time-invariant operator in $\cB(L^2_{\omega,U},L^2_{\omega,Y})$. It then follows from Lemma 2.6.4 in \cite{StafBook} that $\Dfrak$ has a unique ``extension after restriction'' to an operator in $\textup{TIC}^2_{loc}(U,Y)$, which means it is a continuous, causal, time-invariant operator from $L^2_{\ell,loc,U}$ into $L^2_{\ell,loc,U}$, which is precisely what is required for the remaining condition (iii) in Theorem 4.7.14 of \cite{StafBook}. We may thus conclude that $\Sigma$ constructed from $\bS$ is a well-posed system, which generates the system node $\bS$ in the way described in Subsection \ref{SubS:SysNodeConstruct}. It then follows from the reverse construction in Subsection \ref{SubS:ReconstructWP} preceding this theorem that the operator $\Dfrak$ is indeed given by \eqref{Dfrak0}.

That the operators $\Afrak$, $\Bfrak$, $\Cfrak$ and $\Dfrak$ that constitute the well-posed system $\Sigma$ are $\omega$-bounded, follows from the discussion in Section \ref{sec:prel} after Definition \ref{def:WPtraj}.

Conversely, suppose that $\Sigma$ constructed from $\bS$ in the theorem is a well-posed system. Then it has $\omega_\Afrak$ as growth bound, so that $\Afrak$, $\Bfrak$, $\Cfrak$ and $\Dfrak$ are $\omega$-bounded, by the above argument. The properties (1)--(3) now follow from Theorem 10.3.6 in \cite{StafBook}.
\end{proof}

\subsection{Duality between admissible control/observation operators for $A$/$A^*$} \label{SubS:DualitySysNodes} Here we briefly point out the duality between admissible input pairs $(A,B)$ and admissible output pairs $(C,A)$; see also \cite[Theorem 6.2.13]{StafBook}. Let $A$ be the generator of a $C_0$-semigroup $\Afrak$, $B \in \cB(U, X_{-1})$ and $C \in \cB(X_{1},Y)$.

Let us define $A^*$ in the standard way as an unbounded operator on $X$, and let $X_1^d\subset X\subset X_{-1}^d$ be the Gelfand triple as in
\eqref{Xsub-1},  but for $A^*$ and using the parameter $\overline\beta\in\rho(A^*)$ in place of the operator $A$ and the parameter $\beta \in \rho(A)$.
Next define $B^* \in \cB(X_1^d, U)$ by identifying $U$ and $X$ with their duals and by viewing $X_{-1}^d$ as the dual of $X_1$ via the $X$-inner product
to define the duality pairing:
$$
	\Ipdp{x}{z}_{X_1,X_{-1}^d}=\Ipdp xz_X,
	\qquad x\in X_1,~z\in X.
$$
Define $C^* \in \cB(Y, X_{-1}^d)$ analogously.
When this is done it is a matter of verification to see that the operator $B^*$ is an admissible observation operator for $A^*$  if and only if
$B$ is an admissible control operator for $A$.
Similarly, if $C$ is an admissible observation operator for  $A$ , then $C^*$ is an admissible control operator for $A^*$, and vice versa.

Together with the transfer function
$$
	\widehat\Dfrak^\sharp(\lambda):=
	\widehat\Dfrak(\overline\lambda)^*,
	\qquad\lambda\in\rho(A^*),
$$
evaluated at some arbitrary $\alpha\in\rho(A^*)$, the operators $A^*$, $C^*$ and $B^*$ amount to an infinitesimal version of the duality between $\Sigma$ and
$\Sigma^d$ described in Theorem \ref{T:causdual}; in fact, the system node for the causal dual $\Sigma^d$ is
$$
\SysNode^*:\bbm{X\\Y}\supset\dom {\SmallSysNode^*}\to\bbm{X\\U},
$$
in the standard sense of unbounded adjoints.

\subsection{KYP-inequalities in terms of system nodes}\label{SubS:KYP-SySNodes}

In this subsection we show how the standard  KYP-inequality  \eqref{eq:KYP},   the strict KYP-inequality \eqref{eq:strictKYP}, and  for the semi-strict
KYP-inequality \eqref{eq:semi-strictKYP} can be expressed in terms of the system node $\bS = \sbm{ A \& B \\ C \& D}$ rather than in terms of the
well-posed system $\Sigma = \sbm{ \Afrak & \Bfrak \\ \Cfrak & \Dfrak }$, at least for the case where $H$ is bounded and strictly positive-definite.
The main tool will be Lemma \ref{L:traj-node}. 

\begin{theorem}  \label{thm:node-KYPs}
Suppose that $\Sigma = \sbm{ \Afrak & \Bfrak \\ \Cfrak & \Dfrak}$ is a well-posed system with corresponding system node
$\bS = \sbm{ A \& B \\ C \& D}$.  Then the $\Sigma$-KYP inequalities \eqref{eq:KYP}, \eqref{eq:strictKYP} and \eqref{eq:semi-strictKYP}
 correspond to $\bS$-KYP inequalities as follows.
\begin{enumerate}

\item[(1)] A bounded selfadjoint operator  $H \succc 0$ solves the  standard KYP inequality
\eqref{eq:KYP} if and only if $H$ satisfies the standard $\bS$-KYP inequality:
\begin{equation}    \label{eq:KYPnode-b}
2 \re \langle H (A \& B) \sbm{ x \\ u }, x \rangle + \| (C \& D) \sbm{ x \\ u } \|^2  \le \| u \|^2,\quad
   \sbm{ x \\ u } \in \dom{\bS}.   
 \end{equation}

\item[(2)] A bounded selfadjoint operator $H \succc 0$ on $X$ satisfies the strict KYP inequality \eqref{eq:strictKYP} if and only if
$H$ satisfies the strict $\bS$-KYP inequality:
\begin{equation}   \label{eq:strictKYPnode}
2 \re  \langle H ( A \& B ) \sbm{ x \\ u },  x \rangle
+ \| C \& D \sbm{ x \\ u } \|^2 + \delta \| x \|^2 \le \langle Hx, x \rangle  + (1 - \delta) \| u \|^2
\end{equation}
for all $\sbm{ x \\ u } \in \dom{\bS}$.

\item[(3)] A bounded selfadjoint operator $H \succc 0$ on $X$ satisfies the semi-strict KYP inequality \eqref{eq:semi-strictKYP} if and only if
$H$ satisfies the semi-strict $\bS$-KYP inequality:
$$
2 \re \langle H ( A \& B ) \sbm{ x \\ u },  x \rangle
+ \| C \& D \sbm{ x \\ u}  \|^2  \le \langle Hx, x  \rangle + (1 - \delta) \| u \|^2
$$
for all   $\sbm{ x \\ u } \in \dom{\bS}$.
\end{enumerate}
\end{theorem}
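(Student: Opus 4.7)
The plan is to exploit Lemma \ref{L:traj-node}, which links the differential form of the system node to the integrated form of the associated system along sufficiently smooth trajectories. All three parts then follow the same two-step scheme: differentiate the integrated $\Sigma$-KYP at $t=0$ to recover the infinitesimal $\bS$-KYP, and integrate the infinitesimal $\bS$-KYP along trajectories (followed by a density argument) to recover the integrated $\Sigma$-KYP. I describe the standard case (1) in detail; the strict and semi-strict versions (2) and (3) differ only by the pointwise quadratic terms in $\bx(s)$ and $\bu(s)$, which pass through the same limit and integration procedures.

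For the forward direction ($\Sigma$-KYP $\Rightarrow$ $\bS$-KYP), fix $\sbm{x\\u}\in\dom{\bS}$ and use the constant input $\bu(s)\equiv u$ on some small $[0,\varepsilon]$. Since $\bu$ is $C^1$ with $\bu(0)=u$ and $\sbm{x\\\bu(0)}\in\dom{\bS}$, Lemma \ref{L:traj-node} produces a trajectory $(\bu,\bx,\by)$ on $[0,\varepsilon]$ with $\bx(0)=x$, $\bx$ continuously differentiable, $\by$ continuous, and $\sbm{\dot\bx(0)\\\by(0)}=\bS\sbm{x\\u}$. Plugging $(x,\bu|_{[0,t]})$ into the integrated KYP \eqref{eq:KYPbdd}, subtracting $\langle Hx,x\rangle$, dividing by $t$, and letting $t\to 0^+$ gives
\[
2\re\langle H\dot\bx(0),x\rangle+\|\by(0)\|^2\le\|u\|^2,
\]
using the elementary limits $\tfrac{1}{t}\bigl(\langle H\bx(t),\bx(t)\rangle-\langle Hx,x\rangle\bigr)\to 2\re\langle H\dot\bx(0),x\rangle$, $\tfrac{1}{t}\int_0^t\|\by(s)\|^2\,ds\to\|\by(0)\|^2$, $\tfrac{1}{t}\int_0^t\|\bu(s)\|^2\,ds\to\|u\|^2$. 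Substituting $\dot\bx(0)=(A\&B)\sbm{x\\u}$ and $\by(0)=(C\&D)\sbm{x\\u}$ then produces \eqref{eq:KYPnode-b}. The strict case picks up the additional limit $\tfrac{\delta}{t}\int_0^t\|\bx(s)\|^2\,ds\to\delta\|x\|^2$, and the semi-strict case drops this extra term.

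For the reverse direction ($\bS$-KYP $\Rightarrow$ $\Sigma$-KYP), start with a smooth trajectory $(\bu,\bx,\by)$ with $\sbm{\bx(0)\\\bu(0)}\in\dom{\bS}$ and $\bu\in C^1$. Lemma \ref{L:traj-node} gives $\sbm{\bx(s)\\\bu(s)}\in\dom{\bS}$ for all $s\ge 0$ together with $\sbm{\dot\bx(s)\\\by(s)}=\bS\sbm{\bx(s)\\\bu(s)}$, so the infinitesimal $\bS$-KYP applied pointwise in $s$ becomes $2\re\langle H\dot\bx(s),\bx(s)\rangle+\|\by(s)\|^2\le\|\bu(s)\|^2$. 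Boundedness of $H$ together with $\bx\in C^1$ yield $\tfrac{d}{ds}\langle H\bx(s),\bx(s)\rangle=2\re\langle H\dot\bx(s),\bx(s)\rangle$; integrating over $[0,t]$ produces exactly the integrated $\Sigma$-KYP for this smooth trajectory. To discard the smoothness hypothesis, observe that both sides of \eqref{eq:KYPbdd} (and of \eqref{eq:strictKYP}, \eqref{eq:semi-strictKYP}) are jointly continuous in $(x_0,\bu)\in X\times L^2([0,t];U)$, since $H$ is bounded and all the integrated operators $\Afrak^t,\Bfrak^t,\Cfrak^t,\Dfrak^t,\Cfrak^t_{1_X,A},\Dfrak^t_{A,B}$ are bounded.

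The principal obstacle is thus the density statement needed to close the reverse direction: one must show that the admissible smooth data
\[
\mathcal{D}:=\bigl\{(x_0,\bu)\in X\times L^2([0,t];U)\bigmid \sbm{x_0\\\bu(0)}\in\dom{\bS},\ \bu\in C^1\bigr\}
\]
are dense in $X\times L^2([0,t];U)$. The key observation is that $\sbm{x\\0}\in\dom{\bS}$ iff $x\in\dom{A}$, and $\dom{A}$ is dense in $X$. Given $(x_0,\bu)\in X\times L^2([0,t];U)$, one approximates $\bu$ in $L^2$ by smooth $\bu_n$ vanishing in a neighborhood of $s=0$ (standard cutoff-and-mollify), and approximates $x_0$ in $X$ by $x_n\in\dom{A}$; then $\sbm{x_n\\\bu_n(0)}=\sbm{x_n\\0}\in\dom{\bS}$, the trajectories corresponding to $(x_n,\bu_n)$ are smooth, and the integrated $\Sigma$-KYP for those passes to the limit by continuity, finishing the equivalence for all three parts.
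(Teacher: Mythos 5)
Your proposal is correct and follows essentially the same route as the paper: apply Lemma \ref{L:traj-node} to smooth trajectories, differentiate the integrated inequality at $t=0$ for one direction, and integrate the pointwise node inequality plus a density argument for the other. The only (harmless) differences are that you realize an arbitrary element of $\dom{\bS}$ via a constant input rather than a general smooth trajectory, and you spell out the density of the admissible smooth data (via $\dom{A}$ and inputs vanishing near $s=0$), which the paper merely asserts.
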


\begin{proof}[Proof of statement \textup{(1)}]
Suppose first that $H \succc 0$ is a selfadjoint operator satisfying the standard KYP inequality
 \eqref{eq:KYP}.
  Let us apply \eqref{eq:KYP} to the case
where $x = \bx(0)$ and $\bu$ is equal to the input signal for a smooth trajectory $(\bu, \bx, \by)$ in the sense of Lemma \ref{L:traj-node}.
Recalling the definition of the action of
$\sbm{ \Afrak^t & \Bfrak^t \\ \Cfrak^t & \Dfrak^t}$, we see that
\begin{equation}  \label{KYP-traj}
\| H^{\frac{1}{2}} \bx(t) \|^2 + \int_0^t \| \by(s) \|^2 \ud s  \le \| H^{\frac{1}{2}} \bx(0) \|^2 + \int_0^t \| \bu(s) \|^2 \ud s
\end{equation}
for all $t \ge 0$.   As $\bx$ is continuously differentiable and $\bu$ and $\by$ are continuous, we may move $\|H^{\frac12}\bx(0)\|^2$ over to the left-hand side in \eqref{KYP-traj}, divide by $t$, let $t\to0$, and finally observe that
\begin{equation}\label{eq:diffStateH}
\frac{\ud}{\ud s} \langle H \bx(s),  \bx(s) \rangle =
 2 \re \langle H \dot x(s), x(s) \rangle\,,
\end{equation}
in order to arrive at
$$
2 \re \langle H  \dot \bx(0), \bx(0) \rangle  +  \| \by(0) \|^2  \le \| \bu(0) \|^2. 
$$
Plugging in the differential system equations \eqref{node-sys-eq} then leads to
$$
2 \re \left\langle H ( A \& B) \bbm{ x_0 \\ \bu(0)}, x_0 \right\rangle + \left\| C \& D \bbm{ x_0 \\ \bu(0)}\right \|^2 \le \| \bu(0) \|^2\,,
$$
where $\sbm{ x_0 \\ \bu(0) }$ can be an arbitrary element of $\dom{\bS}$, thereby arriving at \eqref{eq:KYPnode-b} as wanted.

Conversely, if $H$ satisfies \eqref{eq:KYPnode-b},  we evaluate \eqref{eq:KYPnode-b} at $\sbm{ x \\ u } = \sbm{ \bx(s) \\ \bu(s) }$
taken from a smooth system trajectory $(\bu(t), \bx(t), \by(t))$ as in Lemma \ref{L:traj-node} to get
$$
2 \re \left\langle H (A \& B) \bbm{ \bx(s) \\ \bu(s) }, \bx(s) \right\rangle + \left\| C \& D \bbm{ \bx(s) \\ \bu(s) } \right\|^2 \le \| \bu(s) \|^2.
$$
Due to the differential system equations \eqref{node-sys-eq} we can rewrite this last expression as
\begin{equation}   \label{diffKYP}
2 \re \langle H \dot \bx(s), \bx(s) \rangle  + \| \by(s) \|^2 \le \| \bu(s)\|^2
\end{equation}
for all $s \ge 0$.  Again using \eqref{eq:diffStateH}, we can integrate \eqref{diffKYP} from $s=0$ to $s=t$ to arrive at
$$
\langle H \bx(t), \bx(t) \rangle - \langle H \bx(0), \bx(0) \rangle + \int_0^t \| \by(s) \|^2 \ud s \le \int_0^t \| \bu(s) \|^2 \ud s
$$
which we can interpret as saying that
$$
\bigg\| \begin{bmatrix} H^{\frac{1}{2}} & 0 \\ 0 & I \end{bmatrix}  \begin{bmatrix} \Afrak^t  & \Bfrak^t \\ \Cfrak^t & \Dfrak^t \end{bmatrix} \begin{bmatrix} x_0 \\  \bu \end{bmatrix}
\bigg \| \le \bigg\| \begin{bmatrix} H^{\frac{1}{2}} & 0 \\ 0 & I \end{bmatrix} \begin{bmatrix} x_0 \\ \bu \end{bmatrix} \bigg\|,
$$
i.e.,  the KYP-inequality \eqref{eq:KYP} holds for all $\sbm{ x_0 \\ \bu } \in \sbm{ X \\ L^2([0,t], U)}$ such that $\bu$ is sufficiently smooth
(in the sense used in Lemma \ref{L:traj-node}) and $\sbm{ x_0 \\ \bu(0) } \in \dom{\bS}$. 
Noting that the collection all such $\sbm{x_0\\\bu}$ is dense in $
\sbm{X\\L^2([0,t], U)}$, we see that \eqref{eq:KYP} continues to hold on the space $\sbm{ X \\ L^2([0,t], U) }$ as wanted.

\smallskip

\noindent
{\em Proof of \textup{(2)} and \textup{(3)}:}    The proofs of statements (2) and (3) follow in much the same way as that for (1).
For the case of statement (2), if we assume that $H \succc 0$ satisfies the strict bounded real lemma \eqref{eq:strictKYP},
apply the associated quadratic form to a vector of the form $\sbm{ \bx(0) \\ \bu }$ coming from a smooth system trajectory $(\bu, \bx, \by)$, and then also
take into account the interpretation \eqref{x(0)u-x}  for the operator $\begin{bmatrix} \Cfrak^t_{1_X,A} & \Dfrak^t_{A,B} \end{bmatrix}$,
we can interpret \eqref{eq:strictKYP} as saying that
$$
\bigg\| \begin{bmatrix} H^{\frac{1}{2}} & 0 \\ 0 & I \end{bmatrix} \begin{bmatrix} \bx(t) \\ \by(t) \end{bmatrix} \bigg\|^2
+ \delta \int_0^t \| \bx(s) \|^2 \ud s \le \| H^{\frac{1}{2}} \bx(0) \|^2 + (1 - \delta) \int_0^t \| \bu(s) \|^2 \ud s.
$$
The above argument for statement (1) then leads us to the conclusion that the differential form \eqref{eq:strictKYPnode} is equivalent to the integrated form \eqref{eq:strictKYP}.

Statement (3) follows in much the same way.  One repeats the argument used for statement (2) while ignoring the term
$$
\delta \begin{bmatrix} (\Cfrak^t_{1_X, A})^* \\ ( \Dfrak^t_{A,B})^* \end{bmatrix}
\begin{bmatrix}   \Cfrak^t_{1_X, A}  &  \Dfrak^t_{A,B} \end{bmatrix}
$$
in \eqref{eq:strictKYP}  and the term   $\delta \| x \|^2$ in  \eqref{eq:strictKYPnode}.
\end{proof}

\begin{remark}  \label{R:ASdifKYP}
Arov and Staffans \cite{ArSt07} have worked out a generalized KYP-inequality for the infinite dimensional, continuous-time setting with solution $H$ possibly unbounded
formulated directly in terms of the system node $\bS= \sbm{ A \& B \\ C \& D }$
(see Definition 5.6 and Theorem 5.7 there) to characterize when the transfer function of $\bS$ is in the Schur class. It suffices to say here that the definition of solution
there involves several auxiliary conditions in addition to the actual spatial operator inequality, all of which collapse to the inequality \eqref{eq:KYPnode-b} in case $H$ is bounded.
\end{remark}

\section{Examples of systems with $L^2$-minimality}\label{sec:L2minexamples}

In this section we consider a few concrete cases where the system $\Sigma$ is $L^2$-minimal. In the first case we assume that the $C_0$-semigroup $\Afrak$ can be embedded into a $C_0$-group. We shall first recall some facts about $C_0$-groups; for further details we refer to \cite[\S II.3]{EN00} and \S6.2 in \cite{JZ12}. By a $C_0$-group we mean a family of linear operators $\{\Afrak^t \mid t \in  {\mathbb R} \}$ on $X$ such that
$$
\Afrak^0=1_X,\qquad  \Afrak^t \Afrak^s = \Afrak^{t+s} \text{ for all } t, \, s \in {\mathbb R}
$$
and which is strongly continuous at $0$:
$$
  \lim_{t \to 0} \Afrak^t x = x \text{ for all } x \in X,
$$
where the limit is now taken from both sides and not just from the right as in the semigroup case. The {\em generator} of the $C_0$-group $\{ \Afrak^t \mid t \in {\mathbb R} \}$ is defined to be the operator $A$ with domain $\dom{A}$ given by
$$
\dom{A} = \left\{ x \in X \biggmid \lim_{t \to 0} \frac{1}{t} (\Afrak^t x - x) \text{ exists in } X \right\},
$$
again with a two-sided limit, and with action then given by
$$
  A x =  \lim_{t \to 0} \frac{1}{t} (\Afrak^t x - x),\quad  x \in \dom{A}.
$$
Among various characterizations contained in the generation theorem for groups \cite[p.\ 79]{EN00}, an operator $A$ is a generator of a $C_0$-group if and only if $A$ and $-A$ are both generators of $C_0$-semigroups, say $\Afrak_+^t$ and $\Afrak_-^t$, respectively, in which case we recover $\Afrak^t$ as
$$
  \Afrak^t x = \begin{cases}  \Afrak^t_+  x & \text{for } t \ge 0, \\ \Afrak_-^{-t} x &\text{for } t \le 0. \end{cases}
$$
The well-known case of a unitary group $\Afrak^{-t} = (\Afrak^t)^* = (\Afrak^t)^{-1}$ is the special case where the  generator $A$ is skew-adjoint, $A^* = -A$.

The above characterization of a $C_0$-group $\Afrak^t$ implies that the spectrum of the generator $A$ is contained in a strip along the imaginary axis:
\begin{equation}\label{eq:strip}
-\omega_\Afrak^-\leq \re(\lambda) \leq \omega_\Afrak^+,\quad \mbox{for some}\quad \omega_\Afrak^-,\omega_\Afrak^+\in\R
\end{equation}
determined by the respective growth bounds of $\Afrak_+^t$ and $\Afrak_-^t$, see \eqref{eq:omegaAdef}, and moreover 
\begin{equation}\label{eq:2bounds}
\| \Afrak^{t}_+ x \| \le M_+ e^{\omega^+ t} \| x \| \quad\mbox{and}\quad
\| \Afrak_-^{t} x \| \le M_- e^{\omega^- t} \| x \|,\quad t\geq 0,\, x\in X,
\end{equation}
for all $\omega^\pm>\omega_\Afrak^\pm$ and corresponding $M_\pm>0$. Using the group property, one can derive an upper and lower growth bound for the semigroup part:

\begin{lemma}\label{L:twosidedgrowth}
Let $\Afrak^t$ be a $C_0$-group with left and right growth bounds given by $\omega_\Afrak^-, \omega_\Afrak^+$. Then for every $\omega^\pm>\omega_\Afrak^\pm$ there are constants
$\delta,\rho>0$ such that
\begin{equation}\label{eq:2sidebound}
\delta\, e^{- \omega^- t} \| x \| \leq  \| \Afrak^t x \| \le 
\rho \, e^{\omega^+  t}  \| x ||,\quad t\geq 0,\, x\in X.
\end{equation}
\end{lemma}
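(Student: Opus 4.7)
The plan is to read the upper bound directly off the first inequality in \eqref{eq:2bounds}, with $\rho := M_+$, so the only real work is the lower bound. For this I would use the defining feature of a $C_0$-group that was recalled just before the lemma, namely that each $\Afrak^t$ is invertible with $(\Afrak^t)^{-1}=\Afrak^{-t}$, and that $\Afrak^{-t} = \Afrak_-^t$ for $t\geq 0$ by the characterization $\Afrak^s x = \Afrak_-^{-s}x$ for $s\leq 0$.

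First I would fix $\omega^->\omega_\Afrak^-$ and choose $M_->0$ so that the second inequality in \eqref{eq:2bounds} holds. Then for $t\geq 0$ and $x\in X$, I would write
\[
\|x\| = \bignorm{\Afrak^{-t}\bigl(\Afrak^t x\bigr)} = \bignorm{\Afrak_-^{t}\Afrak^t x} \leq M_- e^{\omega^- t}\bignorm{\Afrak^t x},
\]
and then rearrange to obtain $\bignorm{\Afrak^t x}\geq M_-^{-1} e^{-\omega^- t}\|x\|$, so that $\delta := M_-^{-1}>0$ works. Combined with the upper bound this yields \eqref{eq:2sidebound}.

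There is no real obstacle here: both bounds fall out of the group property together with the two one-sided semigroup estimates recorded in \eqref{eq:2bounds}. The only place to be a bit careful is to make sure the constants $M_\pm$ in \eqref{eq:2bounds} can be chosen simultaneously for the $\omega^\pm$ picked at the outset, which is fine since \eqref{eq:2bounds} is stated with an existential ``for all $\omega^\pm>\omega_\Afrak^\pm$ and corresponding $M_\pm>0$.''
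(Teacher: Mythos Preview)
Your proof is correct and essentially identical to the paper's: the paper also sets $\rho=M_+$, $\delta=M_-^{-1}$, and obtains the lower bound by substituting $\Afrak^t x$ for $x$ in the second inequality of \eqref{eq:2bounds} together with $\Afrak_-^t=\Afrak^{-t}=(\Afrak^t)^{-1}$.
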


\begin{proof}
Let $M_->0$ and $M_+>0$ be as in \eqref{eq:2bounds}. Set $\rho=M_+$ and $\delta=M_-^{-1}$. The right-hand bound follows immediately. For the left-hand bound, in the second inequality in \eqref{eq:2bounds} replace $x$ by $\Afrak^t x$ and use that $\Afrak_-^t=\Afrak^{-t} = (\Afrak^t)^{-1}$ to arrive at
$
 \| x \| \le M_- e^{\omega^- t} \| \Afrak^t  x \|$, or equivalently, $\| \Afrak^t  x \| \ge \delta \, e^{- \omega^- t} \| x \|$.
\end{proof}

We say that a $C_0$-semigroup $\Afrak^t$ {\em embeds} into a $C_0$-group, if there exists a $C_0$-group (usually also denoted by $\Afrak$) which coincides with the original semigroup $\Afrak^t$ for  $t \in \rplus$. The following proposition characterizes when a $C_0$-semigroup can be embedded into a $C_0$-group.

\begin{proposition}  \label{P:semigroup-group}
For a $C_0$-semigroup $\Afrak^t$ the following are equivalent:
\begin{enumerate}
\item[(1)]  $\Afrak^t$ embeds into a $C_0$-group;

\item[(2)] $\Afrak^t$ is invertible (in $\cB(X)$) for all $t \ge 0$;

\item[(3)] $\Afrak^t$ is invertible for some $t > 0$.
\end{enumerate}
\end{proposition}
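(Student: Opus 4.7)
The plan is to prove $(1)\Rightarrow(2)\Rightarrow(3)\Rightarrow(1)$, where only the last implication requires any real work. The first two implications are essentially immediate: if $\Afrak^t$ extends to a $C_0$-group $\{\Afrak^t\mid t\in\R\}$, then for each $t\ge 0$ the group element $\Afrak^{-t}\in\cB(X)$ is a two-sided inverse of $\Afrak^t$ by the group law, giving (2); and (2) trivially specializes to (3) by picking any $t>0$.

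For the substantial implication $(3)\Rightarrow(1)$, suppose $\Afrak^{t_0}$ is invertible in $\cB(X)$ for some fixed $t_0>0$. The first step is to upgrade this to invertibility of $\Afrak^t$ for every $t\ge 0$. For $t\in[0,t_0]$, the semigroup identity $\Afrak^{t_0}=\Afrak^t\Afrak^{t_0-t}=\Afrak^{t_0-t}\Afrak^t$ exhibits $\Afrak^{t_0-t}(\Afrak^{t_0})^{-1}$ as a two-sided inverse of $\Afrak^t$, so $\Afrak^t$ is invertible on $[0,t_0]$. For arbitrary $t\ge0$, writing $t=nt_0+s$ with $n\in\Z^+$ and $s\in[0,t_0)$ and applying the semigroup law again gives $\Afrak^t=(\Afrak^{t_0})^n\Afrak^s$, which is a composition of invertibles. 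One then defines $\Afrak^{-t}:=(\Afrak^t)^{-1}$ for $t>0$, yielding a family $\{\Afrak^t\mid t\in\R\}$.

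It remains to verify the group law and strong continuity at $0$. The group law $\Afrak^{s+t}=\Afrak^s\Afrak^t$ for arbitrary $s,t\in\R$ reduces, by a short case analysis on the signs of $s$, $t$ and $s+t$, to the semigroup law on $\R^+$ together with the definition of negative powers; e.g., when $s>0>t$ and $s+t\ge 0$, one writes $\Afrak^s=\Afrak^{s+t}\Afrak^{-t}$ via the semigroup law and inverts. For strong continuity as $t\downarrow 0$ of $\Afrak^{-t}$, the clean trick is: for arbitrary $x\in X$, surjectivity of $\Afrak^{t_0}$ provides $y\in X$ with $x=\Afrak^{t_0}y$, so
\[
\Afrak^{-t}x=\Afrak^{-t}\Afrak^{t_0}y=\Afrak^{t_0-t}y\xrightarrow[t\downarrow 0]{}\Afrak^{t_0}y=x
\]
by strong continuity of the original semigroup at $t_0$. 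Combined with right-continuity at $0$ (which is built in) and the already-established group law, this propagates to two-sided strong continuity at every $t\in\R$ in the standard way.

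The main obstacle, insofar as there is one, is the bookkeeping in the case analysis for the group law; there is no analytic difficulty. Uniform boundedness of $\{\Afrak^{-t}\mid t\in[0,t_0]\}$ comes for free from the representation $\Afrak^{-t}=\Afrak^{t_0-t}(\Afrak^{t_0})^{-1}$ together with boundedness of $t\mapsto\|\Afrak^{t_0-t}\|$ on the compact interval $[0,t_0]$, so no Hille--Yosida type construction of the semigroup generated by $-A$ is needed to secure (1).
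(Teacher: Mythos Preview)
Your proof is correct and follows the same cycle $(1)\Rightarrow(2)\Rightarrow(3)\Rightarrow(1)$ as the paper; the only difference is that the paper dispatches $(3)\Rightarrow(1)$ by citing the proposition on page~80 of \cite{EN00}, whereas you supply the standard self-contained argument (which is essentially the one in that reference). Your treatment of the easy implications $(1)\Rightarrow(2)$ and $(2)\Rightarrow(3)$ matches the paper's exactly.
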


\begin{proof}
Clearly (2) implies (3). The proposition on page 80 of \cite{EN00} states the implication (3) $\Rightarrow$ (1)  
and the remaining implication (1) $\Rightarrow$ (2) is easy: 
for $t\geq 0$ we have $\Afrak^t \Afrak^{-t}=\Afrak^0=1_X=\Afrak^{-t} \Afrak^{t}$, so that $\Afrak^t$ is invertible.
\end{proof}

If $\Afrak^t$ is a $C_0$-semigroup that embeds into a $C_0$-group, then it should at least satisfy \eqref{eq:2sidebound}; the upper bound comes for free from the one-sided strong continuity. However, it is not necessarily the case that a $C_0$-semigroup $\Afrak^t$ satisfying \eqref{eq:2sidebound} embeds into a $C_0$-group. Indeed, take $\Afrak^t = \tau_+^{-t}$ to be the right translation semigroup
on $L^2({\mathbb R}^+)$.  Then $\tau_+^{-t}$  ($t \ge 0$) is isometric and hence satisfies the lower estimate $\| \tau_+^{-t} x \| \ge \delta e^{-\omega  t} \| x \|$
with $\delta = 1$ and $\omega = 0$, but $\tau_+^{-t}$ is not onto, and hence not invertible on $L^2({\mathbb R}^+)$ for any $t > 0$.

We next give some sufficient conditions  which guarantee the $L^2$-controllability and/or $L^2$-observability of a given well-posed linear system $\Sigma$. In fact, we will show that under the assumptions of the proposition, the system is exactly controllable and/or exactly observable in any time $t>0$; see Definition 9.4.1 in \cite{StafBook}.

\begin{proposition}  \label{P:L2min-C0group}
Suppose that $\Sigma$ is a minimal well-posed linear system with transfer function $\widehat \Dfrak$ in $H^\infty({\mathbb C}^+; \cB(U, Y))$ and with its $C_0$-semigroup $\Afrak^t$ invertible on $X$ for some (and hence all) $t>0$. Then:
\begin{enumerate}
\item[(1)] Assume there exists a closed subspace $U_0$ of $U$ such that the control operator $B\in\cB(U,X_{-1})$ maps $U_0$ onto $X$ (viewed as an algebraic subspace of $X_{-1}$). Then $\Sigma$ is $L^2$-controllable.

\item[(2)] Assume there exists a closed subspace $Y_0$ of $Y$ such that, for the observation operator $C\in\cB(X_1,Y)$, the operator $P_{Y_0}C$ extends to a bounded operator from $X$ into $Y_0$ which is bounded below. Then $\Sigma$ is $L^2$-observable.

\item[(3)] Assume that $B$ and $C$ satisfy the conditions of (1) and (2), respectively. Then $\Sigma$ is $L^2$-minimal.
\end{enumerate}
\end{proposition}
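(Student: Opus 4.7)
The plan is to prove (1) directly by constructing explicit reaching controls that exploit the invertibility of $\Afrak^t$, then deduce (2) by applying (1) to the causal dual system, and obtain (3) by combining the two.

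For part (1), the first step is to observe that $B|_{U_0}$, viewed as an operator from the Banach space $U_0$ into $X$ (rather than into $X_{-1}$), is everywhere defined by hypothesis and closed: if $u_n\to u$ in $U_0$ and $Bu_n\to y$ in $X$, the continuity of $B\in\cB(U,X_{-1})$ and of the embedding $X\hookrightarrow X_{-1}$ force $Bu=y$. The closed graph theorem then gives $B|_{U_0}\in\cB(U_0,X)$, and since this operator is surjective, the Bartle--Graves selection theorem furnishes a continuous (nonlinear) section $\sigma\colon X\to U_0$ with $B\sigma(x)=x$ and $\|\sigma(x)\|\le K\|x\|$ for some $K>0$. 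Using the $C_0$-group extending $\Afrak^t$, for fixed $x\in X$ and $t>0$ the map $s\mapsto\Afrak^s x$ is continuous from $[-t,0]$ into $X$. Defining
\begin{equation*}
	\bu(s):=\tfrac{1}{t}\sigma(\Afrak^s x)\quad(s\in[-t,0]),\qquad \bu(s):=0\ \text{otherwise},
\end{equation*}
one obtains $\bu\in L^\infty([-t,0],U_0)\subset L^{2-}_{\ell,U}$, and by \eqref{BfrakCfrak} and the group identity $\Afrak^{-s}\Afrak^s=1_X$,
\begin{equation*}
	\Bfrak\bu=\int_{-t}^0\Afrak_{-1}^{-s}B\bu(s)\ud s=\tfrac{1}{t}\int_{-t}^0\Afrak^{-s}\Afrak^s x\ud s=x.
\end{equation*}
Hence $\range{\Bfrak}=X$, and as $\dW_c$ extends $\Bfrak$ (Proposition \ref{P:Wc}(3)) also $\range{\dW_c}=X$. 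Minimality of $\Sigma$ combined with Corollary \ref{C:MinExp} ensures that $\dW_c^\bigstar$ is densely defined, completing the proof of $L^2$-controllability.

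For part (2), I would apply (1) to the causal dual $\Sigma^d$. By Theorem \ref{T:causdual} it is well-posed with $\widehat\Dfrak^d(\lambda)=\widehat\Dfrak(\overline\lambda)^*\in H^\infty(\cplus;\cB(Y,U))$; by Corollary \ref{cor:ApproxContrObsDual} it is minimal; and taking adjoints of the $C_0$-group extending $\Afrak^t$ produces a $C_0$-group extending $\Afrak^{*t}$. The control operator of $\Sigma^d$ is $C^*$, so the remaining task is to check that $C^*$ maps $Y_0$ onto $X$. Writing $\widetilde T:=\overline{P_{Y_0}C}\in\cB(X,Y_0)$ for the bounded extension from the hypothesis, the assumption that $\widetilde T$ is bounded below is equivalent to its Hilbert-space adjoint $\widetilde T^*\colon Y_0\to X$ being surjective. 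A short duality calculation using the Gelfand triple $X_1\subset X\subset X_{-1}^d$ and the identity $\Ipdp{Cx}{y_0}_Y=\Ipdp{x}{C^*y_0}_{X_1,X_{-1}^d}$ for $x\in X_1$ shows that, for each $y_0\in Y_0$, the element $C^*y_0\in X_{-1}^d$ actually lies in $X$ and coincides with $\widetilde T^*y_0$. Consequently $C^*|_{Y_0}\colon Y_0\to X$ is onto, part (1) applies to $\Sigma^d$ to give $L^2$-controllability of $\Sigma^d$, and the identities \eqref{DualObsCon} translate this into $L^2$-observability of $\Sigma$. Part (3) is then immediate.

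The main technical point is the measurable/continuous selection $\sigma$ in part (1); the closedness of $U_0$ (ensuring it is a Banach space) together with the open mapping theorem and Bartle--Graves is what makes this work, and without it one could not guarantee that the constructed $\bu$ takes values in $U_0$ with the required norm bound. A secondary subtlety in part (2) is the careful bookkeeping of the Gelfand-triple duality used to identify $C^*|_{Y_0}$ with $\widetilde T^*$, which is precisely what converts ``bounded below'' into the surjectivity required for part (1).
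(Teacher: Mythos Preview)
Your proof is correct and follows essentially the same route as the paper: construct an explicit reaching input using the group property and a right inverse of $B|_{U_0}$, then deduce (2) by duality. The only notable difference is that you invoke the Bartle--Graves selection theorem to obtain a continuous nonlinear section $\sigma$, whereas this is unnecessary: since $U_0$ is a closed subspace of the Hilbert space $U$ and $B|_{U_0}\colon U_0\to X$ is a bounded linear surjection between Hilbert spaces (as your closed-graph argument shows), it has a bounded \emph{linear} right inverse $B^\dagger$ (e.g., the Moore--Penrose pseudoinverse). The paper simply uses $\bu(s)=\tfrac{1}{\delta}B^\dagger\Afrak^s x$, which is your formula with $\sigma=B^\dagger$. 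Your more detailed treatment of the duality in part (2), identifying $C^*|_{Y_0}$ with $\widetilde T^*$ via the Gelfand-triple pairing, is a useful expansion of what the paper leaves implicit.
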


\begin{proof}
Note that statement (2) follows from (1) applied to $\Sigma^d$ and that statement (3) follows simply by combining statements (1) and (2).
Thus it suffices to consider in detail only statement (1). %
We may moreover consider the restricted system where the input signals are restricted to values in $U_0$, since $L^2$-controllability of the restricted system implies $L^2$-controllability of the original system as long as $\bW_c^\bigstar$ is densely defined for the original system. Hence we will without loss of generality assume that $B$ maps $U$ onto $X$ in the sequel.

Since $\Sigma$ is observable, by Corollary \ref{C:MinExp} we see that  $\bW_c^\bigstar$  is indeed densely defined.  Then we may apply Proposition \ref{P:Wc} to get that $L^{2 -}_{\ell,U} \subset \dom{\bW_c}$ and $\bW_c |_{L^{2 - }_{\ell, U}} = \Bfrak$; then
$$
  \operatorname{Rea}(\Sigma) = \range \Bfrak \subset \range{\bW_c}.
$$
To show the $L^2$-controllability condition $ \range{\bW_c} = X$, we will actually show that $\Sigma$ is exactly controllable in any finite time $t>0$:\ For any $x\in X$ and $\delta > 0$,
we will construct an input signal $\bu \in L^2([-\delta, 0], U)$ such that $\Bfrak \bu = x$. For this, let $x \in X$, and use the surjectivity of $B \in \cB(U, X)$ to find a  $u \in U$ such that $Bu = x$.  We are done if we can find $\bu \in L^2([-\delta, 0], U)$ such that $\Bfrak \bu = Bu$, i.e.,
$$
  \int_{-\delta}^0 \Afrak^{-s} B \bu(s) \ud s = B u.
$$
As $B$ is surjective,  $B$ has a bounded right inverse $B^\dagger$, and it is easily checked that the function
$$
  \bu(s) = \frac{1}{\delta}  B^\dagger \Afrak^s B u,\quad \text{for } -\delta \le s \le 0
$$
does the job.
\end{proof}

\begin{remark}  \label{R:drawbacks}
For the infinite dimensional setting, the conditions on $B$ and $C$  in  Proposition \ref{P:L2min-C0group} are rather strong.  Indeed,
if $X$ is infinite dimensional, the surjectivity of $B$ forces that also the input space $U$ is infinite dimensional, and similarly, injectivity of $C$ forces $\dim (Y)=\infty$.  However these hypotheses are not so offensive in our application to the proof of
the strict infinite dimensional BRL (Theorem \ref{thm:stdlemmastrict} with proof to come in \S\ref{sec:proofs}),
as the idea is to embed the nominal system $\Sigma$ (which may have finite dimensional input and/or output spaces) into an auxiliary system $\Sigma_\varepsilon$ which does have infinite dimensional input and output spaces. The one remaining restrictive hypothesis in Proposition  \ref{P:L2min-C0group} (compared to the discrete-time setting of \cite{BGtH18b})  is that the semigroup can be embedded in a $C_0$-group.  This appears to be unavoidable if one wants to achieve $L^2$-controllability  ($L^2$-observability) with a bounded control (observation) operator. The following example agrees on this observation.
\end{remark}

\begin{example}\label{ex:counter}
Here we give an example of a strict Schur-class function $\widehat\Dfrak$ from $U:=\ell^2(\zplus)$ to $Y:=U$. Later on, in Example \ref{ex:HaHr} below, we shall complete the example by finding explicit the maximal and minimal, bounded and boundedly invertible solutions of the KYP inequality, as expected by Theorem \ref{thm:stdlemmastrict}.

Take $X:=U$, with the canonical orthonormal basis $\{\phi_n \mid n = 0, 1, 2, \dots \}$ where $\phi_n \in \ell^2$ has a one in position $n$
and zeros elsewhere.  Thus each vector $x \in X = \ell^2({\mathbb Z}_+)$ can be represented as $x = \sum_{=0}^\infty x_n \phi_n$
where $x_n = \langle x, \phi_n \rangle_{\ell^2({\mathbb Z}_+)}$ and $\sum_{n=0}^\infty |x_n|^2 < \infty$.  Define $A$ by
$$
  A \colon \sum_{n=0}^\infty x_n \phi_n = \sum_{n=0}^\infty -(n+1)x_n \phi_n
$$
with $ \dom{A}  = \{ x \in X \mid A x \in X\}$, i.e.,
$$
\dom{A} = \left\{ x = \sum_{n=0}^\infty x_n \phi_n \in \ell^2({\mathbb Z}_+) \biggmid  \sum_{n=0}^\infty (n+1)^2 |x_n|^2 < \infty \right\}.
$$
In particular $\phi_n\in\dom{A}$ for all $n$. By \cite[\S4.9]{StafBook}, $A$ generates an exponentially stable diagonal contraction
semigroup $\Afrak$ on $X$, which is determined by the condition
\begin{equation}\label{eq:diagsemigr}
	\Afrak^t\phi_n=e^{-(n+1)t}\phi_n,\qquad n=0,1,\ldots,
\end{equation}
since this function is the unique solution of the Cauchy problem $\dot x=Ax$ with $x(0)=\phi_n$:
$$
	\ddt e^{-(n+1)t}\phi_n=
	-(n+1)e^{-(n+1)t}\phi_n=
	Ae^{-(n+1)t}\phi_n,\qquad t\geq0.
$$
Moreover, $\|\Afrak^t\|=e^{-t}$, so that $\Afrak$ is also a contraction semigroup, and moreover
$$
	\lim_{t\to\infty} \frac{\ln\|\Afrak^t\|}t=-1,
$$
which shows that $\C_{-1}\subset\rho(A)$.

Note that the Cayley transform $\dA$ of the operator $A$ is determined by
$$
	\dA\phi_n=(1_X+A)(1_X-A)^{-1}\phi_n=-\frac{n}{2+n}\phi_n,
$$
and since $-n/(2+n)\to -1$ as $n\to\infty$, the spectral radius of $\dA$ is 1.  Hence the Cayley transform does not always map the generator of an exponentially stable semigroup to an operator which is exponentially stable in the discrete-time sense. Therefore, it is not possible to reduce the study of the strict bounded real lemma in continuous time to the discrete-time case in \cite[Theorem 1.6]{BGtH18a} by means of the Cayley transform, as was  done for the non-strict case in \cite{ArSt07}. Moreover, the semigroup $\Afrak$ cannot be embedded into a group, since \eqref{eq:strip} is violated.

Now observe that
$$
	\int_0^\infty \|\Afrak^t\phi_n\|^2\ud t=\frac{1}{2n+2},
$$
and hence the unbounded operator $C:=2(-A)^{\frac12}$ gives $\bW_ox=t\mapsto C\Afrak^t x$ bounded both from above and below, as an operator from $X$ into $L^{2+}_Y$, but with norm $\sqrt 2$ it is not the output map of a passive system; see Lemma \ref{lem:PasvContr}. However, $C$ is an infinite time admissible observation operator for $\Afrak$ and the pair $(C,A)$ is $L^2$-observable. If $C$ is made essentially more unbounded, then it is no longer an admissible observation operator for $\Afrak$, and if $C$ is made essentially more bounded, then we lose $L^2$-observability. By duality, $B:=\frac 12(-A_{-1})^{\frac12}$ is an admissible control operator for $\Afrak$ and $(A,B)$ is an $L^2$-controllable pair; note that $A_{-1}$ is described by the same formula as $A$, but the domain is extended to all of $X$. 

We now have the operators $A$, $B$ and $C$. To get a system node we still need to fix the special point $\alpha\in\C_{\omega_\Afrak}$ and the corresponding value of the transfer function $\widehat\Dfrak(\alpha)$; for convenience we take $\alpha=0$. The domain of the system node is
$$
	\dom{\AB}=\set{\bbm{x\\u}\in \begin{bmatrix}  X \\ U \end{bmatrix} \bigmid
		A_{-1}x+Bu \in X},\quad \AB=\bbm{A_{-1}&B}\big|_{\dom{\AB}},
$$
and the combined feedthrough/observation operator becomes
\begin{equation}\label{eq:CDex}
	\CD\bbm{x\\u}=C\left(x+A_{-1}^{-1}Bu\right)+\widehat\Dfrak(0)u,
	\quad \bbm{x\\u}\in\dom{\AB}.
\end{equation}
Specializing \eqref{eq:CDex} to $x=x_n\phi_n$ and $u=u_m\phi_m$ gives
\begin{equation}\label{eq:CDexPhi}
	\CD\bbm{x_n\phi_n\\u_m\phi_m}=2\sqrt{n+1}\,x_n\phi_n
	+(\widehat\Dfrak(0)-1_U)\,u_m\phi_m,\quad x_n,u_n\in\C.
\end{equation}

On the other hand, specializing \eqref{eq:CDex} to $x=(\lambda-A_{-1})^{-1}Bu$, we get from \eqref{node-transfunc} that the transfer function is
$$
\begin{aligned}
	\widehat\Dfrak(\lambda)u &=
	C\left((\lambda-A_{-1})^{-1}Bu+A_{-1}^{-1}Bu\right)+\widehat\Dfrak(0)u\\
	&=(-A)^{\frac12}\lambda\,(\lambda-A)^{-1}A_{-1}^{-1}(-A_{-1})^{\frac12}u+\widehat\Dfrak(0)u\\
	&=
	-\lambda\,(\lambda-A)^{-1}u+\widehat\Dfrak(0)u, \quad \lambda\in\C_{-1},
\end{aligned}
$$
where we in the last step used that $(-A)^{\frac12}$ commutes with $(\lambda-A)^{-1}$ and $(-A_{-1})^{\frac12}$ commutes with $A_{-1}^{-1}$; it is easy to check directly that the $m$-accretive operator $-A$ commutes with the bounded operator $(\lambda-A)^{-1}$; see \cite[Theorem 3.35 on p.\ 281]{Kato}. 

Taking for instance $\widehat\Dfrak(0):=0$, we get from \cite[Corollary 3.4.5]{StafBook} that $\widehat\Dfrak$ is a Schur function, but letting $\lambda\to\infty$ along the positive real line, we get from \cite[Theorem 3.2.9(iii)]{StafBook} that $\widehat\Dfrak(\lambda)u=u$ for all $u\in U$, and so $\widehat\Dfrak$ is \emph{not a strict} Schur function.

However, if we instead set $\widehat\Dfrak(0):=\frac{1}2 1_U$, then we get
\begin{equation}\label{eq:ExDstrict}
\widehat\Dfrak(\lambda)=-
\lambda\,(\lambda-A)^{-1}+\frac12=-
\frac12(\lambda+A)(\lambda-A)^{-1},\quad\lambda\in\C_{-1},
\end{equation}
which satisfies $	\|\widehat\Dfrak(\lambda)\|\leq\frac12$ for $\lambda\in\cplus$, i.e. this \emph{is} a strict Schur function. In Example \ref{ex:HaHr} below, we continue this example, in order to get two extremal solutions to the bounded KYP inequality \eqref{eq:KYPbdd} which are bounded both above and below.

Finally, we observe that, in both of the above cases, $\widehat\Dfrak\in H^\infty(\cplus;\cB(X))$, and then \cite[Theorem 10.3.6(iv)]{StafBook} gives that the system node $\SmallSysNode$ is
well-posed, but it is not passive, as we already saw. We may, however, apply Theorem \ref{thm:stdlemmaL2reg} to get that $\SmallSysNode$ is similar to a
passive system.
\end{example}

As the preceding example shows, $L^2$-minimality may be an exotic property. We further add to this conclusion by observing that, in general,
unless the point spectrum of $A$ is confined to a vertical strip, then the pair$(A,B)$ is not $L^2$-observable for any bounded operator $B:U\to X$. Dually,
no bounded $C:X\to Y$ makes $(C,A)$ an $L^2$-observable pair; indeed $\C^+_{\omega_\Afrak}\subset\rho(A)$, and so if $\sigma_p(A)$ is not contained in
a vertical strip, then there exists eigenpairs $(\lambda_n,\phi_n)$ of $A$, such that $\|\phi_n\|=1$ and $\re \lambda_n\to-\infty$ as $n\to\infty$.
Since $\phi_n\in\dom{A}$, we have for bounded $C$ and $\re\lambda_n<0$ that
$$
	\|\Cfrak\phi_n\|^2_{L^{2+}_Y}=
	\int_0^\infty \|C\Afrak^t\phi_n\|^2\ud t\leq
	\|C\phi_n\|^2\int_0^\infty e^{2\re\lambda_n t}\ud t \leq
	\frac{\|C\|^2}{-2\re\lambda_n}\,;
$$
here we used the extension of \eqref{eq:diagsemigr} to an arbitrary eigenpair. Thus $\phi_n\in\dom{\bW_o}$, and by letting $n\to\infty$, we get from
$\bW_o\phi_n=\Cfrak\phi_n$ that $\|\bW_o\phi_n\|\to 0$ with $\|\phi_n\|=1$. This proves that $(C,A)$ is not $L^2$-observable. The statement on controllability can be obtained by duality. Compare this to \eqref{eq:strip} and Remark \ref{R:drawbacks}.

We end this section by pointing out that observability can be strengthened into $L^2$-observability by weakening the norm in the state space and growing it,
while strengthening controllability to $L^2$-controllability can be achieved by shrinking the state space and strengthening the norm to make the $L^2$-reachable
state space Hilbert; see \cite[Theorem 9.4.7 and Proposition 9.4.9]{StafBook}. Note in particular the close relation between $L^2$-controllability/observability and
the concepts ``exact controllability/observability in infinite time (with bound $\omega=0$)'' used by Staffans; see \cite[Definitions 9.4.1--2]{StafBook}.
A difference in the approach is that we here force $\omega=0$ and accept that $\bW_c$ and/or $\bW_o$ may be unbounded, whereas in
\cite{StafBook}, Staffans is flexible about $\omega$ in order to get $\widetilde \Bfrak$ and $\widetilde \Cfrak$ in \eqref{eq:FrakTildeDef} bounded.


\section{The available storage and the required supply}\label{sec:storage}

In this section we return to the notion of storage functions associated with a well-posed system as in Definition \ref{def:storage}, which we recall here for the readers convenience:\ A function $S:X\to [0,\infty]$ is called a \emph{storage function} for the well-posed system $\Sigma$ in \eqref{def:WPsys} if $S(0)=0$ and for all trajectories $(\bu,\bx,\by)$ of $\Sigma$ on $\R^+$ it holds that
\begin{equation}\label{eq:storfndef3a}
	S\left(\bx(t)\right) + \|\pi_{[0,t]}\by\|_{L^{2+}_Y}^2 \leq
	 S\left(\bx(0)\right)+ \|\pi_{[0,t]}\bu\|_{L^{2+}_U}^2,
	\quad t>0.
\end{equation}
For systems $\Sigma$ that have densely defined $\dW_c^\bigstar$, \emph{$L^2$-regular} storage functions are defined as those storage functions that are finite-valued on $\range{\dW_c}$. A storage function $S$ is called \emph{quadratic} if there exists a positive semidefinite operator $H$ on $X$, such that
\begin{equation}\label{eq:quadstorefun}
	S(x)=S_H(x):=\left\{\begin{aligned}
		\| H^{\frac{1}{2}}x \|^2,&\quad x\in\dom{H^{\frac{1}{2}}}, \\
		\infty,&\quad x\not\in\dom{H^{\frac{1}{2}}}.
		\end{aligned}\right.
\end{equation}
Quadratic storage functions are of particular interest since they provide spatial solutions to the spatial KYP inequality.

\begin{proposition}\label{prop:storageimpliesschur}
If the well-posed system $\Sigma$ has a storage function $S$, then the transfer function $\widehat\Dfrak$ of $\Sigma$ has a unique analytic continuation to a Schur function on $\cplus$.
\end{proposition}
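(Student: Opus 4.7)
The plan is to use the storage inequality at zero initial state to show that the input/output map $\Dfrak$ extends to a contraction on $L^2_U$, then to invoke the operator-valued Paley--Wiener / multiplier characterization of causal, time-invariant $L^2$-bounded operators (the same ingredient used in the proof of Theorem \ref{T:absnode}) to conclude that its symbol is a Schur function that agrees with $\widehat\Dfrak$ on its original domain.

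First I would specialize \eqref{eq:storfndef3a} to trajectories $(\bu,\bx,\by)$ with $\bx(0)=0$. Since $S(0)=0$ and $S\geq 0$, the inequality collapses to $\|\pi_{[0,t]}\by\|_{L^{2+}_Y}^2\leq\|\pi_{[0,t]}\bu\|_{L^{2+}_U}^2$ for every $t>0$, which via \eqref{eq:trajonRplus} and \eqref{BCD^t} says exactly that each $\Dfrak^t\colon L^2([0,t];U)\to L^2([0,t];Y)$ is a contraction. For $\bu\in L^2_{\ell,r,U}$ I may assume $\supp\bu\subset[0,T]$ after applying time invariance; causality then forces $\supp(\Dfrak\bu)\subset[0,\infty)$, and for every $t\geq T$ one has $\pi_{[0,t]}\Dfrak\bu=\Dfrak^t\bu$, whence $\|\pi_{[0,t]}\Dfrak\bu\|_{L^{2+}_Y}\leq\|\bu\|_{L^2_U}$. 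Monotone convergence as $t\to\infty$ then yields $\Dfrak\bu\in L^2_Y$ with $\|\Dfrak\bu\|_{L^2_Y}\leq\|\bu\|_{L^2_U}$. Since $L^2_{\ell,r,U}$ is dense in $L^2_U$, this bounded restriction extends uniquely to a contraction $L\in\cB(L^2_U,L^2_Y)$, which inherits causality ($\pi_-L\pi_+=0$) and time invariance ($\tau^tL=L\tau^t$) from $\Dfrak$.

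Next I would invoke the $H^\infty$-multiplier characterization of causal, time-invariant bounded operators on $L^2$ (cf.\ \cite[Theorem 10.3.5]{StafBook} and its use in the proof of Theorem \ref{T:absnode}), specialized to $\omega=0$: the symbol of $L$ is a uniquely determined function $\Phi\in H^\infty(\cplus;\cB(U,Y))$ with $\|\Phi\|_\infty=\|L\|\leq 1$, so $\Phi\in\cS_{U,Y}$. Fix any $\omega>\max\{0,\omega_\Afrak\}$. For any $u_0\in U$ and $\beta>0$, the input $\bu(t)=e^{-\beta t}u_0$ on $\rplus$ lies in $L^{2+}_U\subset L^{2+}_{\omega,U}$ with $\widehat\bu(\lambda)=(\lambda+\beta)^{-1}u_0$; applying both $L=\cL^*M_\Phi\cL$ and the identity \eqref{CTtransfunc} together with $L\bu=\Dfrak\bu$ yields $\Phi(\lambda)u_0=\widehat\Dfrak(\lambda)u_0$ for every $\lambda\in\C_\omega$. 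Hence $\Phi=\widehat\Dfrak$ on $\C_\omega$; since $\C_\omega$ has interior cluster points in $\cplus$, $\Phi$ is the unique analytic continuation of $\widehat\Dfrak$ from $\dom{\widehat\Dfrak}\cap\cplus$ to all of $\cplus$, and the pointwise bound $\|\Phi(\lambda)\|\leq 1$ places it in $\cS_{U,Y}$.

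The main obstacle is the passage from finite-horizon contractivity of $\Dfrak^t$ to genuine $L^2_U$-boundedness of $L$: the a priori information on $\Dfrak$ is only $\omega$-bounded for $\omega>\omega_\Afrak$, which need not be $\leq 0$, so no free extension to an $L^2$-bounded operator is available from the well-posed systems framework alone. The storage inequality is exactly the extra ingredient that drives the effective $\omega$ down to $0$, and the density/monotone-convergence argument in the second paragraph is what converts this into $L^2_U$-boundedness. Once that step is in hand, the identification of the symbol with $\widehat\Dfrak$ and the Schur-class conclusion are routine applications of the Paley--Wiener correspondence already invoked elsewhere in the paper.
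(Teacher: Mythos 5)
Your proposal is correct and follows essentially the same route as the paper's proof: specialize the storage inequality to zero initial state to get contractivity of the input/output map, extend by density and time invariance to a causal contraction $L\in\cB(L^2_U,L^2_Y)$, and then use the Paley--Wiener/multiplier correspondence together with \eqref{CTtransfunc} to identify the symbol of $L$ as a Schur-class extension of $\widehat\Dfrak$. The only cosmetic difference is that you extend from $L^2_{\ell,r,U}$ and test on exponential inputs, whereas the paper extends from $L^2_{\ell,U}$ and identifies the symbol on all of $\cL L^{2+}_U=H^{2+}_U$ at once; both variants require the same routine check that $L$ agrees with $\Dfrak$ on $L^{2+}_U$.
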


\begin{proof}
From \eqref{eq:storfndef3a} it is immediate that every trajectory of $\Sigma$ on $\rplus$ with $\bu\in L_U^{2+}$ and $x(0)=0$ satisfies
\begin{equation}\label{eq:external}
	0\leq S\left(\bx(t)\right) \leq
	\|\pi_{[0,t]}\bu\|_{L^{2+}_U}^2-\|\pi_{[0,t]}\by\|_{L^{2+}_Y}^2,\quad t>0.
\end{equation}
Letting $t\to\infty$ in \eqref{eq:external}, we see that $\by\in L_Y^{2+}$, and we get from \eqref{eq:trajonRplus} that
$$
	\|\Dfrak \bu\|_{L^{2+}_Y}^2=\|\by\|_{L^{2+}_Y}^2  \leq \|\bu\|_{L^{2+}_U}^2.
$$
From $\pi_-\Dfrak\pi_+=0$ and $\tau^s\Dfrak=\Dfrak\tau^s$ for all $s\in \R$, we get
$$
	\|\Dfrak\tau^s\bu\|^2_{L^2_Y}=\|\tau^s\Dfrak \bu\|^2_{L^2_Y}
	=\|\Dfrak \bu\|^2_{L^2_Y}=\|\Dfrak \bu\|^2_{L^{2+}_Y}
	\leq \|\bu\|^2_{L^{2+}_U}
	=\|\tau^s \bu\|^2_{L^2_U}.
$$
By letting $s$ run over $\R$, we obtain that $\Dfrak$ restricted to $L^2_{\ell,U}$ has a unique extension to a time-invariant, causal operator $L$ from $L^2_U$ into $L^2_Y$ with norm at most 1. This implies that $\cL L\cL^*:L^2(i\R;U)\to L^2(i\R;Y)$ coincides with a multiplication operator $M_F$ with symbol $F\in H^\infty(\cplus;\cB(U,Y))$ satisfying $\|F\|_\infty = \|\cL L\cL^*\|=\|L \|\leq 1$. Hence, $F\in\cS_{U,Y}$. Moreover, $F$ is an extension of $\widehat\Dfrak$, because for $u\in L^{2+}_U$, by \cite[Corollary 4.6.10(iii)]{StafBook} (see the last part of
Proposition \ref{prop:Transfer}) we have
$$
	F(\lambda)(\cL \bu)(\lambda) =
	(\cL L \bu)(\lambda) =
	(\cL \Dfrak \bu)(\lambda) =
	\widehat\Dfrak(\lambda)(\cL \bu)(\lambda),
	\quad \lambda\in\C_{\omega_0},
$$
where $\omega_0:=\max\set{\omega_\Afrak,0}$. From $\cL L^{2+}_U=H^{2+}_U$, we now get $\widehat\Dfrak\big|_{\C_{\omega_0}}=F\big|_{\C_{\omega_0}}$. The continuation $F$ of $\widehat\Dfrak$ to the open connected set $\cplus$ is unique since $\C_{\omega_0}$ has an interior cluster point.
\end{proof}

\begin{proposition}\label{P:StorageKYP}
Assume that $S=S_H$ is of the form \eqref{eq:quadstorefun} with $H$ on $X$ positive semidefinite. Then $S_H$ is a storage function for $\Sigma$ if and only if $H$ is a spatial solution to the KYP inequality \eqref{eq:HhalfDomCond}--\eqref{eq:KYP}.
\end{proposition}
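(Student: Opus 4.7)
The plan is to unpack both sides in terms of the trajectory formulas in \eqref{eq:trajonRplus}, namely $\bx(t) = \Afrak^t x_0 + \Bfrak^t \pi_{[0,t]}\bu$ and $\pi_{[0,t]}\by = \Cfrak^t x_0 + \Dfrak^t \pi_{[0,t]}\bu$, and recognize that \eqref{eq:KYP} is literally \eqref{eq:storfndef3a} once one establishes that the relevant states lie in $\dom{H^{\frac{1}{2}}}$. Since $S_H(0)=\|H^{\frac{1}{2}}\cdot 0\|^2 = 0$, only the inequality has substance.

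For the forward direction, assume $H$ is a spatial solution of \eqref{eq:HhalfDomCond}--\eqref{eq:KYP}. Given a trajectory $(\bu,\bx,\by)$ on $\rplus$ with $x_0 := \bx(0)$, first consider the case $x_0 \notin \dom{H^{\frac{1}{2}}}$: then $S_H(x_0)=\infty$ and \eqref{eq:storfndef3a} is trivial. Otherwise $x_0 \in \dom{H^{\frac{1}{2}}}$, and the two inclusions \eqref{eq:HhalfDomCond} together with the linearity of $\dom{H^{\frac{1}{2}}}$ give $\bx(t) = \Afrak^t x_0 + \Bfrak^t \pi_{[0,t]}\bu \in \dom{H^{\frac{1}{2}}}$. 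Squaring both sides of \eqref{eq:KYP} at input $\sbm{x_0 \\ \pi_{[0,t]}\bu}$ and expanding gives
\[
\|H^{\frac{1}{2}}\bx(t)\|_X^2 + \|\pi_{[0,t]}\by\|_{L^2([0,t];Y)}^2 \leq \|H^{\frac{1}{2}}x_0\|_X^2 + \|\pi_{[0,t]}\bu\|_{L^2([0,t];U)}^2,
\]
which, since $\bx(t) \in \dom{H^{\frac{1}{2}}}$, is exactly \eqref{eq:storfndef3a} for $S = S_H$.

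For the converse, assume $S_H$ is a storage function. Fix $t>0$, $x \in \dom{H^{\frac{1}{2}}}$ and $\bu \in L^2([0,t];U)$, and let $(\bu,\bx,\by)$ be the trajectory on $\rplus$ with $\bx(0)=x$ (extending $\bu$ by zero beyond $t$). Then \eqref{eq:storfndef3a} at time $t$ yields
\[
S_H(\bx(t)) \leq \|H^{\frac{1}{2}}x\|^2 + \|\pi_{[0,t]}\bu\|^2 < \infty,
\]
so $\bx(t) = \Afrak^t x + \Bfrak^t \pi_{[0,t]}\bu \in \dom{H^{\frac{1}{2}}}$. Specializing to $\bu = 0$ gives $\Afrak^t x \in \dom{H^{\frac{1}{2}}}$, verifying the first inclusion of \eqref{eq:HhalfDomCond}; specializing to $x = 0$ (noting $0 \in \dom{H^{\frac{1}{2}}}$) gives $\Bfrak^t \pi_{[0,t]}\bu \in \dom{H^{\frac{1}{2}}}$, verifying the second. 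With both inclusions established, $S_H(\bx(t)) = \|H^{\frac{1}{2}}\bx(t)\|^2$, and \eqref{eq:storfndef3a} rearranges to the squared form of \eqref{eq:KYP} on the domain $\sbm{\dom{H^{\frac{1}{2}}} \\ L^2([0,t];U)}$.

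There is no real obstacle here beyond bookkeeping; the potential subtlety — that $\dom{H^{\frac{1}{2}}}$ is merely a subspace, not closed in general — is handled by the two specializations $\bu=0$ and $x=0$ which isolate the semigroup and input contributions separately, using linearity of the domain.
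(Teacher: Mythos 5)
Your proposal is correct and follows essentially the same route as the paper's proof: both directions reduce to the observation that \eqref{eq:KYP} squared is literally \eqref{eq:storfndef3a} evaluated along trajectories, with the domain conditions \eqref{eq:HhalfDomCond} extracted in the converse direction by specializing to $\bu=0$ and $x_0=0$ after using finiteness of $S_H(\bx(t))$. No gaps.
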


\begin{proof}
Let $S$ be quadratic, i.e., $S=S_H$ as in \eqref{eq:quadstorefun} for some positive semidefinite operator $H$ on $X$. First assume that $S$ is a storage function for $\Sigma$, so that \eqref{eq:storfndef3a} holds for all trajectories $(\bu,\bx,\by)$ on $\rplus$ of $\Sigma$. Pick $t>0$, $x_0\in \dom{H^{\frac{1}{2}}}$ and $\bu\in L^{2}([0,t];U)$ arbitrarily. By \eqref{BCD^t} and \eqref{eq:trajonRplus},
\[
	\bx(t):=\Afrak^t x_0+\Bfrak^t \bu 
	\quad\mbox{and}\quad
	\pi_{[0,t]}\by:= \Cfrak^t x_0+\Dfrak^t \bu,\quad t>0,
\]
define a trajectory $(\bu,\bx,\by)$ on $[0,t]$ of $\Sigma$ with $\bx(0)=x_0$. Now \eqref{eq:storfndef3a} and $S(x_0)<\infty$ imply that $S(\bx(t))<\infty$, and hence that $\Afrak^t x_0+\Bfrak^t \bu=\bx(t)\in \dom{H^{\frac{1}{2}}}$.  Taking first $\bu=0$ and then $x_0=0$, we get \eqref{eq:HhalfDomCond}.

Since $S=S_H$, we obtain that
\[
\begin{aligned}
S\left(\bx(t)\right)+\|\pi_{[0,t]}\by\|^2
=\left\|\bbm{H^{\frac{1}{2}}&0\\0&I}\bbm{\bx(t)\\\pi_{[0,t]}\by}\right\|^2
&=\left\|\bbm{H^{\frac{1}{2}}&0\\0&I}
		\bbm{\Afrak^t&\Bfrak^t\\\Cfrak^t&\Dfrak^t}\bbm{x_0\\\bu}\right\|^2\\
\text{and}\qquad S\left(x_0\right)+\|\bu\|^2&
=\left\|\bbm{H^{\frac{1}{2}}&0\\0&I}\bbm{x_0\\\bu}\right\|^2.
\end{aligned}
\]
Hence \eqref{eq:storfndef3a} is equivalent to
\begin{align}
\left\|\bbm{H^{\frac{1}{2}}&0\\0&I}
		\bbm{\Afrak^t&\Bfrak^t\\\Cfrak^t&\Dfrak^t}\bbm{x_0\\ \bu}\right\|^2
\leq \left\|\bbm{H^{\frac{1}{2}}&0\\0&I}\bbm{x_0\\ \bu}\right\|^2.\label{eq:KYPinproof}
\end{align}
Since $x_0 \in \dom{H^{\frac{1}{2}}}$, $t>0$ and
$\bu\in L^2([0,t];U)$ were chosen arbitrarily, we obtain \eqref{eq:KYP}. Conversely, it is clear that \eqref{eq:KYP} implies \eqref{eq:KYPinproof} and
hence that \eqref{eq:storfndef3a} holds.
\end{proof}

Next we explain how solutions to the spatial KYP-inequality for a well-posed system relate to the solutions to the spatial KYP-inequality of the dual system.

\begin{proposition}\label{P:DualKYPsols}
Let $\Sigma$ be a well-posed system with causal dual $\Sigma^d$. A positive definite operator $H$ on $X$ is a spatial solution to the KYP-inequality for $\Sigma$ if and only if $H^{-1}$ is a spatial solution to the KYP-inequality for $\Sigma^d$: For all $t>0$ it holds that 
$$
	\Afrak^{t*}\dom{H^{-\frac12}}\subset\dom{H^{-\frac12}}\,,\quad
	\Cfrak^{t*}L^2([0,t];Y)\subset\dom{H^{-\frac12}}
$$ 
and
\begin{equation}   \label{KYP-d}
 \left\| \begin{bmatrix} H^{-\frac{1}{2}} & 0 \\ 0 & 1 \end{bmatrix} \begin{bmatrix} \Afrak^t & \Bfrak^t \\ \Cfrak^t & \Dfrak^t \end{bmatrix}^* \begin{bmatrix} x \\ \by \end{bmatrix}
\right\| \le \left\|\begin{bmatrix} H^{-\frac{1}{2}} & 0 \\ 0 & 1 \end{bmatrix} \begin{bmatrix} x \\ \by \end{bmatrix} \right\|,   \quad
 \begin{bmatrix} x \\ \by \end{bmatrix} \in \begin{bmatrix} \dom{H^{-\frac{1}{2}}}\\ L^2([0,t], Y) \end{bmatrix}.
\end{equation}
\end{proposition}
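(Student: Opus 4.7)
The plan is to recast each KYP inequality as the contractivity of a suitably rescaled operator and then to exploit the fact that a bounded operator is a contraction if and only if its Hilbert-space adjoint is. Writing $\mathcal{T} := \sbm{\Afrak^t & \Bfrak^t\\ \Cfrak^t & \Dfrak^t}$, the spatial KYP \eqref{eq:HhalfDomCond}--\eqref{eq:KYP} for $\Sigma$ with solution $H$ is a direct restatement of the contractivity of
\[
\mathcal{F} := \sbm{H^{\frac12} & 0 \\ 0 & 1} \mathcal{T} \sbm{H^{-\frac12} & 0 \\ 0 & 1}
\]
on its dense domain $\range{H^{\frac12}} \oplus L^2([0,t];U) \subset X \oplus L^2([0,t];U)$; density uses the injectivity and self-adjointness of $H^{\frac12}$ (a consequence of $H \succ 0$), while well-definedness uses \eqref{eq:HhalfDomCond}. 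Hence $\mathcal F$ extends uniquely to a contraction $\bar{\mathcal F}$ on $X\oplus L^2([0,t];U)$, and the Hilbert-space adjoint $\bar{\mathcal F}^*$ is again a contraction.

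Symmetrically, the bijective substitution $\tilde\eta := H^{\frac12}\eta_0$ between $\dom{H^{\frac12}}$ and $\dom{H^{-\frac12}} = \range{H^{\frac12}}$ shows that the displayed conditions on $H^{-1}$ in the statement (both the invariance conditions and \eqref{KYP-d}) amount to the well-definedness and contractivity of
\[
\mathcal{G} := \sbm{H^{-\frac12} & 0 \\ 0 & 1} \mathcal{T}^* \sbm{H^{\frac12} & 0 \\ 0 & 1}
\]
on the dense domain $\dom{H^{\frac12}} \oplus L^2([0,t];Y)$: the invariance conditions are precisely what is required for $\mathcal G$ to land in $X \oplus L^2([0,t];U)$. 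The heart of the proof is therefore the identification $\mathcal{G} = \bar{\mathcal F}^*\bigr|_{\dom{H^{\frac12}} \oplus L^2([0,t];Y)}$, which will simultaneously yield the invariance conditions.

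To establish this identification, I would compute, for $\eta_0 \in \dom{H^{\frac12}}$, $\by \in L^2([0,t];Y)$, $x \in \dom{H^{\frac12}}$ and $u \in L^2([0,t];U)$,
\[
\langle \mathcal{F}\sbm{H^{\frac12}x \\ u}, \sbm{\eta_0 \\ \by}\rangle = \langle x, \Afrak^{t*}H^{\frac12}\eta_0 + \Cfrak^{t*}\by\rangle + \langle u, \Bfrak^{t*}H^{\frac12}\eta_0 + \Dfrak^{t*}\by\rangle
\]
by shuttling $H^{\frac12}$ onto $\eta_0$ (using self-adjointness) and the integrated operators onto their adjoints. Writing $\sbm{\alpha \\ \beta} := \bar{\mathcal F}^*\sbm{\eta_0 \\ \by}$ and comparing with $\langle \sbm{H^{\frac12}x \\ u}, \sbm{\alpha \\ \beta}\rangle$, one reads off $\beta = \Bfrak^{t*}H^{\frac12}\eta_0 + \Dfrak^{t*}\by$, while the linear functional $x \mapsto \langle H^{\frac12}x, \alpha\rangle = \langle x, \Afrak^{t*}H^{\frac12}\eta_0 + \Cfrak^{t*}\by\rangle$ on $\dom{H^{\frac12}}$ is bounded in the $X$-norm of $x$. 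The self-adjointness of $H^{\frac12}$ then forces $\alpha \in \dom{H^{\frac12}}$ with $H^{\frac12}\alpha = \Afrak^{t*}H^{\frac12}\eta_0 + \Cfrak^{t*}\by$; specializing $\by = 0$ gives the invariance $\Afrak^{t*}\range{H^{\frac12}} \subset \range{H^{\frac12}}$, and specializing $\eta_0 = 0$ gives $\Cfrak^{t*}L^2([0,t];Y) \subset \range{H^{\frac12}}$. Together these yield $\bar{\mathcal F}^*\sbm{\eta_0 \\ \by} = \mathcal G \sbm{\eta_0 \\ \by}$.

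With the identification in hand, the forward direction follows from the chain $\mathcal F$ contractive $\Rightarrow$ $\bar{\mathcal F}$ contractive $\Rightarrow$ $\bar{\mathcal F}^*$ contractive $\Rightarrow$ $\mathcal G$ contractive. The converse is obtained by applying the forward direction to $\Sigma^d$ and $H^{-1}$ in place of $\Sigma$ and $H$, using $(\Sigma^d)^d = \Sigma$ from Theorem \ref{T:causdual}, $(H^{-1})^{-1} = H$, and the routine translation between the displayed conditions on $H^{-1}$ and the canonical KYP for $\Sigma^d$ with $H^{-1}$ supplied by Lemma \ref{L:dualadj} together with the unitarity of $\Lambda^t_U$ and $\Lambda^t_Y$. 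The principal obstacle I anticipate is the rigorous domain bookkeeping, in particular extracting the two invariance conditions from the KYP contraction itself rather than postulating them; the self-adjointness principle invoked above---a functional on $\dom{H^{\frac12}}$ bounded in the $X$-norm must come from a vector in $\dom{H^{\frac12}}$---is the single tool that accomplishes both this and the closed-form description of $\bar{\mathcal F}^*$.
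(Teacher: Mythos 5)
Your proposal is correct, and it takes a route the paper deliberately sidesteps. The paper's own proof is two short steps: it first observes (as you do in your converse step) that, by Lemma \ref{L:dualadj} and the unitarity of $\Lambda^t_U$ and $\Lambda^t_Y$, the displayed conditions on $H^{-1}$ are exactly the canonical spatial KYP-inequality \eqref{eq:HhalfDomCond}--\eqref{eq:KYP} for $\Sigma^d$ with solution $H^{-1}$; it then fixes $t$, regards $\sbm{\Afrak^t&\Bfrak^t\\\Cfrak^t&\Dfrak^t}$ as a discrete-time system, and cites the discrete-time duality result of Arov--Kaashoek--Pik type (\cite[Proposition 5.3]{BGtH18b}). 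The remark preceding the paper's proof states explicitly that one could instead "mechanically imitate" the proof of that cited proposition; your argument is precisely that imitation, carried out in full. Your key mechanism is sound: the spatial KYP-inequality for $\Sigma$ is the contractivity of $\mathcal F=\sbm{H^{1/2}&0\\0&1}\sbm{\Afrak^t&\Bfrak^t\\\Cfrak^t&\Dfrak^t}\sbm{H^{-1/2}&0\\0&1}$ on the dense subspace $\range{H^{1/2}}\oplus L^2([0,t];U)$ (density because $H^{1/2}$ is injective and self-adjoint), the continuous extension $\bar{\mathcal F}$ has a contractive adjoint, and the identity $\langle H^{1/2}x,\alpha\rangle=\langle x,\Afrak^{t*}H^{1/2}\eta_0+\Cfrak^{t*}\by\rangle$ for all $x\in\dom{H^{1/2}}$ forces $\alpha\in\dom{(H^{1/2})^*}=\dom{H^{1/2}}$ with $H^{1/2}\alpha=\Afrak^{t*}H^{1/2}\eta_0+\Cfrak^{t*}\by$ --- which correctly \emph{derives} the two domain-invariance conditions (by specializing $\by=0$ and $\eta_0=0$) rather than postulating them, and identifies $\bar{\mathcal F}^*$ with $\mathcal G$ on $\dom{H^{1/2}}\oplus L^2([0,t];Y)$. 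The converse via $(\Sigma^d)^d=\Sigma$ and $(H^{-1})^{-1}=H$ is also fine. What your approach buys is a self-contained proof independent of the discrete-time literature; what the paper's buys is brevity and an illustration of how continuous-time statements can be imported from discrete time by sampling at time $t$.
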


The proof could be carried out by mechanically imitating the proof of \cite[Proposition 5.3]{BGtH18b}, replacing $\sbm{\dA&\dB\\\dC&\dD}$ by $\sbm{\Afrak^t&\Bfrak^t\\\Cfrak^t&\Dfrak^t}$. However, as Proposition \ref{P:DualKYPsols} is not a core result of our theory, we illustrate how some continuous-time results can be imported from the discrete-time case by discretization using lifting of the input and output signals, combined with sampling of the state, as described in \cite[\S2.4]{StafBook}.

\begin{proof}[Proof of Proposition \ref{P:DualKYPsols}]

That \eqref{KYP-d} is a correct statement of the spatial KYP inequality for $\Sigma^d$, with solution denoted by $H^{-\frac12}$ instead of by $H^{\frac12}$, follows from Lemma \ref{L:dualadj}, the unitarity of $\sbm{1_X&0\\0&\Lambda^t_K}$ and the fact that $\sbm{H^{\frac12}&0\\0&1}$ commutes with $\sbm{1&0\\0&(\Lambda^t_K)^*}$. 

Now let $H$ be a solution to the spatial KYP equality in the sense of Theorem \ref{thm:stdlemma} and fix $t>0$ arbitrarily. Then $H$ is also a solution to the spatial KYP inequality for the discrete time system $\sbm{\dA&\dB\\\dC&\dD}:=\sbm{\Afrak^t&\Bfrak^t\\\Cfrak^t&\Dfrak^t}$ with input space $L^2([0,T];U)$, state space $X$ and output space $L^2([0,T];Y)$, in the sense of \cite[Theorem 1.3]{BGtH18b}. Then \cite[Proposition 5.3]{BGtH18b} gives that $H^{-1}$ is a solution to the spatial KYP inequality for the discrete-time system $\sbm{\dA&\dB\\\dC&\dD}^*:=\sbm{\Afrak^{t*}&\Cfrak^{t*}\\\Bfrak^{t*}&\Dfrak^{t*}}$, so that \eqref{KYP-d} holds. Since $t>0$ was arbitrary, we obtain the result.
\end{proof}

In Proposition \ref{prop:storageimpliesschur} we proved that the existence of a storage function implies that the transfer function coincides with a Schur function on some right half-plane. In order to prove the converse implication, we now introduce the \emph{available storage} 
\begin{equation}\label{eq:Sa}
	S_a(x_0):=\sup_{\bv\in L^{2+}_{loc,U},\, t>0}
		\left(\| \pi_{[0,t]}\by\|^2_{L^{2+}_Y}
		- \|  \pi_{[0,t]}\bv \|^2_{L^{2+}_U} \right),\quad x_0\in X,
\end{equation}
where in the supremum, $\by$ is the output signal of the trajectory on $\rplus$ of $\Sigma$, with input $\bv$ and initial state $x_0$, as well as the \emph{required supply} 
\begin{equation}\label{eq:Sr}
	S_r(x_0):=\inf_{(\bv,\by,t)\in\Vfrak_{x_0}}
		\left(\| \pi_{[t,0]} \bv \|^2_{L^{2-}_U} - \| \pi_{[t,0]} \by \|^2_{L^{2-}_Y}\right),\quad x_0\in X,
\end{equation}
where
$$
	\Vfrak_{x_0}:=\left\{(\bv,\by,t)\in L^2_{\ell,loc,U\times Y}\times \rminus \biggmid \begin{array}{l} (\bv,\bx,\by)\text{ is a trajectory of $\Sigma$ on $\R$,}\\ \bx(0)=x_0,~\supp {\pi_-\bv}\subset[t,0] \end{array}\right\}.
$$

We need the following lemma in order to prove that $S_a$ and $S_r$ are storage functions if $\widehat\Dfrak\in \cS_{U,Y}$.

\begin{lemma}\label{lem:tcontr}
Let $(\bu,\bx,\by)$ be a trajectory on $\rplus$ with $\bx(0)=0$, of a system $\Sigma$ whose transfer function is in $\cS_{U,Y}$. Then
$$
	\|  \pi_{[0,t]}\by \|^2_{L^{2+}_Y} \leq
	\|  \pi_{[0,t]}\bu \|^2_{L^{2+}_U},\quad t>0.
$$
\end{lemma}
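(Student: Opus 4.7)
The idea is to bootstrap the pointwise bound $\|\widehat\Dfrak(\lambda)\| \le 1$ on $\cplus$ to a time-domain contractivity estimate via the multiplier operator $L_\Sigma$ introduced in Theorem \ref{thm:hankel}(1), combined with the causality and time-invariance properties built into Definition \ref{def:WPsys}(4).

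Since the hypothesis $\widehat\Dfrak \in \cS_{U,Y}$ in particular entails $\widehat\Dfrak \in H^\infty(\cplus;\cB(U,Y))$ with $\|\widehat\Dfrak\|_\infty \le 1$, Theorem \ref{thm:hankel}(1) yields a bounded, causal, time-invariant operator $L_\Sigma \in \cB(L^2_U, L^2_Y)$ with $\|L_\Sigma\| = \|\widehat\Dfrak\|_\infty \le 1$ which extends $\Dfrak|_{L^2_{\ell,U}}$.  Because $\bx(0)=0$, the trajectory equation \eqref{eq:trajonRplus} reduces to $\by = \Dfrak \pi_+\bu$, so the claim becomes an estimate for an image under $\Dfrak$ applied to a signal in $L^2_{\ell,loc,U}$.

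The concrete steps I would carry out are as follows.  First, set $\bu_t := \pi_{[0,t]}\bu \in L^2_{\ell,U} \subset L^2_U$ and write $\pi_+\bu = \bu_t + \bu_t^c$ with $\bu_t^c := \pi_{(t,\infty)}\bu$.  Second, I would verify that $\Dfrak \bu_t^c$ is supported in $[t,\infty)$, hence $\pi_{[0,t]}\Dfrak\bu_t^c = 0$: since $\tau^t \bu_t^c \in L^{2+}_{loc,U}$, Definition \ref{def:WPsys}(4)(a)--(b) give $\pi_- \Dfrak \tau^t \bu_t^c = \pi_-\Dfrak\pi_+ \tau^t \bu_t^c = 0$, and applying $\tau^{-t}$ together with $\tau^{-t}\pi_- = \pi_{(-\infty,t)}\tau^{-t}$ yields $\pi_{(-\infty,t)}\Dfrak \bu_t^c = 0$ as required.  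Third, since $\bu_t \in L^2_{\ell,U}$, Theorem \ref{thm:hankel}(1) gives $\Dfrak \bu_t = L_\Sigma \bu_t$, so
\begin{equation*}
\pi_{[0,t]}\by = \pi_{[0,t]}\Dfrak\pi_+\bu = \pi_{[0,t]}\Dfrak\bu_t = \pi_{[0,t]} L_\Sigma \bu_t .
\end{equation*}
Fourth, the contractivity $\|L_\Sigma\| \le 1$ together with $\|\pi_{[0,t]}\| \le 1$ on $L^2_Y$ gives
\begin{equation*}
\|\pi_{[0,t]}\by\|_{L^{2+}_Y} \le \|L_\Sigma \bu_t\|_{L^2_Y} \le \|\bu_t\|_{L^2_U} = \|\pi_{[0,t]}\bu\|_{L^{2+}_U},
\end{equation*}
which is the desired inequality after squaring.

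The only nontrivial step is the second one — showing that future-supported inputs produce future-supported outputs — but this is standard and reduces to a one-line manipulation of the causality and time-invariance axioms.  Everything else is a direct application of the $L_\Sigma$-construction already assembled in Theorem \ref{thm:hankel}(1), so no additional machinery is required.
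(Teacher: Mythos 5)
Your proposal is correct and follows essentially the same route as the paper's proof: both split $\pi_+\bu$ into $\pi_{[0,t]}\bu$ and $\pi_{(t,\infty)}\bu$, kill the contribution of the future part to $\pi_{[0,t]}\by$ by the same time-invariance/causality manipulation from Definition \ref{def:WPsys}(4), and then apply the contraction $L_\Sigma$ from Theorem \ref{thm:hankel}(1) to the truncated input. No gaps.
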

\begin{proof}
By Theorem \ref{thm:hankel}, the operator $L_\Sigma$ in \eqref{L2inout} is a contraction from $L^2_U$ into $L^2_Y$, such that $L_\Sigma \bu=\Dfrak \bu$ for all $\bu\in L^{2+}_{U}$. By \eqref{eq:trajonRplus}, $\by=\Dfrak \bu$, so that item (4) of Definition \ref{def:WPsys} gives
$$
\begin{aligned}
	\pi_{[0,t]}\by&=\pi_{[0,t]}\Dfrak \pi_{[0,t]}\bu+\pi_{[0,t]}\Dfrak \pi_{(t,\infty)}\bu
	=\pi_{[0,t]}L_\Sigma \pi_{[0,t]}\bu+\pi_{[0,t]}\tau^{-t}\Dfrak\tau^t \pi_{(t,\infty)}\bu \\
	&=\pi_{[0,t]}L_\Sigma \pi_{[0,t]}\bu+\tau^{-t}\pi_{[-t,0]}\Dfrak \pi_+\tau^t \bu
	=\pi_{[0,t]}L_\Sigma \pi_{[0,t]}\bu,
\end{aligned}
$$
and then $\|\pi_{[0,t]}\by\|=\|\pi_{[0,t]} L_\Sigma \pi_{[0,t]}\bu\|\leq \| \pi_{[0,t]}\bu\|$.
\end{proof}

In the next result, we do not assume minimality, in contrast to many similar results in the literature.

\begin{theorem}\label{thm:schurimpliesstorage}
Assume that the well-posed system $\Sigma$ has transfer function in $\cS_{U,Y}$. Then $S_a$ and $S_r$ are storage functions for $\Sigma$, which are extremal in the sense that every other storage function $S$ for $\Sigma$ satisfies
\begin{equation}\label{eq:StorFunOrder}
	S_a(x_0)\leq S(x_0)\leq S_r(x_0),\quad x_0\in X.
\end{equation}
\end{theorem}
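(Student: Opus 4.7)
The plan is to verify the two storage-function axioms ($S(0)=0$ and the dissipation inequality \eqref{eq:storfndef3a}) for each of $S_a$ and $S_r$, and then derive the extremality \eqref{eq:StorFunOrder} by direct manipulation of the defining sup/inf. The Schur assumption enters only through Lemma \ref{lem:tcontr} and the contractivity $\|L_\Sigma\|\leq 1$ from Theorem \ref{thm:hankel}, both used to force the appropriate nonnegativity.

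For $S_a$, nonnegativity is immediate from $\bv=0,\,t=0$ in \eqref{eq:Sa}; Lemma \ref{lem:tcontr} renders every term in the supremum defining $S_a(0)$ nonpositive, so in fact $S_a(0)=0$. The dissipation inequality is Willems' concatenation trick: given a trajectory $(\bu,\bx,\by)$ on $\rplus$ with $\bx(0)=x_0$ and any $s>0$, rule (4) after Definition \ref{def:WPtraj} lets me extend $(\bu,\bx,\by)|_{[0,s]}$ by an arbitrary trajectory on $\rplus$ starting at $\bx(s)$; the $L^2$-cost in \eqref{eq:Sa} decomposes additively over $[0,s]$ and the tail, and taking the supremum over the tail produces $S_a(\bx(s))$, yielding exactly \eqref{eq:storfndef3a}.

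For $S_r$, nonnegativity requires a separate observation: for $(\bv,\by,t)\in\Vfrak_0$, causality (item (4b) of Definition \ref{def:WPsys}) and time invariance give $\pi_-\by=\pi_-L_\Sigma\pi_-\bv$, and the contractivity of $L_\Sigma$ in Theorem \ref{thm:hankel} yields $\|\pi_{[t,0]}\by\|^2\leq\|\pi_{[t,0]}\bv\|^2$; combined with the trivial choice $\bv\equiv 0$ this gives $S_r(0)=0$. The dissipation inequality mirrors the $S_a$ argument: for any $(\bv,\by',t)\in\Vfrak_{x_0}$, rule (5) after Definition \ref{def:WPtraj} glues the backward trajectory to the forward trajectory $(\bu,\bx,\by)|_{[0,s]}$ into a trajectory on $\R$ reaching $\bx(s)$ at time $s$; rule (3) then shifts by $-s$ into an element of $\Vfrak_{\bx(s)}$ whose $L^2$-cost decomposes as $\|\pi_{[t,0]}\bv\|^2-\|\pi_{[t,0]}\by'\|^2+\|\pi_{[0,s]}\bu\|^2-\|\pi_{[0,s]}\by\|^2$; infimizing over $(\bv,\by',t)$ gives the required inequality.

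For the extremality \eqref{eq:StorFunOrder}, fix an arbitrary storage function $S$. The bound $S_a\leq S$ is immediate on rearranging \eqref{eq:storfndef3a} and discarding the nonnegative term $S(\bx(t))$. For $S\leq S_r$, given $(\bv,\by,t)\in\Vfrak_{x_0}$, the support condition $\supp{\pi_-\bv}\subset[t,0]$ together with \eqref{eq:trajonR} forces $\bx(s)=0$ for $s\leq t$, so rule (1) applied at time $t$ combined with rule (2) identifies $\tau^t(\bv,\bx,\by)$ with a trajectory on $\rplus$ having zero initial state and reaching $x_0$ at time $-t$; applying \eqref{eq:storfndef3a} to this trajectory and using $S(0)=0$ yields $S(x_0)\leq\|\pi_{[t,0]}\bv\|^2-\|\pi_{[t,0]}\by\|^2$, and infimizing over $\Vfrak_{x_0}$ finishes the proof. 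The main subtlety throughout is the support bookkeeping when shifting and gluing trajectories, but this is routine given rules (1)--(5) after Definition \ref{def:WPtraj}, and I anticipate no serious obstacle.
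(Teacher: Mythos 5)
Your proposal is correct and follows essentially the same route as the paper: nonnegativity and the normalization $S(0)=0$ via Lemma \ref{lem:tcontr} (equivalently, contractivity of $L_\Sigma$ from Theorem \ref{thm:hankel}), the dissipation inequality via concatenation/gluing of trajectories using rules (4), (5), (3) and (1)--(2) after Definition \ref{def:WPtraj}, and extremality by rearranging \eqref{eq:storfndef3a} (discarding $S(\bx(t))$ for $S_a\leq S$) and by running a zero-initial-state trajectory up to $x_0$ for $S\leq S_r$. The only presentational gap is that you verify nonnegativity of $S_r$ explicitly only at $x_0=0$; the same causality-plus-contractivity estimate applies verbatim to every element of $\Vfrak_{x_0}$ for arbitrary $x_0$ (and $S_r(x_0)=\inf\emptyset=\infty$ when $\Vfrak_{x_0}=\emptyset$), which is exactly what the paper records.
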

\begin{proof}
\emph{Step 1: $S_a$ is a storage function for $\Sigma$.} Choose $\bv=0$ in \eqref{eq:Sa} to obtain that $S_a(x_0)\geq0$ for all $x_0\in X$. On the other hand, by Lemma \ref{lem:tcontr}, $\| \pi_{[0,t]}\by\|-\|\pi_{[0,t]}\bv\|\leq0$ for all trajectories $(\bv,\bx,\by)$ on $\R^+$ with $\bv\in L^{2+}_{loc,U}$ and $\bx(0)=0$, and all $t>0$. Thus $S_a(0)=0$.

Let $(\bu,\bx,\by)$ be a system trajectory of $\Sigma$ over $\R^+$ and fix $t>0$. Let $\bv\in L^{2+}_{loc,U}$ and write $\bx_\bv$ and $\by_\bv$ for the state and output trajectory on $\rplus$ of $\Sigma$ corresponding to the input $\bv$ and initial state $\bx_\bv(0)=\bx(t)$. Define
\[
(\widetilde{\bv},\widetilde{\bx},\widetilde{\by}):= \pi_{[0,t)}(\bu,\bx,\by) +\tau^{-t} (\bv,\bx_\bv,\by_\bv).
\]
Since $\bx_\bv(0)=\bx(t)$, trajectory property (4) listed after Definition \ref{def:WPtraj} gives that $(\widetilde{\bv},\widetilde{\bx},\widetilde{\by})$ is also a trajectory of $\Sigma$ over $\R^+$ with $\widetilde{\bx}(0)=\bx(0)$. For every $s>0$, using \eqref{eq:Sa}, we now have
\begin{align*}
&\|\pi_{[0,s]}\by_\bv\|^2_{L^{2+}_Y} - \|\pi_{[0,s]} \bv \|^2_{L^{2+}_U}  = \|\pi_{[t,t+s]} \tau^{-t} \by_\bv\|^2_{L^{2+}_Y} - \|\pi_{[t,t+s]} \tau^{-t} \bv \|^2_{L^{2+}_U}\\
&\qquad \qquad = \|\pi_{[0,t+s]} \widetilde{\by}\|^2_{L^{2+}_Y} - \|\pi_{[0,t+s]} \widetilde{\bv} \|^2_{L^{2+}_U} - \|\pi_{[0,t]}  \by\|^2_{L^{2+}_Y} + \|\pi_{[0,t]} \bu \|^2_{L^{2+}_U}\\
& \qquad \qquad \leq S_a(\bx(0)) + \int_0^t\|\bu(\tau)\|_U^2\ud \tau -\int_0^t\|\by(\tau)\|_Y^2\ud s.
\end{align*}
Taking supremum over $\bv\in L^{2+}_{loc,U}$ and $s>0$ it follows that $S_a$ satisfies \eqref{eq:storfndef3a}.

\smallskip

\noindent
\emph{Step 2: $S_r$ is a storage function for $\Sigma$.} For $x_0\not\in \range\Bfrak$ it follows from \eqref{eq:trajonR} that $\Vfrak_{x_0}=\emptyset$, so that $S_r(x_0)=\inf \emptyset=\infty\geq 0$. Now assume that $x_0\in\range\Bfrak$ and choose $\bv\in L^{2}_{\ell, loc,U}$ with $\Bfrak \pi_- \bv=x_0$ and $t<0$ with $\supp {\pi_-\bv}\subset[t,0]$ arbitrarily.
Let $(\bv,\bx_\bv,\by_\bv)$ be the associated trajectory of $\Sigma$ on $\R$, so that $\bx_\bv(t)=\Bfrak \pi_- \tau^t \bv =0$ and $\bx_\bv(0)=\Bfrak \pi_- \bv =x_0$.
By trajectory property (2),
$\tau^t(\bv, \bx_\bv,\by_\bv)$ is a trajectory of $\Sigma$ on $\rplus$ with $(\tau^t  \bx_\bv)(0)=\bx_\bv(t)=0$.  Then Lemma \ref{lem:tcontr} gives that
$$
\|\pi_{[t,0]} \by_\bv\|_{L^{2-}_Y}
=\|\pi_{[0,-t]} \tau^t \by_\bv\|_{L^{2+}_Y}
\leq \|\pi_{[0,-t]}\tau^t \bv\|_{L^{2+}_U}
=\|\pi_{[t,0]}\bv\|_{L^{2-}_U},
$$
that is,
\[
\|\pi_{[t,0]}\bv\|_{L^{2-}_U}-\|\pi_{[t,0]} \by_\bv\|_{L^{2-}_Y}\geq 0.
\]
Taking the infimum over all pairs $(\bv,t)\in L^2_{\ell,loc,U}\times\rminus$ with $\Bfrak\pi_-\bv=x_0$ and $\supp {\pi_-\bv}\subset[t,0]$, we conclude that $S_r(x_0)\geq0$. For $x_0=0$, we may make the particular choice $\bv=0$ in \eqref{eq:Sr}, in order to get $S_r(0)\leq0-0=0$.

To see that $S_r$ satisfies \eqref{eq:storfndef3a}, we give a similar argument as in Step 1. Let $(\bu,\bx,\by)$ be a system trajectory of $\Sigma$ over $\R^+$
and fix $t>0$. If $\bx(0)\not\in \range\Bfrak$, then $S_r(\bx(0))=\inf\emptyset =\infty$, and hence \eqref{eq:storfndef3a} is satisfied. Now assume that
$\bx(0)\in \range\Bfrak$, say with $\bx(0)=\Bfrak \bv_0$. Then $\supp {\bv_0}\subset[s,0]$ for some $s<0$ and we let $(\bv,\bx_\bv,\by_\bv)$ be an
arbitrary trajectory of $\Sigma$ over $\R$ with $\pi_- \bv=\bv_0$; then also $\bx_\bv(0)=\Bfrak\pi_-\bv=\bx(0)$. Define
\[
(\widetilde{\bv},\widetilde{\bx},\widetilde{\by}):=\tau^t \pi_- (\bv,\bx_\bv,\by_\bv) + \tau^{t} (\bu,\bx,\by).
\]
Using that $\bx_\bv(0)=\bx(0)$, we obtain from trajectory properties (5) and (3) that $(\widetilde{\bv},\widetilde{\bx},\widetilde{\by})$ is a trajectory of
$\Sigma$ over $\R$ with $\supp{\pi_-\widetilde \bv}\subset[s-t,0]$ and $\widetilde{\bx}(0)=\bx(t)$.
Then we have from \eqref{eq:Sr} that
\begin{align*}
&\|\pi_{[s,0]} \bv \|^2_{L^{2-}_U}  - \|\pi_{[s,0]}\by_\bv\|^2_{L^{2-}_Y}
 = \|\pi_{[s-t,-t]}\tau^t \bv \|^2_{L^{2-}_U}  - \|\pi_{[s-t,-t]} \tau^t \by_\bv\|^2_{L^{2-}_Y}\\
& \qquad \qquad = \|\pi_{[s-t,0]}\widetilde{\bv} \|^2_{L^{2-}_U}  - \|\pi_{[s-t,0]}\widetilde{\by}\|^2_{L^{2-}_Y}
-\|\pi_{[-t,0]}\tau^t \bu \|^2_{L^{2-}_U}  + \|\pi_{[-t,0]} \tau^t \by\|^2_{L^{2-}_Y}\\
& \qquad \qquad  \geq
S_r(\bx(t)) -\int_0^t\|\bu(\tau)\|_U^2\ud \tau+\int_0^t\|\by(\tau)\|_Y^2\ud \tau.
\end{align*}
Taking the infimum over all $(\bv,\by_\bv,s)\in\Vfrak_{x_0}$, we obtain that \eqref{eq:storfndef3a} holds for $S=S_r$. Hence $S_r$ is a storage function.

\smallskip

\noindent
\emph{Step 3: Every storage function $S$ for $\Sigma$ satisfies $S_a\leq S\leq S_r$.} Let $S$ be an arbitrary storage function for $\Sigma$ and choose $x_0\in X$. If $S(x_0)=\infty$, then certainly $S_a(x_0)\leq S(x_0)$. Hence assume $S(x_0)<\infty$. Now let $(\bu,\bx,\by)$ be an arbitrary trajectory of $\Sigma$ on $\R^+$ with $\bx(0)=x_0$ and fix a $t>0$. Since $S(\bx(0))=S(x_0)<\infty$, by \eqref{eq:storfndef3a} we obtain that $S(\bx(t))<\infty$. Reordering \eqref{eq:storfndef3a}, we obtain that
\[
\|\pi_{[0,t]}\by\|_{L^2_Y}^2 - \|\pi_{[0,t]}\bu\|_{L^2_U}^2 \leq S(\bx(0))-S(\bx(t))\leq S(x_0).
\]
Taking the supremum over all trajectories $(\bu,\bx,\by)$ of $\Sigma$ on $\R^+$ with $\bx(0)=x_0$ and all $t>0$, we obtain that $S_a(x_0)\leq S(x_0)$. Hence $S_a(x_0)\leq S(x_0)$ for all $x_0\in X$.

Now we turn to the inequality for $S_r$. If $x_0\not\in \range{\Bfrak}$, then $S_r(x_0)=\infty$, and we clearly have $S(x_0)\leq S_r(x_0)$. Hence, assume that $x_0\in \range{\Bfrak}$ and let $\bu\in L^{2-}_{\ell,U}$ be such that $x_0=\Bfrak \bu$. Let $(\bu,\bx,\by)$ be the uniquely determined trajectory for $\Sigma$ over $\R$ with input $\bu$, and fix $t<0$ such that $\supp \bu\subset [t,0]$. Since $\bx(t)=\Bfrak\pi_-\tau^t\bu=\Bfrak0=0$, trajectory properties (1) and (2) give that
\[
(\widetilde{\bu},\widetilde{\bx},\widetilde{\by}):=\pi_+ \tau^t(\bu,\bx,\by)
\]
is a trajectory of $\Sigma$ over $\R^+$, with $\widetilde{\bx}(0)=0$ and $\widetilde{\bx}(-t)=x(0)=x_0$. Hence $S(\widetilde{\bx}(0))=0$. By \eqref{eq:storfndef3a}, we then have
\begin{align*}
\| \pi_{[t,0]} \bu \|^2_{L^{2-}_U} - \| \pi_{[t,0]} \by \|^2_{L^{2-}_Y} &
= \| \pi_{[0,-t]} \tau^t \bu \|^2_{L^{2+}_U} - \| \pi_{[0,-t]} \tau^t \by \|^2_{L^{2+}_Y}\\
&= \| \pi_{[0,-t]} \widetilde{\bu} \|^2_{L^{2+}_U} - \| \pi_{[0,-t]} \widetilde{\by} \|^2_{L^{2+}_Y}\\
& \geq S(\widetilde{\bx}(-t)) = S(x_0).
\end{align*}
Now, in the left hand side of the inequality, take the infimum over all trajectories $(\bu,\bx,\by)$ of $\Sigma$ on $\R$ such that $\bx(0)=x_0$, and all $t$ such that $\supp {\pi_-\bu}\subset[t,0]$. It then follows that $S_r(x_0)\geq S(x_0)$.
\end{proof}

Combining Proposition \ref{prop:storageimpliesschur} and Theorem \ref{thm:schurimpliesstorage}, we get the following corollary.

\begin{corollary}
The transfer function of a well-posed system $\Sigma$ has an analytic continuation in the Schur class if and only if $\Sigma$ has a storage function.
\end{corollary}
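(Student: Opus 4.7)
The statement is a direct combination of the two results just proved, so the proof plan is essentially a matter of invoking each direction from the appropriate source.

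For the forward direction, I would simply appeal to Proposition \ref{prop:storageimpliesschur}: if $\Sigma$ admits a storage function $S$, that proposition produces a unique analytic continuation of $\widehat\Dfrak|_{\dom{\widehat\Dfrak}\cap\cplus}$ to a function in $\cS_{U,Y}$, which is exactly what the corollary asserts under the identification of analytic functions agreeing on a set with an interior cluster point.

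For the reverse direction, if $\widehat\Dfrak$ (after analytic continuation) lies in the Schur class $\cS_{U,Y}$, then Theorem \ref{thm:schurimpliesstorage} produces two concrete storage functions, namely the available storage $S_a$ and the required supply $S_r$, either of which witnesses that $\Sigma$ has a storage function. In particular there is nothing additional to verify.

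There is no real obstacle here; the only mild subtlety is that in Theorem \ref{thm:schurimpliesstorage} one takes $\widehat\Dfrak\in\cS_{U,Y}$ as a hypothesis on $\Sigma$ itself (so that Lemma \ref{lem:tcontr} applies to all trajectories), which matches precisely the convention adopted in the introduction that a function is called Schur if its restriction to $\dom{\widehat\Dfrak}\cap\cplus$ has a (necessarily unique) extension to a Schur function. Thus the two statements fit together without any hidden identification. The proof therefore consists only of writing "($\Rightarrow$): Apply Proposition \ref{prop:storageimpliesschur}; ($\Leftarrow$): Apply Theorem \ref{thm:schurimpliesstorage}."
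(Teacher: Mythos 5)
Your proposal is correct and coincides exactly with the paper's own derivation: the corollary is stated there precisely as the combination of Proposition \ref{prop:storageimpliesschur} (storage function implies Schur-class continuation) and Theorem \ref{thm:schurimpliesstorage} (Schur-class transfer function implies $S_a$ and $S_r$ are storage functions). Nothing further is needed.
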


Next we derive more explicit formulas for $S_a$ and $S_r$, in terms of the operators constituting $\Sigma$, and we determine quadratic storage functions for
$\Sigma$, leading to, in general unbounded, solutions to the KYP inequality for $\Sigma$. For this purpose, assume
$\widehat{\Dfrak}|_{\mathbb{C}^+\bigcap \dom{ \widehat{\Dfrak}}}$  has an analytic continuation to a function in $\cS_{U,Y}$. By item (1) of
Theorem \ref{thm:hankel}, the operator $L_\Sigma$ in \eqref{L2inout} decomposes as
\begin{equation}\label{LfrakDec}
L_\Sigma=\bbm{\widetilde{\Tfrak}_\Sigma & 0\\ \Hfrak_\Sigma & \Tfrak_\Sigma}:\bbm{L^{2-}_U\\ L^{2+}_U} \to \bbm{L^{2-}_Y\\ L^{2+}_Y},
\end{equation}
with $\Hfrak_\Sigma$ the $L^2$-Hankel operator of \eqref{HankelBounded}. Since $\widehat{\Dfrak}\in\cS_{U,Y}$, we have
$\|L_\Sigma\|=\|M_{\widehat{\Dfrak}}\|=\|\widehat{\Dfrak}\|_\infty\leq 1$. Hence, also $\widetilde{\Tfrak}_\Sigma$, $\Hfrak_\Sigma$ and $\Tfrak_\Sigma$
are contractions.
In the statement of the lemma, the reader should recall the notation $D_T: = (I - T^* T)^{\frac{1}{2}}$ used to denote the {\em defect operator} of a Hilbert-space contraction operator $T$, as defined at the end of \S\ref{sec:intro}.

\begin{lemma}\label{L:SaSrOp}
Let $\Sigma=\sbm{\Afrak&\Bfrak\\ \Cfrak & \Dfrak}$ be a well-posed system, such that $\widehat{\Dfrak}\in\cS_{U,Y}$. Define $\dW_o$ as in
\S\ref{sec:L2maps} and decompose $L_\Sigma$ in \eqref{L2inout} as in \eqref{LfrakDec}. Then
\begin{align}
S_a(x_0) & = \sup_{\bu\in L^{2+}_U} \|\dW_o x_0 + \Tfrak_\Sigma \bu \|^2_{L^{2+}_Y} - \|\bu\|^2_{L^{2+}_U},\quad x_0\in\dom{\dW_o}, \label{Sa}\\
S_r(x_0) & =  \inf_{\bu\in L^{2-}_{\ell,U}, x_0=\Bfrak \bu}  \| D_{\widetilde\Tfrak_\Sigma} \bu \|^2_{L^{2-}_{U}}, \quad x_0\in X  \label{Sr},
\end{align}
and $S_a(x_0)=\infty$ in case $x_0\not\in\dom{\dW_o}$.   Finally, $S_r(x_0)<\infty$ if and only if $x_0\in \textup{Rea}\,(\Sigma)=\range\Bfrak$.
\end{lemma}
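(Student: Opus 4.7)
The plan is to convert the time-domain expressions inside the supremum in \eqref{eq:Sa} and the infimum in \eqref{eq:Sr} into operator-theoretic quantities via the decomposition \eqref{LfrakDec} of $L_\Sigma$. The two workhorses are the identification $\Dfrak\bu = L_\Sigma \bu$ for $\bu\in L^2_{\ell,U}$, provided by item (1) of Theorem \ref{thm:hankel}, and $\Cfrak x_0 = \dW_o x_0$ for $x_0\in\dom{\dW_o}$. Throughout I shall invoke causality, either in the form $\pi_{[0,t]}\Dfrak = \pi_{[0,t]}\Dfrak\pi_{[0,t]}$ on $L^{2+}_{loc,U}$, or in the form $\supp{L_\Sigma\bu}\subset[t_0,\infty)$ whenever $\supp{\bu}\subset[t_0,\infty)$.

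For the available storage I would first dispose of the case $x_0\notin\dom{\dW_o}$: choosing $\bv=0$ in \eqref{eq:Sa} gives output $\by=\Cfrak x_0\notin L^{2+}_Y$, so $\|\pi_{[0,t]}\Cfrak x_0\|^2\to\infty$ as $t\to\infty$, forcing $S_a(x_0)=\infty$. For $x_0\in\dom{\dW_o}$ and any $(\bv,t)\in L^{2+}_{loc,U}\times(0,\infty)$, I replace $\bv$ by $\bw:=\pi_{[0,t]}\bv\in L^{2+}_U$; causality together with $\pi_{[0,t]}\Cfrak x_0 = \pi_{[0,t]}\dW_o x_0$ yields $\pi_{[0,t]}\by = \pi_{[0,t]}(\dW_o x_0 + \Tfrak_\Sigma\bw)$, and contractivity of $\pi_{[0,t]}$ gives
\[
\|\pi_{[0,t]}\by\|^2 - \|\pi_{[0,t]}\bv\|^2 \;\leq\; \|\dW_o x_0 + \Tfrak_\Sigma\bw\|^2 - \|\bw\|^2.
\]
Taking suprema produces the $\leq$ direction of \eqref{Sa}. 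For the reverse direction, each $\bw\in L^{2+}_U$ used as input from the initial state $x_0$ produces an output $\by=\dW_o x_0 + \Tfrak_\Sigma\bw\in L^{2+}_Y$, so $\|\pi_{[0,t]}\by\|^2-\|\pi_{[0,t]}\bw\|^2 \to \|\dW_o x_0 + \Tfrak_\Sigma\bw\|^2 - \|\bw\|^2$ as $t\to\infty$, and the limit is dominated by $S_a(x_0)$.

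For the required supply I first observe that $\Vfrak_{x_0}\neq\emptyset$ exactly when $x_0\in\range{\Bfrak}$: necessity follows from $\bx(0)=\Bfrak\pi_-\bv$, while sufficiency follows by extending any $\bu\in L^{2-}_{\ell,U}$ with $\Bfrak\bu=x_0$ by zero on $\R^+$ to an admissible input. This immediately settles the final claim $S_r(x_0)<\infty\Leftrightarrow x_0\in\range\Bfrak$. For $x_0\in\range\Bfrak$ and $(\bv,\by,t)\in\Vfrak_{x_0}$, I set $\bu:=\pi_-\bv\in L^{2-}_{\ell,U}$, so that $\Bfrak\bu=x_0$ and, as $\supp{\bu}\subset[t,0]$, $\|\pi_{[t,0]}\bv\|^2=\|\bu\|^2$. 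Causality combined with $\Dfrak\bu=L_\Sigma\bu$ gives $\pi_-\by=\pi_-\Dfrak\bu=\widetilde\Tfrak_\Sigma\bu$, while time invariance forces $\supp{\widetilde\Tfrak_\Sigma\bu}\subset[t,0]$, so $\pi_{[t,0]}\by=\widetilde\Tfrak_\Sigma\bu$. The identity $\|D_{\widetilde\Tfrak_\Sigma}\bu\|^2=\|\bu\|^2-\|\widetilde\Tfrak_\Sigma\bu\|^2$ then evaluates the expression inside the infimum in \eqref{eq:Sr} to $\|D_{\widetilde\Tfrak_\Sigma}\bu\|^2$, a quantity that depends only on $\bu=\pi_-\bv$; hence the infimum over $\Vfrak_{x_0}$ reduces to the one displayed in \eqref{Sr}. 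The main technical hurdle is the careful bookkeeping of projections and supports needed to justify each replacement of $\Dfrak$ by $L_\Sigma$, $\Tfrak_\Sigma$, or $\widetilde\Tfrak_\Sigma$ on the relevant subspaces; once those identifications are in place, the remaining manipulations are algebraic.
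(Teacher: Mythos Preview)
Your proposal is correct and follows essentially the same approach as the paper: both arguments reduce the truncated time-domain expressions to operator-theoretic ones by invoking Theorem \ref{thm:hankel}(1) to replace $\Dfrak$ by $L_\Sigma$ on $L^2_{\ell,U}$, then use causality to identify the relevant compressions with $\Tfrak_\Sigma$ and $\widetilde\Tfrak_\Sigma$, and finally pass to the limit $t\to\infty$ (for $S_a$) or observe that the value depends only on $\bu=\pi_-\bv$ (for $S_r$). The only cosmetic difference is that the paper spells out the intermediate identity $\Dfrak^t\bv=\pi_{[0,t]}\Tfrak_\Sigma\pi_{[0,t]}\bv$ and the chain $\pi_-\by=\pi_{[t,0]}\by$ more explicitly, whereas you fold these into your causality/support bookkeeping.
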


Note that for each $x_0\in X$, formula \eqref{Sa} exhibits $S_a(x_0)$ as the norm squared of $\dW_ox_0$ in the Brangesian complement of the
space $\range{\Tfrak_\Sigma}$; see the notes to Chapter I of \cite{SaraBook}, or \cite[\S3]{ArStKrein}.

\begin{proof}[Proof of Lemma \ref{L:SaSrOp}]
We start with $S_a$. Using \eqref{eq:Sa}, \eqref{eq:trajonRplus} and \eqref{BCD^t}, it follows that
\[
	S_a(x_0)=\sup_{\bv\in L^{2+}_{loc,U},\, t>0}
		\left(\| \Cfrak^t x_0+\Dfrak^t \bv \|^2_{L^{2+}_Y}
		- \|  \pi_{[0,t]}\bv \|^2_{L^{2+}_U} \right),\quad x_0\in X.
\]
In case $x_0\not\in \dom{\dW_o}$, we have $\Cfrak x_0\not\in L^{2+}_Y$, and fixing $\bv=0$ in the preceding supremum, we see that
\[
S_a(x_0)\geq \sup_{t>0}\|\Cfrak^t x_0\|^2_{L^{2+}_Y}=\sup_{t>0}\|\pi_{[0,t]}\Cfrak x_0\|^2_{L^{2+}_Y}=\infty.
\]
Now take $x_0\in\dom{\dW_o}$. Then $\Cfrak^t x_0 =\pi_{[0,t]}\dW_o x_0$. For now, fix $t>0$ and $\bv\in L^{2+}_{loc,U}$. Combining the causality and time-invariance of $\Dfrak$, see item (4) of Definition \ref{def:WPsys}, it follows that $\pi_{[0,t]}\Dfrak =\pi_{[0,t]}\Dfrak \pi_{(-\infty,t]}$. By Theorem \ref{thm:hankel} and because $\supp{\bv}\subset [0,\infty)$, we have $\Dfrak^t \bv=\pi_{[0,t]} \Dfrak \pi_{[0,t]}\bv=\pi_{[0,t]} L_\Sigma \pi_{[0,t]} \bv = \pi_{[0,t]} \Tfrak_\Sigma \pi_{[0,t]} \bv$. Thus $S_a$ can be written as
\[
	S_a(x_0)=\sup_{\bv\in L^{2+}_{loc,U},\, t>0}
		\left(\|\pi_{[0,t]} (\dW_o x_0+ \Tfrak_\Sigma \pi_{[0,t]}\bv) \|^2_{L^{2+}_Y}
		- \|  \pi_{[0,t]}\bv \|^2_{L^{2+}_U} \right).
\]
Next we show that $\pi_{[0,t]}$ can be removed everywhere in the right hand side. Set $\bw:=\pi_{[0,t]}\bv\in L^{2+}_U$, so that
\begin{align*}
&  \|\pi_{[0,t]} (\dW_o x_0+ \Tfrak_\Sigma \pi_{[0,t]}\bv) \|^2_{L^{2+}_Y} - \|  \pi_{[0,t]}\bv \|^2_{L^{2+}_U}=\\
 &\qquad\qquad\qquad =\|\pi_{[0,t]} (\dW_o x_0+ \Tfrak_\Sigma \bw) \|^2_{L^{2+}_Y} - \| \bw \|^2_{L^{2+}_U} \\
    &\qquad\qquad\qquad \leq\| \dW_o x_0+ \Tfrak_\Sigma \bw \|^2_{L^{2+}_Y} - \| \bw \|^2_{L^{2+}_U}.
\end{align*}
It follows that $S_a(x_0)$ is dominated by the right-hand side of \eqref{Sa}, and equality is approached as $t\to\infty$. Thus \eqref{Sa} holds.

Now we turn to the proof of the formula for $S_r$.
 If $x_0 \notin \operatorname{Rea}(\Sigma) = \range{\Bfrak}$, then $\Vfrak_{x_0} = \emptyset$
$S_r(x_0) = \infty$ in
\eqref{eq:Sr} as in Step 3 in the proof of Theorem \ref{thm:schurimpliesstorage} and in this case \eqref{Sr} is correct.
Next suppose that $x_0 \in \operatorname{Rea}(\Sigma) = \range{\Bfrak}$  so the set $\Vfrak_{x_0} \neq \emptyset$.
Let $(\bv, \by,t)$ be an arbitrary element of $\Vfrak_{x_0}$.  Thus $\supp{\pi_- \bv} \subset [t, 0]$, $(\bv, \by)$ embeds into a system trajectory
$(\bv,\bx,\by)$ of $\Sigma$ on ${\mathbb R}$ such that $\bx(0) = x_0$.

By \eqref{eq:trajonR}, combined with the causality and time-invariance of $\Dfrak$, we have
$$
	\pi_-\by=\pi_- \Dfrak \bv =\pi_- \Dfrak \pi_- \bv=\pi_- \Dfrak \pi_{[t,0]} \bv
		=\pi_{[t,0]}\Dfrak\pi_{[t,0]}\bv=\pi_{[t,0]}\by.
$$
In particular, the value of $\|\pi_- \bv\|^2- \| \pi_- \by\|^2=\|\pi_{[t,0]}\bv\|^2- \|\pi_{[t,0]}\by\|^2$ only depends on $\bu:=\pi_- \bv\in L^{2-}_{\ell,U}$, and
 thus we may assume without loss of generality that $\bv\in L^{2-}_{\ell,U}$. In that case, Theorem \ref{thm:hankel} shows that $\by=\Dfrak \bu= L_\Sigma \bu$
 and by \eqref{LfrakDec} we have $\pi_- \by = \pi_- \Dfrak \pi_- \bu = \widetilde{\Tfrak}_\Sigma \pi_-\bv$. Thus
\begin{equation}\label{SrId}
\| \pi_{[t,0]} \bv \|^2_{L^{2-}_U} - \| \pi_{[t,0]} \by \|^2_{L^{2-}_Y}
=\| \pi_- \bv \|^2_{L^{2-}_U} - \| \widetilde{\Tfrak}_\Sigma \pi_-\bv \|^2_{L^{2-}_Y}
=\|D_{\widetilde{\Tfrak}_\Sigma} \pi_- \bv \|^2_{L^{2-}_U}.
\end{equation}
As $(\bv, \by, t)$ was chosen to be an arbitrary element of $\Vfrak_{x_0}$ and $\bv\in L^{2-}_{\ell,U}$ satisfies $x_0=\Bfrak\bu$, we conclude that $S_r(x_0)$ (as defined by \eqref{eq:Sr})
is greater than or equal to the right-hand side of \eqref{Sr}. 

To conclude that in fact equality holds, just note that starting from $\bu\in L^{2-}_{\ell,U}$ with $x_0=\Bfrak \bu$ one obtains a triple $(\bv,\by,t)$ in $\Vfrak_{x_0}$
by taking $\bv:=\bu$, letting $t<0$ be such that $\supp{\pi_-\bv}=\supp{\bu}\subset[t,0]$, and defining $\bx$ and $\by$ by \eqref{eq:trajonR}. Then \eqref{SrId} shows that $S_r(x_0)$ is dominated by the right-hand side of \eqref{Sr}, and hence the expressions for $S_r$ are equal, as claimed.
\end{proof}

By the preceding analysis, $S_r(x_0)=\infty$ precisely when $x_0\not\in \textup{Rea}\,(\Sigma)=\range \Bfrak$ which in general is a proper subset of $\range{\dW_c}$; hence it is not an $L^2$-regular storage function as defined at the beginning of \S\ref{sec:storage}.
However, assuming that $\dom{\dW_c^\bigstar}$ is dense,  
we can define the following version of $S_r$:
\begin{equation}\label{SrMod}
\underline{S}_r(x_0) :=  \inf_{\bu\in \dW_c^{-1}(\{x_0\})}  \| D_{\widetilde\Tfrak_\Sigma} \bu \|^2_{L^{2-}_{U}}, \quad x_0\in X,
\end{equation}
where
$$
	\dW_c^{-1}(\{x_0\}):=\{\bu\in \dom{\dW_c} \mid \dW_c \bu=x_0\}.
$$

\begin{proposition}\label{P:L2reg}
Assume that the well-posed system $\Sigma$ has transfer function in $\cS_{U,Y}$ and that $\dW_c^\bigstar$ is densely defined. Then $S_a$ and $\underline{S}_r$ are $L^2$-regular storage functions.
\end{proposition}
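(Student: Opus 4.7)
The plan is to use the contractivity of $L_\Sigma$ (which, by Theorem \ref{thm:hankel}(1), follows from $\widehat\Dfrak\in\cS_{U,Y}$) together with the Hankel factorizations of Theorem \ref{thm:hankel}. Since $S_a$ is already a storage function by Theorem \ref{thm:schurimpliesstorage}, only its $L^2$-regularity remains. I fix $x_0\in\range{\dW_c}$ and a preimage $\bv_0\in\dom{\dW_c}$ with $\dW_c\bv_0=x_0$. The density of $\dom{\dW_c^\bigstar}$ allows me to invoke Theorem \ref{thm:hankel}(4), which gives $x_0\in\dom{\dW_o}$ and $\dW_o x_0=\Hfrak_\Sigma\bv_0$. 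Since $\bv_0\in L^{2-}_U$ and $\bw\in L^{2+}_U$ have essentially disjoint supports, the decomposition \eqref{LfrakDec} combined with $\|L_\Sigma\|\leq 1$ yields, for every $\bw\in L^{2+}_U$,
\begin{equation*}
\|\widetilde\Tfrak_\Sigma\bv_0\|^2+\|\Hfrak_\Sigma\bv_0+\Tfrak_\Sigma\bw\|^2
=\|L_\Sigma(\bv_0+\bw)\|^2\leq\|\bv_0\|^2+\|\bw\|^2.
\end{equation*}
Substituting into formula \eqref{Sa} and taking the supremum over $\bw$, I obtain $S_a(x_0)\leq\|\bv_0\|^2-\|\widetilde\Tfrak_\Sigma\bv_0\|^2=\|D_{\widetilde\Tfrak_\Sigma}\bv_0\|^2<\infty$, so $S_a$ is $L^2$-regular.

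For $\underline{S}_r$, the identity $\underline{S}_r(0)=0$ follows from $\bv_0=0$, and the $L^2$-regularity bound $\underline{S}_r(x_0)\leq\|D_{\widetilde\Tfrak_\Sigma}\bv_0\|^2<\infty$ for any $\bv_0\in\dW_c^{-1}(\{x_0\})$ is immediate from \eqref{SrMod}. The substantive task is the storage inequality \eqref{eq:storfndef3a}. Given a trajectory $(\bu,\bx,\by)$ on $\R^+$ and $t>0$, the case $\bx(0)\notin\range{\dW_c}$ is trivial since $\underline{S}_r(\bx(0))=\infty$; otherwise, given any $\bv_0\in\dW_c^{-1}(\{\bx(0)\})$, I construct the candidate preimage
\begin{equation*}
\bv':=\tau^t\bv_0+\tau^t\pi_{[0,t]}\bu,
\end{equation*}
whose summands have essentially disjoint supports $(-\infty,-t]$ and $[-t,0]$. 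Proposition \ref{P:Wc} then gives $\bv'\in\dom{\dW_c}$ with $\dW_c\bv'=\Afrak^t\bx(0)+\Bfrak^t\pi_{[0,t]}\bu=\bx(t)$, so $\bv'$ is an admissible competitor in \eqref{SrMod} for $\underline{S}_r(\bx(t))$.

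The key computation is then
\begin{equation*}
\|D_{\widetilde\Tfrak_\Sigma}\bv'\|^2=\|D_{\widetilde\Tfrak_\Sigma}\bv_0\|^2+\|\pi_{[0,t]}\bu\|^2-\|\pi_{[0,t]}\by\|^2.
\end{equation*}
Disjoint supports yield $\|\bv'\|^2=\|\bv_0\|^2+\|\pi_{[0,t]}\bu\|^2$. Combining the time-invariance of $L_\Sigma$ with \eqref{LfrakDec} produces
\begin{equation*}
\widetilde\Tfrak_\Sigma\bv'=\pi_- L_\Sigma\bv'=\tau^t\widetilde\Tfrak_\Sigma\bv_0+\pi_{[-t,0]}\tau^t(\Hfrak_\Sigma\bv_0+\Tfrak_\Sigma\pi_{[0,t]}\bu),
\end{equation*}
whose two summands again have disjoint supports, so $\|\widetilde\Tfrak_\Sigma\bv'\|^2=\|\widetilde\Tfrak_\Sigma\bv_0\|^2+\|\pi_{[0,t]}(\Hfrak_\Sigma\bv_0+\Tfrak_\Sigma\pi_{[0,t]}\bu)\|^2$. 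The main obstacle is identifying the second summand with $\|\pi_{[0,t]}\by\|^2$: Theorem \ref{thm:hankel}(4) gives $\Hfrak_\Sigma\bv_0=\dW_o\dW_c\bv_0=\dW_o\bx(0)=\Cfrak\bx(0)$, while Theorem \ref{thm:hankel}(1) together with causality gives $\pi_{[0,t]}\Tfrak_\Sigma\pi_{[0,t]}\bu=\pi_{[0,t]}\Dfrak\pi_{[0,t]}\bu=\Dfrak^t\pi_{[0,t]}\bu$; then \eqref{BCD^t} and \eqref{eq:trajonRplus} give $\pi_{[0,t]}(\Hfrak_\Sigma\bv_0+\Tfrak_\Sigma\pi_{[0,t]}\bu)=\Cfrak^t\bx(0)+\Dfrak^t\pi_{[0,t]}\bu=\pi_{[0,t]}\by$. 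Taking the infimum over $\bv_0\in\dW_c^{-1}(\{\bx(0)\})$ produces $\underline{S}_r(\bx(t))+\|\pi_{[0,t]}\by\|^2\leq\underline{S}_r(\bx(0))+\|\pi_{[0,t]}\bu\|^2$, completing the proof that $\underline{S}_r$ is a storage function.
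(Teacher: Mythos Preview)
Your proof is correct and, for the storage inequality of $\underline{S}_r$, follows essentially the paper's argument: the candidate $\bv'=\tau^t\bv_0+\tau^t\pi_{[0,t]}\bu$ is precisely the paper's $\bu^\circ$, and your identification of $\pi_{[0,t]}(\Hfrak_\Sigma\bv_0+\Tfrak_\Sigma\pi_{[0,t]}\bu)$ with $\pi_{[0,t]}\by$ via Theorem~\ref{thm:hankel}(4) is the same as the paper's claim~(2) in \eqref{eq:ucirctoprove}, just packaged more compactly through the time-invariance of $L_\Sigma$.

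Where you diverge is in the treatment of $L^2$-regularity. The paper first establishes that $\underline{S}_r$ is a storage function, then invokes the forward reference to Corollary~\ref{C:bounds} (which uses the extremality $S_a\le\underline{S}_r$ from Theorem~\ref{thm:schurimpliesstorage}) to conclude finiteness on $\range{\dW_c}$. You instead bound $S_a(x_0)$ directly by $\|D_{\widetilde\Tfrak_\Sigma}\bv_0\|^2$ through the contractivity of $L_\Sigma$ applied to $\bv_0+\bw$. This is a genuinely cleaner route: it avoids the forward reference, gives the sharper bound $S_a(x_0)\le\|D_{\widetilde\Tfrak_\Sigma}\bv_0\|^2$ (rather than the paper's $\underline{S}_r(x_0)\le\|\bv_0\|^2$), and in fact yields $S_a\le\underline{S}_r$ on $\range{\dW_c}$ as an immediate by-product upon taking the infimum over $\bv_0$, without appealing to the extremality of $S_a$ among all storage functions.
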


\begin{proof}
We first prove that $\underline{S}_r$ is an $L^2$-regular storage function. Clearly $\underline{S}_r(x_0)\geq 0$ for all $x_0\in X$. Also, for $x_0=0$ we can select $\bu:=0\in \dW_c^{-1}(\{0\})$, obtaining that $\underline{S}_r(0)\leq \| D_{\widetilde\Tfrak_\Sigma} 0\|^2=0$. Hence $\underline{S}_r(0)=0$.

Next we prove that $\underline{S}_r$ satisfies the energy inequality \eqref{eq:storfndef3a}. To this end, fix a system trajectory
$(\widetilde{\bu},\widetilde{\bx},\widetilde{\by})$ of $\Sigma$ over $\R^+$ and a $t>0$. If $\widetilde \bx(0)\not\in\range{\dW_c}$ then
$\underline S_r(\widetilde \bx(0))=\inf\emptyset=\infty$ and \eqref{eq:storfndef3a} holds; otherwise let $\bu\in \dW_c^{-1}(\{\widetilde \bx(0)\})\subset L^{2-}_U$.
Then define
\begin{equation}\label{eq:ucircdef}
\bu^\circ:=\pi_-\tau^t(\bu+\widetilde{\bu})= \tau^t(\bu + \pi_{[0,t]}\widetilde{\bu})\in L^{2-}_U,
\end{equation}
and note that
\begin{equation}\label{eq:ucircnormid}
\|\bu^\circ\|^2_{L^{2-}_U}=\|\bu\|^2_{L^{2-}_U} + \|\pi_{[0,t]}\widetilde{\bu}\|^2_{L^{2+}_U}.
\end{equation}
We claim that
\begin{equation}\label{eq:ucirctoprove}
\mbox{(1) }\  \bu^\circ \in \dW_c^{-1}(\{\widetilde{\bx}(t)\}) \qquad \mbox{and} \qquad \mbox{(2) }\
\widetilde\Tfrak_\Sigma \bu^\circ =\tau^t (\widetilde\Tfrak_\Sigma \bu + \pi_{[0,t]}\widetilde{\by}).
\end{equation}
For claim (1), note that item (3) of Proposition \ref{P:Wc} implies that $\tau^t \pi_{[0,t]}\widetilde{\bu}\in L^{2-}_{\ell,U}$ is in $\dom{\dW_c}$ and
\[
\dW_c \tau^t \pi_{[0,t]}\widetilde{\bu}= \Bfrak \tau^t \pi_{[0,t]}\widetilde{\bu} = \Bfrak \pi_- \tau^t \widetilde{\bu}= \Bfrak^t \widetilde{\bu}.
\]
Also, item (4) of Proposition \ref{P:Wc} yields that $\tau^t \bu$ is in $\dom{\dW_c}$ and $\dW_c \tau^t \bu= \Afrak^t \dW_c \bu= \Afrak^t \widetilde{\bx}(0)$.
Therefore we have that $\bu^\circ \in \dom{\dW_c}$ and
\[
\dW_c \bu^\circ = \dW_c \tau^t \bu + \dW_c \tau^t \pi_{[0,t]}\widetilde{\bu}  = \Afrak^t \widetilde{\bx}(0) + \Bfrak^t \widetilde{\bu} = \widetilde{\bx}(t),
\]
using \eqref{eq:trajonRplus} in the last identity. Next we prove claim (2). By item (1) of Theorem \ref{thm:hankel} and \eqref{LfrakDec},
\begin{align*}
\pi_- L_\Sigma\tau^t
&=  \pi_- \tau^t L_\Sigma = \tau^t \pi_{(-\infty,t]}  L_\Sigma = \tau^t (\widetilde{\Tfrak}_\Sigma \pi_- + \pi_{[0,t]} \Hfrak_\Sigma \pi_- + \pi_{[0,t]} \Tfrak_\Sigma \pi_+).
\end{align*}
Therefore, from \eqref{eq:ucircdef}, we get
\begin{align*}
\widetilde\Tfrak_\Sigma \bu^\circ &
= \pi_- L_\Sigma \bu^\circ
= \pi_- L_\Sigma \tau^t (\bu + \pi_{[0,t]}\widetilde{\bu})
=  \tau^t (\widetilde{\Tfrak}_\Sigma \bu +  \pi_{[0,t]} \Hfrak_\Sigma \bu +  \pi_{[0,t]} \Tfrak_\Sigma \pi_{[0,t]}\widetilde{\bu}),
\end{align*}
and furthermore, by \eqref{Hfact2},
\[
\pi_{[0,t]} \Hfrak_\Sigma \bu = \pi_{[0,t]} \dW_o \dW_c  \bu  =\pi_{[0,t]} \dW_o \widetilde{\bx}(0) = \pi_{[0,t]} \Cfrak \widetilde{\bx}(0) = \Cfrak^t \widetilde{\bx}(0).
\]
On the other hand, using item (1) of Theorem \ref{thm:hankel} and causality, we obtain
\[
 \pi_{[0,t]} \Tfrak_\Sigma \pi_{[0,t]}\widetilde{\bu}=  \pi_{[0,t]} \Dfrak \pi_{[0,t]}\widetilde{\bu} = \Dfrak^t \widetilde{\bu}.
\]
Combining the above computations we find that
\begin{align*}
\widetilde\Tfrak_\Sigma \bu^\circ
&= \tau^t (\widetilde{\Tfrak}_\Sigma \bu + \Cfrak^t \widetilde{\bx}(0) + \Dfrak^t \widetilde{\bu})
=\tau^t (\widetilde{\Tfrak}_\Sigma \bu + \pi_{[0,t]} \widetilde{\by}),
\end{align*}
again using \eqref{eq:trajonRplus} in the last step. This proves claim (2).

Claim (2) implies that $\|\widetilde\Tfrak_\Sigma \bu^\circ\|_{L^{2-}_Y}^2 =\|\widetilde\Tfrak_\Sigma \bu\|_{L^{2-}_Y}^2
+\| \pi_{[0,t]}\widetilde{\by}\|^2_{L^{2+}_Y}$. Combining this with \eqref{eq:ucircnormid}, we find that
\[
\|\bu^\circ\|_{L^{2-}_U}^2-\|\widetilde\Tfrak_\Sigma \bu^\circ\|_{L^{2-}_Y}^2=\|\bu\|_{L^{2-}_U}^2 - \|\widetilde\Tfrak_\Sigma \bu\|_{L^{2-}_Y}^2 + \|\pi_{[0,t]}\widetilde{\bu}\|_{L^{2+}_U}^2 - \| \pi_{[0,t]}\widetilde{\by}\|^2_{L^{2+}_Y}.
\]
By claim (1) in \eqref{eq:ucirctoprove}, $\pi_-\tau^t\big(\dW_c^{-1}(\widetilde \bx(0))+\widetilde \bu)\subset \dW_c^{-1}(\widetilde \bx(t))$, and so we get that
\begin{align*}
&\inf_{\bu^\circ \in \dW_c^{-1}(\{\widetilde \bx(t)\})} \|\bu^\circ\|_{L^{2-}_U}^2-\|\widetilde\Tfrak_\Sigma \bu^\circ\|_{L^{2-}_Y}^2\\
&\qquad\qquad \leq \inf_{\bu \in \dW_c^{-1}(\{\widetilde \bx(0)\})}  \|\bu\|_{L^{2-}_U}^2 - \|\widetilde\Tfrak_\Sigma \bu\|_{L^{2-}_Y}^2
+ \|\pi_{[0,t]}\widetilde{\bu}\|_{L^{2+}_U}^2 - \| \pi_{[0,t]}\widetilde{\by}\|^2_{L^{2+}_Y}.
\end{align*}
This shows that $\underline{S}_r$ satisfies the energy inequality \eqref{eq:storfndef3a}, and hence it is a storage function. We already established that $S_a$ is a storage function. 

The boundedness of $\underline S_r$ on $\range{\bW_c}$ follows from Corollary \ref{C:bounds} below, and then $S_a$ is finite on $\range{\dW_c}$, since \eqref{eq:StorFunOrder} holds with $S=\underline S_r$. This completes the proof that $\underline S_r$ is $L^2$-regular.
\end{proof}

\begin{corollary}\label{C:bounds}
Assume that the well-posed system $\Sigma$ has a transfer function $\widehat\Dfrak\in\cS_{U,Y}$ and that $\dW_c^\star$ is densely defined. Then for all $x_0 \in X$ we have
\[
\| \dW_o x_0\|_{L^{2+}_Y}^2 \leq S_a(x_0)\leq \underline{S}_r(x_0)\leq \inf_{\bu\in \dW_c^{-1}(\{x_0\})}\|\bu\|^2_{L^{2-}_U},
\]
with $\| \dW_o x_0\|_{L^{2+}_Y}^2$ to be interpreted as $\infty$ in case $x_0\not\in\dom{\dW_o}$. Moreover, $\underline{S}_r(x_0)<\infty$ precisely when $x_0\in \range{\dW_c}$.
\end{corollary}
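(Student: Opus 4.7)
The plan is to verify the three-term chain of inequalities separately, and then deduce the finiteness criterion from the third inequality. Throughout, I will use that $\widehat\Dfrak\in\cS_{U,Y}$ makes $L_\Sigma$ in \eqref{L2inout} a contraction, so in the decomposition \eqref{LfrakDec} both $\widetilde\Tfrak_\Sigma$ and $\Tfrak_\Sigma$ are contractions, and $D_{\widetilde\Tfrak_\Sigma}$ is therefore also a contraction.

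For the leftmost inequality $\|\dW_o x_0\|_{L^{2+}_Y}^2\leq S_a(x_0)$, I would split on whether $x_0\in\dom{\dW_o}$. When $x_0\in\dom{\dW_o}$, I would apply formula \eqref{Sa} from Lemma \ref{L:SaSrOp} with $\bu=0$, which directly yields the bound. When $x_0\notin\dom{\dW_o}$, Lemma \ref{L:SaSrOp} gives $S_a(x_0)=\infty$, and by the stated convention $\|\dW_o x_0\|_{L^{2+}_Y}^2=\infty$, so the inequality holds trivially. For the middle inequality $S_a(x_0)\leq \underline{S}_r(x_0)$, I would appeal to the fact that $\underline{S}_r$ is a storage function (which is the content of Proposition \ref{P:L2reg} --- crucially, the proof of the storage-function property there is self-contained and does not invoke this corollary; only the $L^2$-regularity assertion of that proposition defers to the present corollary). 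Once $\underline{S}_r$ is known to be a storage function, Theorem \ref{thm:schurimpliesstorage} yields $S_a\leq \underline{S}_r$ pointwise.

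For the rightmost inequality $\underline{S}_r(x_0)\leq \inf_{\bu\in \dW_c^{-1}(\{x_0\})}\|\bu\|^2_{L^{2-}_U}$, I would simply compare the two infima: they range over the same set $\dW_c^{-1}(\{x_0\})$, and since $\|D_{\widetilde\Tfrak_\Sigma}\bu\|_{L^{2-}_U}\leq \|\bu\|_{L^{2-}_U}$ by contractivity of the defect operator, each term in the infimum defining $\underline{S}_r(x_0)$ is dominated by the corresponding term on the right.

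Finally, for the equivalence $\underline{S}_r(x_0)<\infty \Leftrightarrow x_0\in\range{\dW_c}$: if $x_0\notin\range{\dW_c}$ then the set $\dW_c^{-1}(\{x_0\})$ in \eqref{SrMod} is empty, so $\underline{S}_r(x_0)=\inf\emptyset=\infty$; conversely, if $x_0\in\range{\dW_c}$, pick any $\bu\in \dW_c^{-1}(\{x_0\})\subset L^{2-}_U$, and then the already-established rightmost inequality bounds $\underline{S}_r(x_0)\leq \|\bu\|^2_{L^{2-}_U}<\infty$. There is no substantial obstacle; the only point requiring care is to avoid circularity with Proposition \ref{P:L2reg} by observing that the storage-function property of $\underline{S}_r$ is proved there without recourse to the present corollary.
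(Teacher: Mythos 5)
Your proposal is correct and follows essentially the same route as the paper: the first inequality via $\bu=0$ in \eqref{Sa}, the middle one via Proposition \ref{P:L2reg} and the ordering \eqref{eq:StorFunOrder}, the last via contractivity of $D_{\widetilde\Tfrak_\Sigma}$, and the finiteness criterion from the (non)emptiness of $\dW_c^{-1}(\{x_0\})$. Your remark about avoiding circularity with Proposition \ref{P:L2reg} is apt — the paper indeed proves the storage-function property there independently and defers only the $L^2$-regularity claim to this corollary.
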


\begin{proof}
The first inequality is obtained by selecting $\bu=0$ for the input signals in the supremum in \eqref{Sa}. The second inequality follows from \eqref{eq:StorFunOrder}, using that $\underline{S}_r$ is a storage function for $\Sigma$ by Proposition \ref{P:L2reg}. The final inequality follows from the definition of $\underline{S}_r$ in \eqref{SrMod} and the fact that $D_{\widetilde\Tfrak_\Sigma}$ is contractive. If $x_0\not\in \range{\dW_c}$, then the infimum in \eqref{SrMod} is taken over an empty set, leading to $\underline{S}_r(x_0)=\infty$.
\end{proof}

We next establish that the storage functions $S_a$ and $\underline S_r$ are in fact quadratic.

\section{Quadratic descriptions of $S_a$ and $\underline S_r$}\label{sec:QuadStorage}

In the sequel, we will need the concept of a \emph{core} for a closed operator, which we recall here from \cite[p.\ 256]{RS80}: the set $D\subset\dom T$ is a core for the closed operator $T$ if the operator closure of $T|_D=T$ equals $T$, or in words, a closed operator is uniquely determined by its restriction to a core.

In case $\Sigma$ is a well-posed system whose transfer function $\widehat\Dfrak\in\cS_{U,Y}$, then the $L^2$-transfer map $L_\Sigma$ in \eqref{L2inout} is contractive. Hence, with respect to the decomposition in \eqref{LfrakDec}, we have
\begin{equation}\label{DefecLSi}
\begin{aligned}
I-L_\Sigma L_\Sigma^* &= \begin{bmatrix}
D_{\widetilde
\Tfrak_\Sigma^{*}}^{2} & - \widetilde \Tfrak_\Sigma
\Hfrak_\Sigma^{*} \\
 - \Hfrak_\Sigma \widetilde \Tfrak_\Sigma^{*} &
 D_{\Tfrak_\Sigma^{*}}^{2} - \Hfrak_\Sigma \Hfrak_\Sigma^{*}
\end{bmatrix}\succeq 0;\\
I-L_\Sigma^* L_\Sigma
&= \begin{bmatrix}
D_{\widetilde
  \Tfrak_\Sigma}^{2} - \Hfrak_\Sigma^{*} \Hfrak_\Sigma &
-\Hfrak_\Sigma^*\Tfrak_\Sigma\\
  -\Tfrak_\Sigma^* \Hfrak_\Sigma &  D_{\Tfrak_\Sigma}^{2}
\end{bmatrix}\succeq 0.
\end{aligned}
\end{equation}
Since $L_\Sigma$ is a contraction, so are $\Tfrak_\Sigma$, $\Tfrak_\Sigma^{*}$, $\widetilde \Tfrak_\Sigma$ and $\widetilde\Tfrak_\Sigma^{*}$, and hence their defect operators $D_{\Tfrak_\Sigma}$, $D_{\Tfrak_\Sigma^{*}}$, $D_{\widetilde \Tfrak_\Sigma}$ and $D_{\widetilde\Tfrak_\Sigma^{*}}$ are well defined. The inequalities in \eqref{DefecLSi} imply in particular that
$$
D_{\Tfrak_\Sigma^{*}}^{2} \succeq \Hfrak_\Sigma \Hfrak_\Sigma^{*} \quad \mbox{and}\quad
D_{\widetilde  \Tfrak_\Sigma}^{2} \succeq \Hfrak_\Sigma^{*} \Hfrak_\Sigma.
$$
Assuming, in addition, that $\Sigma$ is minimal, $\range{\dW_c}$ and $\range{\dW_o^*}$ are dense in $X$, by Corollary \ref{C:MinExp} and items (3)
of Propositions \ref{P:Wc} and \ref{P:Wo}, respectively, so that the factorizations of item (4) in Theorem \ref{thm:hankel} apply:
$$
\Hfrak_\Sigma\big|_{\dom{\dW_c }}=\dW_o \dW_c
\quad \mbox{and}\quad
\Hfrak_\Sigma^*\big|_{\dom{\dW_o^* }}=\dW_c^\bigstar \dW_o^*.
$$
The following lemma follows from Lemma \ref{L:X1X2} in Appendix \ref{sec:OpOpt} below, combined with \eqref{LfrakDec}, \eqref{Lop}, \eqref{Hfact2} and \eqref{HfactA}:

\begin{lemma}\label{L:XaXb}
Assume that the minimal well-posed system $\Sigma$ has transfer function in $\cS_{U,Y}$. Then:
\begin{enumerate}
\item[(1)] There exists a unique closable operator ${\bf X}_a$ with domain $\range{\dW_c}\subset X$, with range contained in $\Ker{D_{\Tfrak^*_{\Sigma}}}^\perp$,  and which satisfies the factorization
\begin{equation}\label{eq:WoFact}
\dW_o|_{\range{\dW_c}} = D_{\Tfrak^*_{\Sigma}} {\bf X}_a.
\end{equation}
Moreover, $\range{\dW_c}$ is a core for the closure $\overline {\bf X}_a$ of ${\bf X}_a$, and this closure is injective with range contained in $\Ker{D_{\Tfrak^*_{\Sigma}}}^\perp$.

\item[(2)] There exists a unique closable operator ${\bf X}_r$ with domain $\range{\dW_o^*}\subset X$, range contained in $\Ker{D_{\widetilde{\Tfrak}_{\Sigma}}}^\perp$, that satisfies the factorization
\begin{equation}\label{eq:WcFact}
\dW_c^*|_{\range{\dW_o^*}} = D_{\widetilde{\Tfrak}_{\Sigma}} {\bf X}_r.
\end{equation}
The range of $\dW_o^*$ is a core for the injective closure $\overline {\bf X}_r$ of ${\bf X}_r$ and $\range{\overline\bX_r}\perp\Ker{D_{\Tfrak^*_{\Sigma}}}$.

\end{enumerate}
\end{lemma}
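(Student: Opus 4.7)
The plan is to reduce the construction of $\bX_a$ and $\bX_r$ to a Douglas-type factorization (the apparent purpose of Lemma~\ref{L:X1X2} in Appendix~\ref{sec:OpOpt}), applied to the defect-operator dominations that fall out of the contractivity of $L_\Sigma$ when $\widehat\Dfrak\in\cS_{U,Y}$. Concretely, from \eqref{DefecLSi} one reads off the two inequalities
\[
D_{\Tfrak_\Sigma^*}^{\,2}\succeq \Hfrak_\Sigma\Hfrak_\Sigma^*\qquad\text{and}\qquad D_{\widetilde\Tfrak_\Sigma}^{\,2}\succeq \Hfrak_\Sigma^*\Hfrak_\Sigma,
\]
and Douglas's factorization lemma yields unique bounded contractions $C\in\cB(L^{2-}_U,L^{2+}_Y)$ and $\widetilde C\in\cB(L^{2+}_Y,L^{2-}_U)$, with ranges in $\Ker{D_{\Tfrak_\Sigma^*}}^\perp$ and $\Ker{D_{\widetilde\Tfrak_\Sigma}}^\perp$ respectively, such that $\Hfrak_\Sigma=D_{\Tfrak_\Sigma^*}C$ and $\Hfrak_\Sigma^*=D_{\widetilde\Tfrak_\Sigma}\widetilde C$. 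These are the bounded skeletons to be pushed through $\dW_c$ (respectively $\dW_o^*$) to produce $\bX_a$ (respectively $\bX_r$).

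For part (1) I would define $\bX_a\colon\range{\dW_c}\to L^{2+}_Y$ by $\bX_a(\dW_c\bu):=C\bu$ for $\bu\in\dom{\dW_c}$, where $\dom{\dW_c^\bigstar}$ is dense by Corollary~\ref{C:MinExp} so that $\dW_c$ is the genuine adjoint from Proposition~\ref{P:Wc}(1); note also that $\range{\dW_c}\subset\dom{\dW_o}$ by Theorem~\ref{thm:hankel}(4). Well-definedness is the step where the Douglas range constraint pays off: if $\dW_c\bu_1=\dW_c\bu_2$, then \eqref{Hfact2} gives
\[
D_{\Tfrak_\Sigma^*}(C\bu_1-C\bu_2)=\Hfrak_\Sigma(\bu_1-\bu_2)=\dW_o\dW_c(\bu_1-\bu_2)=0,
\]
and $C\bu_i\in\Ker{D_{\Tfrak_\Sigma^*}}^\perp$ then forces $C\bu_1=C\bu_2$. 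The factorization \eqref{eq:WoFact} is the same chain read in reverse, and uniqueness of $\bX_a$ under the range constraint $\range{\bX_a}\subset\Ker{D_{\Tfrak_\Sigma^*}}^\perp$ follows by the usual observation that any two candidates would differ pointwise by a vector lying in $\Ker{D_{\Tfrak_\Sigma^*}}\cap\Ker{D_{\Tfrak_\Sigma^*}}^\perp=\set{0}$.

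The closability, core, and injectivity statements then come from combining closedness of $\dW_o$ (Proposition~\ref{P:Wo}) with the Douglas range constraint. If $z_n\in\range{\dW_c}$ with $z_n\to 0$ in $X$ and $\bX_a z_n\to w$ in $L^{2+}_Y$, then $\dW_o z_n=D_{\Tfrak_\Sigma^*}\bX_a z_n\to D_{\Tfrak_\Sigma^*}w$; closedness of $\dW_o$ forces $D_{\Tfrak_\Sigma^*}w=0$, while closedness of $\Ker{D_{\Tfrak_\Sigma^*}}^\perp$ forces $w$ into this subspace, whence $w=0$. That $\range{\dW_c}$ is a core for $\overline{\bX_a}$ is by construction, and the same limit argument reduces injectivity of $\overline{\bX_a}$ to injectivity of $\dW_o$, which is furnished by minimality via Lemma~\ref{lem:ObsControl1-1}. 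The range of $\overline{\bX_a}$ sits in the closed subspace $\Ker{D_{\Tfrak_\Sigma^*}}^\perp$ for the same reason.

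Part (2) is entirely parallel, with the roles of $\dW_o$ and $\dW_c^\bigstar$ swapped: set $\bX_r(\dW_o^*\by):=\widetilde C\by$, use \eqref{Hfact3} in place of \eqref{Hfact2}, and appeal to closedness and injectivity of $\dW_c^\bigstar$ (the latter again from minimality, via the second half of the criterion recorded after \eqref{DualObsCon}) in place of the corresponding properties of $\dW_o$. The main obstacle I anticipate is purely bookkeeping: one must keep the two defect operators $D_{\Tfrak_\Sigma^*}$ and $D_{\widetilde\Tfrak_\Sigma}$, together with their respective kernel complements, cleanly separated, and invoke the Douglas range constraint at precisely the three moments of well-definedness, closability, and injectivity. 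Once that bookkeeping is in place, no individual step is genuinely hard.
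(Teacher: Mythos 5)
Your proof is correct and follows essentially the same route as the paper, which delegates to an abstract version of this argument (Lemma \ref{L:X1X2} in Appendix \ref{sec:OpOpt}): a Douglas factorization of $\Hfrak_\Sigma$ through $D_{\Tfrak_\Sigma^*}$ (resp.\ of $\Hfrak_\Sigma^*$ through $D_{\widetilde\Tfrak_\Sigma}$) based on the defect inequalities from \eqref{DefecLSi}, followed by systematic use of the range constraint in $\Ker{D_{\Tfrak_\Sigma^*}}^\perp$ to settle uniqueness, closability and injectivity, with closedness of $\dW_o$ and injectivity from minimality supplying the remaining ingredients exactly as you describe. The only cosmetic difference is that the paper defines $\bX_a$ via the Moore--Penrose generalized inverse of $\dW_c$ (making well-definedness automatic), whereas you define it on preimages and verify well-definedness directly from \eqref{Hfact2} and the range constraint; the two constructions coincide.
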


Next we introduce operators $H_a$ and $H_r$, which give rise to the quadratic storage functions $S_{H_a}(x) =
\langle H_a x, x \rangle$ and $S_{H_r}(x) = \langle H_r x, x \rangle$ which are equal to the
available storage function $S_a(x)$ and the $L^2$-regularized required supply $\underline S_r(x)$ respectively, at least for $x\in\range{\dW_c}$.
Assume that $\Sigma$ is minimal and has transfer function in $\cS_{U,Y}$, so that ${\bf X}_a$ and ${\bf X}_r$ in Lemma \ref{L:XaXb} are densely defined, closable operators
with injective closures $\overline{\bf X}_a$ and $\overline{\bf X}_r$, respectively. Then, $\overline{\bf X}_a^*\overline{\bf X}_a$ is selfadjoint with unique positive,  selfadjoint,
injective square root $|\overline{\bf X}_a|=(\overline{\bf X}_a^*\overline{\bf X}_a)^{\frac{1}{2}}$ satisfying $\dom{|\overline{\bf X}_a|}=\dom{\overline{\bf X}_a}$;
see for instance \cite[\S VIII.9]{RS80}.
We now set $H_a=\overline{\bf X}_a^*\overline{\bf X}_a$ so that $H_a^{\frac{1}{2}}=|\overline{\bf X}_a|$. Analogously, set
$|\overline{\bf X}_r|:=(\overline{\bf X}_r^*\overline{\bf X}_r)^{\frac{1}{2}}$
and $H_r:=(\overline{\bf X}_r^*\overline{\bf X}_r)^{-1}$, so that $H_r^{\frac{1}{2}}=|\overline{\bf X}_r|^{-1}$, with $\dom{H_r^{\frac{1}{2}}}=\range{|\overline {\bf X}_r|}$. Note that the operators $H_a^{\frac{1}{2}}$, $H_a^{-\frac{1}{2}}$, $H_r^{\frac{1}{2}}$ and $H_r^{-\frac{1}{2}}$ are all closed. The following theorem follows directly from Theorem \ref{T:S-S+quad} in Appendix \ref{sec:OpOpt}.

\begin{theorem}\label{T:SaUnSrQuad}
Let $\Sigma$ be a minimal well-posed system which has transfer function in $\cS_{U,Y}$. Define ${\bf X}_a$, $\overline{\bf X}_a$, ${\bf X}_r$, $\overline{\bf X}_r$ as in Lemma \ref{L:XaXb} and $H_a$ and $H_r$ as in the preceding paragraph. Then the dense subspace $\range{\dW_c}$ of $X$ is contained in the domains of $H_a^{\frac{1}{2}}$ and $H_r^{\frac{1}{2}}$, and $S_a$ and $\underline{S}_r$ satisfy
\begin{equation}\label{SaUnSrQuad}
\begin{aligned}
S_a(x_0) & = \||\overline{\bf X}_a|x_0\|^2=\|H_a^{\frac{1}{2}}x_0\|^2,\quad x_0\in \range{\dW_c},  \\
\underline{S}_r(x_0) & = \||\overline{\bf X}_r|^{-1} x_0\|^2=\|H_r^{\frac{1}{2}}x_0\|^2,\quad x_0\in \range{\dW_c}.  
\end{aligned}
\end{equation}
Moreover, $\range{\dW_c}$ is a core for $H_a^{\frac{1}{2}}$  and $\range{\dW_o^*}$ is a core for $H_r^{-\frac{1}{2}}$.
\end{theorem}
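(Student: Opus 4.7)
The plan is to combine the explicit formulas for $S_a$ and $\underline{S}_r$ from Lemma~\ref{L:SaSrOp} with the factorizations \eqref{eq:WoFact} and \eqref{eq:WcFact} from Lemma~\ref{L:XaXb}, and then invoke the abstract operator optimization result Theorem~\ref{T:S-S+quad} in Appendix~\ref{sec:OpOpt}. In effect, once the right factorizations are identified, the two identities in \eqref{SaUnSrQuad} become special instances of the generic ``sup-problem'' and ``inf-problem'' solved in the appendix.

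For the available storage: By Theorem~\ref{thm:hankel}(4), $\range{\dW_c}\subset\dom{\dW_o}$, so Lemma~\ref{L:SaSrOp} applies and gives $S_a(x_0)=\sup_{\bu\in L^{2+}_U}\|\dW_o x_0+\Tfrak_\Sigma\bu\|^2-\|\bu\|^2$ for $x_0\in\range{\dW_c}$. Substituting $\dW_o x_0=D_{\Tfrak_\Sigma^*}{\bf X}_a x_0$ from \eqref{eq:WoFact} rewrites this supremum in the canonical form handled by Theorem~\ref{T:S-S+quad}: the supremum over $\bu$ of $\|D_{\Tfrak_\Sigma^*}{\bf X}_a x_0+\Tfrak_\Sigma\bu\|^2-\|\bu\|^2$, with ${\bf X}_a x_0\in\Ker{D_{\Tfrak_\Sigma^*}}^\perp$ (by construction in Lemma~\ref{L:XaXb}). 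The abstract theorem identifies this supremum with $\|{\bf X}_a x_0\|^2$, and since ${\bf X}_a x_0=|\overline{\bf X}_a|x_0$ in norm via the polar decomposition $\overline{\bf X}_a=V_a|\overline{\bf X}_a|$ (with $V_a$ a partial isometry), we obtain $S_a(x_0)=\|H_a^{\frac{1}{2}}x_0\|^2$, and in particular $\range{\dW_c}\subset\dom{H_a^{\frac{1}{2}}}$.

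For the regularized required supply: Lemma~\ref{L:SaSrOp}'s formula, upgraded to $\underline{S}_r(x_0)=\inf_{\bu\in\dW_c^{-1}(\{x_0\})}\|D_{\widetilde\Tfrak_\Sigma}\bu\|^2$ for $x_0\in\range{\dW_c}$, is precisely of the ``inf-problem'' form dual to the previous one. Here the relevant factorization is \eqref{eq:WcFact}, $\dW_c^*=D_{\widetilde\Tfrak_\Sigma}{\bf X}_r$ on $\range{\dW_o^*}$, which allows us to transform the constrained minimization over preimages of $\dW_c$ into an unconstrained expression governed by $\overline{\bf X}_r$. Theorem~\ref{T:S-S+quad} then delivers $\underline{S}_r(x_0)=\||\overline{\bf X}_r|^{-1}x_0\|^2=\|H_r^{\frac{1}{2}}x_0\|^2$, provided $x_0\in\range{|\overline{\bf X}_r|}=\dom{H_r^{\frac{1}{2}}}$, which again forces the domain containment $\range{\dW_c}\subset\dom{H_r^{\frac{1}{2}}}$ as part of the conclusion.

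The core assertions transfer directly from Lemma~\ref{L:XaXb}: that lemma states that $\range{\dW_c}$ is a core for $\overline{\bf X}_a$ and $\range{\dW_o^*}$ is a core for $\overline{\bf X}_r$. Since $\dom{\overline{\bf X}_a}=\dom{|\overline{\bf X}_a|}=\dom{H_a^{\frac{1}{2}}}$ with equivalent graph norms (by polar decomposition), a core for $\overline{\bf X}_a$ is a core for $H_a^{\frac{1}{2}}$; the same argument applied to $\overline{\bf X}_r$ shows that $\range{\dW_o^*}$ is a core for $|\overline{\bf X}_r|=H_r^{-\frac{1}{2}}$. The main obstacle is thus entirely encapsulated in the appendix: verifying that Theorem~\ref{T:S-S+quad} is formulated in enough generality to simultaneously (i) accept the densely defined but possibly unbounded factors ${\bf X}_a,{\bf X}_r$, (ii) produce the domain characterization and the precise values of the extremal quadratic forms, and (iii) identify the cores. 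Once that abstract machinery is established, the present theorem is a direct translation.
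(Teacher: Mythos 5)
Your proposal is correct and follows essentially the same route as the paper, which simply notes that the theorem "follows directly from Theorem \ref{T:S-S+quad}" after identifying $T_1=\widetilde\Tfrak_\Sigma$, $T_2=\Tfrak_\Sigma$, $H=\Hfrak_\Sigma$, $W_1=\dW_o$, $W_2=\dW_c$ via \eqref{LfrakDec}, \eqref{Hfact2}, \eqref{eq:WoFact} and \eqref{eq:WcFact}. Your additional remarks on the polar decomposition and the transfer of cores are exactly the arguments carried out inside the appendix proof, so nothing is missing.
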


Note that Theorem \ref{T:SaUnSrQuad} is not strong enough to justify the conclusion that $S_a$ and $\underline{S}_r$ are quadratic storage functions, since the identities in \eqref{SaUnSrQuad} only hold on $\range{\dW_c}$ which might be strictly contained in the domains of $H_a^{\frac{1}{2}}$ and $H_r^{\frac{1}{2}}$, respectively. Later on, in Theorem \ref{T:HaHr-KYPsols} below, we will show that $H_a$ and $H_r$ are spatial solutions to the KYP inequality of $\Sigma$ under the assumptions of Theorem \ref{T:SaUnSrQuad}, so that $H_a$ and $H_r$ induce quadratic storage functions by Theorem \ref{thm:stdlemma}. These may differ from $S_a$ and $\underline S_r$ outside $\range{\bW_c}$. However, if the initial state of a trajectory $(\bu,\bx,\by)$ of $\Sigma$ on $\rplus$ satisfies $\bx(0)\in\range{\bW_c}$, then $\bx(t)\in\range{\bW_c}$ for all $t\geq0$, by items (3) and (4) of Proposition \ref{P:Wc}. For such state trajectories, $S_a$ and $\underline S_r$ coincide with $S_{H_a}$ and $S_{H_r}$, respectively.

It is of interest to work out the corresponding results for the causal dual system $\Sigma^d$ explicitly in terms of objects related to the original system $\Sigma$. Using \eqref{eq:Lsig-d} and \eqref{LfrakDec}, one gets that the Laurent operator $L_{\Sigma^d}$ for $\Sigma^d$ is
\begin{align*}
 L_{\Sigma^d} &  = \begin{bmatrix} \widetilde  \Tfrak_{\Sigma^d}  & 0 \\ \Hfrak_{\Sigma^d} & \Tfrak_{\Sigma^d}  \end{bmatrix} :=
\begin{bmatrix} 0 & \ya \\ \ya & 0 \end{bmatrix}  \begin{bmatrix} \widetilde \Tfrak_\Sigma & 0 \\ \Hfrak_\Sigma & \Tfrak_\Sigma \end{bmatrix}^*
\begin{bmatrix} 0 & \ya \\ \ya & 0 \end{bmatrix}  \\
& = \begin{bmatrix} \ya \Tfrak^*_\Sigma \ya & 0 \\ \ya \Hfrak_\Sigma^* \ya & \ya \widetilde \Tfrak_\Sigma^* \ya \end{bmatrix}  \colon
\begin{bmatrix} L^{2-}_Y \\ L^{2+}_Y \end{bmatrix} \to \begin{bmatrix} L^{2-}_U \\ L^{2+}_U \end{bmatrix}.
\end{align*}
Furthermore, from \eqref{DualObsCon} we see that the dual $L^2$-output and dual $L^2$-input operators are given by
\begin{equation}   \label{DualObsCon'}
  \bW_o^d = \ya \bW^*_c,  \quad \bW_c^d = \bW_o^* \ya.
\end{equation}
Apply Lemma \ref{L:XaXb} with $\Sigma^d$ in place of $\Sigma$ to see that the operator $X_a^d$ obtained from item (1) is determined by
\begin{equation}\label{eq:WodX}
\bW^d_o|_{\range{ \bW_c^d}}
= D_{\Tfrak_{\Sigma^d}^*} \bX_a^d
=  D_{\scriptsize{\ya} \widetilde \Tfrak_\Sigma \scriptsize{\ya}} \bX_a^d
=  \ya D_{\widetilde \Tfrak_\Sigma} \ya \bX_a^d,
\end{equation}
where the last equality can be verified by simply squaring $\ya D_{\widetilde \Tfrak_\Sigma} \ya\geq0$.

On the other hand, by \eqref{DualObsCon'} and Lemma \ref{L:XaXb} applied to $\Sigma$ we have
$$
\bW^d_o|_{\range{ \bW_c^d}}
=  \ya \bW_c^*|_{\range{ \bW^*_o \scriptsize{\ya}}}
=  \ya \bW_c^*|_{\range{ \bW^*_o} }
= \ya D_{\widetilde \Tfrak_\Sigma}\bX_r.
$$
By combining these last two expressions we get that $\range{\ya\bX_a^d-\bX_r}\subset\Ker{D_{\widetilde\Tfrak_\Sigma}}$, and since $\range{\bX_r}$ is also perpendicular to this kernel, we may conclude that
\begin{equation*} 
\overline{\bX}_a^d = \ya \overline{\bX}_r
\end{equation*}
once we use \eqref{eq:WodX} to observe that
$$
	\range{\ya\bX_a^d}\subset
	\ya\Ker{D_{\Tfrak_{\Sigma^d}^*}}^\perp=
	\Ker{D_{\widetilde\Tfrak_\Sigma}}^\perp.
$$

By duality, we immediately get $\ya \overline{\bX}_r^d = \overline{\bX}_a$, and then the operators $H_a^d$ and $H_r^d$ associated with the dual system $\Sigma^d$, as in the paragraph preceding Theorem \ref{T:SaUnSrQuad}, are related to $H_a$ and $H_r$ via
\begin{equation}\label{HaHr-dual}
H_a^d=H_r^{-1} \quad\mbox{and}\quad H_r^d=H_a^{-1}.
\end{equation}
Therefore, Theorem \ref{T:SaUnSrQuad} applied to the causal dual system leads us to the following formulas for the available storage and $L^2$-regularized required supply for the causal dual system $\Sigma^d$.

 \begin{theorem} \label{T:SaUnSrQuad-dual}
 Let $\Sigma$ be a minimal well-posed system which has transfer function in $\cS_{U,Y}$. Define ${\bf X}_a$, $\overline{\bf X}_a$, ${\bf X}_r$, $\overline{\bf X}_r$ as in Lemma \ref{L:XaXb} and $H_a$ and $H_r$ as in Theorem \ref{T:SaUnSrQuad}. Then $\range{ \bW_o^* }$ is contained in the domains of $H_a^{-\frac{1}{2}}$ and $H_r^{-\frac{1}{2}}$, and the available storage $S^d_a$ and the $L^2$-regularized required supply $\underline{S}_r^d$
 for the causal dual system $\Sigma^d$ are given by
 \begin{align*}
 & S_a^d(x_0) = \| |\overline{\bX}_a^d| x_0 \|^2 = \| |\overline{\bX}_r|  x_0 \|^2 = \| H_r^{-\frac{1}{2}} x_0 \|^2 \text{ for } x_0 \in \range{ \bW_o^* }, \label{SaUnSrQuad-dual} \\
& \underline{S}_r^d(x_0) =  \| |\overline{\bX}_r^d|^{-1} x_0 \|^2 = \| |\overline{\bX}_a|^{-1}  x_0 \|^2 = \| H_a^{-\frac{1}{2}} x_0 \|^2 \text{ for } x_0 \in \range{ \bW_o^* }.
 \notag
\end{align*}
\end{theorem}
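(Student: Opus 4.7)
The plan is to apply Theorem~\ref{T:SaUnSrQuad} directly to the causal dual system $\Sigma^d$ and then translate the conclusions back into objects associated with $\Sigma$, using the duality identifications already recorded in the discussion immediately preceding the statement. First I would verify that the hypotheses of Theorem~\ref{T:SaUnSrQuad} hold for $\Sigma^d$: by Corollary~\ref{cor:ApproxContrObsDual} minimality is preserved under causal duality, and by Theorem~\ref{T:causdual} the transfer function $\widehat\Dfrak^d(\lambda)=\widehat\Dfrak(\overline\lambda)^*$ lies in $\cS_{Y,U}$ whenever $\widehat\Dfrak\in\cS_{U,Y}$. Thus Theorem~\ref{T:SaUnSrQuad} applied to $\Sigma^d$ yields that $\range{\bW_c^d}$ is dense in $X$, is contained in $\dom{(H_a^d)^{\frac12}}\cap \dom{(H_r^d)^{\frac12}}$, and that
$$
S_a^d(x_0)=\bigl\||\overline{\bX}_a^d|\,x_0\bigr\|^2,\qquad \underline S_r^d(x_0)=\bigl\||\overline{\bX}_r^d|^{-1}x_0\bigr\|^2
$$
for all $x_0\in\range{\bW_c^d}$.

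Next I would use the identifications already derived in the excerpt to rewrite the right--hand sides in the notation of $\Sigma$. From $\bW_c^d=\bW_o^*\,\ya$ in \eqref{DualObsCon'} and the fact that $\ya$ is a unitary involution mapping $L^{2-}_Y$ bijectively onto $L^{2+}_Y$, it follows that $\range{\bW_c^d}=\range{\bW_o^*}$. The identities $\overline{\bX}_a^d=\ya\,\overline{\bX}_r$ and $\overline{\bX}_r^d=\ya\,\overline{\bX}_a$, combined with unitarity of $\ya$, give on squaring that $(\overline{\bX}_a^d)^*\overline{\bX}_a^d=\overline{\bX}_r^*\overline{\bX}_r$ and $(\overline{\bX}_r^d)^*\overline{\bX}_r^d=\overline{\bX}_a^*\overline{\bX}_a$; uniqueness of the positive selfadjoint square root then forces $|\overline{\bX}_a^d|=|\overline{\bX}_r|$ and $|\overline{\bX}_r^d|=|\overline{\bX}_a|$. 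Invoking the definitions of $H_a,H_r$ applied to $\Sigma^d$, or equivalently \eqref{HaHr-dual}, one concludes
$$
|\overline{\bX}_a^d|=|\overline{\bX}_r|=H_r^{-\frac12},\qquad |\overline{\bX}_r^d|^{-1}=|\overline{\bX}_a|^{-1}=H_a^{-\frac12}.
$$

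Substituting these identifications into the two displayed formulas above produces exactly the desired equalities, and the containment $\range{\bW_o^*}\subset \dom{H_a^{-\frac12}}\cap\dom{H_r^{-\frac12}}$ is then just the rewriting of $\range{\bW_c^d}\subset \dom{(H_a^d)^{\frac12}}\cap \dom{(H_r^d)^{\frac12}}$ under the same dictionary. I do not anticipate a serious obstacle: the only point needing a moment of care is the passage from $\overline{\bX}_a^d=\ya\,\overline{\bX}_r$ to $|\overline{\bX}_a^d|=|\overline{\bX}_r|$, and from $H_a^d=H_r^{-1}$ to $(H_a^d)^{\frac12}=H_r^{-\frac12}$; both are standard consequences of the uniqueness of the positive selfadjoint square root for (possibly unbounded) positive selfadjoint operators, together with the observation that $\ya$ is a unitary involution so that $(\ya\,T)^*(\ya\,T)=T^*T$ for any closed operator $T$.
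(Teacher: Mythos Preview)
Your proposal is correct and follows essentially the same approach as the paper: the paper presents no separate proof for this theorem, since the discussion immediately preceding it establishes $\range{\bW_c^d}=\range{\bW_o^*}$, $\overline{\bX}_a^d=\ya\,\overline{\bX}_r$, $\ya\,\overline{\bX}_r^d=\overline{\bX}_a$, and the identities \eqref{HaHr-dual}, after which the result is declared to follow from Theorem~\ref{T:SaUnSrQuad} applied to $\Sigma^d$. Your argument makes exactly these steps explicit, including the verification that $\Sigma^d$ satisfies the hypotheses and the passage from $\overline{\bX}_a^d=\ya\,\overline{\bX}_r$ to $|\overline{\bX}_a^d|=|\overline{\bX}_r|$ via unitarity of $\ya$.
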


Using the above results, we will next show that the solutions $H_a$ and $H_r$  to the spatial KYP-inequality \eqref{eq:KYP} associated with $\Sigma$ are minimal and maximal spatial solutions respectively for certain subclasses of spatial solutions.

\begin{theorem}\label{T:HaHr-KYPsols}
Let $\Sigma$ be a minimal well-posed system which has transfer function in $\cS_{U,Y}$. Then the operators $H_a$ and $H_r$ defined above are spatial solutions to the KYP-inequality \eqref{eq:KYP}. Moreover, for all spatial solutions $H$ to \eqref{eq:KYP} the following hold:
\begin{enumerate}
\item[(1)] If $\range{\dW_c}$ is a core for $H^\frac{1}{2}$, then $H_a \preceq H$;

\item[(2)] If $\range{\dW_o^*}$ is a core for $H^{-\frac{1}{2}}$, then $H \preceq H_r$.

\end{enumerate}
\end{theorem}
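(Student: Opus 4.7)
The plan is to first establish that $H_a$ and $H_r$ are themselves spatial KYP solutions, by exploiting the identities $S_a(x)=\|H_a^{1/2}x\|^2$ on $\range{\dW_c}$ and $\underline S_r(x)=\|H_r^{1/2}x\|^2$ on $\range{\dW_c}$ from Theorem \ref{T:SaUnSrQuad}, the fact that $S_a$ and $\underline S_r$ are genuine storage functions (Theorem \ref{thm:schurimpliesstorage} and Proposition \ref{P:L2reg}), and the core properties provided by Theorem \ref{T:SaUnSrQuad}. Then statements (1) and (2) will follow from the extremality of $S_a$ and $\underline S_r$ among storage functions, again extended from $\range{\dW_c}$ to the full domain via the core hypothesis and closedness of the square roots.

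For $H_a$: fix $x_0\in\range{\dW_c}$ and any trajectory $(\bu,\bx,\by)$ on $\rplus$ with $\bx(0)=x_0$. Items (3)--(4) of Proposition \ref{P:Wc} imply $\bx(t)=\Afrak^t x_0+\Bfrak^t\bu\in\range{\dW_c}$ for all $t\ge 0$, so the storage inequality for $S_a$ together with $S_a|_{\range{\dW_c}}=\|H_a^{1/2}\cdot\|^2$ yields
\begin{equation*}
\|H_a^{1/2}(\Afrak^t x_0+\Bfrak^t\bu)\|^2+\|\Cfrak^t x_0+\Dfrak^t\bu\|^2 \le \|H_a^{1/2}x_0\|^2+\|\bu\|^2_{L^2([0,t],U)},
\end{equation*}
which is \eqref{eq:KYP} restricted to $\sbm{x_0\\\bu}\in\sbm{\range{\dW_c}\\ L^2([0,t],U)}$. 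To extend to $x\in\dom{H_a^{1/2}}$, use that $\range{\dW_c}$ is a core for $H_a^{1/2}$ to pick $x_n\in\range{\dW_c}$ with $x_n\to x$ and $H_a^{1/2}x_n\to H_a^{1/2}x$. Applying the inequality just established with input zero to the difference $x_n-x_m$ gives $\|H_a^{1/2}\Afrak^t(x_n-x_m)\|\le\|H_a^{1/2}(x_n-x_m)\|\to 0$, so $H_a^{1/2}(\Afrak^t x_n+\Bfrak^t\bu)$ is Cauchy while $\Afrak^t x_n+\Bfrak^t\bu\to\Afrak^t x+\Bfrak^t\bu$; closedness of $H_a^{1/2}$ now places the limit in $\dom{H_a^{1/2}}$ (giving \eqref{eq:HhalfDomCond}) and passage to the limit in the inequality yields \eqref{eq:KYP} throughout $\dom{H_a^{1/2}}\times L^2([0,t],U)$. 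For $H_r$, apply the same reasoning to $\Sigma^d$ (which is also minimal and Schur by Theorem \ref{T:causdual} and Corollary \ref{cor:ApproxContrObsDual}) to conclude that $H_a^d$ solves the KYP inequality for $\Sigma^d$; since $H_a^d=H_r^{-1}$ by \eqref{HaHr-dual}, Proposition \ref{P:DualKYPsols} translates this into the claim that $H_r$ solves the KYP inequality for $\Sigma$.

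For statement (1): if $H$ is a spatial KYP solution, then $S_H$ is a storage function by Proposition \ref{P:StorageKYP}, so Theorem \ref{thm:schurimpliesstorage} gives $S_a\le S_H$ pointwise on $X$. On $\range{\dW_c}\subset\dom{H^{1/2}}$ (using the core hypothesis) both are quadratic: $\|H_a^{1/2}x\|\le\|H^{1/2}x\|$ for $x\in\range{\dW_c}$. Given $y\in\dom{H^{1/2}}$, choose $x_n\in\range{\dW_c}$ with $x_n\to y$ and $H^{1/2}x_n\to H^{1/2}y$; the comparison applied to $x_n-x_m$ shows $H_a^{1/2}x_n$ is Cauchy, so by closedness of $H_a^{1/2}$ we get $y\in\dom{H_a^{1/2}}$ and $H_a^{1/2}x_n\to H_a^{1/2}y$, whence $\|H_a^{1/2}y\|\le\|H^{1/2}y\|$. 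This is exactly $H_a\preceq H$.

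For statement (2): apply (1) to the dual system $\Sigma^d$ with $H^{-1}$ in place of $H$. Proposition \ref{P:DualKYPsols} shows $H^{-1}$ is a spatial KYP solution for $\Sigma^d$, and since $\bW_c^d=\bW_o^*\ya$ by \eqref{DualObsCon'} gives $\range{\bW_c^d}=\range{\dW_o^*}$, the hypothesis that $\range{\dW_o^*}$ is a core for $H^{-1/2}=(H^{-1})^{1/2}$ is precisely that $\range{\bW_c^d}$ is a core for $(H^{-1})^{1/2}$. Hence (1) for $\Sigma^d$ yields $H_a^d\preceq H^{-1}$, and by \eqref{HaHr-dual} this reads $H_r^{-1}\preceq H^{-1}$, equivalent to $H\preceq H_r$ by the antitonicity of inversion in the unbounded Loewner order (\cite[\S5]{ArKaPi06}). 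The main technical obstacle in the whole scheme is the careful closedness/core extension argument, used twice: once to promote the validity of \eqref{eq:KYP} for $H_a$ from $\range{\dW_c}$ to $\dom{H_a^{1/2}}$, and once to promote the comparison $\|H_a^{1/2}\cdot\|\le\|H^{1/2}\cdot\|$ from $\range{\dW_c}$ to $\dom{H^{1/2}}$; both hinge on the core property and the fact that the KYP inequality for $H_a$ on $\range{\dW_c}$ already controls the relevant differences under the semigroup.
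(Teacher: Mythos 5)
Your proposal is correct and follows essentially the same route as the paper: establish the KYP inequality for $H_a$ on $\range{\dW_c}\oplus L^2([0,t];U)$ via the storage-function identity from Theorem \ref{T:SaUnSrQuad}, extend to $\dom{H_a^{\frac{1}{2}}}$ by the core/Cauchy/closedness argument, derive statement (1) from $S_a\le S_H$ by the same extension, and obtain the $H_r$ claims by dualizing through Proposition \ref{P:DualKYPsols}, \eqref{HaHr-dual}, and the antitonicity of inversion (for which the paper cites \cite[Proposition 3.4]{AKP05}). Your treatment is if anything slightly more explicit than the paper's about why $H_r$ itself is a spatial solution, which the paper leaves implicit in the duality step.
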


\begin{proof}
We first prove the claims regarding $H_a$. By items (3) and (4) of Proposition \ref{P:Wc} it follows that
\[
\range{\Bfrak}\subset \range{\dW_c} \quad \mbox{and}  \quad \Afrak^t\, \range{\dW_c} \subset \range{\dW_c},\ \ t\in\R^+.
\]
In particular, Theorem \ref{T:SaUnSrQuad} yields $\range{\Bfrak}\subset \dom{H_a^\frac{1}{2}}$, implying $\Bfrak^t L^2([0,t];U)\subset \dom{H_a^{\frac{1}{2}}}$.
 Moreover, the fact that $S_a(x)=S_{H_a}(x)$ for a
\begin{equation}\label{eq:KYPrest}
	\left\|\bbm{H_a^{\frac{1}{2}}&0\\0&I}
		\bbm{\Afrak^t&\Bfrak^t\\\Cfrak^t&\Dfrak^t}\bbm{x\\\bu}\right\|
	\leq\left\|\bbm{H_a^{\frac{1}{2}}&0\\0&I}\bbm{x\\\bu}\right\|,\ \bbm{x\\\bu}\in\bbm{\range{\dW_c}\\L^2([0,t];U)}.
\end{equation}
Squaring on both sides and restricting to $\bu=0$, we get
\[
\| H_a^{\frac{1}{2}} \Afrak^t x\|^2 \leq \| H_a^{\frac{1}{2}} \Afrak^t x\|^2  + \|\Cfrak^t x\|^2 \leq \| H_a^{\frac{1}{2}} x\|^2, \quad x\in \range{\dW_c},
\]
hence
\begin{equation}\label{Ineq}
\| H_a^{\frac{1}{2}} \Afrak^t x\| \leq  \| H_a^{\frac{1}{2}} x\|, \quad x\in \range{\dW_c}.
\end{equation}

Now take $\widetilde{x}\in \dom{H_a^{\frac{1}{2}}}$ and fix $t\geq 0$. Since $\range{\dW_c}$ is a core for $H_a^{\frac{1}{2}}$ by Theorem \ref{T:SaUnSrQuad}, there exists a sequence
$x_n\in\range{\dW_c}$, $n\in\Z_+$, such that $x_n \to \widetilde{x}$ and $H_a^{\frac{1}{2}}x_n \to H_a^{\frac{1}{2}}\widetilde{x}$ in $X$. In particular, $H_a^{\frac{1}{2}}x_n$ is a Cauchy sequence. Applying \eqref{Ineq} with $x=x_n-x_m$, we obtain that
\[
\|H_a^{\frac{1}{2}} \Afrak^t x_n - H_a^{\frac{1}{2}} \Afrak^t x_m\|\leq \|H_a^{\frac{1}{2}} x_n - H_a^{\frac{1}{2}} x_m\| \to 0\quad \mbox{as $n,m\to 0$.}
\]
Hence $H_a^{\frac{1}{2}} \Afrak^t x_n$ is also a Cauchy sequence, thus convergent in $X$. Also, $\Afrak^t x_n$ converges to $\Afrak^t \widetilde{x}$, because $\Afrak^t$ is bounded. Since $H_a^{\frac{1}{2}}$ is closed, it follows that $\Afrak^t \widetilde{x}$ is in $\dom{H_a^{\frac{1}{2}}}$ and $H_a^{\frac{1}{2}}\Afrak^t \widetilde{x}=\lim_{n\to \infty} H_a^{\frac{1}{2}}\Afrak^t x_n$. In particular, we proved that $\Afrak^t\, \dom{H_a^{\frac{1}{2}}} \subset \dom{H_a^{\frac{1}{2}}}$. We have now proved that \eqref{eq:HhalfDomCond} holds. The fact that the spatial KYP inequality \eqref{eq:KYP} holds on $\dom{H_a^{\frac{1}{2}}} \oplus L^2([0,t];U)$ now also follows easily from \eqref{eq:KYPrest} and the fact that for $\widetilde{x}\in \dom{H_a^{\frac{1}{2}}}$ and $x_n\in \range{\dW_c}$ as above we have $H_a^{\frac{1}{2}} x_n \to H_a^{\frac{1}{2}} \widetilde{x}$, $H_a^{\frac{1}{2}} \Afrak^t x_n \to \Afrak^t H_a^{\frac{1}{2}} \widetilde{x}$ and $\Cfrak^tx_n\to\Cfrak^t\widetilde x$.

Assume next that $H$ is any solution to the spatial KYP-inequality \eqref{eq:KYP} with the property that $\range{\dW_c}$ is a core for $H^\frac{1}{2}$. By Proposition \ref{P:StorageKYP} and Theorem \ref{thm:schurimpliesstorage}, we have
\begin{equation}\label{Ineq2}
\|H_a^{\frac{1}{2}}x\|^2=S_a(x)\leq S_H(x)=\|H^{\frac{1}{2}}x\|^2,\quad x\in\range{\dW_c}.
\end{equation}
Take $\widetilde{x}\in \dom{H^{\frac{1}{2}}}$ arbitrarily, and let $x_n\in \range{\dW_c}$, $n\in\Z_+$, so that $x_n\to \widetilde{x}$ and $H^{\frac{1}{2}}x_n \to H^{\frac{1}{2}}\widetilde{x}$; such a sequence
exists since $\range{\dW_c}$ is a core for $H^{\frac{1}{2}}$, by assumption. Reasoning as above, the sequence $H^{\frac{1}{2}}x_n$, $n\in\Z_+$, is a Cauchy sequence, and the
inequality \eqref{Ineq2} implies that $H_a^{\frac{1}{2}}x_n$, $n\in\Z_+$, is a Cauchy sequence as well. The closedness of $H_a^{\frac{1}{2}}$ then implies that
$\widetilde{x}\in \dom{H_a^{\frac{1}{2}}}$ and $H_a^{\frac{1}{2}}x_n \to H_a^{\frac{1}{2}}\widetilde{x}$. Consequently, $\dom{H^{\frac{1}{2}}}\subset \dom{H_a^{\frac{1}{2}}}$ and the inequality \eqref{Ineq2}
extends to all $x \in \dom{H^{\frac{1}{2}}}$, which proves that $H_a\preceq H$, and the proof of statement (1) is complete.

The proof of statement (2)  requires drawing on results for the causal dual system $\Sigma^d$ as well as results for $\Sigma$ itself.
We note from \eqref{DualObsCon'} that $\range{ \bW_o^*} = \range{ \bW_c^d}$.  Note also by Proposition \ref{P:DualKYPsols} that $H$ is a solution of the
spatial KYP-inequality \eqref{eq:KYP} for $\Sigma$ if and only if $H^{-1}$ is a solution of the spatial KYP-inequality \eqref{KYP-d} for $\Sigma^d$.
Thus $\range{\bW_o^*}$ being a core for $H^{-\frac{1}{2}}$ where $H$ solves the KYP-inequality \eqref{eq:KYP} for $\Sigma$  is the same as $\range{\bW_c^d}$ being a core for
$(H^{-1})^{\frac{1}{2}}$  where $H^{-1}$ solves the KYP-inequality \eqref{KYP-d} for $\Sigma^d$.
We conclude that the hypothesis for statement (2) in the theorem is the same as the hypothesis for statement (1), but applied to $\Sigma^d$ rather than to $\Sigma$.
Hence, if we assume the hypothesis for statement (2), we can use the implication in statement (1) already proved to conclude that $H^d_a \preceq H^{-1}$,
where \eqref{HaHr-dual} gives $H_a^d = H_r^{-1}$, and thus we have
     $H_r^{-1} \preceq H^{-1}$.
Now \cite[Proposition 3.4]{AKP05}  
gives us the desired inequality $H \preceq H_r$.
\end{proof}

\begin{remark}\label{R:ASconnect}
Theorem \ref{T:HaHr-KYPsols} states that $H_a$ and $H_r$ are both positive definite spatial solutions to the KYP inequality \eqref{eq:KYP}, provided $\Sigma$ is a minimal well-posed system which has transfer function in $\cS_{U,Y}$, and they are the minimal and maximal spatial solutions at least within a certain subset of the collection of spatial solutions. To be precise, if $\cG\cK_\Sigma$ denotes the collection of all positive definite spatial solutions to \eqref{eq:KYP}, then $H_a$ is the minimal element in
\[
\widetilde{\cG\cK}_{\Sigma,\textup{core}}:=\{H\in \cG\cK_\Sigma \mid \mbox{$\range{\dW_c}$ is a core for $H^\frac{1}{2}$} \}
\]
while $H_r$ is the maximal element in
\[
\widehat{\cG\cK}_{\Sigma,\textup{core}}:=\{H\in \cG\cK_\Sigma \mid \mbox{$\range{\dW_o^*}$ is a core for $H^\frac{1}{2}$} \}.
\]
That we cannot claim that $H_a$ is the minimal element in $\cG\cK_\Sigma$, despite the fact that $S_a$ is the minimal storage function for $\Sigma$, is because in
general we only managed to prove that $S_a$ and $S_{H_a}$ coincide on $\range{\dW_c}$.

In \cite{ArSt07} another analysis of the spatial solutions to the KYP for well-posed linear systems is obtained, with somewhat different extremality results.
This may result from
the fact that the analysis conducted in \cite{ArSt07} is done at the level of system nodes, and that the requirements there are slightly different. More precisely, in \cite{ArSt07} it is not assumed that the well-posed system
$\Sigma$ is minimal, but rather,
for spatial solutions $H$ it is assumed, in addition, that the well-posed system $\Sigma_H$ obtained by applying $H^\frac{1}{2}$ as a pseudo-similarity
is minimal, and in that
case the minimal and maximal solutions are those that correspond to the so-called optimal and $*$-optimal solutions. Note that because of the applied
pseudo-similarity, the KYP-inequality for
$\Sigma_H$ always has a bounded and boundedly invertible solution, namely $1_X$. Why there are no core restrictions in \cite{ArSt07}, which correspond to those that we have in the present paper, is unclear to us at this stage.
\end{remark}

If in addition to the minimality and a Schur class transfer function we also have $L^2$-controllability or $L^2$-observability, more can be said about the operators $H_a$ and $H_r$. 

\begin{corollary}\label{C:L2minImplics}
Let $\Sigma$ be a minimal well-posed system which has transfer function in $\cS_{U,Y}$. Then the following holds:
\begin{enumerate}
  \item[(1)] If $\Sigma$ is $L^2$-controllable, then $H_a$ and $H_r$ are bounded.
  \item[(2)] If $\Sigma$ is $L^2$-observable, then $H_a^{-1}$ and $H_r^{-1}$ are bounded.
\end{enumerate}
\end{corollary}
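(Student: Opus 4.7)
The plan is to show that $L^2$-controllability forces the quadratic storage functions $S_a$ and $\underline{S}_r$ to be dominated by the norm squared on $X$, from which boundedness of $H_a$ and $H_r$ follows via the identifications in Theorem \ref{T:SaUnSrQuad}; part (2) will then follow from part (1) by duality.

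For part (1), I would first observe that under $L^2$-controllability we have $\range{\dW_c} = X$, while $\dW_c$ is a closed, densely defined operator from $L^{2-}_U$ to $X$ by Proposition \ref{P:Wc}(1). Viewing $\dW_c$ as a bounded, surjective operator from $(\dom{\dW_c}, \|\cdot\|_{\text{graph}})$ onto the Hilbert space $X$, the open mapping theorem yields an $M > 0$ such that for each $x_0 \in X$ there exists $\bu \in \dom{\dW_c}$ with $\dW_c \bu = x_0$ and $\|\bu\|_{L^{2-}_U} \leq M\|x_0\|_X$. Combined with Corollary \ref{C:bounds}, this gives
\[
S_a(x_0) \leq \underline{S}_r(x_0) \leq \inf_{\bu \in \dW_c^{-1}(\{x_0\})} \|\bu\|^2_{L^{2-}_U} \leq M^2 \|x_0\|^2_X, \quad x_0 \in X.
\]

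Next, Theorem \ref{T:SaUnSrQuad} provides the identifications $S_a(x_0) = \|H_a^{\frac{1}{2}} x_0\|^2$ and $\underline{S}_r(x_0) = \|H_r^{\frac{1}{2}} x_0\|^2$ for $x_0 \in \range{\dW_c}$, which under $L^2$-controllability is all of $X$. Since $H_a^{\frac{1}{2}}$ and $H_r^{\frac{1}{2}}$ are operators in $X$ whose domains contain $\range{\dW_c} = X$, their domains are equal to $X$; the estimate above then forces $\|H_a^{\frac{1}{2}}\|, \|H_r^{\frac{1}{2}}\| \leq M$ and hence $H_a, H_r \in \cB(X)$.

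For part (2), I would invoke duality. If $\Sigma$ is $L^2$-observable then $\Sigma^d$ is $L^2$-controllable by \eqref{DualObsCon} and Definition \ref{def:L2min}, is still minimal by Corollary \ref{cor:ApproxContrObsDual}, and still has transfer function in the Schur class because $\widehat\Dfrak^d(\lambda) = \widehat\Dfrak(\overline\lambda)^*$ by Theorem \ref{T:causdual}. Applying part (1) to $\Sigma^d$, the corresponding operators $H_a^d$ and $H_r^d$ are bounded on $X$; the duality identities \eqref{HaHr-dual}, namely $H_a^d = H_r^{-1}$ and $H_r^d = H_a^{-1}$, then yield boundedness of $H_a^{-1}$ and $H_r^{-1}$.

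The only non-routine step is the open mapping argument for the unbounded operator $\dW_c$, which I expect to be the main obstacle to watch carefully; once this bound on the infimum in Corollary \ref{C:bounds} is in hand, the remainder is a direct assembly of already-established identifications and duality relations.
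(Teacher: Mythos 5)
Your proof is correct. It rests on the same backbone as the paper's argument, namely the fact from Theorem \ref{T:SaUnSrQuad} that $\range{\dW_c}$ (which equals $X$ under $L^2$-controllability) lies in the domains of $H_a^{\frac{1}{2}}$ and $H_r^{\frac{1}{2}}$, and it handles part (2) by the same duality reduction via \eqref{HaHr-dual} — but the way you extract boundedness is different. The paper simply notes that $H_a^{\frac{1}{2}}$ and $H_r^{\frac{1}{2}}$ are closed operators that are now everywhere defined, and invokes the closed graph theorem; that is the whole proof. You instead apply the open mapping theorem to $\dW_c$ (viewed from its graph-norm domain onto $X$) to produce, for each $x_0$, a preimage $\bu$ with $\|\bu\|_{L^{2-}_U} \le M \|x_0\|_X$, and then run this through the chain of inequalities in Corollary \ref{C:bounds} and the identifications of Theorem \ref{T:SaUnSrQuad}. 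Both are Baire-category/automatic-boundedness arguments and both are valid; the paper's is shorter, while yours has the mild advantage of producing an explicit quantitative bound $\|H_a\|,\|H_r\| \le M^2$ in terms of the open-mapping constant of $\dW_c$, and of not needing to invoke closedness of $H_a^{\frac{1}{2}}$ and $H_r^{\frac{1}{2}}$ at the final step (the pointwise estimate on all of $X$ already gives boundedness). The hypotheses you need for Corollary \ref{C:bounds} (dense domain of $\dW_c^\bigstar$, Schur-class transfer function) are indeed available under the stated assumptions, so there is no gap.
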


\begin{proof}
Assume that $\Sigma$ is $L^2$-controllable, that is, $\dom{\bW_c^\bigstar}$ is dense and $\range{\bW_c}=X$. Since $X=\range{\bW_c}$ is contained in the domains of $H_a^\frac{1}{2}$ and $H_r^\frac{1}{2}$ by Theorem \ref{T:SaUnSrQuad}, it follows that $H_a^\frac{1}{2}$ and $H_r^\frac{1}{2}$ are bounded by the closed graph theorem; hence $H_a$ and $H_r$ are also bounded. Statement 2 follows by applying statement 1 to $\Sigma^d$.
\end{proof}

\section{Proofs of the bounded real lemmas}\label{sec:proofs}

In this section we prove the bounded real lemmas posed in the introduction. We start with a proof of Theorem \ref{thm:stdlemma}.

\begin{proof}[Proof of Theorem \ref{thm:stdlemma}]
The implication (5) $\Rightarrow$ (4) is trivial and many of the other implications have been proved in the preceding sections: that
(4) $\Rightarrow$ (1) follows from Proposition \ref{prop:storageimpliesschur}; the equivalence (2) $\Leftrightarrow$ (5)
 follows from Proposition \ref{P:StorageKYP}, together with the statement that the same $H$ works in both items; Theorem \ref{T:HaHr-KYPsols} shows that
(1) $\Rightarrow$ (2).  Hence it follows that (1) $\Leftrightarrow$ (2) $\Leftrightarrow$ (4) $\Leftrightarrow$ (5).

Next, we show that (3) $\Rightarrow$ (5).  Assume that item (3) holds, say that $\Gamma:X\supset\dom{\Gamma}\to X^\circ$ implements a
pseudo-similarity from $\Sigma=\sbm{\Afrak&\Bfrak\\\Cfrak&\Dfrak}$ to a passive well-posed system $\Sigma^\circ=\sbm{\Afrak^\circ&\Bfrak^\circ\\\Cfrak^\circ&\Dfrak^\circ}$ with state space $X^\circ$. In that case $H:=\Gamma^*\Gamma$ and its positive semidefinite square root are well-defined positive definite operators, and $\dom{H^{\frac{1}{2}}}=\dom{\Gamma}$ by \cite[\S VIII.9]{RS80}. We next prove that $S_H$ in \eqref{eq:quadstorefun} is a quadratic storage function for $\Sigma$. For this, pick $z_0\in\dom{H^{\frac{1}{2}}}$ arbitrarily and let $(\bu,\bz,\by)$ be a trajectory of $\Sigma$ on $\rplus$ with initial state $\bx(0)=z_0$. Setting $\bx(t):=\Gamma \bz(t)$, $t\geq0$, and $x_0:=\Gamma z_0$ we get that $(\bu,\bx,\by)$ is a trajectory of $\Sigma^\circ$ on $\rplus$ with initial state $x_0$, since
$$
	\bx(t)= \Gamma \bz(t)=\Gamma(\Afrak^tz_0+\Bfrak^t\bu)=
	\Afrak^{\circ t}\Gamma \bz(0)+\Bfrak^{\circ t}\bu,
$$
and
$$
	\Cfrak^\circ \Gamma \bz(0)+\Dfrak \bu=\Cfrak x_0+\Dfrak \bu=\by.
$$
By passivity, every trajectory $(\bu,\bx,\by)$ of $\Sigma^\circ$ on $\rplus$ satisfies \eqref{eq:storfndef3a} with $S(x_0)=\|x_0\|^2_{X^\circ}$, and by considering $\bx(t)=\Gamma \bz(t)$, we see that also \eqref{eq:storfndef3a} holds with $S$ replaced by $S_H$ and $\bx$ replaced by $\bz$. If $z_0\not\in\dom{H^{\frac{1}{2}}}$ then $S_H(z_0)=\infty$, and the modification of \eqref{eq:storfndef3a} is still true. We have proved that $S_H$ is a quadratic storage function for $\Sigma$, where $H=\Gamma^*\Gamma$.

Finally, we prove that (1) $\Rightarrow$ (3). Assume the transfer function $\widehat{\Dfrak}$ of $\Sigma$ is in $\cS_{U,Y}$, more precisely, that it has an analytic continuation to a function in $\cS_{U,Y}$. In that case $\widehat{\Dfrak}$ coincides with the transfer function of some minimal passive well-posed system on some right half-plane, by Theorem 11.8.14 in \cite{StafBook}. Hence we have two minimal well-posed systems whose transfer functions coincide on some right half-plane, of which one is passive. Then Theorem 9.2.4 in \cite{StafBook} (see also \cite[Theorem 4.11]{ArSt07}) implies that the two systems are pseudo-similar. In particular, $\Sigma$ is pseudo-similar to a passive well-posed system.
\end{proof}

Next we turn to the proof of Theorem \ref{thm:stdlemmaL2reg}.

\begin{proof}[Proof of Theorem \ref{thm:stdlemmaL2reg}]
By Corollary \ref{C:L2min}, the $L^2$-minimality of $\Sigma$ implies that $\Sigma$ is minimal. Assume item (3) holds, i.e., $\Sigma$ is similar to a passive system. Then, in particular, $\Sigma$ is pseudo-similar to a passive system, and since
$\Sigma$ is minimal we can conclude from the implication (3) $\Rightarrow $ (1) in Theorem \ref{thm:stdlemma} that the transfer function $\widehat\Dfrak$ is in
$\cS_{U,Y}$. Hence item (1) holds.

Next we show that (2) $\Rightarrow$ (3). Assume that the operator $H$ on $X$ is a bounded, strictly positive definite solution to the KYP inequality \eqref{eq:KYPbdd}.
In that case $\Gamma:=H^{\frac{1}{2}}$ can serve as a similarity to a passive system. Indeed, for each $t\geq 0$, set
\[
\bbm{\Afrak^{\circ t}&\Bfrak^{\circ t} \\ \Cfrak^{\circ t}& \Dfrak^{\circ t}}:=
\bbm{H^\frac{1}{2} &0\\0&I} \bbm{\Afrak^t & \Bfrak^t \\ \Cfrak^t & \Dfrak^t} \bbm{H^{-\frac{1}{2}} &0\\0&I}.
\]
Then we have
\begin{equation}\label{eq:introinter3}
H^{\frac{1}{2}}\Afrak^t=\Afrak^{\circ t} H^{\frac{1}{2}},\quad H^{\frac{1}{2}}\Bfrak^t=\Bfrak^{\circ t},\quad
\Cfrak^t=\Cfrak^{\circ t} H^{\frac{1}{2}},\quad \Dfrak^t=\Dfrak^{\circ t}.
\end{equation}
Furthermore,  \eqref{eq:KYPbdd} implies that $\sbm{\Afrak^{\circ t}&\Bfrak^{\circ t} \\ \Cfrak^{\circ t}& \Dfrak^{\circ t}}$ is contractive for each $t\geq 0$.
Clearly, the relation between $\Afrak^t$ and $\Afrak^{\circ t}$ in \eqref{eq:introinter3} with $H^{\frac{1}{2}}$ bounded and boundedly invertible implies that
$\Afrak^{\circ t}$ inherits the properties of a $C_0$-semigroup from $\Afrak^t$. Next, define $\Bfrak^\circ$, $\Cfrak^\circ$ and $\Dfrak^\circ$ via the limits
in \eqref{t-back}, adding $\circ$ where appropriate. It is then easy to check that \eqref{eq:introinter3} extends to
$$
H^{\frac{1}{2}}\Afrak^t=\Afrak^{\circ t} H^{\frac{1}{2}},\quad H^{\frac{1}{2}}\Bfrak=\Bfrak^{\circ},\quad
\Cfrak=\Cfrak^{\circ} H^{\frac{1}{2}},\quad \Dfrak=\Dfrak^{\circ},
$$
and via these relations it follows that the requirements on the $C_0$-semigroup $\Afrak^\circ$ and the operators $\Bfrak^\circ$, $\Cfrak^\circ$ and
$\Dfrak^\circ$ to form a well-posed system (Definition \ref{def:WPsys}) carry over from $\Afrak$, $\Bfrak$, $\Cfrak$ and $\Dfrak$. We have proved that
$\sbm{\Afrak^{\circ}&\Bfrak^{\circ} \\ \Cfrak^{\circ}& \Dfrak^{\circ}}$ is a passive system that is similar to $\sbm{\Afrak &\Bfrak  \\ \Cfrak & \Dfrak }$ via the
similarity $\Gamma=H^{\frac{1}{2}}$; hence item (3) holds.

To establish the mutual equivalence of all  three items, it remains to prove that (1) $\Rightarrow$ (2).  Hence assume that $\widehat\Dfrak\in\cS_{U,Y}$.
Since $\Sigma$ is minimal, Theorem \ref{T:HaHr-KYPsols} gives that $H_a$ and $H_r$ are spatial solutions to the KYP-inequality \eqref{eq:KYP}. However,
the $L^2$-minimality of $\Sigma$ implies that $H_a$ and $H_r$ are bounded and boundedly invertible, by Corollary \ref{C:L2minImplics}.
Thus $H_a$ and $H_r$ are bounded, positive definite operators on $X$ with bounded inverses, and hence both are bounded and strictly positive definite.
Since $H_a$ and $H_r$ are bounded
solutions to the spatial KYP inequality \eqref{eq:KYP}, it is immediate that $H_a$ and $H_r$ also satisfy the standard KYP inequality \eqref{eq:KYPbdd}.
Hence statement (2) holds.

Next we prove that $\cplus\subset\dom{\widehat\Dfrak}$ if there is some bounded and boundedly invertible $\Gamma$ that implements the similarity from $\Sigma$ to a passive system $\Sigma^\circ$. Assume this and recall that by Proposition \ref{prop:Transfer}, $	\dom{\widehat\Dfrak}=\C_{\omega_{\Afrak}}$. Since $\Afrak^\circ$ is a contraction semigroup, as implied by passivity, we get from \eqref{eq:omegaAdef} that
$$
	\omega_{\Afrak}=\lim_{t\to\infty}\frac{\ln\|\Afrak^{t}\|}{t} \leq
	\lim_{t\to\infty}\frac{\ln\|\Gamma^{-1}\|+\ln\|\Afrak^{\circ t}\|+\ln\|\Gamma\|}{t}=\omega_{\Afrak^\circ}\leq 0.
$$

We established above that every bounded, strictly positive definite solution $H$ to the KYP inequality provides a similarity via $H^{\frac{1}{2}}$. The converse implication follows from the final statement in Theorem \ref{thm:stdlemma}.

We already noted that $H_a$ and $H_r$ are both bounded and strictly positive definite, 
and that $\Sigma$ is approximately controllable, so that $\range\Bfrak$ is dense in $X$. 
By Theorem \ref{P:StorageKYP}, every solution $H$ to the spatial KYP inequality \eqref{eq:KYP} defines a storage function $S_H$, which by Theorem \ref{thm:schurimpliesstorage} is wedged between $S_a$ and $S_r$: $S_a(x)\leq S_H(x)\leq S_r(x)$ for all $x\in X$. Moreover, combining item (3) in Proposition \ref{P:Wc} with \eqref{Sr} and \eqref{SrMod}, we get that $S_r(x)=\underline S_r(x)$ for all $x\in\range\Bfrak\subset\range{\bW_c}$. Then \eqref{SaUnSrQuad} gives
$$
	\|H_a^{\frac{1}{2}} x\|\leq \|H^{\frac{1}{2}} x\|\leq \|H_r^{\frac{1}{2}} x\|,\quad x\in \range{\Bfrak}.
$$ 
Since $\range\Bfrak$ is dense in $X$, these inequalities in fact hold on all of $X$, and we get that $H$ inherits boundedness from $H_r$, while strict positive definiteness carries over to $H$ from $H_a$. Hence every generalized solution $H$ to the spatial KYP is also a bounded, strictly positive definite solution to the standard KYP inequality \eqref{eq:KYPbdd}, and $H_a \preceq H \preceq H_r$ holds.
\end{proof}

In case the transfer function is a strict Schur class function and $\Afrak$ is exponentially stable, to obtain a bounded, strictly positive definite solution $H$ to the standard KYP inequality \eqref{eq:KYPbdd}, it suffices to have only $L^2$-controllability or $L^2$-observability:

\begin{proposition}\label{P:BRLstrict+L2}
Let $\Sigma=\sbm{\Afrak&\Bfrak\\\Cfrak&\Dfrak}$  be a minimal, exponentially stable well-posed system with transfer function $\widehat\Dfrak$ in the strict Schur class $\cS_{U,Y}^0$. Then $H_a$ and $H_r^{-1}$ are bounded and are given by
\begin{equation}\label{eq:HaHrEasy}
H_a=\dW_o^* D_{\Tfrak_\Sigma^*}^{-2}  \dW_o\quad\mbox{and}\quad
H_r^{-1}=\dW_c D_{\widetilde\Tfrak_\Sigma}^{-2}  \dW_c^*.
\end{equation}
Furthermore, $H_a^{-1}$ is bounded if and only if $\Sigma$ is $L^2$-observable and $H_r$ is bounded if and only if $\Sigma$ is $L^2$-controllable.
\end{proposition}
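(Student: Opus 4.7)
The plan is to first exploit the hypotheses to show the two auxiliary operators $\bX_a$ and $\bX_r$ from Lemma \ref{L:XaXb} actually extend to bounded operators on $X$, and then to read off the formulas \eqref{eq:HaHrEasy} from the definitions $H_a = \overline\bX_a^* \overline\bX_a$ and $H_r^{-1} = \overline\bX_r^* \overline\bX_r$. Exponential stability of $\Afrak$ gives by Lemma \ref{lem:PasvContr}(1) that $\dW_o \in \cB(X, L^{2+}_Y)$ and $\dW_c \in \cB(L^{2-}_U, X)$ are everywhere-defined and bounded. On the other hand $\widehat\Dfrak \in \cS^0_{U,Y}$ gives $\|L_\Sigma\| = \|\widehat\Dfrak\|_\infty < 1$, so in the block decomposition \eqref{LfrakDec} the contractions $\Tfrak_\Sigma$ and $\widetilde\Tfrak_\Sigma$ are strict contractions as well; hence the bounded selfadjoint operators $D_{\Tfrak_\Sigma^*}^2 = 1 - \Tfrak_\Sigma \Tfrak_\Sigma^*$ and $D_{\widetilde\Tfrak_\Sigma}^2 = 1 - \widetilde\Tfrak_\Sigma^* \widetilde\Tfrak_\Sigma$ are strictly positive definite, and their inverse square roots $D_{\Tfrak_\Sigma^*}^{-1}$ and $D_{\widetilde\Tfrak_\Sigma}^{-1}$ are bounded on all of $L^{2+}_Y$ and $L^{2-}_U$ respectively.

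I would next take the factorization $\dW_o|_{\range\dW_c} = D_{\Tfrak_\Sigma^*} \bX_a$ from Lemma \ref{L:XaXb}(1) and rewrite it as $\bX_a = D_{\Tfrak_\Sigma^*}^{-1}\, \dW_o|_{\range\dW_c}$, which is legitimate because $\Ker{D_{\Tfrak_\Sigma^*}} = \{0\}$. The right-hand side is the restriction to the dense subspace $\range\dW_c \subseteq X$ of the bounded operator $D_{\Tfrak_\Sigma^*}^{-1} \dW_o \in \cB(X)$, so this bounded operator is a closed extension of $\bX_a$, and therefore equals its minimal closed extension: $\overline\bX_a = D_{\Tfrak_\Sigma^*}^{-1}\dW_o$ on $X$. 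Taking adjoints gives $H_a = \overline\bX_a^* \overline\bX_a = \dW_o^* D_{\Tfrak_\Sigma^*}^{-2} \dW_o$, which is bounded. The same argument, applied to $\dW_c^*|_{\range{\dW_o^*}} = D_{\widetilde\Tfrak_\Sigma} \bX_r$ from Lemma \ref{L:XaXb}(2), gives $\overline\bX_r = D_{\widetilde\Tfrak_\Sigma}^{-1} \dW_c^*$ on $X$ and hence $H_r^{-1} = \overline\bX_r^* \overline\bX_r = \dW_c\, D_{\widetilde\Tfrak_\Sigma}^{-2}\, \dW_c^*$ is bounded. This proves \eqref{eq:HaHrEasy} and the boundedness of $H_a$ and $H_r^{-1}$.

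For the equivalences in the last sentence, the observation is that boundedness of $H_a^{-1}$ is equivalent to $H_a$ being bounded below, which from $H_a = \dW_o^* D_{\Tfrak_\Sigma^*}^{-2} \dW_o$ with $D_{\Tfrak_\Sigma^*}^{-1}$ boundedly invertible is equivalent to $\dW_o$ being bounded below. Since $\dW_o$ is bounded, $\dW_o$ bounded below is the same as $\dW_o$ being injective with closed range; by the closed range theorem this is equivalent to $\range{\dW_o^*} = X$, which (because $\dW_o$ is bounded and thus densely defined) is precisely the $L^2$-observability of $\Sigma$ as in Definition \ref{def:L2min}. The dual statement for $H_r$ follows by the same argument applied to $H_r^{-1} = \dW_c D_{\widetilde\Tfrak_\Sigma}^{-2} \dW_c^*$, using that $\dW_c$ is bounded and that $L^2$-controllability is by definition the condition $\range{\dW_c} = X$.

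There is essentially no hard step here once the strict Schur hypothesis is translated into bounded invertibility of the defect operators; the only conceptual point is the density of $\range\dW_c$ (respectively $\range{\dW_o^*}$) in $X$, which follows from minimality via Corollary \ref{C:MinExp} together with items (3) of Propositions \ref{P:Wc} and \ref{P:Wo}, and is what makes the passage from $\bX_a, \bX_r$ to their bounded extensions unambiguous.
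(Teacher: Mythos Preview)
Your proof is correct and follows essentially the same approach as the paper's: use exponential stability to get $\dW_o,\dW_c$ bounded (Lemma \ref{lem:PasvContr}), use the strict Schur-class hypothesis to get $D_{\Tfrak_\Sigma^*}$ and $D_{\widetilde\Tfrak_\Sigma}$ boundedly invertible, invert the factorizations in Lemma \ref{L:XaXb} to identify $\overline\bX_a$ and $\overline\bX_r$ as bounded operators, and then read off $H_a$ and $H_r^{-1}$. Your treatment of the final equivalences via bounded-below/closed-range is slightly more explicit than the paper's, but the argument is the same; one small slip is that $D_{\Tfrak_\Sigma^*}^{-1}\dW_o$ lies in $\cB(X,L^{2+}_Y)$ rather than $\cB(X)$.
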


\begin{proof}
By Lemma \ref{lem:PasvContr}, the exponential stability guarantees that the operators $\dW_o$ and $\dW_c$ are bounded. Moreover, because
$\widehat\Dfrak\in\cS_{U,Y}^0$, $D_{\Tfrak_\Sigma^*}$ and $D_{\widetilde\Tfrak_\Sigma}$ are boundedly invertible. It follows that the operators $\bX_a$ and $\bX_r$ in Lemma \ref{L:XaXb} are given by
\[
\bX_a= D_{\Tfrak_\Sigma^*}^{-1}  \dW_o|_{\range{\dW_c}}\quad\mbox{and}\quad
\bX_r= D_{\widetilde\Tfrak_\Sigma}^{-1}  \dW_c^*|_{\range{\dW_o^*}},
\]
and hence they extend uniquely to bounded operators $\overline{\bX}_a=D_{\Tfrak_\Sigma^*}^{-1}  \dW_o$ and $\overline{\bX}_r=D_{\widetilde\Tfrak_\Sigma}^{-1}  \dW_c^*$ from $X$ into $L^{2+}_Y$ and $L^{-2}_U$, respectively. The boundedness of and formulas for $H_a=\overline{\bX}_a^*\overline{\bX}_a$ and $H_r^{-1}=\overline{\bX}_r^*\overline{\bX}_r$ now follow directly. Moreover, given the boundedness of $\dW_o$ and $\dW_c$ we have that $L^2$-observability and $L^2$-controllability are equivalent to $\dW_o$ and $\dW_c^*$ being bounded below, respectively, from which the last claim follows.
\end{proof}

Using Proposition \ref{P:BRLstrict+L2}, we can obtain explicitly the extremal KYP solutions $H_a$ and $H_r$ arising from the minimal realization for the strict Schur-class transfer function \eqref{eq:ExDstrict} which was already discussed in Example \ref{ex:counter}, thereby illustrating Proposition \ref{P:BRLstrict+L2} and item (5) of Theorem \ref{thm:stdlemmastrict}.

\begin{example}\label{ex:HaHr}
In Example \ref{ex:counter}, we considered the diagonal system $\Sigma$ with operators
$$
	\Afrak^t\phi_n=e^{-(n+1)t}\phi_n,\quad
	B\phi_n=\frac{\sqrt{n+1}}2\phi_n,\quad
	C\phi_n=2\sqrt{n+1}\phi_n,\qquad n=0,1,\ldots,
$$
leading to $\bW_o$ determined by
\begin{equation}\label{eq:WoDiagEx}
	(\bW_o\phi_n)(t)=2\sqrt{n+1}e^{-(n+1)t}\phi_n,
	\quad t\geq0,
\end{equation}
being bounded from both below and above.

In order to apply the formula for $H_a$ in \eqref{eq:HaHrEasy}, we additionally need some information on the action of the adjoint of $\Tfrak_\Sigma$ in \eqref{LfrakDec}. Combining the latter with item (1) of Theorem \ref{thm:hankel}, we get $\Tfrak_\Sigma=\Dfrak\big|_{L^{2+}_U}$, and we next compute this operator using \eqref{Dfrak0}. Because of \eqref{BfrakCfrak}, and item (3) of Proposition \ref{P:Wc},
\begin{equation}\label{eq:WcDiagEx}
	\bW_c \big(f(\cdot)\phi_n\big)= \frac12\sqrt{n+1}\int_{-\infty}^0e^{(n+1)s}f(s)\ud s\,\phi_n,
	\quad f\in L^{2-},
\end{equation}
and $\Bfrak=\bW_c\big|_{L^{2-}_{\ell,U}}$. Combining the above with \eqref{eq:CDexPhi} and $\widehat\Dfrak(0)=\frac121_U$ gives for all $f\in L^{2}_{\ell,loc,\C}$ and $n=1,2,\ldots$ that
\begin{equation}\label{eq:DfrakEx}
	\Dfrak \big(f(\cdot)\phi_n\big) = t\mapsto (n+1)\,e^{-(n+1)t}\int_{-\infty}^te^{(n+1)s}f(s)\ud s\,\phi_n - \frac12f(t)\phi_n,
	\quad t\in\R.
\end{equation}
For all $n=0,1,\ldots$ and $u=\sum_{m=1}^\infty f_m\phi_m$, $f_m\in L^{2+}_\C$, it then holds that
$$
\begin{aligned}
	&\Ipdp{\Tfrak_\Sigma^*\pi_+e_{-(n+1)}\phi_n}{u}_{L^{2+}_U} =
	\Ipdp{\pi_+e_{-(n+1)}\phi_n}{\Tfrak_\Sigma \!\sum_{m=1}^\infty f_m(\cdot)  \phi_m}_{L^{2+}_U}\\
	&\quad=\int_0^\infty e^{-(n+1)t}\,\overline{\phi_n^*\big(\Dfrak f_n(\cdot)\phi_n\big)(t)}\ud t \\
	 &\quad=\int_0^\infty e^{-(n+1)t}\left(\overline{(n+1)e^{-(n+1)t}\int_0^te^{(n+1)s}f_n(s)\ud s -\frac12 f_n(t)}\right)\ud t \\
	&\quad=\int_0^\infty (n+1)e^{(n+1)s}\overline{f_n(s)}\int_s^\infty e^{-2(n+1)t}\ud t \ud s - \frac12\int_0^\infty e^{-(n+1)t} \overline{f_n(t)}\ud t=0.
\end{aligned}
$$
Hence, $\Tfrak_\Sigma^*\pi_+e_{-(n+1)}\phi_n=0$ for $n=0,1,\ldots$, which implies that 
$$
	D_{\Tfrak_\Sigma^*}^{2}(\pi_+e_{-(n+1)}\phi_n)=\pi_+e_{-(n+1)}\phi_n=
	D_{\Tfrak_\Sigma^*}^{-2}(\pi_+e_{-(n+1)}\phi_n).
$$
Using \eqref{eq:HaHrEasy} and \eqref{eq:WoDiagEx}, we then easily calculate
$$
	H_a\phi_n=\dW_o^*\dW_o\phi_n=4(n+1)\int_0^\infty e^{-2(n+1)t}\ud t\,\phi_n=2\,\phi_n,
$$
i.e., that $H_a=2\cdot 1_U$.

Now proceeding to $H_r$, we get from \eqref{eq:WcDiagEx} that
$$
	\bW_c^*\phi_n=\frac{\sqrt{n+1}}{2}\pi_-e_{n+1}\phi_n,
$$
and we need to evaluate $D_{\widetilde\Tfrak_\Sigma}^{-2}$ on this. By item (1) of Theorem \ref{thm:hankel} and \eqref{eq:DfrakEx},
$$
	(L_\Sigma\pi_-e_{n+1}\phi_n)(t)=
	(n+1)e^{-(n+1)t}\int_{-\infty}^te^{2(n+1)s}\ud s\,\phi_n 
	 -\frac{e^{(n+1)t}}2\phi_n=0,\quad t\leq0,
$$
so that $D^{-2}_{\widetilde\Tfrak_\Sigma}\pi_-e_{n+1}\phi_n=\pi_-e_{n+1}\phi_n$. Then \eqref{eq:HaHrEasy} and \eqref{eq:WcDiagEx} give
$$
	H_r\phi_n = (\bW_c\bW_c^*)^{-1}\phi_n =8\,\phi_n.
$$
Finally, by Theorem \ref{thm:stdlemmaL2reg}, all solutions $H$ to the spatial KYP inequality for $\Sigma$ are bounded and strictly positive definite; in fact they satisfy $2\cdot 1_X\preceq H\preceq 8\cdot 1_X$.

\end{example}

We now turn to the proof of the strict bounded real lemma, stated as Theorem \ref{thm:stdlemmastrict}.

\begin{proof}[Proof of   \rm{(2a)} $\Rightarrow$ \rm{(2b)}, \rm{(3a)} $\Rightarrow$ \rm{(3b)}, \rm{(4a)} $\Rightarrow$ \rm{(4b)}, \rm{(5a)} $\Rightarrow$ \rm{(5b)}]
Note that these are tautologies following from the definitions.
\end{proof}

\begin{proof}[Proof of \rm{(2a)} $\Leftrightarrow$ \rm{(3a)} $\Leftrightarrow$ \rm{(4a)} $\Rightarrow$ \rm{(5a)} ]
Let us assume (2a).  Thus there is a bounded, strictly positive definite $H$ on $X$ satisfying
\eqref{eq:strictKYP} for some $\delta > 0$.
As we saw in the proof of (2) $\Rightarrow$ (3) in Theorem \ref{thm:stdlemmaL2reg}, $\Gamma := H^{\frac{1}{2}}$ is an invertible change of state-space
coordinates $x^\circ := \Gamma x$ transforming the well-posed linear system $\Sigma = \sbm{ \Afrak & \Bfrak \\ \Cfrak & \Dfrak }$ to the system
$$
  \Sigma^\circ = \begin{bmatrix} \Afrak^\circ & \Bfrak^\circ \\ \Cfrak^\circ & \Dfrak^\circ \end{bmatrix} :=
  \begin{bmatrix} \Gamma \Afrak \Gamma^{-1}  & \Gamma \Bfrak \\ \Cfrak \Gamma^{-1}  & \Dfrak \end{bmatrix},
$$
and moreover, for each $t > 0$, the map
$$
 \Sigma^{\circ t} = \begin{bmatrix}  \Afrak^{\circ t} & \Bfrak^{\circ t} \\ \Cfrak ^{ \circ t}  & \Dfrak^{\circ t}   \end{bmatrix} \colon
 \begin{bmatrix} x^\circ(0) \\ u^\circ|_{[0,t]} \end{bmatrix} \mapsto \begin{bmatrix} x^\circ(t) \\ y^\circ|_{[0,t]} \end{bmatrix}
$$
has the same form when considered as a transformation of $\Sigma^t = \sbm{ \Afrak^t & \Bfrak^t \\ \Cfrak^t & \Dfrak^t }$:
$$
   \Sigma^{\circ t} = \begin{bmatrix} \Gamma \Afrak^t \Gamma^{-1}  & \Gamma \Bfrak^t \\ \Cfrak^t \Gamma^{-1} & \Dfrak^t \end{bmatrix}.
$$
Note that the inequality \eqref{eq:strictKYP} can be interpreted as the statement that the system trajectories $(\bu,\bx,\by)$ of $\Sigma$ satisfy
\begin{equation}\label{eq:strictpower}
\begin{aligned}
\| \Gamma \bx(t) \|^2 + \| \by|_{[0,t]} \|^2_{L^2([0,t];Y)} +\delta \| \bx|_{[0,t]} \|^2_{L^2([0,t];X)}
&   \\ \le \| \Gamma \bx(0) \|^2 + (1 - \delta) \| \bu|_{[0,t]} \|^2_{L^2([0,t]; U)},&\qquad t>0.
\end{aligned}
\end{equation}
Using that $(\bu,\bx,\by)$ is a system trajectory for $\Sigma$ if and only if $(\bu^\circ, \bx^\circ, \by^\circ) = (\bu,\Gamma \bx, \by)$ is a system trajectory for $\Sigma^\circ$ and the simple estimate $\|\Gamma x\|\leq\|\Gamma\|\cdot\|x\|$, we get from \eqref{eq:strictpower} that
\begin{equation}\label{eq:simbalance}
\begin{aligned}
\| \bx^\circ(t) \|^2 + \| \by|_{[0,t]} \|^2_{L^2([0,t];Y)} +\delta' \| \bx^\circ|_{[0,t]} \|^2_{L^2([0,t];X)}
&   \\ \le \| \bx^\circ(0) \|^2 + \left(1 - \delta\right) \| \bu|_{[0,t]} \|^2_{L^2([0,t]; U)}&\,,\qquad t>0,
\end{aligned}
\end{equation}
where $\delta':=\min(\delta,\delta/\|\Gamma\|^2)>0$. In \eqref{eq:simbalance}, we can still replace $\delta$ by $\delta'\leq \delta$, and the result then translates back to \eqref{eq:strictKYP} holding for the system $\Sigma^\circ$ with $H = 1_X$ and $\delta$ replaced by $\delta'>0$, and (3a) is established.

Conversely, assume (3a), so that $\Sigma$ is similar to a strictly passive system $\Sigma^\circ$ via an invertible
$\Gamma \colon X \mapsto X^\circ$, and let $(\bu,\bx,\by)$ be a system trajectory of $\Sigma$. Then $(\bu^\circ, \bx^\circ, \by^\circ) = (\bu, \Gamma \bx, \by)$
is a system trajectory of $\Sigma^\circ$ such that \eqref{eq:simbalance} holds for some $\delta=\delta'>0$. Setting $H = \Gamma^* \Gamma \succ 0$ and observing that $\|x\|/\|\Gamma^{-1}\|\leq \|\Gamma x\|$, we obtain from \eqref{eq:simbalance}, with $\delta'':=\min(\delta,\delta/\|\Gamma^{-1}\|^2)>0$, that
\begin{align*}
& \| H^{\frac{1}{2}}  \bx(t) \|^2    + \| \by|_{[0,t]} \|^2_{L^2([0,t]),Y)} +\delta'' \| \bx|_{[0,t]} \|^2_{L^2([0,t];X)}  \notag  \\
& \quad  \le \| H^{\frac{1}{2}} \bx(t) \|^2 + \left(1 - \delta''\right)   \| \bu|_{[0,t]} \|^2_{L^2([0,t],U)}.
\end{align*}
This in turn is equivalent to $H$ being a bounded, strictly positive-definite solution to \eqref{eq:strictKYP} with $\delta$ replaced by $\delta''>0$. Hence  (2a) $\Leftrightarrow$ (3a).

Next note that (2a) $\Leftrightarrow$ (4a) follows from the discussion in Remark \ref{R:strict-storages}. Finally (4a) $\Rightarrow$ (5a) is a tautology.
\end{proof}

\begin{proof}[Proof of \rm{(2b)} $\Leftrightarrow$ \rm{(3b)} $\Leftrightarrow$ \rm{(4b)} $\Rightarrow$ \rm{(5b)}.] (2b) $\Leftrightarrow$ (3b) is a simpler version of the above proof of
(2a) $\Leftrightarrow$ (3a), where one works with \eqref{eq:semi-strictKYP} in place of \eqref{eq:strictKYP} and the manipulations of $\delta$ associated to the now absent term $\| \bx|_{[0,t]} \|^2_{L^2([0,t];X)}$ are not needed. The equivalence of (2b) and (4b) is again a consequence
of the observations in Remark \ref{R:strict-storages}.  Finally, (4b) $\Rightarrow$ (5b) is a tautology.
\end{proof}

\begin{proof}[Proof of \rm{(5b)} $\Rightarrow$ \rm{(1)}]  Assume that $\Sigma$ satisfies condition (5b), so that $\Sigma$ has a semi-strict storage function
$S$ satisfying \eqref{eq:stordef-semi-strict}, repeated here (in the case $t_1 = 0$, $t_2 = t$) for the reader's convenience:  {\em There is a $\delta > 0$ such that
$$
S(\bx(t)) + \int_0^t \| \by(s) \|^2 \ud s \le S(\bx(0)) + (1 - \delta) \int_0^t \| \bu(s) \|^2 \ud s,\quad t\geq0,
$$
for all trajectories $(\bu, \bx, \by)$ of $\Sigma$ on $\rplus$.} As $S(x)$ (and hence $S(\bx(t))$) has values in $[0, \infty]$, we certainly then also have
$$
\int_0^t \| \by(s) \|^2 \ud s \le S(\bx(t)) + \int_0^t \| \by(s) \|^2 \ud s \le S(\bx(0)) + (1 - \delta) \int_0^t \| \bu(s) \|^2 \ud s
$$
for all such system trajectories $(\bu, \bx, \by)$ and $t\geq0$.  In particular, let us consider only those system trajectories initialized to satisfy $\bx(0) = 0$.
Then using that storage functions by definition satisfy $S(0) = 0$ and ignoring the middle in the preceding chain of inequalities, we see that
$$
\int_0^t \| \by(s) \|^2\ud s  \le (1 - \delta) \int_0^t \| \bu(s) \|^2 \ud s,\quad t>0.
$$
Letting $t$ tend to $+\infty$ then gives us
 $$
\| \by \|^2_{L^2({\mathbb R}^+, Y)}  \le (1 - \delta)  \| \bu \|^2_{L^2({\mathbb R}^+,U)}.
$$
Applying the Plancherel Theorem and taking Laplace transforms then gives us
$$
 \| \widehat \by \|^2_{H^2({\mathbb R}^+, Y)} \le (1 - \delta) \| \widehat \bu \|^2_{L^2({\mathbb R}^+, U)},
$$
where, as noted in \eqref{CTtransfunc}, $\widehat \by = M_{\widehat \Dfrak} \widehat \bu$; see also \eqref{MultOp}.   Hence $\| M_{\widehat \Dfrak} \| \le  \sqrt{1 - \delta}$
and therefore
$$
   \| \widehat \Dfrak\|_{H^\infty({\mathbb C}^+, \cB(U,Y))} = \| M_{\widehat \Dfrak} \|  \le \sqrt{1 - \delta} < 1,
$$
i.e., $\widehat \Dfrak$ is in the strict Schur class with $\cplus\subset\dom {\widehat\Dfrak}$, and we have arrived at statement (1) as wanted.
\end{proof}

Next we work towards a proof of the remainder of Theorem \ref{thm:stdlemmastrict}, namely that the implication (1) $\Rightarrow$ (2a) holds  under the additional hypothesis  that $\Afrak$
is exponentially stable and that at least one of the hypotheses (H1), (H2), (H3) holds.
The tool for this analysis is to dilate 
$\Sigma$ into a well-posed system $\Sigma_\varepsilon$ for which there exists a bounded
and boundedly invertible solution $H$ to the KYP-inequality for $\Sigma_\varepsilon$; 
this $H$ then turns out to be a bounded and boundedly invertible solution of the
strict KYP-inequality for the original well-posed system $\Sigma$. The details are as follows.

The first step is to embed the system node $\bS$ of $\Sigma$ into a larger system node $\bS_\varepsilon$  via a procedure which we call \emph{$\varepsilon$-regularization.} We extend the operators  $B \in \cB(U, X_{-1})$ and $C \in \cB(X_1, Y)$ to operators
$ B_\varepsilon  = \begin{bmatrix} B & \varepsilon 1_X \end{bmatrix} \in \cB(\sbm{ U \\ X}, X_{-1})$ and
$C_\varepsilon  = \sbm{ C \\ \varepsilon  1_X \\ 0} \in \cB(X_1, \sbm{Y \\ X \\ U})$. 
Using the operators $B_\varepsilon$ and $A$ we define $\bbm{\AB}_\varepsilon$ with domain
$$
\dom{\bbm{\AB}_\varepsilon}
\!:=\!\! \set{\bbm{x\\u\\u_1}\in\bbm{X\\U\\X}
\biggmid A_{-1}x+B_\varepsilon\bbm{u\\u_1}\in X}
\!=\!\emph{} \bbm{\dom{\AB}\\X},
$$
and action given by
$$
\bbm{\AB}_\varepsilon:=\bbm{A_{-1}&B_\varepsilon}\Big|_{\dom{\bbm{\AB}_\varepsilon}}=\bbm{\AB & \varepsilon 1_X}.
$$
Next we define $\bbm{\CD}_\varepsilon$ on $\dom{\bbm{\CD}_\varepsilon}=\dom{\bbm{\AB}_\varepsilon}$ by
\begin{equation}\label{CDeps}
\begin{aligned}
	\bbm{\CD}_\varepsilon\bbm{x\\u\\u_1}:=&\,
	C_\varepsilon\left(x-(\alpha-A_{-1})^{-1}B_\varepsilon\bbm{u\\u_1}\right)  \\
	&\quad+\bbm{\widehat\Dfrak(\alpha)&\varepsilon\,C(\alpha-A)^{-1} \\ \varepsilon\,
(\alpha-A_{-1})^{-1}B&\varepsilon^2(\alpha-A)^{-1}\\\varepsilon 1_U&0}\bbm{u\\u_1},   
\end{aligned}
\end{equation}
where $\alpha\in\rho(A)$ is the same number $\alpha$ as used in the definition  of $C \& D$ via formula \eqref{C&D} as part of the
definition of $\SmallSysNode$, and where $\widehat \Dfrak(\alpha)$ is the value at $\alpha$ of the transfer function $\widehat \Dfrak$ for the original well-posed
system $\Sigma$.  It is now an easy exercise to verify that $\bS_\varepsilon : = \sbm{ (A \& B)_\varepsilon  \\ (C \& D)_\varepsilon}$
is a system node in the sense of Definition \ref{def:sysnode}. 

Our next goal is to apply Theorem \ref{T:absnode} to show that $\bS_\varepsilon$ is the system node arising from a well-posed linear system $\Sigma_\varepsilon$.
Note that Theorem \ref{T:absnode} calls for a choice of $\omega \in {\mathbb R}$ with $\omega_\Afrak < \omega$.  Here we shall be assuming that $\Afrak$ is exponentially
stable, i.e., that $\omega_\Afrak < 0$.  Hence we have the option (which we shall use) of taking $\omega = 0$ in the application of Theorem \ref{T:absnode}.
For this case it is customary to simplify the terminology {\em $0$-bounded} (i.e., {\em $\rho$-bounded} for the case $\rho = 0$) to simply {\em bounded}.
Thus $\Bfrak$, $\Cfrak$, $\Dfrak$ being {\em bounded} means that the operators $\widetilde \Bfrak$, $\widetilde \Cfrak$, $\widetilde \Dfrak$ appearing in \eqref{eq:FrakTildeDef}
satisfy
$$
\widetilde \Bfrak \in \cB(L^{2-}_U, X), \quad \widetilde \Cfrak \in \cB(X, L^{2+}_Y), \quad \widetilde \Dfrak \in \cB(L^2_U, L^2_Y).
$$
The following lemma encodes the main properties of the $\varepsilon$-regularized system node $\bS_\varepsilon$.
In particular we see that we view the $\varepsilon$-regularization process as producing a dilation at three levels:
\begin{itemize}
\item at the system node level:  $\bS_\varepsilon$ can be seen as a dilation of $\bS$;

\item at the transfer-function level:  $\widehat \Dfrak_\varepsilon$ can be seen as a dilation of $\widehat \Dfrak$;

\item at the well-posed level:  $\sbm{ \Afrak_\epsilon^t & \Bfrak_\varepsilon^t \\ \Cfrak_\varepsilon^t & \Dfrak_\varepsilon^t }$ can be  seen
as a dilation of $\sbm{ \Afrak^t & \Bfrak^t \\ \Cfrak^t & \Dfrak^t }$.
\end{itemize}

\begin{lemma}\label{L:epsregsys}
Assume that $\Sigma=\sbm{\Afrak&\Bfrak\\ \Cfrak & \Dfrak}$ is an exponentially stable well-posed system with associated system node $\bS=\sbm{\AB\\ \CD}$ with a strict Schur class transfer function $\widehat\Dfrak\in\cS^0_{U,Y}$. Then, for all $\varepsilon>0$, the operator
\[
\bS_\varepsilon=\SmallSysNode_\varepsilon:=\sbm{\sbm{\AB}_\varepsilon\\\sbm{\CD}_\varepsilon}
\]
constructed above is the system node of a minimal,
exponentially stable, bounded, well-posed system $\Sigma_\varepsilon$ with transfer function $\widehat\Dfrak_\varepsilon$ given by
\begin{equation}\label{eq:transfepsilon}
\widehat\Dfrak_\varepsilon(\lambda)=\bbm{\widehat\Dfrak(\lambda)&
	\varepsilon\,C(\lambda-A)^{-1}\\
	\varepsilon\,(\lambda-A_{-1})^{-1}B&\varepsilon^2(\lambda-A)^{-1}\\\varepsilon 1_U&0},
	\qquad \lambda\in \rho(A).
\end{equation}
For $\varepsilon>0$ sufficiently small,
$\widehat\Dfrak_\varepsilon$ is also in the strict Schur class over $\cplus$.

For each $t\geq 0$ the $t$-dependent operators $\sbm{\Afrak_\varepsilon^t & \Bfrak_\varepsilon^t\\ \Cfrak_\varepsilon^t & \Dfrak_\varepsilon^t}$ for the
well-posed system $\Sigma_\varepsilon$ have the form
\begin{equation}   \label{Sigma-varepsilon-t}
\bbm{\Afrak_\varepsilon^t & \Bfrak_\varepsilon^t\\ \Cfrak_\varepsilon^t & \Dfrak_\varepsilon^t} =
\bbm{\Afrak^t & \Bfrak^t & \Bfrak_1^t\\ \Cfrak^t &\Dfrak^t& \Dfrak_1^t\\ \Cfrak_1^t &\Dfrak_2^t& \Dfrak_3^t\\ 0 &\varepsilon 1_{L^2([0,t],U)} & 0}:
\bbm{X\\ L^2([0,t],U) \\ L^2(([0,t],X)} \to \bbm{X\\ L^2([0,t],Y) \\ L^2([0,t],X) \\ L^2([0,t],U)},
\end{equation}
with $\Afrak^t$, $\Bfrak^t$, $\Cfrak^t$ and $\Dfrak^t$ equal to the $t$-dependent operators determined by the original system $\Sigma$ and $\Bfrak_1^t$, $\Cfrak_1^t$,
$\Dfrak_1^t$, $\Dfrak_2^t$ and $\Dfrak_3^t$ some operators acting between appropriate spaces.

If  $\Sigma$ is $L^2$-controllable ($L^2$-observable), then also $\Sigma_\varepsilon$ is $L^2$-controllable ($L^2$-observable).
\end{lemma}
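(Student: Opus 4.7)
My plan is to apply Theorem~\ref{T:absnode} with $\omega=0$ (admissible since $\omega_\Afrak<0$) to establish that $\bS_\varepsilon$ generates a bounded, well-posed linear system $\Sigma_\varepsilon$; to read off the transfer function formula \eqref{eq:transfepsilon} directly from \eqref{node-transfunc}; to obtain the strict Schur property via a perturbation bound; and finally to verify the structural claims \eqref{Sigma-varepsilon-t}, minimality, and preservation of $L^2$-controllability/observability. Exponential stability of $\Sigma_\varepsilon$ is immediate since the semigroup is unchanged: $\Afrak_\varepsilon^t=\Afrak^t$.

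For the three hypotheses of Theorem~\ref{T:absnode}, the admissibility of $B_\varepsilon = [B\ \varepsilon 1_X]$ for $A$ follows by combining the assumed admissibility of $B$ with that of $\varepsilon 1_X\colon X\to X\subset X_{-1}$, the latter producing via \eqref{BfrakCfrak} the map $\bu_1\mapsto \varepsilon\int_{-\infty}^0 \Afrak^{-s}\bu_1(s)\ud s$ which is bounded from $L^{2-}_X$ into $X$ by exponential stability and Cauchy-Schwarz; admissibility of $C_\varepsilon = \sbm{C \\ \varepsilon 1_X \\ 0}$ for $A$ is verified analogously. Plugging $B_\varepsilon$, $C_\varepsilon$, and the feedthrough matrix in \eqref{CDeps} into \eqref{node-transfunc} produces the block matrix \eqref{eq:transfepsilon}. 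I expect the $H^\infty(\cplus;\cdot)$ bound on each entry of this block matrix to be the main technical step: the $(1,1)$ entry is $\widehat\Dfrak\in H^\infty$ by hypothesis, the entries $\varepsilon^2(\lambda-A)^{-1}$ and $\varepsilon 1_U$ are evidently uniformly bounded by exponential stability, and the off-diagonal entries $\varepsilon C(\lambda-A)^{-1}$ and $\varepsilon(\lambda-A_{-1})^{-1}B$ are handled as transfer functions of exponentially stable subsystems combining the admissible pair $(A,B)$ or $(C,A)$ with the trivially admissible bounded operator $1_X$.

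Once the $H^\infty$-bound is established, set $\rho:=\|\widehat\Dfrak\|_\infty<1$ and let $K$ uniformly bound the $H^\infty$-norms of $C(\cdot-A)^{-1}$, $(\cdot-A_{-1})^{-1}B$, and $(\cdot-A)^{-1}$ on $\cplus$. The elementary block bound $\|[M_{ij}]\|^2 \le \sum_{i,j}\|M_{ij}\|^2$ applied to \eqref{eq:transfepsilon} yields
\[
\|\widehat\Dfrak_\varepsilon(\lambda)\|^2 \le \rho^2 + \varepsilon^2(2K^2 + 1) + \varepsilon^4 K^2,\qquad \lambda\in\cplus,
\]
which is strictly less than $1$ for all sufficiently small $\varepsilon>0$, so $\widehat\Dfrak_\varepsilon\in\cS^0_{U\oplus X,Y\oplus X\oplus U}$.

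For the remaining claims, the block decomposition \eqref{Sigma-varepsilon-t} is obtained by substituting the block forms of $B_\varepsilon$, $C_\varepsilon$, and \eqref{CDeps} into the definitions \eqref{BCD^t}; the bottom row $[0\ \varepsilon 1_{L^2([0,t],U)}\ 0]$ comes from the single direct feedthrough $\varepsilon 1_U$ in position $(3,1)$ of \eqref{CDeps}, since the third row of $C_\varepsilon$ is zero. For observability of $\Sigma_\varepsilon$, by Lemma~\ref{lem:ObsControl1-1} it suffices to show injectivity of $\Cfrak_\varepsilon$: the second block of $\Cfrak_\varepsilon x$ equals $\varepsilon\Afrak^\cdot x$, which vanishes only at $x=\lim_{t\to 0^+}\Afrak^t x=0$ by strong continuity. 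For controllability, inputs $\sbm{0 \\ n\chi_{[-1/n,0]}x}\in L^{2-}_{\ell,U\oplus X}$ yield $\Bfrak_\varepsilon\sbm{0 \\ n\chi_{[-1/n,0]}x} = \varepsilon n\int_0^{1/n}\Afrak^t x\ud t\to\varepsilon x$ as $n\to\infty$, so $\range\Bfrak_\varepsilon$ is dense in $X$. Finally, for preservation of $L^2$-controllability, exponential stability of $\Sigma_\varepsilon$ gives $\dW_{c,\varepsilon}\in\cB(L^{2-}_{U\oplus X},X)$ everywhere defined (Lemma~\ref{lem:PasvContr}(1)); inputs of the form $\sbm{\bu\\ 0}$ with $\bu\in L^{2-}_U$ satisfy $\dW_{c,\varepsilon}\sbm{\bu\\ 0}=\dW_c\bu$ by density of $L^{2-}_{\ell,U}$ and continuity, so $\range\dW_{c,\varepsilon}\supset\range\dW_c=X$; preservation of $L^2$-observability is handled dually via outputs of the form $\sbm{\by_1\\ 0\\ 0}$, which produce $\dW_{o,\varepsilon}^*\sbm{\by_1\\ 0\\ 0}=\dW_o^*\by_1$.
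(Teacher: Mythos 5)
Your proposal is correct and follows essentially the same route as the paper: verify the three hypotheses of Theorem \ref{T:absnode} with $\omega=0$, read off \eqref{eq:transfepsilon} from \eqref{node-transfunc}, bound the extra block entries in $H^\infty(\cplus)$ using exponential stability and admissibility of $B$ and $C$ (the paper cites \cite[Lemma 10.3.3]{StafBook} for this), and then deduce minimality and the $L^2$-properties from the $\varepsilon 1_X$ blocks in $B_\varepsilon$ and $C_\varepsilon$. The only differences are cosmetic: you make the strict-Schur estimate explicit via a sum-of-squares block bound where the paper leaves it implicit, and you prove approximate controllability/observability by an approximating-input sequence and injectivity of $\Cfrak_\varepsilon$ rather than the paper's orthogonality argument — these are equivalent.
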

\begin{proof}   We already left as an exercise for the reader to check that $\bS_\varepsilon$ is a system node. In order to prove that $\bS_\varepsilon$ is the system node of a well-posed system $\Sigma_\varepsilon$, we prove that conditions (1)--(3) of Theorem \ref{T:absnode} are satisfied.

First we verify that $B_\varepsilon$ is an admissible control operator for $A$.
For all $\sbm{\bu\\\bu_1}\in L^{2-}_{\ell,U\times X}$, the formula for $B_\varepsilon$ gives
\begin{equation}\label{eq:BfrakEps}
	\Bfrak_\varepsilon\bbm{\bu\\\bu_1}=
	\int_{-\infty}^0 \Afrak_{-1}^{-s}B\bu(s)\ud s+
	\varepsilon\int_{-\infty}^0 \Afrak^{-s}\bu_1(s)\ud s\in X.
\end{equation}
The first term lands in $X$ since $B$ is admissible for $A$.  The second term lands in $X$ by the compact support of $\bu_1$ and the uniform boundedness of $\Afrak$ on compact intervals.   Thus $B_\varepsilon$
 is an admissible control operator for $A$. We next observe that $C_\varepsilon$ is an admissible observation operator for $A$, i.e., that
$$
	x\mapsto \left(\bbm{C\\\varepsilon 1_X\\0}\Afrak^tx\right)_{t\geq0},\quad x\in\dom A,
$$
can be extended to a continuous linear operator from $X$ to $L^{2+}_{loc,Y\times X\times U}$; indeed, $C$ is admissible for $A$ and from $\omega_\Afrak<0$, we get
$$
	\int_0^T \|\varepsilon \Afrak^tx\|^2\ud t\leq
	-\frac{2M^2\varepsilon^2}{\omega_\Afrak}\|x\|^2.
$$
This completes the verification of conditions (1) and (2) in Theorem \ref{T:absnode}. 

In order to verify condition (3), we first prove formula \eqref{eq:transfepsilon} for the  transfer function $\widehat \Dfrak_\varepsilon$ of the system node $\bS_\varepsilon$. To this end, we use formulas \eqref{node-transfunc} and \eqref{CDeps} to compute: 
\begin{align*}
	\widehat\Dfrak_\varepsilon(\lambda)&=
\bbm{\CD}_\varepsilon\bbm{(\lambda-A_{-1})^{-1}B_\varepsilon\\\bbm{1_U&0\\0&1_X}}    \\   
 &=\begin{bmatrix}  C  \\ \varepsilon 1_X \\ 0 \end{bmatrix}   
 \big(  (\lambda - A_{-1})^{-1}  - (\alpha - A_{-1})^{-1}\big) \bbm{B&\varepsilon 1_X}  \\
&\qquad+\begin{bmatrix} \widehat \Dfrak(\alpha)   & \varepsilon C (\alpha  - A)^{-1} \\
    \varepsilon  (\alpha - A_{-1})^{-1} B & \varepsilon^2 (\alpha - A)^{-1}  \\
     \varepsilon 1_U  & 0 \end{bmatrix},\quad \lambda\in\rho(A),
\end{align*}
and observing that the $(1,1)$ entry equals $\CD\sbm{(\lambda-A_{-1})^{-1}B\\1_U}$, we get \eqref{eq:transfepsilon}. 
To verify condition (3) in Theorem \ref{T:absnode} applied to $\bS_\varepsilon$, we need to verify that each block entry appearing in the formula
\eqref{eq:transfepsilon} for $\widehat \Dfrak_\varepsilon$ is in $H^\infty(\cplus;\cB(K,L))$ for the relevant $K,L = X,U,Y$ as appropriate.  Since the original system $\Sigma$ is well-posed with $\omega_\Afrak < 0$, we can apply \cite[Lemma 10.3.3]{StafBook}
(with parameter $\omega$ taken to be $\omega = 0$)
to conclude that
\begin{equation*}
 \lambda \mapsto (\lambda - A)^{-1}
, \quad
\lambda \mapsto (\lambda - A_{-1})^{-1} B
,\quad \lambda \mapsto C (\lambda - A)^{-1}
,\quad\lambda\in\cplus,
\end{equation*}
are all in $H^\infty$ over $\cplus$ as wanted. With these observations in hand, it then becomes clear that choosing $\varepsilon > 0$ sufficiently small implies that $\widehat \Dfrak_\varepsilon$ is in the strict Schur class too. 
Moreover, it now follows from Theorem \ref{T:absnode} that $\bS_\varepsilon$ is the system node of a bounded, well-posed system $\Sigma_\varepsilon$, which is exponentially stable, since the $C_0$-semigroup is the same as that of the original system $\Sigma$. 
The formula \eqref{Sigma-varepsilon-t} for $\sbm{ \Afrak_\varepsilon^t & \Bfrak_\varepsilon^t  \\ \Cfrak_\varepsilon^t & \Dfrak_\varepsilon^t }$
is a straightforward consequence of the construction. 

We next discuss minimality. Fixing any $x\in X$ perpendicular to $\range{\Bfrak_\varepsilon}$, we get from \eqref{eq:BfrakEps} that for all $\bu_1\in L^{2-}_{\ell,X}$:
\begin{equation}\label{eq:epsmapadj}
	0=\Ipdp{x}{\varepsilon\int_{-\infty}^0 \Afrak^{-s}\bu_1(s)\ud s}_X
	=\varepsilon\Ipdp{s\mapsto \Afrak^{-s*}x}{\bu_1}_{L^{2-}_X}.
\end{equation}
By the density of $L^{2-}_{\ell,X}$ in $L^{2-}_{X}$, the continuous function $s\mapsto \Afrak^{-s*}x$ must vanish on $(-\infty,0)$, and letting $s\to0^-$,
we get that $x=0$, i.e., that $\Sigma_\varepsilon$ is (approximately) controllable. Since $C^*_\varepsilon=\bbm{C^*&\varepsilon 1_X&0}$,
\eqref{eq:epsmapadj} gives that $\Sigma_\varepsilon^*$ is controllable, i.e., $\Sigma_\varepsilon$
is observable; hence $\Sigma_\varepsilon$ is minimal.
As $\Afrak$ is exponentially stable by assumption, it follows that $\bW_{c,\varepsilon}$ is bounded by Lemma \ref{lem:PasvContr}, and hence it follows
from \eqref{eq:BfrakEps} that $\bW_{c,\varepsilon}=\bbm{\bW_c&\bW_\varepsilon}$ for some bounded operator $\bW_\varepsilon:L^{2-}_X\to X$;
now it is trivial from Definition \ref{def:L2min} that $\Sigma_\varepsilon$ inherits $L^2$-controllability from $\Sigma$. By \eqref{DualObsCon},
the bounded $L^2$-controllability map of $\Sigma^d_\varepsilon$ is $\bbm{\dW_o^*\ya&\dW_\varepsilon&0}$,
and so $\Sigma_\varepsilon^*$ is $L^2$-controllable, i.e., $\Sigma_\varepsilon$ is $L^2$-observable, whenever $\Sigma$ is
$L^2$-observable.
\end{proof}

Now we can prove the last part of Theorem \ref{thm:stdlemmastrict}.

\begin{proof}[Proof of {\rm (1) $\Rightarrow$ (2a)} in Theorem \ref{thm:stdlemmastrict}]
To complete the proof of Theorem \ref{thm:stdlemmastrict} it remains to show that (1) $\Rightarrow$ (2a) holds under the assumption that  $\Afrak$ is
exponentially stable and that at least one of the additional
conditions (H1), (H2) or (H3) holds. Assume $\widehat\Dfrak\in\cS^0_{U,Y}$. Let $\Sigma_\varepsilon$ be the $\varepsilon$-regularized system constructed above,
where we take $\varepsilon>0$ small enough, so that the transfer function $\widehat\Dfrak_\varepsilon$ of $\Sigma_\varepsilon$ is still a strict Schur class function.

We claim that each of the conditions (H1), (H2) and (H3) implies that the standard KYP-inequality for $\Sigma_\varepsilon$ has a bounded, strictly positive
definite solution $H$.
Assuming (H1), note that clearly the operators $B_\varepsilon$ and $C_\varepsilon$ satisfy the conditions of Proposition \ref{P:L2min-C0group}, so that item (3) of Proposition \ref{P:L2min-C0group} implies that $\Sigma_\varepsilon$ is $L^2$-minimal. Then the $L^2$-minimal standard bounded real lemma,
Theorem \ref{thm:stdlemmaL2reg}, shows that the standard KYP-inequality for $\Sigma_\varepsilon$ has a bounded, strictly positive definite solution
$H_\varepsilon$. In fact, both the operators $H_{\varepsilon,a}$ and $H_{\varepsilon,r}$ associated with the available storage and required supply of
$\Sigma_\varepsilon$ are bounded and strictly positive definite. 

For (H2) and (H3), note that $\Sigma_\varepsilon$ is minimal and
exponentially stable. Therefore, by Proposition \ref{P:BRLstrict+L2}, $H_{\varepsilon,a}$ and $H_{\varepsilon,r}^{-1}$
 are bounded and their inverses are bounded precisely when $\Sigma_\varepsilon$ is $L^2$-observable and $L^2$-controllable, respectively.
Since, by Lemma \ref{L:epsregsys}, $L^2$-observability of $\Sigma$  implies $L^2$-observability of $\Sigma_\varepsilon$, and likewise for $L^2$-controllability,
 it follows that $H_{\varepsilon,a}$ is a bounded, strictly positive definite solution to the KYP inequality for $\Sigma_\varepsilon$ whenever (H3) holds,
 while (H2) implies that $H_{\varepsilon,r}$ is a bounded, strictly positive definite solution to the KYP inequality for $\Sigma_\varepsilon$.

Hence, assuming (H1), (H2) or (H3) as well as the exponential stability,  we obtain a bounded, strictly positive definite solution $H$  to the standard KYP inequality
for $\Sigma_\varepsilon$. Our next goal is to show that this $H$ is also a solution to the strict KYP inequality \eqref{eq:strictKYP} for the original system $\Sigma$,
and thereby arrive at (2a) and complete the proof of (1) (and extra hypotheses) $\Rightarrow$ (2a).  We first need to probe a little deeper into the structure of
$\Sigma_\varepsilon$.

We shall have need for more explicit formulas for the operators
 $\Cfrak^t_1$ and $\Dfrak^t_2$ appearing in \eqref{Sigma-varepsilon-t}.
It is easy to see from the definition of $C_\varepsilon$ that $\Cfrak^t_1 \colon X \to L^2([0,t],X)$ is given by
$$
 \Cfrak^t_1 \colon x_0 \mapsto \bigg( s \mapsto  \varepsilon\,\Afrak^s x_0 \bigg)_{0 \le s \le t}.
$$
As for $\Dfrak^t_2$, what we know from \eqref{eq:transfepsilon} is that
$$
   \cL \Dfrak_2 \cL^{-1} = M_{\widehat \Dfrak_2}
$$
where $\cL$ is the bilateral Laplace transform, and where by \eqref{eq:transfepsilon} we know that
\begin{equation}\label{eq:Dhat2}
  \widehat \Dfrak_2(\lambda) = \varepsilon \,(\lambda - A_{-1})^{-1} B,\quad\lambda\in\rho(A).
\end{equation}
In general for a well-posed linear system $\Sigma = \sbm{ \Afrak & \Bfrak \\ \Cfrak & \Dfrak}$ it is difficult to compute the input-output map
$\Dfrak$ explicitly from the transfer function $\widehat \Dfrak(\lambda)$.  However  for the case here, where $\widehat \Dfrak_2$ is a simple expression
in terms of the resolvent of the semigroup generator $A$,  from experience with the reverse direction of computing  the frequency-domain transfer function from the
time-domain system equations, we conjecture that 
$$
 \Dfrak^t_2 \colon \bu|_{[0,t]} \mapsto \bigg( s \mapsto 
 \varepsilon\int_0^s \Afrak^{s - r}_{-1} B \bu(r) \ud r \bigg)_{0 \le s \le t};
$$
indeed this is correct, because it agrees with the observation that \eqref{eq:Dhat2} is the transfer function for the special case $\CD= \sbm{\varepsilon 1_X & 0 }$, followed by application of \eqref{Dfrak0} for this special $\CD$.

 We conclude that if $(\bu, \bx, \by)$ is a system trajectory on ${\mathbb R}^+$ with $\bx(0) = x_0$, then
 $$
   \begin{bmatrix} \Cfrak^t_1  & \Dfrak^t_2 \end{bmatrix} \colon \begin{bmatrix} x_0  \\ \bu|_{[0,t]} \end{bmatrix} \mapsto
   \bigg( s \mapsto \varepsilon\,\bx(s) \bigg)_{0 \le s \le t} =
   \varepsilon \begin{bmatrix} \Cfrak^t_{1_X,A}  & \Dfrak^t_{A,B} \end{bmatrix},
 $$
 where the right hand side is defined in \eqref{x(0)u-x}.

Let us now suppose that $H$ is bounded strictly positive-definite solution of the standard KYP-inequality associated with the $\varepsilon$-regularized
well-posed system $\Sigma_\varepsilon$.  Then $H$ satisfies
\[
\bbm{\Afrak_\varepsilon^t & \Bfrak_\varepsilon^t\\ \Cfrak_\varepsilon^t & \Dfrak_\varepsilon^t}^*
\bbm{H& 0\\ 0 & 1_{L^2([0,t], \sbm{Y \\ X \\ U})}}
\bbm{\Afrak_\varepsilon^t & \Bfrak_\varepsilon^t\\ \Cfrak_\varepsilon^t & \Dfrak_\varepsilon^t} \preceq
\bbm{H& 0\\ 0 & 1_{L^2([0,t], \sbm{U \\ X})}}.
\]
Compressing this inequality to $X \oplus L^2([0,t],U)$ and writing out $\sbm{\Afrak_\varepsilon^t & \Bfrak_\varepsilon^t\\ \Cfrak_\varepsilon^t &
\Dfrak_\varepsilon^t}$ yields
\begin{align*}
&\bbm{H& 0\\ 0 & 1_{L^2([0,t],U)}}  \\
 & \qquad \succeq
\bbm{\Afrak^t & \Bfrak^t \\ \Cfrak^t &\Dfrak^t\\ \varepsilon \Cfrak_{1_X,A}^t &\varepsilon \Dfrak_{A,B}^t\\ 0 &\varepsilon 1_{L^2([0,t],U)}}^*
\bbm{H&0\\0& 1_{L^2([0,t], \sbm{Y \\ X \\ U})}}
\bbm{\Afrak^t & \Bfrak^t \\ \Cfrak^t &\Dfrak^t\\ \varepsilon \Cfrak_{1_X,A}^t & \varepsilon \Dfrak_{A,B}^t\\ 0 &\varepsilon 1_{L^2([0,t],U)}}\\
&\qquad
= \bbm{\Afrak^t & \Bfrak^t \\ \Cfrak^t &\Dfrak^t}^* \bbm{H& 0\\ 0 & 1_{L^2([0,t],Y)}} \bbm{\Afrak^t & \Bfrak^t \\ \Cfrak^t &\Dfrak^t}
+  \varepsilon^2 \bbm{\Cfrak_{1_X,A}^{t *}\\ \Dfrak_{A,B}^{t*}} \bbm{\Cfrak_{1_X,A}^t &\Dfrak_{A,B}^t} + \\
& \qquad\qquad\qquad\qquad\qquad\qquad + \bbm{0&0\\0&\varepsilon^2 1_{L^2([0,t],U)}}.
\end{align*}
Subtracting $\sbm{0&0\\0&\varepsilon^2 1_{L^2([0,t],U)}}$ from both sides gives \eqref{eq:strictKYP} with $\delta=\varepsilon^2>0$ and this completes the proof.
\end{proof}

\paragraph{\bf Acknowledgments}
The authors thank the anonymous referees for their helpful remarks. This work is based on research supported in part by the National Research Foundation of South Africa (NRF) and the DSI-NRF Centre of Excellence in Mathematical and Statistical Sciences (CoE-MaSS). Any opinion, finding and conclusion or recommendation expressed in this material is that of the authors and the NRF and CoE-MaSS do not accept any liability in this regard.

\paragraph{\bf Data availability} 
Data sharing is not applicable to this article as no datasets were generated or analysed
during the current study.

\appendix

\section{An operator optimization problem}\label{sec:OpOpt}

In this section we consider a general operator optimization problem used in \S\ref{sec:QuadStorage}. Consider a contractive Hilbert space operator matrix:
\begin{equation}\label{Lop}
L=\bbm{T_1&0\\H & T_2}:\bbm{K_1\\ R_1}\to \bbm{K_2\\ R_2}.
\end{equation}
In particular, the operators $T_1$, $T_2$ and $H$ are contractive and hence bounded. Note that $H$ has a different meaning here in the appendix than in the main part of the paper. Further assume that $H$ admits a factorization
\begin{equation}\label{HfactA}
H|_{\dom{W_2}}=W_1W_2,
\end{equation}
where for some auxiliary Hilbert space $X$, the operators $W_1:\dom{W_1}\subset X \to R_2$ and $W_2:\dom{W_2}\subset K_1\to X$ are closed and densely defined. In particular, $W_1$ and $W_2$ then have closed, densely defined adjoints $W_1^*$ and $W_2^*$, respectively. Moreover,
$$
H^*|_{\dom{W_1^*}}=W_2^*W_1^*,
$$
since \eqref{HfactA} implies that $\range{W_2}\subset\dom{W_1}$, and then for all $x\in\dom{W_2}$ and $y\in\dom{W_1^*}$, it holds that $\Ipd{W_1W_2x}{y}=\Ipdp{W_2x}{W_1^*y}$. Then $W_1^*y\in\dom{W_2^*}$, and the boundedness of $H$ gives
$$
	\Ipdp{x}{W_2^*W_1^*y}=
	\Ipdp{W_1W_2x}{y}=\Ipdp{x}{H^*y}.
$$
Since $\dom{W_2}$ is dense, $W_2^*W_1^*y=H^*y$ for all $y\in\dom{W_1^*}$. In particular, also $\range{W_1^*}\subset \dom{W_2^*}$.

The objective of this appendix is to study the functions $S_-:X\to[0,\infty]$ and $S_+:X\to[0,\infty]$ determined by the general optimization problems
\begin{equation}\label{S-S+}
\begin{aligned}
S_-(x_0) & =
\left\{\begin{array}{ll}
\sup_{h\in R_1}\|W_1 x_0 + T_2 h\|^2 -\|h\|^2&\quad \mbox {if $x_0\in\dom{W_1}$}\\
\infty& \quad  \mbox {if $x_0\not\in\dom{W_1}$}
\end{array} \right.\\
S_+(x_0) & =
\left\{\begin{array}{ll}
\inf_{k\in W_2^{-1}(\{x_0\})}\|k\|^2 -\|T_1 k\|^2& \quad \mbox {if $x_0\in\range{W_2}$}\\
\infty &\quad  \mbox {if $x_0\not\in\range{W_2}$}.
\end{array} \right.
\end{aligned}
\end{equation}

In order to analyze these functions we define operators ${\bf X}_1$ and ${\bf X}_2$ on $X$ in the following lemma, which amounts to Lemma \ref{L:XaXb}, but formulated in a logically more optimal general context.

\begin{lemma}\label{L:X1X2}
Let $T_1$, $T_2$, $H$, $W_1$ and $W_2$ be as above. 
The following are true:
\begin{enumerate}
\item[(1)] Assume that $W_2$ has dense range. Then there exists a unique closable operator ${\bf X}_1$ from $X$ to $R_2$ with dense domain equal to $\range{W_2}$, $\range{{\bf X}_1}\perp \Ker{D_{T_2^*}}$ and
\begin{equation}\label{W1fact}
W_1|_{\range{W_2}}=D_{T_2^*}{\bf X}_1.
\end{equation}
Moreover, $\range{W_2}$ is a core for the closure $\overline{\bf X}_1$ of ${\bf X}_1$ and $\range{\overline{\bf X}_1}\perp \Ker{D_{T_2^*}}$. If additionally $W_1$ is injective, then $\overline{\bf X}_1$ is injective too. 

\item[(2)] If $W_1$ is injective, then there exists a unique closable operator ${\bf X}_2$ from $X$ to $K_1$ with dense domain $\range{W_1^*}$, $\range{{\bf X}_2}\perp \Ker{D_{T_1}}$, and
    \begin{equation}\label{W2*fact}
    W_2^*|_{\range{W_1^*}}=D_{T_1}{\bf X}_2.
    \end{equation}
Moreover, $\range{W_1^*}$ is a core for the closure $\overline{\bf X}_2$ of ${\bf X}_2$, whose range is still perpendicular to $\Ker{D_{T_1}}$. If $W_2$ has dense range, then $\overline{\bf X}_2$ is injective.
\end{enumerate}
\end{lemma}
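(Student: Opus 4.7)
The strategy is to first deduce, from $L$ being a contraction, factorizations of the off-diagonal operator $H$ through the defect operators $D_{T_2^*}$ and $D_{T_1}$ by applying the Douglas factorization lemma (see, e.g., \cite[Theorem XVI.2.2]{GGKBookII}), and then to transfer these factorizations through the identity $H|_{\dom{W_2}} = W_1 W_2$ to define $\bX_1$ and $\bX_2$ on $\range{W_2}$ and $\range{W_1^*}$, respectively. Throughout, the closedness of $W_1$ and $W_2$ will be what yields closability of the $\bX$-operators, while the orthogonality to the relevant kernel of a defect operator will give well-definedness, uniqueness, and injectivity.

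For statement (1), the inequality $LL^* \preceq I_{K_2 \oplus R_2}$, restricted to the second coordinate, produces $HH^* \preceq D_{T_2^*}^2$. Douglas's lemma then supplies a unique contraction $C\colon K_1 \to R_2$ with $H = D_{T_2^*} C$ and $\range{C} \subset \overline{\range{D_{T_2^*}}} = \Ker{D_{T_2^*}}^\perp$. Using the hypothesis that $\range{W_2}$ is dense in $X$, I would define
\[
\bX_1(W_2 k) := C k, \qquad k \in \dom{W_2}.
\]
Well-definedness rests on the observation that $W_2 k = W_2 k'$ forces $D_{T_2^*} C(k-k') = W_1 W_2(k-k') = 0$, so $C(k-k') \in \Ker{D_{T_2^*}} \cap \Ker{D_{T_2^*}}^\perp = \{0\}$. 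The factorization $W_1|_{\range{W_2}} = D_{T_2^*} \bX_1$ and the range condition $\range{\bX_1} \subset \Ker{D_{T_2^*}}^\perp$ then come for free, and uniqueness of $\bX_1$ follows by the same orthogonality.

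For closability, I would argue as follows: if $\range{W_2} \ni x_n \to 0$ in $X$ and $\bX_1 x_n \to y$ in $R_2$, then the identity $W_1 x_n = D_{T_2^*} \bX_1 x_n$ and the boundedness of $D_{T_2^*}$ give $W_1 x_n \to D_{T_2^*} y$; since $\range{W_2} \subset \dom{W_1}$ and $W_1$ is closed, this forces $D_{T_2^*} y = 0$, and because $y$ lies in the closed subspace $\Ker{D_{T_2^*}}^\perp$ one concludes $y = 0$. Thus $\bX_1$ is closable, $\range{W_2}$ is a core for the closure $\overline{\bX_1}$ by construction, and $\range{\overline{\bX_1}}$ remains in the closed subspace $\Ker{D_{T_2^*}}^\perp$. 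If $W_1$ is in addition injective, the same limiting scheme applied to $x \in \Ker{\overline{\bX_1}}$ (approximated by $x_n \in \range{W_2}$ with $\bX_1 x_n \to 0$) gives $W_1 x_n \to 0$ and $x_n \to x$, so the closedness of $W_1$ places $x \in \dom{W_1}$ with $W_1 x = 0$, hence $x = 0$.

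Statement (2) is the mirror image: now I would use $L^* L \preceq I_{K_1 \oplus R_1}$ restricted to the first coordinate to get $H^* H \preceq D_{T_1}^2$, so that Douglas yields a unique contraction $E \colon R_2 \to K_1$ with $H^* = D_{T_1} E$ and $\range{E} \subset \Ker{D_{T_1}}^\perp$, and then set $\bX_2(W_1^* z) := E z$ for $z \in \dom{W_1^*}$. The injectivity of $W_1$ (together with the standard duality for closed densely defined operators) guarantees that $\range{W_1^*}$ is dense in $X$, providing the required dense domain for $\bX_2$; all remaining assertions are verified verbatim as in part (1), with $(W_1, T_2^*)$ and $(W_2^*, T_1)$ interchanged. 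The main obstacle I anticipate is bookkeeping: one must be careful to check the inclusions $\range{W_2} \subset \dom{W_1}$ and $\range{W_1^*} \subset \dom{W_2^*}$ (noted in the discussion preceding the lemma) whenever closedness of $W_1$ or $W_2$ is invoked, and to keep track of which orthogonality ($\Ker{D_{T_2^*}}^\perp$ or $\Ker{D_{T_1}}^\perp$) underlies each uniqueness/injectivity step.
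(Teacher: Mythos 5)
Your proposal is correct and follows essentially the same route as the paper: the same applications of Douglas's lemma to $HH^*\preceq D_{T_2^*}^2$ and $H^*H\preceq D_{T_1}^2$, the same use of closedness of $W_1$ (resp.\ $W_2^*$) for closability, and the same orthogonality-to-$\Kern{D}$ argument for well-definedness, uniqueness and injectivity. The only cosmetic differences are that you define $\bX_1$ directly on $\range{W_2}$ with an explicit well-definedness check where the paper composes the Douglas contraction with the Moore--Penrose inverse $W_2^\dagger$ (these give the same operator), and that you write out part (2) as a mirror argument where the paper reduces it to part (1) applied to a flipped adjoint of $L$.
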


The proof requires the use of the Moore-Penrose generalized inverse, which we reproduce from \cite[(4.31)]{BGtH18b}; see also \cite{NV74}. Let $W:X\to R$ be a closed, densely defined Hilbert-space operator. Define the operator $W^\dagger:R\supset\dom {W^\dagger}\to \dom W\subset X$ by $\dom{W^\dagger}:=\range W\oplus\range W^\perp$,
$$
	W^\dagger Wx=P_{\Ker W^\perp}x,\quad x\in\dom W,\quad
	W^\dagger\big|_{\range W^\perp}=0,
$$
where $P_{\Ker W^\perp}$ is the orthogonal projection in $X$ onto $\Ker W^\perp$.

\begin{proof}

Item (2) is obtained by applying item (1) to $\sbm{0&I\\I&0}L^*\sbm{0&I\\I&0}$ and hence we provide a detailed proof for item (1) only.

We start with the construction of ${\bf X}_1$. The fact that $L$ in \eqref{Lop} is contractive implies that $T_2 T_2^* + HH^*\preceq 1_{R_2}$, so that $D_{T_2^*}^2\succeq HH^*$. By Douglas' lemma, there exists a unique contraction ${\bf Y}_1$ from $K_1$ into $R_2$ with $D_{T_2^*}{\bf Y}_1=H$ and $\range{{\bf Y}_1}\perp \Ker{D_{T_2^*}}$. Next, write $W_2^\dagger$ for the Moore-Penrose generalized inverse of $W_2$. Then $W_2^\dagger$ has domain equal to $\range{W_2}$, since $W_2$ has dense range.

Now we define
\[
{\bf X}_1:={\bf Y}_1 W_2^\dagger.
\]
We claim that this operator ${\bf X}_1$ has the required properties. Clearly, ${\bf X}_1$ is a well-defined operator with dense domain $\dom{{\bf X}_1}=\range{W_2}$. Furthermore,
\[
D_{T_2^*} {\bf X}_1= D_{T_2^*}{\bf Y}_1 W_2^\dagger= H W_2^\dagger= W_1 W_2 W_2^\dagger=W_1|_{\range{W_2}}.
\]
We have $\range{{\bf X}_1}\subset \range{{\bf Y}_1}$ so that also $\range{{\bf X}_1}\perp \Ker{D_{T_2^*}}$. This establishes that ${\bf X}_1$ has the stated properties. If $\bX_1'$ also has these properties, then $\range{\bX_1-\bX_1'}\subset \Ker{D_{T_2^*}}\cap\Ker{D_{T_2^*}}^\perp$, so that $\bX_1'=\bX_1$, and uniqueness is also clear.

Next we prove that ${\bf X}_1$ is closable. Let $\{x_k\}_{k\geq 0}$ be a sequence in $\dom{{\bf X}_1}=\range{W_2}$ such that $x_k\to 0$. Assume that ${\bf X}_1 x_k \to y\in R_2$.
Then
\[
\lim_{k\to\infty} W_1 x_k = \lim_{k\to\infty} D_{T_2^*}{\bf X}_1 x_k = D_{T_2^*} y
\]
since $D_{T_2^*}$ is bounded and ${\bf X}_1 x_k \to y$. Since $W_1$ is closed and we have $x_k \to 0$ while
$W_1 x_k \to D_{T_2^*}y$,
we see that $0 = W_1 0 = D_{T_2^*} y$.
Since ${\bf X}_1 x_k  \perp \Ker{D_{T_2^*}}$, also $y \perp \Ker{D_{T_2^*}}$. But then $D_{T_2^*} y=0$ implies $y=0$, and hence ${\bf X}_1$ is closable.

Write $\overline{\bf X}_1$ for the closure of ${\bf X}_1$. Then ${\bf X}_1=\overline{\bf X}_1|_{\range{W_2}}$ and it follows by the definition of the closure of a closable operator that $\range{W_2}$ is a core of $\overline{\bf X}_1$. Moreover, $\range{\overline\bX_1}\subset\overline{\range{\bX_1}}\subset \Ker{D_{T_2^*}}^\perp$.

Let $x\in \dom{\overline{\bf X}_1}$ with $\overline{\bf X}_1 x=0$. Then there exists a sequence $\{x_k\}_{k\in\Z_+}$ in $\dom{{\bf X}_1}=\range{W_2}$ such that $x_k \to x$ in $X$ and ${\bf X}_1 x_k \to 0$ in $R_2$. Since $D_{T_2^*}$ is bounded, we have
\[
\lim_{k\to\infty}W_1 x_k= \lim_{k\to\infty}D_{T_2^*}{\bf X}_1 x_k= D_{T_2^*}0=0.
\]
Thus $x_k\to x$ and $W_1 x_k\to 0$. The fact that $W_1$ is a closed operator implies that $x\in\dom{W_1}$ and $W_1 x=0$.
If $W_1$ is injective, then  $x=0$, and it follows that $\overline{\bf X}_1$ is also injective in that case.
\end{proof}

Let ${\bf X}_1$ and ${\bf X}_2$ be as defined in Lemma \ref{L:X1X2} with closures $\overline{\bf X}_1$ and $\overline{\bf X}_2$. By Theorem VIII.32 in \cite{RS80}, $\overline{\bf X}_1$ and $\overline{\bf X}_2$ admit polar decompositions:
\[
\overline{\bf X}_1=U_1|\overline{\bf X}_1|
\quad\mbox{and}\quad \overline{\bf X}_2 =U_2|\overline{\bf X}_2|,
\]
where for $k=1,2$,  $|\overline{\bf X}_k|=(\overline{\bf X}_k^* \overline{\bf X}_k)^\frac{1}{2}$ is the positive self-adjoint square root of $\overline{\bf X}_k^* \overline{\bf X}_k$, which has $\dom{|\overline\bX_k|}=\dom {\overline\bX_k}$. If $\bX_k$ is injective, then $\overline{\bf X}_k$ is injective, and $U_k$ is then an isometry with $\range{U_k}$ equal to the closure of the range of $\overline{\bf X}_k$.

\begin{theorem}\label{T:S-S+quad}
Let $T_1$, $T_2$, $H$, $W_1$ and $W_2$ be as above with $W_1$ injective and $W_2$ having dense range. Define $S_-$ and $S_+$ as in \eqref{S-S+}. Then $\range{W_2}$ is contained in the domains of $|\overline{\bf X}_1|$ and $|\overline{\bf X}_2|^{-1}$ and we have
\[
S_-(x_0)=\||\overline{\bf X}_1|x_0\|^2 \quad\mbox{and}\quad S_+(x_0)=\||\overline{\bf X}_2|^{-1}x_0\|^2,\quad \mbox{for $x_0\in \range{W_2}$}.
\]
Moreover, $\range{W_2}$ is a core for $|\overline{\bf X}_1|$ and $\range{W_1^*}$ is a core for $|\overline{\bf X}_2|$.
\end{theorem}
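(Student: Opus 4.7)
The core statements and the inclusion $\range{W_2} \subset \dom{|\overline{\bX}_1|}$ come essentially for free: by the standard polar-decomposition identity $\dom{|\overline{\bX}_k|} = \dom{\overline{\bX}_k}$ with $\||\overline{\bX}_k|v\| = \|\overline{\bX}_k v\|$ for $v \in \dom{\overline{\bX}_k}$, a core for $\overline{\bX}_k$ is automatically a core for $|\overline{\bX}_k|$, so the two core statements are immediate from Lemma \ref{L:X1X2}, and $\dom{\bX_1} = \range{W_2}$ gives the first domain inclusion. The remaining inclusion $\range{W_2} \subset \dom{|\overline{\bX}_2|^{-1}}$ will emerge as a byproduct of the $S_+$ computation. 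For $S_-$ I would apply the Julia identity
\[
\|T_2 h + D_{T_2^*} a\|^2 + \|D_{T_2} h - T_2^* a\|^2 = \|h\|^2 + \|a\|^2
\]
with $a = \bX_1 x_0$ (legal since $x_0 \in \range{W_2} = \dom{\bX_1}$), and combine it with $W_1 x_0 = D_{T_2^*} \bX_1 x_0$ from Lemma \ref{L:X1X2} to rewrite the quantity inside the supremum as $\|\bX_1 x_0\|^2 - \|D_{T_2} h - T_2^* \bX_1 x_0\|^2$. The Sz.-Nagy--Foias intertwining $T_2^* D_{T_2^*} = D_{T_2} T_2^*$, coupled with $\bX_1 x_0 \in \overline{\range{D_{T_2^*}}}$, places $T_2^* \bX_1 x_0$ in $\overline{\range{D_{T_2}}}$, so the infimum over $h$ of $\|D_{T_2} h - T_2^* \bX_1 x_0\|^2$ vanishes and $S_-(x_0) = \|\bX_1 x_0\|^2 = \||\overline{\bX}_1| x_0\|^2$.

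The $S_+$ analysis starts from the identity $\|k\|^2 - \|T_1 k\|^2 = \|D_{T_1} k\|^2$, which reduces the problem to $\inf\{\|D_{T_1} k\|^2 : k \in \dom{W_2},\, W_2 k = x_0\}$. The pivotal duality is $\overline{\bX}_2^* D_{T_1} k = W_2 k$ for every $k \in \dom{W_2}$; I would derive it by fixing $y \in \dom{\overline{\bX}_2}$, approximating $y$ by $y_n \in \range{W_1^*}$ (a core for $\overline{\bX}_2$ by Lemma \ref{L:X1X2}) with $\bX_2 y_n \to \overline{\bX}_2 y$, and passing to the limit in $\langle W_2 k, y_n\rangle = \langle D_{T_1} k, \bX_2 y_n\rangle$, which comes from $W_2^* y_n = D_{T_1}\bX_2 y_n$. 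The polar decomposition $\overline{\bX}_2 = U_2 |\overline{\bX}_2|$, in which $U_2$ is an isometry because $\overline{\bX}_2$ is injective, then gives $\overline{\bX}_2^* = |\overline{\bX}_2| U_2^*$ and $\range{\overline{\bX}_2^*} = \range{|\overline{\bX}_2|} = \dom{|\overline{\bX}_2|^{-1}}$; hence $x_0 = \overline{\bX}_2^* D_{T_1} k$ lies in $\dom{|\overline{\bX}_2|^{-1}}$ with $|\overline{\bX}_2|^{-1} x_0 = U_2^* D_{T_1} k$, yielding $\||\overline{\bX}_2|^{-1} x_0\|^2 = \|P_{\range{U_2}} D_{T_1} k\|^2$ for every admissible $k$. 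The orthogonal splitting $\|D_{T_1} k\|^2 = \|P_{\range{U_2}} D_{T_1} k\|^2 + \|P_{\range{U_2}^\perp} D_{T_1} k\|^2$ immediately delivers the lower bound $S_+(x_0) \ge \||\overline{\bX}_2|^{-1} x_0\|^2$.

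The matching upper bound is the main obstacle. For a fixed $k_0 \in \dom{W_2}$ with $W_2 k_0 = x_0$, the plan is to construct a sequence $n_m \in \Ker{W_2}$ with $D_{T_1} n_m \to -P_{\range{U_2}^\perp} D_{T_1} k_0$; since $D_{T_1}(\Ker{W_2}) \subset \Ker{\overline{\bX}_2^*} = \range{U_2}^\perp$, this forces $\|D_{T_1}(k_0 + n_m)\|^2 \to \|P_{\range{U_2}} D_{T_1} k_0\|^2 = \||\overline{\bX}_2|^{-1} x_0\|^2$. The existence of such a sequence reduces to the density identity
\[
\overline{D_{T_1}(\Ker{W_2})} = \overline{\range{D_{T_1}}} \cap \range{U_2}^\perp,
\]
which I would attack by dualization. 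The annihilator of the left-hand side in $K_1$ is $\{w : D_{T_1} w \in \overline{\range{W_2^*}}\}$, while that of the right-hand side is $\Ker{D_{T_1}} \oplus \range{U_2}$ (the sum is closed because $\range{U_2} \subset \overline{\range{D_{T_1}}}$ is orthogonal to $\Ker{D_{T_1}}$). Identifying these annihilators rests on the inclusion $W_2^* \supset D_{T_1} \overline{\bX}_2$, obtained by closing $W_2^*|_{\range{W_1^*}} = D_{T_1} \bX_2$, and on the injectivity of $\overline{\bX}_2$ from Lemma \ref{L:X1X2}, which forces $\overline{\range{\bX_2}} = \range{U_2}$ and $\Ker{\overline{\bX}_2^*} = \range{U_2}^\perp$. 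This step is delicate precisely because $W_2^*$ may strictly extend $D_{T_1}\overline{\bX}_2$ and because neither $D_{T_1}$ nor $\overline{\bX}_2$ is assumed to have closed range in the infinite-dimensional setting, so the argument must be carried out purely at the level of closures and orthogonal complements.
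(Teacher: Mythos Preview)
Your $S_-$ computation and the two core statements are essentially identical to the paper's proof (the paper also expands $\|W_1x_0+T_2h\|^2-\|h\|^2$, uses $T_2^*D_{T_2^*}=D_{T_2}T_2^*$, and argues $T_2^*\bX_1x_0\in\overline{\range{D_{T_2}}}$ exactly as you do; the Julia identity is just a repackaging of that algebra).

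For $S_+$ you take a longer route than the paper and leave a genuine gap. After deriving $W_2=\bX_2^*D_{T_1}\big|_{\dom{W_2}}$ (which you also obtain), the paper avoids the orthogonal splitting into $\range{U_2}$ and $\range{U_2}^\perp$ entirely: it rewrites $S_+(x_0)$ as $\inf\{\|v\|^2:v\in D_{T_1}\dom{W_2},\ \bX_2^*v=x_0\}$, enlarges the ambient set by density to $\overline{\range{D_{T_1}}}\cap(\bX_2^*)^{-1}(\{x_0\})$, and then observes that the unconstrained minimizer over the full affine set $(\bX_2^*)^{-1}(\{x_0\})=v_0+\Ker{\bX_2^*}$ is $v_0=U_2|\overline{\bX}_2|^{-1}x_0\in\overline{\range{\bX_2}}\subset\overline{\range{D_{T_1}}}$, so intersecting with $\overline{\range{D_{T_1}}}$ does not raise the infimum. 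No annihilator computation is needed.

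Your density identity $\overline{D_{T_1}(\Ker{W_2})}=\overline{\range{D_{T_1}}}\cap\range{U_2}^\perp$ is logically equivalent to the paper's density passage (translate both affine sets by $-D_{T_1}k_0$), so you have correctly located the crux. But your dualization argument establishes only the easy inclusion $\Ker{D_{T_1}}\oplus\range{U_2}\subset\{w:D_{T_1}w\in\overline{\range{W_2^*}}\}$; the reverse inclusion, which carries all the weight, is not proved. Listing the obstructions ($W_2^*$ may strictly extend $D_{T_1}\overline{\bX}_2$; no closed-range hypotheses) is an honest diagnosis but not a proof, so as written your $S_+$ upper bound is incomplete.
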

\begin{proof}
We start with the formula for $S_-$. First note that
\[
\range{W_2}=\dom{{\bf X}_1}\subset \dom{\overline{\bf X}_1}= \dom{|\overline{\bf X}_1|}.
\]
Let $x_0\in \range{W_2}$ and $h\in R_1$. Then $W_1 x_0=D_{T_2^*}{\bf X}_1 x_0$ and
\begin{equation}\label{eq:S1proof}
\begin{aligned}
&\|W_1 x_0 + T_2 h\|^2-\|h\|^2
=\|D_{T_2^*}{\bf X}_1 x_0 + T_2 h\|^2-\|h\|^2\\
&\qquad\qquad=\|D_{T_2^*}{\bf X}_1 x_0\|^2 +2\re{\ipd{D_{T_2^*}{\bf X}_1 x_0}{T_2 h}} + \|T_2 h\|^2-\|h\|^2\\
&\qquad\qquad=\|D_{T_2^*}{\bf X}_1 x_0\|^2 +2\re{\ipd{D_{T_2^*}{\bf X}_1 x_0}{T_2 h}} - \|D_{T_2} h\|^2.
\end{aligned}
\end{equation}
Furthermore, $T_2^*D_{T_2^*}=D_{T_2}T_2^*$, see for instance \cite[p.\ 665]{GGKBookII}, and then
\begin{align*}
\ipd{D_{T_2^*}{\bf X}_1 x_0}{T_2 h}
=\ipd{T_2^*D_{T_2^*}{\bf X}_1 x_0}{ h}
=\ipd{D_{T_2} T_2^* {\bf X}_1 x_0}{ h}
=\ipd{ T_2^* {\bf X}_1 x_0}{D_{T_2} h},
\end{align*}
so that
\begin{align*}
2 \re{\ipd{D_{T_2^*}{\bf X}_1 x_0}{T_2 h}} & = 2 \re{\ipd{ T_2^* {\bf X}_1 x_0}{D_{T_2} h}}\\
&=\|T_2^* {\bf X}_1 x_0\|^2 + \|D_{T_2} h\|^2-\|T_2^* {\bf X}_1 x_0-D_{T_2} h\|^2.
\end{align*}
Inserting this back into \eqref{eq:S1proof}, we obtain
\begin{align*}
\|W_1 x_0 + T_2 h\|^2-\|h\|^2
&=\|D_{T_2^*}{\bf X}_1 x_0\|^2 + \|T_2^* {\bf X}_1 x_0\|^2 -\|T_2^* {\bf X}_1 x_0-D_{T_2} h\|^2\\
&=\Ipdp{(1-T_2T_2^*){\bf X}_1 x_0}{{\bf X}_1x_0} +\Ipdp{T_2T_2^* {\bf X}_1 x_0}{{\bf X}_1x_0}\\
&\qquad-\|T_2^* {\bf X}_1 x_0-D_{T_2} h\|^2\\
&=\|{\bf X}_1 x_0\|^2 -\|T_2^* {\bf X}_1 x_0-D_{T_2} h\|^2.
\end{align*}
Hence we find that
\begin{align*}
S_-(x_0)&=\|{\bf X}_1 x_0\|^2- \inf_{h\in R_1} \|T_2^* {\bf X}_1 x_0-D_{T_2} h\|^2\\
&=\||\overline{\bf X}_1| x_0\|^2- \inf_{h\in R_1} \|T_2^* {\bf X}_1 x_0-D_{T_2} h\|^2.
\end{align*}
It remains to show that the infimum over $R_1$ is 0.
By construction $\range{{\bf X}_1}\perp \Ker{D_{T_2^*}}$, and hence ${\bf X}_1 x_0$
is in $\overline{\range{D_{T_2^*}}}$. Note that $T_2^*$ maps $\overline{\range{D_{T_2^*}}}$ into $\overline{\range{D_{T_2}}}$, since for every $w\in \overline{\range{D_{T_2^*}}}$, there exists a sequence $v_k$, such that $D_{T_2^*}v_k\to w$ and then
$$
	T_2^*w=
	\lim_{k\to\infty}T_2^*D_{T_2^*}v_k=
	\lim_{k\to\infty}D_{T_2}T_2^*v_k\in
	\overline{\range{D_{T_2}}}.
$$
Thus $T_2^* {\bf X}_1 x_0$ is in $\overline{\range{D_{T_2}}}$, and this implies that we can approximate $T_2^* {\bf X}_1 x_0$
with vectors of the form $D_{T_2} h$, $h\in R_1$, so that the infimum is 0, as claimed.

Now we turn to $S_+$. We first argue that the factorization \eqref{W2*fact} transfers to
\begin{equation}\label{eq:W2fact}
W_2={\bf X}_2^* D_{T_1}|_{\dom{W_2}}.
\end{equation}
Indeed, for all $x\in\range{W_1^*}=\dom{{\bf X}_2}$ and $k\in\dom{W_2}$, since $\range{{W}_1^*} \subset
\dom{W_2^*}$ and $D_{T_1}$ is bounded, we see that
$$
\langle x, W_2k \rangle  = \langle W_2^*x, k \rangle = \langle D_{T_1} {\bf X}_2 x, k \rangle
= \langle {\bf X}_2 x, D_{T_1} k \rangle\,,
$$
from which we see that $D_{T_1} k \in \dom{{\bf X}_2^*}$ and ${\bf X}_2^* D_{T_1}k = W_2 k$ as claimed.

The polar decomposition $\overline{\bf X}_2=U_2|\overline{\bf X}_2|$ gives $\overline{\bf X}_2^*=|\overline{\bf X}_2| U_2^*$ by the boundedness of $U_2$. Hence $W_2=|\overline{\bf X}_2| U_2^* D_{T_1}|_{\dom{W_2}}$ and it follows that $\range{W_2}\subset \range{|\overline{\bf X}_2|}=\dom{|\overline{\bf X}_2|^{-1}}$.

Now, for $x_0\in \range{W_2}$ we have
\begin{align*}
S_+(x_0)
& =\inf_{k\in W_2^{-1}(\{x_0\})}\|k\|^2 -\|T_1 k\|^2
=\inf_{k\in\dom{W_2},\,
{\bf X}_2^* D_{T_1}k=x_0}\|D_{T_1}k\|^2\\
&=\inf_{v\in D_{T_1}\dom{W_2},\,{\bf X}_2^* v=x_0}\|v\|^2.
\end{align*}
Hence, we look for the infimum of $\|\ \|^2_{R_1}$ over the affine set
\[
\{ v\in D_{T_2}\dom{W_2} \bigmid {\bf X}_2^* v=x_0\}.
\]
Since $\dom{W_2}$ is dense, $D_{T_1}$ bounded, and $D_{T_1}\dom{W_2}\subset\dom{\bX_2^*}$ by \eqref{eq:W2fact}, the set in the infimum can be replaced by
\[
\left\{ v\in \overline{\range{D_{T_1}}} \bigcap \dom{{\bf X}_2^*} \Bigmid {\bf X}_2^* v=x_0\right\}.
\]
We thus have
\begin{equation}\label{eq:S+InfChar}
	S_+(x_0)=\inf_{v\in\overline{\range{D_{T_1}}}\cap({\bf X}_2^*)^{-1}(\{x_0\})}\|v\|^2
	\geq \inf_{v\in({\bf X}_2^*)^{-1}(\{x_0\})}\|v\|^2,
\end{equation}
because we in the right-hand side dropped one of the conditions on the set. Moreover, $({\bf X}_2^*)^{-1}(\{x_0\})=v_0+\Ker{{\bf X}_2^*}$ for some unique $v_0\in\Ker{{\bf X}_2^*}^\perp=\overline{\range{{\bf X}_2}}\subset \overline{\range{D_{T_1}}}$, and therefore the two infima in \eqref{eq:S+InfChar} are in fact both equal to $\|v_0\|^2$. We next verify that $v_0=U_2|\overline\bX_2|^{-1}x_0$; indeed this vector is in $\overline{\range{\bX_2}}\perp\Ker{\bX_2^*}$ and 
\[
\overline{\bf X}_2^* v_0 = 
|\overline{\bf X}_2| U_2^* U_2 |\overline{\bf X}_2|^{-1}x_0 = 
|\overline{\bf X}_2| |\overline{\bf X}_2|^{-1}x_0=x_0,
\]
where we used the isometricity of $U_2$. Then finally
$$
	S_+(x_0)=\|U_2|\overline\bX_2|^{-1}x_0\|^2=
	\||\overline\bX_2|^{-1}x_0\|^2.
$$

It remains only to prove that the claim regarding the core of $|\overline{\bf X}_k|$ follows from the corresponding property of $\overline\bX_k$ established in Lemma \ref{L:X1X2}. Pick $v\in \dom{|\overline\bX_k|}=\dom{\overline\bX_k}$ arbitrarily and let $D$ be a core for $\overline\bX_k$; then there exists a sequence $v_n\in D$ such that $v_n\to v$ and $\overline\bX_kv_n\to \overline\bX_kv$. Using that $U_k$ in the polar decomposition $\overline\bX_k=U_k|\overline\bX_k|$ is isometric, we get
$$
	|\overline\bX_k|v_n =
	U_k^*\overline\bX_kv_n \to
	U_k^*\overline\bX_kv =
	|\overline\bX_k|v,
$$
and hence, every core for $\overline\bX_k$ is also a core for $|\overline\bX_k|$.
\end{proof}

\end{document}